\theoremstyle{plain}
\newtheorem{theorem}{Theorem}[section]
\newtheorem{proposition}[theorem]{Proposition}
\newtheorem{corollary}[theorem]{Corollary}
\newtheorem{lemma}[theorem]{Lemma}
\theoremstyle{definition}
\newtheorem*{definition}{Definition}
\theoremstyle{remark}
\newtheorem{example}[theorem]{Example}
\newtheorem{remark}[theorem]{Remark}
\newtheorem*{notation}{Notation}
\numberwithin{equation}{section}
\title{Categories of operators and actions of group operads}
\date{\today}
\author{Jun Yoshida}
\DeclareMathAlphabet{\mathpzc}{OT1}{pzc}{m}{it}
\DeclareMathAlphabet{\mathbfcal}{OMS}{cmsy}{b}{n}
\DeclareSymbolFont{bsletter}{OML}{cmbrm}{b}{it}
\DeclareMathSymbol{\bsrho}{\mathord}{bsletter}{"1A}
\DeclareMathSymbol{\bspsi}{\mathord}{bsletter}{"20}
\DeclareMathSymbol{\bsvarphi}{\mathord}{bsletter}{"27}
\newcommand{\llangle}{\mathopen{\langle\mkern-3mu\langle}}
\newcommand{\rrangle}{\mathclose{\rangle\mkern-3mu\rangle}}
\newcommand\objof{\operatorname{Ob}}
\newcommand\opposite{\mathrm{op}}
\newcommand{\overbar}[1]{\mkern 1.5mu\overline{\mkern-1.5mu#1\mkern-1.5mu}\mkern 1.5mu}
\newcommand{\blank}{\mkern1mu\mathchar"7B\mkern1mu}
\newcommand{\pbcorner}{\text{\LARGE{$\mathrlap{\cdot}\lrcorner$}}}
\newcommand{\bpcorner}{\text{\LARGE{$\llcorner\mathllap\cdot$}}}
\newcommand\dblslash{\mathbin{/\mkern-5mu/}}
\newcommand*{\relrelbarsep}{.386ex}
\newcommand*{\@relrelbar}[2]{%
  \raise #2\hbox to 0pt{$\m@th#1\relbar$\hss}%
  \lower #2\hbox{$\m@th#1\relbar$}}
\newcommand*{\relrelbar}{%
  \mathrel{\mathpalette\@relrelbar\relrelbarsep}}
\newcommand\xrightrightarrows[1]{%
  \mathrel{\mathrlap\relrelbar\overset{#1\ }{\,\rightrightarrows}}}
\begin{document}
\maketitle

\tableofcontents

\section*{Introduction}
\label{sec:intro}
\addcontentsline{toc}{section}{Introduction}

Operads, introduced by Stasheff \cite{Stasheff1963} and May \cite{May1973}, play essential roles in the study of higher algebras.
For example, the little disks operads present the levels of higher commutativities.
On the other hand, May and Thomason introduced in their paper \cite{MayThomason1978} the notion of \emph{categories of operators}.
In contrast to operads, which are defined in an algebraic way, a category of operators is just a fibration in an appropriate sense over the opposite $\Gamma^\opposite$ of the Segal's category satisfying certain conditions.
They pointed out that every topological operad gives rise to a topological category of operators and proved the equivalence of algebras over them.
The algebraic operations in operads are presented by certain \emph{universal lifting properties} of categories of operators.
Hence, the relation of operads and categories of operators is compared with that of $G$-spaces and fiber bundles over the classifying space $BG$ for a group $G$.
This geometric nature of categories of operators was made use of in the definition of $\infty$-operads by Lurie \cite{Lur14}, which is one of the well-used models of \emph{homotopy operads}.

The original definition of categories of operators, however, only covers the \emph{symmetric} operads (or multicategories more generally).
On the other hand, Zhang introduced \emph{group operads} \cite{Zha11} as generalizations of the operad $\mathfrak S$ of symmetric groups.
As pointed out by Gurski \cite{Gur15}, a group operad $\mathcal G$ may have an action on multicategories; namely, for a non-symmetric multicategory $\mathcal M$, the action of $\mathcal G$ on $\mathcal M$ is defined to be a right action
\begin{equation}
\label{eq:intro:action}
\left(\coprod_{\sigma\in\mathfrak S_n}\mathcal M(a_{\sigma(1)}\dots a_{\sigma(n)};a)\right)\times\mathcal G(n)
\to \coprod_{\sigma\in\mathfrak S_n}\mathcal M(a_{\sigma(1)}\dots a_{\sigma(n)};a)
\end{equation}
on the multihom-set for $a,a_1,\dots, a_n\in\objof\mathcal M$ with an appropriate compatibility condition with the compositions.
In the author's previous paper \cite{YoshidaGrpop}, it is called a \emph{$\mathcal G$-symmetric structure} and defined in terms of a monoidal structures on the category $\mathbf{GrpOp}$ of group operads.
More precisely, a $2$-monad $(\blank)\rtimes\mathcal G$ on the $2$-category $\mathbf{MultCat}$ of multicategories is defined so that a $\mathcal G$-symmetric structure is nothing but a structure of a (strict) $2$-algebra over $(\blank)\rtimes\mathcal G$.
As a result, we obtain a $2$-category $\mathbf{MultCat}_{\mathcal G}$ of $\mathcal G$-symmetric multicategories.

It is then natural to ask for a $\mathcal G$-symmetric analogue of categories of operators.
This is exactly the main theme of this paper.
Note that, we already have a \emph{non-symmetric} analogue: let $\nabla$ be the category of intervals; i.e.
\begin{itemize}
  \item objects are totally ordered sets of the following form for $n\in\mathbb N$:
\[
\llangle n\rrangle := \{-\infty,1,\dots,n,\infty\}\ ;
\]
  \item morphisms are order-preserving maps sending $\pm\infty$ to $\pm\infty$ respectively.
\end{itemize}
Using the duality $\nabla\cong\Delta^\opposite$ due to Joyal, where $\Delta$ is the simplex category, for a non-symmetric multicategory $\mathcal M$, a multicategorical version of the \emph{categorical wreath product} appearing in \cite{Berger2007} and \cite{Rezk2010} gives rise to a fibered category
\[
\mathcal M^\triangledown := \Delta\wr\mathcal M \to \nabla\ .
\]
Note that the objects of $\mathcal M^\triangledown$ are finite sequences $\vec a=a_1\dots a_n$ of objects $a_1,\dots,a_n\in\objof\mathcal M$.
Moreover, if $\mu_n:\llangle n\rrangle\to\llangle 1\rrangle\in\nabla$ is the morphism which is the dual of the map $[1]\to[n]\in\Delta$ with $0\mapsto 0$ and $1\mapsto n$, we have a pullback square
\begin{equation}
\label{eq:intro:trigdown}
\vcenter{
  \xymatrix{
    \mathcal M(a_1\dots a_n;a) \ar[r] \ar[d] \ar@{}[dr]|(.4)\pbcorner & \mathcal M^\triangledown(a_1\dots a_n,a) \ar[d] \\
    \{\mu_n\} \ar[r] & \nabla(\llangle n\rrangle,\llangle 1\rrangle) }}
\quad.
\end{equation}
Actually, the functor $\mathcal M^\triangledown\to\nabla$ is characterized by \eqref{eq:intro:trigdown} with the universal lifting property with respect to a certain class of morphisms in $\nabla$ as well as \emph{Segal condition}; i.e. there is a canonical equivalence of categories
\[
\mathcal M^\triangledown_{\llangle n\rrangle} \simeq (\mathcal M^\triangledown_{\llangle 1\rrangle})^{\times n}
\]
on the fibers $\mathcal M^\triangledown_{\llangle k\rrangle}:=\mathcal M^\triangledown\times_\nabla\{\llangle k\rrangle\}$.
Hence, we are interested in the counterpart of the construction $(\blank)\rtimes\mathcal G$ for each group operad $\mathcal G$ in the side of categories of non-symmetric operators.

Unfortunately, the category $\nabla$ may not be enough for this purpose.
Indeed, the action \eqref{eq:intro:action} may change the domain of multimorphisms while, in view of \eqref{eq:intro:trigdown}, no fiberwise action on $\mathcal M^\triangledown$ can realize this phenomenon.
This is mainly because $\nabla$ has no non-trivial isomorphism, so we have to consider an extension of $\nabla$ so that the symmetry of $\mathcal G$ is taken into account.
We can make use of the results obtained in \cite{YoshidaGrpop}: the category $\mathbf{GrpOp}$ of group operads can be thought of as a reflective full subcategory of the slice category $\mathbf{CrsGrp}_\nabla^{/\mathfrak S}$ of the category of \emph{crossed interval groups}.
Hence, we can consider the \emph{total category} $\nabla_{\mathcal G}$.
Furthermore, quotient categories of $\nabla_{\mathcal G}$ is important.
For example, as pointed out by Segal \cite{Segal1968}, a monoid $M$ gives rise to a functor $\widehat M^\otimes:\nabla\to\mathbf{Set}$ given by $\widehat M^\otimes(\llangle n\rrangle):=M^{\times n}$ and, for $\varphi:\llangle m\rrangle\to\llangle n\rrangle\in\nabla$, $\varphi^{}_\ast:=\widehat M^\otimes(\varphi):M^{\times m}\to M^{\times n}$ with
\[
\varphi^{}_\ast(x_1,\dots,x_m)
:= \left(\prod_{\varphi(i)=1}x_i,\dots,\prod_{\varphi(i)=n} x_i\right)\ ,
\]
where the products are taken in the appropriate orders.
The \emph{Grothendieck construction} enables us to regard $\widehat M^\otimes$ as a discrete fibration $M^\otimes\to\nabla$, so it is a discrete version of a category of operators.
Notice that a functor $X:\nabla_{\mathcal G}\to\mathbf{Set}$ is equivalent to data
\begin{itemize}
  \item a functor $X:\nabla\to\mathbf{Set}$;
  \item for each $n\in\mathbb N$, a left $\mathcal G(n)$-action on the set $X_n:=(\llangle n\rrangle)$, say $\mathcal G(n)\times X_n\to X_n;\ (u,x)\mapsto x^u$;
\end{itemize}
such that, for each $\varphi:\llangle m\rrangle\to \llangle n\rrangle\in\nabla$, $x\in X(\llangle m\rrangle)$, and $v\in\mathcal G(n)$,
\[
\varphi_\ast(x)^v = (\varphi^v)_\ast(x^{\varphi^\ast(v)})\ .
\]
Now, each $\mathcal G(n)$ acts on $\widehat M^\nabla(\llangle n\rrangle)=M^{\times n}$ in the obvious way so that it gives rise to an extension $\widehat M^\nabla_{\mathcal G}:\nabla_{\mathcal G}\to\mathbf{Set}$.
In particular, when $\mathcal G=\mathfrak S$, we assert that the monoid $M$ is commutative if and only if the functor $\widehat M^\nabla_{\mathfrak S}:\nabla_{\mathfrak S}\to\mathbf{Set}$ factors through an appropriate quotient category of $\nabla_{\mathfrak S}$.
For each $(x_1,\dots,x_n)\in M^{\times n}$, and for each $\sigma\in\mathfrak S_n$, we have
\[
(\mu_n)_\ast\left((x_1,\dots,x_n)^\sigma\right)
= (\mu_n)_\ast(x_{\sigma^{-1}(1)},\dots,x_{\sigma^{-1}(n)})
= x_{\sigma^{-1}(1)}\dots x_{\sigma^{-1}(n)}\ .
\]
It follows that $M$ is commutative if and only if the two morphisms $(\mu_n,\sigma),(\mu_n,e_n):\llangle n\rrangle\to\llangle 1\rrangle\in\nabla_{\mathcal G}$, here $e_n\in\mathcal G(n)$ is the unit, induce the same map $\widehat M^\otimes_{\mathcal G}(\llangle n\rrangle)\to\widehat M^\otimes_{\mathcal G}(\llangle 1\rrangle)$ for every $n$ and $\sigma\in\mathcal G(n)$.
In other words, the associated discrete fibration $M^\nabla_{\mathfrak S}\to\nabla_{\mathcal G}$ is a pullback along a quotient $q:\nabla_{\mathfrak S}\to\mathcal Q$ such that $q(\mu_n,\sigma)=q(\mu_n,e_n)$.
More generally, for a group operad $\mathcal G$, each quotient of the total category $\nabla_{\mathcal G}$ may present a $\mathcal G$-symmetry on the fibrations over it.

In this point of view, we will establish a $\mathcal G$-symmetric analogue of categories of operators in the following way.
After reviewing the basic results of group operads and crossed interval groups in \cref{sec:grpop}, we will attempt to the classification of a sort of quotient categories of $\nabla_G$ for arbitrary crossed interval group $G$ in \cref{sec:quot-tot}.
It will be seen that the \emph{congruence families} on $G$ determine and are determined by quotient categories.
For a congruence family $K$ on $G$, we write $\mathcal Q_K$ the associated quotient of $\nabla_G$.
Basic constructions and some technical results on congruence families will be discussed.
In particular, in the case $G$ is a group operad $\mathcal G$, two special congruence families $\overbar{\operatorname{Kec}}^{\mathcal G}\subset\overbar{\operatorname{Dec}}^{\mathcal G}$ will be introduced.

We will see in \cref{sec:assoc-dblcat} that if we have a pair $(K,L)$ of congruence families satisfying a certain condition, the quotient category $\mathcal Q_L$ become a part of an internal category
\begin{equation}
\label{eq:intro:quot-dbl}
\mathcal Q_{L\dblslash K}\rightrightarrows\mathcal Q_L
\end{equation}
in the category $\mathbf{Cat}$ of small categories.
In other words, \eqref{eq:intro:quot-dbl} is a \emph{double category}.
For example, if $(K,L)=(\overbar{\operatorname{Dec}}^{\mathcal G},\overbar{\operatorname{Kec}}^{\mathcal G})$, we write $\mathbb G_{\mathcal G}:=\mathcal Q_{\overbar{\operatorname{Kec}}^{\mathcal G}\dblslash\overbar{\operatorname{Dec}}^{\mathcal G}}$ and $\mathbb E_{\mathcal G}:=\mathcal Q_{\overbar{\operatorname{Kec}}^{\mathcal G}}$, which are the key to establish $\mathcal G$-symmetric analogue of categories of operators.
Moreover, further quotient
\[
\widetilde{\mathcal Q}_{L\dblslash K}
\rightrightarrows\widetilde{\mathcal Q}_L
\]
will be discussed.

Having an internal category, we are interested in \emph{internal presheaves} over it.
In \cref{sec:intpresh-assocdbl}, we will see that a $\mathcal G$-symmetric multicategory $\mathcal M$ gives rise to an internal presheaf $\mathcal M\wr\widetilde{\mathbb E}_{\mathcal G}$ over the double category $\widetilde{\mathbb G}_{\mathcal G}\rightrightarrows\widetilde{\mathbb E}_{\mathcal G}$.
It will turn out that the construction extends to a $2$-functor
\begin{equation}
\label{eq:intro:wrE}
\mathbf{MultCat}
\to \mathbf{PSh}(\widetilde{\mathbb G}_{\mathcal G}\rightrightarrows\widetilde{\mathbb E}_{\mathcal G})
\end{equation}
from the $2$-category of (non-symmetric) multicategories to that of internal presheaves over $\widetilde{\mathbb G}_{\mathcal G}\rightrightarrows\widetilde{\mathbb E}_{\mathcal G}$.
It will proved that the fibered product $(\mathcal M\wr\widetilde{\mathbb E}_{\mathcal G})\times_{\widetilde{\mathbb E}_{\mathcal G}}\widetilde{\mathbb G}_{\mathcal G}$ is exactly the image of the \emph{free $\mathcal G$-symmetrization} $\mathcal M\rtimes\mathcal G$ under the $2$-functor \eqref{eq:intro:wrE}.
In other words, it is the required counterpart in the side of fibrations.

In the last two sections, we will compute the essential image of the $2$-functor \eqref{eq:intro:wrE}.
We will propose a notion of \emph{categories of algebraic $\mathcal G$-operators} as analogues of $\infty$-operads by Lurie \cite{Lur14}.
Namely, they form a $2$-subcategory $\mathbf{Oper}^{\mathsf{alg}}_{\mathcal G}$ of $\mathbf{PSh}(\widetilde{\mathbb G}_{\mathcal G}\rightrightarrows\widetilde{\mathbb E}_{\mathcal G})$ consisting of internal presheaves $\mathcal X$ over the double category such that the canonical functor $\mathcal X\to\widetilde{\mathbb E}_{\mathcal G}$ satisfies a universal lifting property and Segal condition.
Actually, they are alternative models of $\mathcal G$-symmetric multicategories; we will prove the biequivalence $\mathbf{MultCat}_{\mathcal G}\simeq\mathbf{Oper}^{\mathsf{alg}}_{\mathcal G}$ in \cref{sec:algopcat-equiv}.

We finally note that we say the models above are ``algebraic'' because the actions of $\mathcal G$ as in \eqref{eq:intro:action} are realized in an algebraic way.
It is also possible to realize them in a ``geometric'' way; for example, as fibrations over appropriate categories.
It will be established in the author's future work.

\subsection*{Acknowledgment}

I would like to first express my gratitude to my supervisor Prof. Kohno who provided valuable comments and advice.
Some important ideas were obtained during visit to Macquarie University.
In particular, the discussion with Prof. Street was really fruitful.
I am also indebted to Genki Sato whose opinion have helped me.

This work was supported by the Program for Leading Graduate Schools, MEXT, Japan.

\section{Group operads and crossed interval groups}
\label{sec:grpop}

We review the notion of group operads.
It was introduced by Zhang \cite{Zha11} though the axioms were already stated in 1.2.0.2 in the paper \cite{Wah01}.
The further theory was developed in \cite{CG13} and \cite{Gur15} while they use different terminology ``action operads.''
We give just sketches; for the details, we refer the reader to the literature above and the author's previous work \cite{YoshidaGrpop}.

For each $n\in\mathbb N$, set $\mathfrak S(n)=\mathfrak S_n$ to be the $n$-th symmetric group.
It turns out that the family $\mathfrak S=\{\mathfrak S(n)\}_n$ admits a structure of an operad so that, for $\sigma,\tau\in\mathfrak S(n)$, $\sigma_i,\tau_i\in\mathfrak S(k_i)$, we have
\[
\gamma_{\mathfrak S}(\sigma\tau;\sigma_1\tau_1,\dots,\sigma_n\tau_n)
= \gamma_{\mathfrak S}(\sigma;\sigma_{\tau^{-1}(1)},\dots,\sigma_{\tau^{-1}(n)})\gamma(\tau;\tau_1,\dots,\tau_n)\ ,
\]
where $\gamma$ is the composition in the operad structure.
The group operads are generalizations of this example.

\begin{definition}
A \emph{group operad} is an operad $\mathcal G$ together with data
\begin{itemize}
  \item a group structure on each $\mathcal G(n)$;
  \item a map $\mathcal G\to\mathfrak S$ of operads so that each $\mathcal G(n)\to\mathfrak S(n)$ is a group homomorphism, which gives rise to a left $\mathcal G(n)$-action on $\langle n\rangle$;
\end{itemize}
which satisfy the identity
\begin{equation}
\label{eq:grpop-interchange}
\gamma_{\mathcal G}(xy;x_1y_1,\dots,x_ny_n)
= \gamma_{\mathcal G}(x;x_{y^{-1}(1)},\dots,x_{y^{-1}(n)})\gamma_{\mathcal G}(y;y_1,\dots,y_n)
\end{equation}
for every $x,y\in\mathcal G(n)$ and $x_i,y_i\in\mathcal G(k_i)$ for $1\le i\le n$.
\end{definition}

\begin{example}
The operad $\mathfrak S$ is an example of group operads with the identity map $\mathfrak S\to\mathfrak S$.
\end{example}

\begin{example}
\label{ex:grpop-braid}
For each $n\in\mathbb N$, set $\mathcal B(n)$ to be the braid group on $n$ strands.
Then, in a similar manner to $\mathfrak S$, one can find an operad structure on the family $\mathcal B=\{\mathcal B(n)\}_n$ so that the canonical maps $\mathcal B(n)\to\mathfrak S(n)$ define a map of operads.
The canonical map $\mathcal B\to\mathfrak S$ together with the group structure on each $\mathcal B(n)$ exhibits $\mathcal B$ as a group operad.
\end{example}

\begin{definition}
A \emph{map of group operads} is a map $F:\mathcal G\to\mathcal H$ of operads satisfying the following conditions:
\begin{enumerate}[label={\upshape(\roman*)}]
  \item each map $F:\mathcal G(n)\to\mathcal H(n)$ is a group homomorphism;
  \item $F$ respects the maps into $\mathfrak S$; i.e. the diagram below commutes:
\[
\xymatrix@C=1.5em{
  \mathcal G \ar[rr]^F \ar[dr] && \mathcal H \ar[dl] \\
  & \mathfrak S & }
\]
\end{enumerate}
\end{definition}

We denote by $\mathbf{GrpOp}$ the category of group operads and maps of group operads.

One of the most important features of group operads is that they may act on multicategories.
Recall that a \emph{multicategory} $\mathcal M$ consists of a set $\objof\mathcal M$ and a set $\mathcal M(a_1\dots a_n;a)$ for each $a,a_1,\dots,a_n\in\mathcal M$ together with associative compositions and identities.
For a group operad $\mathcal G$, we define a multicategory $\mathcal M\rtimes\mathcal G$ as follows:
\begin{itemize}
  \item $\objof(\mathcal M\rtimes\mathcal G)=\objof\mathcal M$;
  \item for $a,a_1,\dots,a_n\in\objof\mathcal M$, set
\[
\begin{multlined}
(\mathcal M\rtimes\mathcal G)(a_1\dots a_n;a) \\
:=\left\{(f,x)\;\middle|\;x\in\mathcal G(n),\ f\in\mathcal M(a_{x^{-1}(1)}\dots a_{x^{-1}(n)};a)\right\}\ ;
\end{multlined}
\]
  \item the composition operation is given by
\[
\begin{multlined}
\gamma_{\mathcal M\rtimes\mathcal G}\bigl((f,x);(f_1,x_1),\dots,(f_n,x_n)\bigr) \\
= \left(\gamma_{\mathcal M}(f;f_{x^{-1}(1)},\dots,f_{x^{-1}(n)}),\gamma_{\mathcal G}(x;x_1,\dots,x_n)\right)\ .
\end{multlined}
\]
\end{itemize}
The assignment $\mathcal M\mapsto\mathcal M\rtimes\mathcal G$ extends in a canonical way to an endo-$2$-functor
\[
(\blank)\rtimes\mathcal G:\mathbf{MultCat}\to\mathbf{MultCat}
\]
on the $2$-category of multicategories, multifunctors, and multinatural transformations.
Moreover, there are two identity-on-objects multifunctors
\begin{equation}
\label{eq:MG-2monad}
\begin{gathered}
M:\mathcal M\rtimes\mathcal G\rtimes\mathcal G\to\mathcal M\rtimes\mathcal G
\ ;\quad (f,x,y)\mapsto (f,xy) \\
H:\mathcal M\to\mathcal M\rtimes\mathcal G
\ ;\quad f \mapsto (f,e)\ ,
\end{gathered}
\end{equation}
where $e$ is the unit of the group $\mathcal G(n)$ for an appropriate $n\in\mathbb N$.
Clearly, the multifunctors \eqref{eq:MG-2monad} are strictly natural with respect to $\mathcal M\in\mathbf{MultCat}$, and one can see the triple $\bigl((\blank)\rtimes\mathcal G,M,H\bigr)$ forms a (strict) $2$-monad on the $2$-category $\mathbf{MultCat}$ in the sense of \cite{BlackwellKellyPower1989}.
The following result is easy to verify.

\begin{lemma}
\label{lem:grpopmap-monadmap}
The assignment $\mathcal G\mapsto (\blank)\rtimes\mathcal G$ extends to a functor
\[
\mathbf{GrpOp}\to 2\mathchar`-\mathbf{Mnd}(\mathbf{MultCat})\ ,
\]
here the codomain is the category of (strict) $2$-monads on $\mathbf{MultCat}$ and $2$-monad transformations.
\end{lemma}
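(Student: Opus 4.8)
The plan is to leave the object assignment as the $2$-monad $\bigl((\blank)\rtimes\mathcal G,M,H\bigr)$ already constructed, and to define the functor on morphisms. Given a map $F\colon\mathcal G\to\mathcal H$ of group operads, I would take the candidate $2$-natural transformation $\alpha^F\colon(\blank)\rtimes\mathcal G\Rightarrow(\blank)\rtimes\mathcal H$ whose component at a multicategory $\mathcal M$ is the identity on objects and sends a multimorphism $(f,x)$ to $(f,F(x))$. The first point to settle is that this is well-defined: the source sequence of $(f,x)\in(\mathcal M\rtimes\mathcal G)(a_1\dots a_n;a)$ is indexed by the underlying permutation of $x$, so I need $F(x)$ and $x$ to have the same image in $\mathfrak S(n)$. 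This is exactly condition~(ii) in the definition of a map of group operads, which yields $F(x)^{-1}(i)=x^{-1}(i)$ and hence $f\in\mathcal M(a_{F(x)^{-1}(1)}\dots a_{F(x)^{-1}(n)};a)$.

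Next I would check that each $\alpha^F_{\mathcal M}$ is a multifunctor and that the family is $2$-natural in $\mathcal M$. On compositions, the comparison reduces to the two group-operad components, which agree because $F$ is a map of operads, i.e. $F\bigl(\gamma_{\mathcal G}(x;x_1,\dots,x_n)\bigr)=\gamma_{\mathcal H}\bigl(F(x);F(x_1),\dots,F(x_n)\bigr)$, while the multicategory component is untouched and its source-indexing again matches by condition~(ii); preservation of identities follows from $F(e)=e$. For $2$-naturality, observe that $\alpha^F$ only alters the operad variable whereas $(\blank)\rtimes\mathcal G$ and $(\blank)\rtimes\mathcal H$ act through the multicategory variable, so the naturality squares for multifunctors and the compatibility with multinatural transformations commute on the nose.

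The heart of the matter is the pair of $2$-monad transformation axioms. Unit compatibility $\alpha^F\circ H=H$ is immediate since $F$ preserves units. For the multiplication axiom I would first identify the horizontal composite $\alpha^F\ast\alpha^F\colon(\blank)\rtimes\mathcal G\rtimes\mathcal G\Rightarrow(\blank)\rtimes\mathcal H\rtimes\mathcal H$: whiskering $\alpha^F$ by $(\blank)\rtimes\mathcal G$ on one side and by $(\blank)\rtimes\mathcal H$ on the other and composing, one finds that its component at $\mathcal M$ sends $(f,x,y)\mapsto(f,F(x),F(y))$. The two legs of the square then compare $(f,F(xy))$ with $(f,F(x)F(y))$, so the axiom holds precisely because $F$ is a group homomorphism. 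Getting this horizontal composite right---in particular whiskering the nested $\rtimes$-factors in the correct order---is the one step that demands care; the rest is forced.

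Finally, functoriality of $F\mapsto\alpha^F$ is clear from the formula $(f,x)\mapsto(f,F(x))$: $\alpha^{\mathrm{id}_{\mathcal G}}$ is the identity and $\alpha^{F'\circ F}=\alpha^{F'}\circ\alpha^F$. This completes the extension of $\mathcal G\mapsto(\blank)\rtimes\mathcal G$ to a functor into $2\mathchar`-\mathbf{Mnd}(\mathbf{MultCat})$.
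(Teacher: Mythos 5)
Your proposal is correct and is exactly the verification the paper leaves to the reader (the paper states the lemma with no proof, remarking only that it is ``easy to verify''). Defining $\alpha^F_{\mathcal M}(f,x)=(f,F(x))$, using condition~(ii) on maps of group operads for well-definedness, the operad-map property for multifunctoriality, and the group-homomorphism property for the multiplication axiom of a $2$-monad transformation is the intended argument, and all the steps check out.
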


\begin{definition}[cf. Definition~5.1 in \cite{Gur15}]
Let $\mathcal G$ be a group operad.
Then, a \emph{$\mathcal G$-symmetric multicategory} is a multicategory $\mathcal M$ equipped with a structure of a (strict) algebra for the $2$-monad $(\blank)\rtimes\mathcal G$.
More explicitly, a $\mathcal G$-symmetric multicategory is a multicategory $\mathcal M$ together with a multifunctor $\mathpzc A:\mathcal M\rtimes\mathcal G\to\mathcal M$ such that the following diagrams of multifunctors are (strictly) commutative:
\[
\vcenter{
  \xymatrix{
    \mathcal M\rtimes\mathcal G\rtimes\mathcal G \ar[r]^-{\mathpzc A\rtimes\mathcal G} \ar[d]_M & \mathcal M\rtimes\mathcal G \ar[d]^{\mathpzc A} \\
    \mathcal M\rtimes\mathcal G \ar[r]^{\mathpzc A} & \mathcal M }}
\qquad
\vcenter{
  \xymatrix@C=.5em{
    \mathcal M \ar@{=}[rr] \ar[dr]_H && \mathcal M \\
    & \mathcal M\rtimes\mathcal G \ar[ur]_{\mathpzc A} }}
\]
\end{definition}

\begin{remark}
If $\mathcal M$ be a $\mathcal G$-symmetric multicategory with $\mathpzc A:\mathcal M\rtimes\mathcal G\to\mathcal G$, then $\mathpzc A$ is the identity on objects.
In this case, by abuse of notations, for $f\in\mathcal M(a_1\dots a_n;a)$ and for $x\in\mathcal G(n)$, we will write $f^x:=\mathpzc A(f,x)$.
Note that we have
\[
f^x \in \mathcal M(a_{x(1)}\dots a_{x(n)};a)\ .
\]
\end{remark}

\begin{example}
A \emph{symmetric multicategory} (resp. a \emph{symmetric operad}) in the usual sense is nothing but an $\mathfrak S$-symmetric multicategory (resp. an $\mathfrak S$-symmetric operad) in the sense above.
\end{example}

\begin{example}
Let $\ast$ be the trivial group operad.
Then $\ast$-symmetric multicategories are nothing but ordinary multicategories in our convention.
\end{example}

\begin{example}
\label{ex:braidmon-mulcatBsym}
Let $\mathcal C$ be a monoidal category, and put $\mathcal C^\otimes$ the associated multicategory; i.e. $\objof\mathcal C^\otimes=\objof\mathcal C$ and $\mathcal C^\otimes(X_1\dots X_n;X)=\mathcal C(X_1\otimes\dots\otimes X_n;X)$.
Hence, for each $X_1,\dots,X_n\in\mathcal C$, we have a multimorphism
\[
u_{X_1\dots X_n}\in\mathcal C^\otimes(X_1\dots X_n;X_1\otimes\dots\otimes X_n)
\]
corresponding to the identity.
If $\mathcal C^\otimes$ is endowed with a $\mathcal G$-symmetric structure, then for each $x\in\mathcal G(n)$, we define an isomorphism
\[
\Theta^x_{X_1\dots X_n}:X_{x(1)}\otimes\dots\otimes X_{x(n)}\to X_1\otimes\dots\otimes X_n
\]
to be the one corresponding to the multimorphism $u^x_{X_1\dots X_n}$.
It turns out that $\Theta^x$ is a natural isomorphism with $\Theta^x\Theta^y=\Theta^{xy}$.
For example, in the case $\mathcal G=\mathcal B$ (resp. $\mathfrak S$), the resulting structure on $\mathcal C$ is nothing but a braided (resp. symmetric) structure on the monoidal structure.
\end{example}

\begin{definition}
Let $\mathcal G$ be a group operad, and let $\mathcal M$ and $\mathcal N$ be $\mathcal G$-symmetric multicategories.
Then, a \emph{$\mathcal G$-symmetric multifunctor} $\mathcal M\to\mathcal N$ is a multifunctor which is a homomorphism of algebras for the $2$-monad $(\blank)\rtimes\mathcal G$.
\end{definition}

We denote by $\mathbf{MultCat}_{\mathcal G}$ the $2$-category of $\mathcal G$-symmetric multicategory, $\mathcal G$-symmetric multifunctors, and multinatural transformations.
In view of \cref{lem:grpopmap-monadmap}, a map $F:\mathcal G\to\mathcal H$ of group operads induces a $2$-functor
\[
F^\ast:\mathbf{MultCat}_{\mathcal H}\to\mathbf{MultCat}_{\mathcal G}\ .
\]
In particular, for every group operad $\mathcal G$, there are canonical $2$-functors
\[
\mathbf{MultCat}_{\mathfrak S}
\to\mathbf{MultCat}_{\mathcal G}
\to\mathbf{MultCat}\ .
\]

To understand group operads, as pointed out in \cite{YoshidaGrpop}, the notion of \emph{crossed interval groups} is convenient.
We consider the category $\nabla$ given as follows:
\begin{itemize}
  \item objects are totally ordered sets of the form
\[
\llangle n\rrangle:=\{-\infty,1,\dots,n,\infty\}
\]
for $n\in\mathbb N$;
  \item morphisms are order-preserving maps which send $\pm\infty$ to $\pm\infty$ respectively.
\end{itemize}
Then, a crossed interval group is a $\nabla$-set $G$ equipped with data
\begin{itemize}
  \item a group structure on $G_n=G(\llangle n\rrangle)$ for each $n\in\mathbb N$ and
  \item a left group action 
\[
G_n\times\nabla(\llangle m\rrangle,\llangle n\rrangle)
\to\nabla(\llangle m\rrangle,\llangle n\rrangle)
\ ;\quad (x,\varphi)\mapsto\varphi^x
\]
for each $m,n\in\mathbb N$
\end{itemize}
satisfying the equations
\[
\begin{gathered}
\varphi^\ast(xy)=(\varphi^y)^\ast(x)\varphi^\ast(y) \\
(\varphi\psi)^x = \varphi^x\psi^{\varphi^\ast(x)}
\end{gathered}
\]
for $x,y\in G(n)$, $\varphi:\llangle m\rrangle\to\llangle n\rrangle$, and $\psi:\llangle l\rrangle\to\llangle m\rrangle$.
In addition, for crossed interval groups $G$ and $H$, a map $G\to H$ of $\nabla$-sets is called a \emph{map of crossed interval groups} if it preserves the structures above.
We denote by $\mathbf{CrsGrp}_\nabla$ the category of crossed interval groups and maps of them.
By Theorem~2.4 in \cite{YoshidaLimcolim}, $\mathbf{CrsGrp}_\nabla$ is a locally presentable category and has all (small) limits and colimits.

\begin{remark}
The notion of crossed groups was originally introduced by Fiedorowicz and Loday \cite{FL91} and by Krasauskas \cite{Kra87} in the simplicial case.
Although it is easily generalized to arbitrary base categories, crossed interval groups was first studied by Batanin and Markl \cite{BataninMarkl2014}.
Actually, the terminology is due to them.
\end{remark}

\begin{example}
The sequence $\mathfrak S=\{\mathfrak S_n\}$ actually admits a structure of a crossed interval groups.
Actually, it is a subobject of the terminal object in $\mathbf{CrsGrp}_\nabla$ computed in \cite{YoshidaLimcolim}.
\end{example}

\begin{theorem}[Theorem~3.3 in \cite{YoshidaGrpop}]
\label{theo:grpop->crsint}
There is a fully faithful functor
\[
\widehat\Psi:\mathbf{GrpOp}\to\mathbf{CrsGrp}_\nabla^{/\mathfrak S}
\]
such that $\widehat\Psi(\mathcal G)_n=\mathcal G(n)$ for each $n\in\mathbb N$.
\end{theorem}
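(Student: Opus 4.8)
The plan is to build $\widehat\Psi(\mathcal G)$ explicitly and then show that the whole operad structure of $\mathcal G$ is both recorded in and recovered from the resulting crossed-interval-group structure; full faithfulness then becomes formal.

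First I would fix the underlying data. Set $\widehat\Psi(\mathcal G)_n:=\mathcal G(n)$ with its given group structure, and take the levelwise homomorphisms $\mathcal G(n)\to\mathfrak S(n)$ as the structure map to $\mathfrak S$. For the crossed action I would pull back the one already carried by the crossed interval group $\mathfrak S$: each $x\in\mathcal G(n)$ has an image $\bar x\in\mathfrak S_n$ acting on $\llangle n\rrangle$, and for $\varphi\colon\llangle m\rrangle\to\llangle n\rrangle$ I set $\varphi^x:=\varphi^{\bar x}$, the order-preserving factor of $\bar x\circ\varphi$. For the $\nabla$-set structure $\varphi^\ast\colon\mathcal G(n)\to\mathcal G(m)$ I would use operadic \emph{block substitution}: writing $k_j:=|\varphi^{-1}(j)|$ for $1\le j\le n$, put $\varphi^\ast(x):=\gamma_{\mathcal G}(x;e_{k_1},\dots,e_{k_n})$ on the finite blocks, with the preimages of $\pm\infty$ treated as fixed outer blocks governed by the interval structure of $\mathfrak S$. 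Thus $\varphi^\ast(x)$ is the operadic expansion of $x$ along the fibres of $\varphi$.

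Next I would verify the two crossed-interval identities. The relation $(\varphi\psi)^x=\varphi^x\psi^{\varphi^\ast(x)}$ is inherited from $\mathfrak S$ together with the functoriality $\psi^\ast\varphi^\ast=(\varphi\psi)^\ast$, which in turn is operad associativity. The relation $\varphi^\ast(xy)=(\varphi^y)^\ast(x)\,\varphi^\ast(y)$ is the crux: unwinding both sides through the definition of $\varphi^\ast$ turns it into exactly the operad interchange law~\eqref{eq:grpop-interchange}, since expanding the product $xy$ along $\varphi$ reshuffles the blocks of $x$ according to $\bar y$, which is precisely the reindexing $x_{y^{-1}(i)}$ occurring in~\eqref{eq:grpop-interchange}. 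Checking that the maps $\mathcal G(n)\to\mathfrak S_n$ assemble into a morphism of crossed interval groups is then routine, so $\widehat\Psi(\mathcal G)$ lies in $\mathbf{CrsGrp}_\nabla^{/\mathfrak S}$. A map $F\colon\mathcal G\to\mathcal H$ of group operads is levelwise a family of group homomorphisms over $\mathfrak S$ commuting with $\gamma$, hence with every $\varphi^\ast$ and with the pulled-back crossed action; this gives $\widehat\Psi(F)$, and functoriality of $\widehat\Psi$ is immediate. Faithfulness is clear because $\widehat\Psi(F)$ and $F$ share the components $F_n$, and a map of group operads is determined by its components.

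The substantial point is fullness. Given $\Phi\colon\widehat\Psi(\mathcal G)\to\widehat\Psi(\mathcal H)$ in the slice, its components $\Phi_n$ are group homomorphisms over $\mathfrak S$, yielding conditions (i)--(ii) of a map of group operads; it remains to show they respect $\gamma$. Here I would exploit that $\gamma$ is reconstructible from the $\nabla$-set structure and the group multiplication: taking $y=e_n$ and $y_i=x_i$ in~\eqref{eq:grpop-interchange} gives
\[
\gamma_{\mathcal G}(x;x_1,\dots,x_n)
=\gamma_{\mathcal G}(x;e_{k_1},\dots,e_{k_n})\,\gamma_{\mathcal G}(e_n;x_1,\dots,x_n)
=\varphi^\ast(x)\,\delta_1^\ast(x_1)\cdots\delta_n^\ast(x_n)\ ,
\]
where $\varphi$ is the surjection with block sizes $k_i$ and $\delta_i\colon\llangle m\rrangle\to\llangle k_i\rrangle$ projects onto the $i$-th block, sending the remaining finite elements to $\pm\infty$. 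Since $\Phi$ is a levelwise group homomorphism commuting with all structure maps $\varphi^\ast$ and $\delta_i^\ast$, it commutes with $\gamma$; hence $\Phi=\widehat\Psi(F)$ for the group-operad map $F$ with components $\Phi_n$, establishing fullness. The two places where genuine work is required --- verifying the cocycle identity $\varphi^\ast(xy)=(\varphi^y)^\ast(x)\varphi^\ast(y)$ and establishing the reconstruction identity above --- both rest on the interchange law~\eqref{eq:grpop-interchange}, and the careful bookkeeping of how $\bar y$ permutes operadic inputs, together with the handling of the $\pm\infty$ blocks via the interval structure, is the main obstacle.
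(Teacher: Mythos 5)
The paper does not actually prove this statement: it is quoted from Theorem~3.3 of \cite{YoshidaGrpop}, and the only thing recorded here is the explicit formula $\varphi^\ast(x)=\gamma_{\mathcal G}\bigl(e_3;e^{(\varphi)}_{-\infty},\gamma_{\mathcal G}(x;e^{(\varphi)}_1,\dots,e^{(\varphi)}_n),e^{(\varphi)}_\infty\bigr)$, which agrees with your block-substitution definition of the $\nabla$-set structure (your ``fixed outer blocks'' are exactly the padding by $e^{(\varphi)}_{\pm\infty}$ inside $e_3$). So there is no in-paper proof to compare against, but your reconstruction is sound and is almost certainly the intended argument. The two pivots you identify are the right ones: the cocycle identity $\varphi^\ast(xy)=(\varphi^y)^\ast(x)\,\varphi^\ast(y)$ is precisely \eqref{eq:grpop-interchange} applied to the tuple of units $(e_{k_1},\dots,e_{k_n})$, and the factorization $\gamma_{\mathcal G}(x;x_1,\dots,x_n)=\varphi^\ast(x)\cdot\delta_1^\ast(x_1)\cdots\delta_n^\ast(x_n)$ (interchange with $y=e_n$, $y_i=x_i$) is exactly what turns a levelwise group homomorphism of crossed interval groups over $\mathfrak S$ back into a map of operads, which gives fullness. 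Two small facts that you use silently should be made explicit: first, $\gamma_{\mathcal G}(e_n;e_{k_1},\dots,e_{k_n})=e_{k_1+\dots+k_n}$, whence $\gamma_{\mathcal G}(e_3;e_0,z,e_0)=z$, so that your simplified formula $\varphi^\ast(x)=\gamma_{\mathcal G}(x;e_{k_1},\dots,e_{k_n})$ for active $\varphi$ is consistent with the general one and $\prod_i\delta_i^\ast(x_i)=\gamma_{\mathcal G}(e_n;x_1,\dots,x_n)$; second, the operad unit coincides with the group unit of $\mathcal G(1)$, so that a levelwise group homomorphism automatically preserves it. Both follow from \eqref{eq:grpop-interchange} because the only idempotent in a group is the identity. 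With those noted, the argument is complete.
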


The $\nabla$-set structure on $\widehat\Psi(\mathcal G)$ is described as follows: note that morphisms $\varphi:\llangle m\rrangle\to\llangle n\rrangle\in\nabla$ correspond in one-to-one to $(n+2)$-tuples
\[
\vec k=(k_{-\infty},k_1,\dots,k_n,k_\infty)
\]
of non-negative integers with $\sum k_j=m$ via $\varphi\mapsto \vec k^{(\varphi)}$ with
\begin{equation}
\label{eq:nabla-tuple}
k^{(\varphi)}_j :=
\begin{cases}
\#\varphi^{-1}\{j\} & 1\le j\le n\ , \\
\#\varphi^{-1}\{j\}-1 & j=\pm\infty\ .
\end{cases}
\end{equation}
Then, for $\varphi:\llangle m\rrangle\to\llangle n\rrangle\in\nabla$, the induced map is given by
\[
\varphi^\ast:\mathcal G(n)\to\mathcal G(m)
\ ;\quad x\mapsto \gamma_{\mathcal G}\bigl(e_3;e^{(\varphi)}_{-\infty},\gamma_{\mathcal G}(x;e^{(\varphi)}_1,\dots,e^{(\varphi)}_n),e^{(\varphi)}_\infty\bigr)\ ,
\]
where $e^{(\varphi)}_j:=e_{k^{(\varphi)}_j}$.

\begin{example}
\Cref{theo:grpop->crsint} justifies the coincidence of the notation $\mathfrak S$.
Indeed, the functor $\widehat\Psi$ sends $\mathfrak S$ to $\mathfrak S$.
Identifying $\mathbf{GrpOp}$ with its image in $\mathbf{CrsGrp}_\nabla$, we can hence identify $\mathfrak S$.
\end{example}

\section{Quotients of the total category}
\label{sec:quot-tot}

As pointed out in Introduction, we can regard any quotients of $\nabla_G$ may present a kind of symmetries on monoids or higher variants.
Hence, the classification of the quotients is an important problem.
In particular, in this section, we focus on the quotients of the following form.

\begin{definition}
Let $G$ be a crossed interval group.
Then, a \emph{$G$-quotal category} is a category $\mathcal Q$ equipped with a functor $q:\nabla_G\to\mathcal Q$ satisfying the following conditions:
\begin{enumerate}[label={\upshape(\roman*)}]
  \item $q$ is full and bijective on objects, so we may assume $\objof\mathcal Q=\objof\mathcal\nabla$;
  \item for $\varphi,\varphi'\in\nabla(\llangle m\rrangle,\llangle n\rrangle)$ and $x,x'\in G_m$, the equality of morphisms
\[
q(\varphi,x)=q(\varphi',x'):\llangle m\rrangle\to\llangle n\rrangle\in\mathcal Q
\]
in $\mathcal Q$ implies $\varphi=\varphi'$.
\end{enumerate}
\end{definition}

\begin{lemma}
\label{lem:quotal-faithful}
Let $G$ be a crossed interval group, and let $\mathcal Q$ be a $G$-quotal category with $q:\nabla_G\to\mathcal Q$.
Then, the composition
\begin{equation}
\label{eq:quotal-faithful:comp}
\nabla
\hookrightarrow\nabla_G
\xrightarrow{q} \mathcal Q
\end{equation}
is faithful and conservative.
\end{lemma}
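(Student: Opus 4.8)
For conservativity, suppose $\varphi\colon\llangle m\rrangle\to\llangle n\rrangle$ is such that $q(\varphi,e)$ is invertible in $\mathcal Q$, where $e\in G_m$ denotes the unit. Since $q$ is full by axiom~(i), I would first realize the inverse as the image of an honest morphism of $\nabla_G$: it has the form $q(\psi,y)$ for some $(\psi,y)\colon\llangle n\rrangle\to\llangle m\rrangle$ with $\psi\colon\llangle n\rrangle\to\llangle m\rrangle$ in $\nabla$ and $y\in G_n$. Because $q$ is an identity-on-objects functor, the identity of $\llangle k\rrangle$ in $\mathcal Q$ is $q(\mathrm{id}_{\llangle k\rrangle},e)$, so the two invertibility relations become $q\bigl((\varphi,e)\circ(\psi,y)\bigr)=q(\mathrm{id}_{\llangle n\rrangle},e)$ and $q\bigl((\psi,y)\circ(\varphi,e)\bigr)=q(\mathrm{id}_{\llangle m\rrangle},e)$.

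The key computation is to expand these composites using the composition law of $\nabla_G$, which on pairs reads $(\alpha,a)\circ(\beta,b)=(\alpha\circ\beta^{a},\ \beta^\ast(a)\,b)$. On the one hand $(\varphi,e)\circ(\psi,y)=(\varphi\circ\psi^{e},\ \psi^\ast(e)\,y)=(\varphi\circ\psi,\ y)$, where I use that the unit acts trivially ($\psi^{e}=\psi$) and that $\psi^\ast$ preserves the unit ($\psi^\ast(e)=e$); the latter follows by setting $x=y=e$ in the crossed relation $\psi^\ast(xy)=(\psi^{y})^\ast(x)\psi^\ast(y)$. On the other hand $(\psi,y)\circ(\varphi,e)=(\psi\circ\varphi^{y},\ \varphi^\ast(y))$. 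Now axiom~(ii) is exactly the tool to discard the group components: applied to the two resulting equalities of $q$-images it yields, \emph{inside $\nabla$ itself}, the relations $\varphi\circ\psi=\mathrm{id}_{\llangle n\rrangle}$ and $\psi\circ\varphi^{y}=\mathrm{id}_{\llangle m\rrangle}$.

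It then remains to conclude purely formally. The morphism $\psi$ admits $\varphi$ as a retraction and $\varphi^{y}$ as a section, hence is simultaneously a split monomorphism and a split epimorphism, and therefore an isomorphism in $\nabla$; consequently $\varphi=\psi^{-1}$ is invertible, which is precisely conservativity of \eqref{eq:quotal-faithful:comp}. (Since $\nabla$ has no nonidentity isomorphisms, one in fact recovers $\varphi=\mathrm{id}_{\llangle n\rrangle}$ with $m=n$.) Faithfulness, by contrast, is immediate and needs no such work: if $q(\varphi,e)=q(\varphi',e)$ for $\varphi,\varphi'\in\nabla(\llangle m\rrangle,\llangle n\rrangle)$, then axiom~(ii) with $x=x'=e$ forces $\varphi=\varphi'$.

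The one delicate point, and the place where the $G$-quotal axioms genuinely enter, is that conservativity cannot be read off fiberwise over $\nabla$: the crossed action can produce a twisted section $\varphi^{y}\neq\varphi$, so one must first use fullness to present the inverse as $q(\psi,y)$ and only afterwards use axiom~(ii) to strip away both the group element $y$ and its transform $\varphi^\ast(y)$. Getting the composition law of $\nabla_G$ and the crossed term $\varphi^{y}$ bookkept correctly is thus the main obstacle; once the computation lands back in $\nabla$, the split-mono/split-epi argument is routine.
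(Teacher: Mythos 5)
Your proof is correct and follows essentially the same route as the paper's: use fullness to write the inverse as $q(\psi,y)$, expand the two composites in $\nabla_G$ to get $\varphi\psi=\mathrm{id}_{\llangle n\rrangle}$ and $\psi\varphi^{y}=\mathrm{id}_{\llangle m\rrangle}$ via axiom (ii), and conclude that $\psi$ (hence $\varphi$) is invertible; faithfulness is the same one-line appeal to axiom (ii). Your explicit split-mono/split-epi bookkeeping just spells out the step the paper compresses into ``it follows that $\psi$ is an inverse of $\varphi$ in $\nabla$.''
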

\begin{proof}
Let us denote by $e_m\in G_m$ the unit of the group.
Then, the composition \eqref{eq:quotal-faithful:comp} sends a morphism $\varphi:\llangle m\rrangle\to\llangle n\rrangle\in\nabla$ to $q(\varphi,e_m)$.
Hence, the condition on $G$-quotal categories directly implies \eqref{eq:quotal-faithful:comp} is faithful.

Next, suppose $\varphi:\llangle m\rrangle\to\llangle n\rrangle\in\nabla$ is a morphism such that $q(\varphi,e_m)$ is an isomorphism.
Since $q$ is full, the inverse of $q(\varphi,e_m)$ can be written in the form $q(\psi,y)$ with $\psi:\llangle n\rrangle\to\llangle m\rrangle\in\nabla$ and $y\in G_n$.
We have
\[
\begin{gathered}
\mathrm{id}_{\llangle m\rrangle}
= q(\psi,y)\circ q(\varphi,e_m)
= q(\psi\varphi^y,\varphi^\ast(y))
\\
\mathrm{id}_{\llangle n\rrangle}
= q(\varphi,e_m)\circ q(\psi,y)
= q(\varphi\psi,y)\ .
\end{gathered}
\]
By virtue of the condition on $G$-quotal categories, the former equation implies $\mathrm{id}_{\llangle m\rrangle}=\psi\varphi^y$ while the latter implies $\mathrm{id}_{\llangle n\rrangle}=\varphi\psi$.
It follows that $\psi$ is an inverse of $\varphi$ in $\nabla$, so \eqref{eq:quotal-faithful:comp} is conservative.
\end{proof}

Thanks to \cref{lem:quotal-faithful}, we can identify morphisms in $\nabla$ with their images in a $G$-quotal category; namely, if $q:\nabla_G\to\mathcal Q$ is a $G$-quotal category, then by abuse of notation, we write $\varphi=q(\varphi,e_m)$ for every $\varphi:\llangle m\rrangle\to\llangle n\rrangle\in\nabla$.

There  is a general recipe to construct $G$-quotal categories.
For this, we introduce some notions.

\begin{definition}
Let $G$ be a crossed interval group, and let $\varphi:\llangle m\rrangle\to\llangle n\rrangle\in\nabla$ be a morphism.
Then, an element $x\in G_m$ is called a \emph{right stabilizer of $\varphi$} if for every morphism $\psi:\llangle l\rrangle\to \llangle m\rrangle\in\nabla$, we have $\varphi\psi^\ast=\varphi\psi$.
We denote by $\mathrm{RSt}^G_\varphi\subset G_m$ the subset of right stabilizers of $\varphi$.
\end{definition}

It is obvious that, for $\varphi:\llangle m\rrangle\to\llangle n\rrangle\in\nabla$, the subset $\mathrm{RSt}^G_\varphi\subset G_m$ is a subgroup.

\begin{definition}
Let $G$ be a crossed interval group.
A \emph{congruence family} on $G$ is a family $K=\{K_\varphi\}_\varphi$ indexed by morphisms in $\nabla$ such that, for every $\varphi:\llangle m\rrangle\to\llangle n\rrangle\in\nabla$,
\begin{enumerate}[label={\upshape(\roman*)}]
  \item $K_\varphi$ is a subgroup of $\mathrm{RSt}^G_\varphi$;
  \item for every morphism $\chi:\llangle n\rrangle\to\llangle k\rrangle\in\nabla$, $K_\varphi\subset K_{\chi\varphi}$;
  \item for every morphism $\psi:\llangle l\rrangle\to\llangle m\rrangle\in\nabla$, the map $\psi^\ast:G_m\to G_l$ restricts to a map $K_\varphi\to K_{\varphi\psi}$;
  \item for every element $y\in G_n$, we have
\begin{equation}
\label{eq:congfam-conj}
\varphi^\ast(y)\cdot K_\varphi\cdot \varphi^\ast(y)^{-1}
= K_{\varphi^y}\ .
\end{equation}
\end{enumerate}
\end{definition}

\begin{remark}
\label{rem:congfam-crsgrp}
The first three conditions above implies $K=\{K_\varphi\}_\varphi$ forms a crossed group over $\operatorname{opTw}(\nabla):=\operatorname{Tw}(\nabla)^\opposite$ the opposite of the twisted arrow category of $\nabla$; $\operatorname{opTw}(\nabla)$ is the category such that
\begin{itemize}
  \item the objects are morphisms of $\nabla$;
  \item for morphisms $\varphi_i:\llangle m_i\rrangle\to\llangle n_i\rrangle\in\nabla$ for $i=1,2$, morphisms $\varphi_1\to\varphi_2$ in $\operatorname{Tw}(\nabla)$ are pairs $(\alpha,\beta)$ of morphisms in commutative squares of the form
\[
\vcenter{
  \xymatrix{
    \llangle m_1\rrangle \ar[r]^\alpha \ar[d]_{\varphi_1} & \llangle m_2\rrangle \ar[d]^{\varphi_2}  \\
    \llangle n_1\rrangle & \llangle n_2\rrangle \ar[l]_\beta }}
\quad;
\]
  \item the composition is given by
\[
(\gamma,\delta)\circ(\alpha,\beta)
= (\gamma\alpha,\beta\delta)\ .
\]
\end{itemize}
The second and the third conditions imply each morphism $(\alpha,\beta):\varphi_1\to\varphi_2$ in $\operatorname{opTw}(\nabla)$ induces a map
\[
(\alpha,\beta)^\ast:K_{\varphi_2}
\xrightarrow{\alpha^\ast} K_{\varphi_2\alpha}
\hookrightarrow K_{\beta\varphi_2\alpha} = K_{\varphi_1}\ .
\]
Moreover, for a morphism $(\alpha,\beta):\varphi_1\to\varphi_2\in\operatorname{opTw}(\nabla)$ as above, if $x\in G_{m_2}$ is a right stabilizer of $\varphi_2$, then the pair $(\alpha^x,\beta)$ is again a morphism $\varphi_1\to\varphi_2$ in $\operatorname{opTw}(\nabla)$.
Hence, by virtue of the first condition, this defines a left action of $K_{\varphi_2}$ on the set $\operatorname{opTw}(\nabla)(\varphi_1,\varphi_2)$.
It is easily verified that these data actually form a crossed $\operatorname{opTw}(\nabla)$-group structure on the family $K=\{K_\varphi\}_\varphi$.
\end{remark}

\begin{example}
\label{ex:congfam-rstab}
The family $\operatorname{RSt}^G=\{\operatorname{RSt}^G_\varphi\}_\varphi$ is itself a congruence family.
Indeed, the first three conditions for congruence families are obvious.
To verify \eqref{eq:congfam-conj}, observe that, for $\varphi:\llangle m\rrangle\to\llangle n\rrangle\in\nabla$, $\psi:\llangle l\rrangle\to\llangle l\rrangle\nabla$, $x\in\operatorname{RSt}^G_\varphi$, and $y\in G_n$, we have
\[
\varphi\psi^{\varphi^\ast(y)x\varphi^\ast(y)^{-1}}
= (\varphi\psi^{x\varphi^\ast(y)^{-1}})^y
= (\varphi\psi^{\varphi^\ast(y)^{-1}})^y
= \varphi\psi\ .
\]
Note that, in view of \cref{rem:congfam-crsgrp}, a congruence family on $G$ is nothing but a crossed $\operatorname{opTw}(\nabla)$-subgroup of $\operatorname{RSt}^G$ satisfying \eqref{eq:congfam-conj}.
\end{example}

\begin{example}
\label{ex:congfam-decomp}
Let $\mathcal G$ be a group operad.
Recall that morphisms $\varphi:\llangle m\rrangle\to\llangle n\rrangle\in\nabla$ correspond to $(n+2)$-tuples $\vec k=(k_{-\infty},k_1,\dots,k_n,k_\infty)$ by the formula \eqref{eq:nabla-tuple}.
We set a subset $\operatorname{Dec}^{\mathcal G}_\varphi\subset G_m$ to be the image of the map
\begin{equation}
\label{eq:congfam-decomp:map}
\begin{array}{ccc}
\mathcal G(k^{(\varphi)}_{-\infty})\times\mathcal G(k^{(\varphi)}_1)\times\dots\times\mathcal G(k^{(\varphi)}_n)\times\mathcal G(k^{(\varphi)}_\infty) &\mathclap\to& \mathcal G(m) \\
  (x_{-\infty},x_1,\dots,x_n,x_\infty) &\mathclap\mapsto& \gamma(e_{n+2};x_{-\infty},x_1,\dots,x_n,x_\infty)
\end{array}.
\end{equation}
As easily verified, the map \eqref{eq:congfam-decomp:map} is actually a group homomorphism, and $\operatorname{Dec}^G=\{\operatorname{Dec}^G_\varphi\}_\varphi$ forms a crossed $\operatorname{opTw}(\nabla)$-subgroup.
Moreover, for $y\in\mathcal G(n)$, we have
\[
\begin{split}
&\varphi^\ast(y)\cdot\gamma(e_{n+2};x_{-\infty},x_1,\dots,x_n,x_\infty) \\
&= \gamma(e_3;x_{-\infty},\gamma(y;e_{k^{(\varphi)}_1},\dots,e_{k^{(\varphi)}_n})\gamma(e_n;x_1,\dots,x_n),x_\infty) \\
&= \gamma(e_3;x_{-\infty},\gamma(e_n;x_{y^{-1}(1)},\dots,x_{y^{-1}(n)})\gamma(y;e_{k^{(\varphi)}_1},\dots,e_{k^{(\varphi)}_n}),x_\infty) \\
&= \gamma(e_{n+2};x_{-\infty},x_{y^{-1}(1)},\dots,x_{y^{-1}(n)})\cdot\varphi^\ast(y)
\ ,
\end{split}
\]
which implies
\[
\varphi^\ast(y)\cdot\operatorname{Dec}^G_\varphi\cdot\varphi^\ast(y)^{-1}
= \operatorname{Dec}^G_{\varphi^y}\ .
\]
Thus, $\operatorname{Dec}^G$ is a congruence family on $\mathcal G$.
\end{example}

\begin{example}
\label{ex:congfam-triv}
For each $\varphi:\llangle m\rrangle\to\llangle n\rrangle\in\nabla$, we set $\operatorname{Triv}_\varphi$ to consists of a single element $e_m$ which is seen as the unit of $G_m$ for any crossed interval group $G$.
Then, clearly $\operatorname{Triv}=\{\operatorname{Triv}_\varphi\}_\varphi$ is a congruence family on $G$.
In view of \cref{rem:congfam-crsgrp}, $\operatorname{Triv}$ is the trivial crossed $\operatorname{opTw}(\nabla)$-group.
\end{example}

\begin{example}
\label{ex:congfam-kernel}
Let $K$ be a congruence family on a crossed interval group $G$.
For each $\varphi:\llangle m\rrangle\to\llangle n\rrangle\in\nabla$, we have a canonical group homomorphism
\[
K_\varphi
\hookrightarrow\operatorname{RSt}^G_\varphi
\hookrightarrow G_m
\to\mathfrak W^\nabla_m\ .
\]
We put $K'_\varphi$ the kernel and claim that $K'=\{K'_\varphi\}_\varphi$ forms a congruence family.
Namely, it is an uncrossed $\operatorname{opTw}(\nabla)$-subgroup of $\operatorname{RSt}^G$ since it is the kernel of the map $\operatorname{RSt}^G\to\operatorname{RSt}^{\mathfrak W^\nabla}$ of crossed $\operatorname{opTw}(\nabla)$-groups.
This observation also leads to the equation \eqref{eq:congfam-conj}.
\end{example}

We see congruence families on a crossed interval group $G$ are associated to $G$-quotal categories.
We use the following lemma.

\begin{lemma}
\label{lem:congfam-pb}
Let $G$ be a crossed interval group, and let $K=\{K_\varphi\}_\varphi$ be a congruence family on $G$.
Suppose $\varphi:\llangle l\rrangle\to\llangle m\rrangle$ and $\psi:\llangle m\rrangle\to\llangle n\rrangle$ are morphisms in $\nabla$.
Then, for each $y\in G_n$, the composition
\[
\begin{array}{ccccl}
(K_\psi\cdot y)\times G_m &\hookrightarrow& G_n\times G_m &\to& G_m \\
                          && (y',x) &\mapsto& \varphi^\ast(y')\cdot x
\end{array}
\]
induces a maps
\[
\{K_\varphi\cdot y\}\times (K_\varphi\backslash G_m)
\to (K_{\psi\varphi^y}\backslash G_m)\ .
\]
\end{lemma}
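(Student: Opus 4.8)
The plan is to reduce the statement to two well-definedness conditions and to discharge each by a single congruence-family axiom. Writing $[g]$ for the class of $g\in G_m$ in $K_{\psi\varphi^y}\backslash G_m$, the assignment $(y',x)\mapsto\varphi^\ast(y')\cdot x$ descends to the stated quotients exactly when (a) the class $[\varphi^\ast(y')\,x]$ is independent of the chosen representative $y'$ of the coset $K_\psi\cdot y$, and (b) replacing $x$ by $kx$ for $k\in K_\varphi$ does not change $[\varphi^\ast(y')\,x]$. Unwinding the definition of the cosets, condition (a) amounts to the membership $\varphi^\ast(y')\varphi^\ast(y'')^{-1}\in K_{\psi\varphi^y}$ for any two representatives $y',y''\in K_\psi\cdot y$, and condition (b) amounts to $\varphi^\ast(y')\cdot K_\varphi\cdot\varphi^\ast(y')^{-1}\subseteq K_{\psi\varphi^y}$. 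Both are containments in the single subgroup $K_{\psi\varphi^y}$, which is precisely why the target of the induced map is the quotient by $K_{\psi\varphi^y}$ and the first factor collapses to the singleton.

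For condition (b), I would first apply the conjugation axiom \eqref{eq:congfam-conj} with $y'$ in place of $y$ to rewrite $\varphi^\ast(y')\cdot K_\varphi\cdot\varphi^\ast(y')^{-1}=K_{\varphi^{y'}}$. The crucial observation is that $y'$ differs from $y$ by an element $k\in K_\psi\subseteq\operatorname{RSt}^G_\psi$, and applying the defining property of $\operatorname{RSt}^G_\psi$ to the morphism $\varphi^y$ gives $\psi\varphi^{y'}=\psi\varphi^y$ (after recording that the left action satisfies $\varphi^{ky}=(\varphi^y)^k$, so that $\varphi^{y'}=(\varphi^y)^k$ is a stabilizer-translate of $\varphi^y$). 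Then condition (ii) of a congruence family yields $K_{\varphi^{y'}}\subseteq K_{\psi\varphi^{y'}}=K_{\psi\varphi^y}$, which is exactly (b).

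For condition (a), I would expand using the crossed interval group identity $\varphi^\ast(k\,y)=(\varphi^y)^\ast(k)\cdot\varphi^\ast(y)$. Writing $y'=k'y$ and $y''=k''y$ with $k',k''\in K_\psi$, the two copies of $\varphi^\ast(y)$ cancel, leaving $\varphi^\ast(y')\varphi^\ast(y'')^{-1}=(\varphi^y)^\ast(k')\cdot(\varphi^y)^\ast(k'')^{-1}$. Now condition (iii) of a congruence family, applied to the morphism $\psi$ and the precomposition $\varphi^y$, asserts that $(\varphi^y)^\ast$ carries $K_\psi$ into $K_{\psi\varphi^y}$; hence each of the two factors lies in $K_{\psi\varphi^y}$, and therefore so does their product. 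This establishes (a).

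The main obstacle is not any single estimate but the bookkeeping needed to line up the variances: one must fix once and for all the conventions for the left action $(x,\varphi)\mapsto\varphi^x$ and for the induced maps $\varphi^\ast$, verify the compatibility $\varphi^{ky}=(\varphi^y)^k$ that licenses the right-stabilizer identity $\psi\varphi^{y'}=\psi\varphi^y$, and keep careful track of left versus right cosets so that (a) and (b) emerge as the two membership statements above rather than their mirror images. Once these conventions are pinned down, axioms (ii), (iii) and \eqref{eq:congfam-conj} of a congruence family together with the definition of $\operatorname{RSt}^G_\psi$ close the argument with no further computation.
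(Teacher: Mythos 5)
Your proof is correct and follows essentially the same route as the paper's: both rest on the crossed-group identity $\varphi^\ast(vy)\cdot ux=(\varphi^y)^\ast(v)\cdot\bigl(\varphi^\ast(y)u\varphi^\ast(y)^{-1}\bigr)\cdot\varphi^\ast(y)x$ together with axioms (ii), (iii) and \eqref{eq:congfam-conj} of a congruence family. The only cosmetic difference is that you split the well-definedness into two separate checks and, in check (b), conjugate by a general representative $\varphi^\ast(y')$ (hence need the extra observation $\psi\varphi^{y'}=\psi\varphi^y$ coming from $K_\psi\subset\operatorname{RSt}^G_\psi$), whereas the paper conjugates by $\varphi^\ast(y)$ for the distinguished representative and verifies both memberships in a single displayed computation.
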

\begin{proof}
Take $x\in G_m$ and $y\in G_n$.
Then, for $u\in K_\varphi$ and $v\in K_\psi$, we have
\begin{equation}
\label{eq:prf:congfam-pb}
\varphi^\ast(vy)\cdot ux
= (\varphi^y)^\ast(v)\cdot (\varphi^\ast(y)\cdot u\cdot \varphi^\ast(y)^{-1})\cdot \varphi^\ast(y)x\ .
\end{equation}
By virtue of the conditions on the congruence family $K$, the first term in the right hand side of \eqref{eq:prf:congfam-pb} belongs to $K_{\psi\varphi^y}$ while the second to $K_{\varphi^y}\subset K_{\psi\varphi^y}$.
Hence, the result follows.
\end{proof}

Now, for a congruence family $K$ on a crossed interval group $G$, we define a category $\mathcal Q_K$ as follows:
the objects are the same as $\nabla$, and for $m,n\in\mathbb N$,
\[
\mathcal Q_K(\llangle m\rrangle,\llangle n\rrangle)
=\left\{(\varphi,[x])\;\middle|\; \varphi\in\nabla(\llangle m\rrangle,\llangle n\rrangle), [x]\in K_{\varphi}\backslash G_m\right\}\ .
\]
There is an obvious map $q:\nabla_G(\llangle m\rrangle,\llangle n\rrangle)\to\mathcal Q(\llangle m\rrangle,\llangle n\rrangle)$.
Using \cref{lem:congfam-pb} and the inclusion $K_\varphi\subset\operatorname{RSt}^G_\varphi$, one can see the composition in the total category $\nabla_G$ induces a composition operation in $\mathcal Q_K$ so that $q$ is a functor.

\begin{proposition}
\label{prop:congfam-quotal}
For every congruence family $K$ on a crossed interval group $G$, the functor
\[
q:\nabla_G\to\mathcal Q_K
\]
given above exhibits $\mathcal Q_K$ as a $G$-quotal category.
Moreover, the assignment $K\mapsto\mathcal Q_K$ gives a one-to-one correspondence between congruence families on $G$ and (isomorphism classes of) $G$-quotal categories respecting the orders.
\end{proposition}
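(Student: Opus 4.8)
The first assertion is immediate from the construction: $q$ is the identity on objects, hence bijective on objects; every morphism of $\mathcal Q_K$ has the form $(\varphi,[x])=q(\varphi,x)$, so $q$ is full; and $q(\varphi,x)=q(\varphi',x')$ forces $\varphi=\varphi'$ by comparing first coordinates. Thus $\mathcal Q_K$ is a $G$-quotal category.

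The substance is the bijectivity, for which I would exhibit an explicit inverse assignment $\mathcal Q\mapsto K^{\mathcal Q}$. Fix a $G$-quotal category $q\colon\nabla_G\to\mathcal Q$ and write $\varphi:=q(\varphi,e_m)$ as in the convention after \cref{lem:quotal-faithful}. Each $(\mathrm{id}_{\llangle m\rrangle},x)$ is invertible in $\nabla_G$ (with inverse $(\mathrm{id}_{\llangle m\rrangle},x^{-1})$, using $\mathrm{id}^x=\mathrm{id}$, which follows by applying the second crossed-interval axiom to $\mathrm{id}\cdot\psi$), so $\lambda_m\colon x\mapsto q(\mathrm{id}_{\llangle m\rrangle},x)$ is a group homomorphism $G_m\to\mathrm{Aut}_{\mathcal Q}(\llangle m\rrangle)$. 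Unwinding the composition law of $\nabla_G$ gives the two identities
\[
q(\varphi,x)=\varphi\circ\lambda_m(x),\qquad
\lambda_n(y)\circ\varphi=\varphi^y\circ\lambda_m(\varphi^\ast(y))
\]
for $\varphi\colon\llangle m\rrangle\to\llangle n\rrangle$, $x\in G_m$, $y\in G_n$. I then set $K^{\mathcal Q}_\varphi:=\{x\in G_m\mid q(\varphi,x)=\varphi\}$.

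Next I would verify that $K^{\mathcal Q}$ is a congruence family. For (i), $K^{\mathcal Q}_\varphi$ is a subgroup since it is the $\lambda_m$-preimage of the stabilizer $\{g\mid\varphi\circ g=\varphi\}$, and $K^{\mathcal Q}_\varphi\subset\operatorname{RSt}^G_\varphi$ because the composition law gives $q(\varphi,x)\circ\psi=q(\varphi\psi^x,\psi^\ast(x))$ while $q(\varphi,x)\circ\psi=\varphi\psi=q(\varphi\psi,e_l)$, so the $G$-quotal condition forces $\varphi\psi^x=\varphi\psi$. Condition (ii) follows from $q(\chi\varphi,x)=\chi\circ(\varphi\circ\lambda_m(x))=\chi\varphi$, and (iii) from $q(\varphi\psi,\psi^\ast(x))=q(\varphi,x)\circ\psi=\varphi\psi$. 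The main obstacle is the conjugation condition \eqref{eq:congfam-conj}: substituting the second identity into the requirement $\varphi^y\circ\lambda_m(x')=\varphi^y$, cancelling the automorphism $\lambda_n(y)$, and rearranging shows that $x'\in K^{\mathcal Q}_{\varphi^y}$ if and only if $\varphi^\ast(y)^{-1}x'\varphi^\ast(y)\in K^{\mathcal Q}_\varphi$, which is exactly \eqref{eq:congfam-conj}.

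Finally I would check the two assignments are mutually inverse and monotone. For $\mathcal Q=\mathcal Q_K$ one has $q(\varphi,x)=(\varphi,[x])$ equal to $\varphi=(\varphi,[e_m])$ precisely when $x\in K_\varphi$, so $K^{\mathcal Q_K}=K$. Conversely, the $G$-quotal condition for $\mathcal Q$ together with the first identity shows that, for fixed $\varphi$, one has $q(\varphi,x)=q(\varphi,x')$ iff $x'x^{-1}\in K^{\mathcal Q}_\varphi$; hence $q$ identifies precisely the morphisms identified by $q_{K^{\mathcal Q}}$, so it factors as a functor $F\colon\mathcal Q_{K^{\mathcal Q}}\to\mathcal Q$ that is bijective on objects, full (as $q$ is) and faithful, i.e.\ an isomorphism of $G$-quotal categories. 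Ordering congruence families by inclusion and $G$-quotal categories by the existence of a (necessarily unique) functor over $\nabla_G$, one sees that $K\subset K'$ holds iff the quotient maps $K_\varphi\backslash G_m\to K'_\varphi\backslash G_m$ assemble into a functor $\mathcal Q_K\to\mathcal Q_{K'}$, so the correspondence respects the orders.
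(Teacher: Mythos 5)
Your proof is correct and follows essentially the same route as the paper: both define the inverse assignment $K^{\mathcal Q}_\varphi=\{x\mid q(\varphi,x)=\varphi\}$, verify the congruence-family axioms (with the conjugation condition \eqref{eq:congfam-conj} handled by the same conjugation-by-$q(\mathrm{id},y)$ computation, which you merely repackage via the homomorphisms $\lambda_m$), and then check mutual inversion and monotonicity. Your write-up is if anything slightly more explicit than the paper's on the "straightforward" step that $q(\varphi,x)=q(\varphi,x')$ iff $x'x^{-1}\in K^{\mathcal Q}_\varphi$.
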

\begin{proof}
The first statement is obvious.
To see the assignment is one-to-one, suppose $\mathcal Q$ is a $G$-quotal category with $q:\nabla_G\to\mathcal Q$.
For each $\varphi:\llangle m\rrangle\to\llangle n\rrangle\in\nabla$, we put
\[
K^{\mathcal Q}_\varphi
:= \{x\in G_m\mid q(\varphi,x)=\varphi\}\ .
\]
We assert $K^{\mathcal Q}=\{K^{\mathcal Q}_\varphi\}_\varphi$ forms a congruence family on $G$.
First, clearly we have $K^{\mathcal Q}_\varphi\subset K^{\mathcal Q}_{\chi\varphi}$ and $\psi^\ast(K^{\mathcal Q}_\varphi)\subset K^{\mathcal Q}_{\varphi\psi}$ whenever the compositions make sense.
Next, for $\varphi:\llangle m\rrangle\to\llangle n\rrangle$ and $\psi:\llangle l\rrangle\to\llangle m\rrangle$, and for each $x\in K^{\mathcal Q}_\varphi$, we have
\begin{equation}
\label{eq:prf:congfam-quotal:Krst}
\varphi\psi
= q(\varphi,x)\psi
= q(\varphi\psi^x,\psi^\ast(x))\ ,
\end{equation}
here we identify morphisms in $\nabla$ with their images in $\mathcal Q$.
Since $\mathcal Q$ is $G$-quotal, \eqref{eq:prf:congfam-quotal:Krst} implies $\varphi\psi=\varphi\psi^x$, so we obtain $K^{\mathcal Q}_\varphi\subset\operatorname{RSt}^G_\varphi$.
In addition, for $y\in G_n$ and $x\in K^{\mathcal Q}_\varphi$, we have
\[
\begin{multlined}
q(\varphi^y,\varphi^\ast(y)x\varphi^\ast(y)^{-1}) \\
= q(\mathrm{id},y)q(\varphi,x)q(\mathrm{id},\varphi^\ast(y)^{-1})
= q(\mathrm{id},y) q(\varphi,\varphi^\ast(y)^{-1})
= \varphi^y\ ,
\end{multlined}
\]
which implies
\[
\varphi^\ast(y)\cdot K^{\mathcal Q}_\varphi\cdot\varphi^\ast(y)^{-1}
= K^{\mathcal Q}_{\varphi^y}\ .
\]
Therefore, $K^{\mathcal Q}$ is a congruence family.

It is straightforward that $\mathcal Q\mapsto K^{\mathcal Q}$ is an inverse assignment to $K\mapsto\mathcal Q_K$.
Furthermore, if we have an inclusion $K\subset K'$ between congruence families, i.e. $K_\varphi\subset K'_\varphi$ for each morphism $\varphi$ in $\nabla$, then there is a functor $\mathcal Q_K\to\mathcal Q_{K'}$ which makes the following diagram commutes:
\[
\xymatrix@C=1em{
  & \nabla_G \ar[dl] \ar[dr] & \\
  \mathcal Q_K \ar[rr] && \mathcal Q_{K'} }
\]
In other words, the assignment $K\mapsto K'$ respects the orders, and this completes the proof.
\end{proof}

To end the section, we mention a closure operator on the partially ordered set of congruence families.
We introduce two classes of morphisms in the category $\nabla$.

\begin{definition}
Let $\varphi:\llangle m\rrangle\to\llangle n\rrangle\in\nabla$ be a morphism.
\begin{enumerate}[label={\upshape(\arabic*)}]
  \item $\varphi$ is said to be \emph{active} if $\varphi^{-1}\{\pm\infty\}=\varphi^{-1}\{\pm\infty\}$; equivalently if $k^\varphi_{\pm\infty}=0$.
  \item $\varphi$ is said to be \emph{inert} if the restriction $\varphi^{-1}(\langle n\rangle)\to\langle n\rangle\subset\llangle n\rrangle$ is bijective.
\end{enumerate}
\end{definition}

\begin{lemma}
\label{lem:interval-factor}
\begin{enumerate}[label={\upshape(\arabic*)}]
  \item Every morphism $\varphi$ in $\nabla$ uniquely factors as $\varphi=\mu\rho$ with $\rho$ inert and $\mu$ active.
  \item Every inert morphism admits a unique section.
\end{enumerate}
\end{lemma}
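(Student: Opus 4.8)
The plan is to prove both statements by a direct combinatorial analysis of morphisms in $\nabla$, using the encoding of $\varphi:\llangle m\rrangle\to\llangle n\rrangle$ by its tuple $\vec k^{(\varphi)}=(k^{(\varphi)}_{-\infty},k^{(\varphi)}_1,\dots,k^{(\varphi)}_n,k^{(\varphi)}_\infty)$ from \eqref{eq:nabla-tuple}. For the factorization in the first statement, I would first read the defining condition of an active map as $\varphi^{-1}\{\pm\infty\}=\{\pm\infty\}$, i.e. no element of $\langle m\rangle$ is sent to $\pm\infty$. The intermediate object is then dictated by separating the finite source elements that land in $\langle n\rangle$ from those sent to $\pm\infty$: set $p:=\sum_{j=1}^n k^{(\varphi)}_j=m-k^{(\varphi)}_{-\infty}-k^{(\varphi)}_\infty$, the number of $i\in\langle m\rangle$ with $\varphi(i)\in\langle n\rangle$. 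I would define $\rho:\llangle m\rrangle\to\llangle p\rrangle$ to send these $p$ elements, in order, to the successive elements of $\langle p\rangle$, to send the $k^{(\varphi)}_{-\infty}$ elements with $\varphi(i)=-\infty$ to $-\infty$ and the $k^{(\varphi)}_\infty$ ones with $\varphi(i)=\infty$ to $\infty$, and to fix $\pm\infty$; and define $\mu:\llangle p\rrangle\to\llangle n\rrangle$ to carry the $j$-th element of $\langle p\rangle$ to the value $\varphi(i)$ of the element $i$ it came from. By construction $\rho$ is order-preserving and $\rho^{-1}(\langle p\rangle)\to\langle p\rangle$ is a bijection, so $\rho$ is inert, while $\mu$ sends $\langle p\rangle$ into $\langle n\rangle$, so $\mu$ is active; and $\mu\rho=\varphi$ is verified case by case on $\langle m\rangle$ and on $\pm\infty$.

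For uniqueness in the first statement, I would exploit that activeness reflects the finite/infinite dichotomy: if $\varphi=\mu'\rho'$ with $\rho':\llangle m\rrangle\to\llangle p'\rrangle$ inert and $\mu'$ active, then for $i\in\langle m\rangle$ the value $\mu'(\rho'(i))=\varphi(i)$ is finite precisely when $\rho'(i)$ is finite. Hence $\rho'$ must send exactly the elements with $\varphi(i)\in\langle n\rangle$ into $\langle p'\rangle$; since $\rho'$ is inert, $\langle p'\rangle$ is in bijection with that set, forcing $p'=p$, and the order-preserving bijection is unique, so $\rho'=\rho$. Finally $\mu'$ is pinned down by $\mu'\rho'=\varphi$, because every element of $\langle p\rangle$ lies in the image of $\rho'$, giving $\mu'=\mu$.

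For the second statement, let $\rho:\llangle m\rrangle\to\llangle p\rrangle$ be inert. Inertness provides, for each $j\in\langle p\rangle$, a unique $i_j\in\langle m\rangle$ with $\rho(i_j)=j$, and I would define $s:\llangle p\rrangle\to\llangle m\rrangle$ by $s(j)=i_j$ and $s(\pm\infty)=\pm\infty$. Monotonicity of $\rho$ forces $i_j<i_{j'}$ whenever $j<j'$, so $s$ is order-preserving and lies in $\nabla$, and $\rho s=\mathrm{id}_{\llangle p\rrangle}$ holds by construction. Uniqueness is then immediate: any section must send a finite $j$ into $\rho^{-1}\{j\}$, whose only finite member is $i_j$ while $\pm\infty$ are excluded since $\rho(\pm\infty)=\pm\infty\ne j$, and $\pm\infty$ are forced.

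None of this is deep, and the arguments are elementary bookkeeping with the tuples $\vec k^{(\varphi)}$. The only points that require genuine care — and thus the closest thing to an obstacle — are checking order-preservation of the constructed $\rho$, $\mu$, and $s$, confirming in the uniqueness half of the first statement that the intermediate object $\llangle p\rrangle$ together with its two maps is truly forced rather than merely one convenient choice, and reading the (slightly garbled) definition of active morphism with its evidently intended meaning $\varphi^{-1}\{\pm\infty\}=\{\pm\infty\}$.
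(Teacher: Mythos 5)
Your proof is correct. The paper states \cref{lem:interval-factor} without any proof, so there is nothing to compare against; your elementary bookkeeping with the tuples $\vec k^{(\varphi)}$ — cutting $\langle m\rangle$ into the initial segment $\varphi^{-1}\{-\infty\}$, the middle block mapping into $\langle n\rangle$, and the final segment $\varphi^{-1}\{\infty\}$, and routing the factorization through $\llangle p\rrangle$ with $p=\sum_j k^{(\varphi)}_j$ — is exactly the standard argument one would supply, and your uniqueness steps (the intermediate object and both maps are forced, and a section of an inert map is forced on finite elements by the bijectivity of $\rho^{-1}(\langle p\rangle)\to\langle p\rangle$ and on $\pm\infty$ by the definition of $\nabla$) are all sound. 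Your reading of the garbled definition of ``active'' as $\varphi^{-1}\{\pm\infty\}=\{\pm\infty\}$ is confirmed by the stated equivalent condition $k^{(\varphi)}_{\pm\infty}=0$.
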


\begin{remark}
\label{rem:ortho-fact}
In view of \cref{lem:interval-factor}, it turns out that the classes $\mathsf I$ and $\mathsf A$ of inert morphisms and active morphisms respectively form an \emph{orthogonal factorization system} $(\mathsf I,\mathsf A)$ on $\nabla$; i.e. the following two conditions are satisfied:
\begin{enumerate}[label={\upshape(\roman*)}]
  \item the classes $\mathsf I$ and $\mathsf A$ are closed under compositions and contains all the isomorphisms;
  \item every morphisms in $\nabla$ is of the form $\mu\rho$ with $\rho\in\mathsf I$ and $\mu\in\mathsf A$;
  \item for every commutative square
\[
\xymatrix{
  \llangle k\rrangle \ar[r]^\varphi \ar[d]_\rho & \llangle m\rrangle \ar[d]^\mu \\
  \llangle l\rrangle \ar[r]^\psi \ar@{-->}[ur]^{\exists!\chi} & \llangle n\rrangle}
\]
with $\rho\in\mathsf I$ and $\mu\in\mathsf A$, there is a \emph{unique} diagonal $\chi$ so that $\chi\rho=\varphi$ and $\mu\chi=\psi$.
\end{enumerate}
Note that the notion was first introduced by Freyd and Kelly in \cite{FreydKelly1972} under the name \emph{factorization}.
We instead use the name above to emphasize the \emph{unique} lifting property and to distinguish it from \emph{weak factorization systems}.
\end{remark}

Let $G$ be a crossed interval group, and let $K$ be a congruence family on $G$.
Using \cref{lem:interval-factor}, we construct another congruence family $\overbar K$ as follows: for each active morphism $\mu$ in $\nabla$, we put $\overbar K_\mu=K_\mu$.
For a general morphism $\varphi$ in $\nabla$, we set $\overbar K_\varphi\subset\operatorname{RSt}^G_\varphi$ to consist $x\in\operatorname{RSt}^G_\varphi$ such that, for every morphism $\psi$ with $\varphi\psi$ making sense and active, $\psi^\ast(x)\in\operatorname{RSt}^G_{\varphi\psi}$ belongs to $\overbar K_{\varphi\psi}$.
Thanks to \cref{lem:interval-factor}, this extension does not change $\overbar K_\mu$ for active $\mu$.

\begin{lemma}
\label{lem:congfam-bar}
In the situation above, the family $\overbar K=\{\overbar K_\varphi\}_\varphi$ forms a congruence family on $G$.
\end{lemma}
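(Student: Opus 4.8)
The plan is to verify the four defining conditions of a congruence family for $\overbar K$ one at a time. First I would record that the recursion defining $\overbar K$ is harmless: for a general $\varphi$ the condition ``$x\in\overbar K_\varphi$'' only invokes the sets $\overbar K_{\varphi\psi}$ for $\varphi\psi$ \emph{active}, and there $\overbar K_{\varphi\psi}=K_{\varphi\psi}$ by definition; moreover, since $\operatorname{RSt}^G$ is itself a congruence family (\cref{ex:congfam-rstab}), the transport $\psi^\ast(x)$ of an $x\in\operatorname{RSt}^G_\varphi$ automatically lies in $\operatorname{RSt}^G_{\varphi\psi}$, so that
\[
\overbar K_\varphi=\{x\in\operatorname{RSt}^G_\varphi\mid \psi^\ast(x)\in K_{\varphi\psi}\text{ for every }\psi\text{ with }\varphi\psi\text{ active}\}\ .
\]
The two tools used repeatedly are the crossed-interval identities $\psi^\ast(ab)=(\psi^b)^\ast(a)\,\psi^\ast(b)$ and $(\varphi\psi)^x=\varphi^x\psi^{\varphi^\ast(x)}$, together with the fact that $x\in\operatorname{RSt}^G_\varphi$ gives $\varphi\psi^x=\varphi\psi$ for every $\psi$.

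The conditions (i)--(iii) are then quick. For (i), given $x,y\in\overbar K_\varphi$ and $\psi$ with $\varphi\psi$ active, I expand $\psi^\ast(xy)=(\psi^y)^\ast(x)\,\psi^\ast(y)$; the second factor is in $K_{\varphi\psi}$, while $y\in\operatorname{RSt}^G_\varphi$ makes $\varphi\psi^y=\varphi\psi$ active, so applying the defining property of $x$ to the morphism $\psi^y$ puts $(\psi^y)^\ast(x)$ into $K_{\varphi\psi^y}=K_{\varphi\psi}$; inverses are handled the same way starting from $e=\psi^\ast(xx^{-1})$. For (ii) the key remark is that every morphism of $\nabla$ fixes $\pm\infty$, so if $\chi\varphi\psi$ is active then $\varphi\psi$ is already active; hence $\psi^\ast(x)\in K_{\varphi\psi}\subset K_{\chi\varphi\psi}$ by condition (ii) for $K$, and $\operatorname{RSt}^G_\varphi\subset\operatorname{RSt}^G_{\chi\varphi}$ supplies the stabilizer part. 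For (iii), if $\varphi\psi\theta$ is active then $\theta^\ast(\psi^\ast(x))=(\psi\theta)^\ast(x)\in K_{\varphi\psi\theta}$ directly from $x\in\overbar K_\varphi$.

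The real work is condition (iv), i.e.\ \eqref{eq:congfam-conj}. Writing $z:=\varphi^\ast(y)$, $w:=zxz^{-1}$ and $\psi':=\psi^{z^{-1}}$, a short computation with $(\varphi\psi)^x=\varphi^x\psi^{\varphi^\ast(x)}$ gives $(\varphi\psi')^y=\varphi^y\psi$. Assuming $\varphi^y\psi$ active, I would conclude $\varphi\psi'$ active from the fact that the $G_n$-action on $\nabla(\llangle l\rrangle,\llangle n\rrangle)$ preserves activeness, whence $(\psi')^\ast(x)\in K_{\varphi\psi'}$. The crux is the identity
\[
\psi^\ast(w)=(\varphi\psi')^\ast(y)\cdot(\psi')^\ast(x)\cdot(\varphi\psi')^\ast(y)^{-1}\ ,
\]
proved by expanding $\psi^\ast(z\cdot xz^{-1})=(\psi^{xz^{-1}})^\ast(z)\,\psi^\ast(xz^{-1})$ and using $x\in\operatorname{RSt}^G_\varphi$ to rewrite $\varphi\psi^{xz^{-1}}=\varphi(\psi')^x=\varphi\psi'$, so that $(\psi^{xz^{-1}})^\ast(z)=(\varphi\psi')^\ast(y)$. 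Condition (iv) for $K$ applied to $\varphi\psi'$ and $y$ then places $\psi^\ast(w)$ in $K_{(\varphi\psi')^y}=K_{\varphi^y\psi}$, giving $\varphi^\ast(y)\,\overbar K_\varphi\,\varphi^\ast(y)^{-1}\subset\overbar K_{\varphi^y}$; the reverse inclusion follows by replacing $\varphi$ by $\varphi^y$ and $y$ by $y^{-1}$, since $(\varphi^y)^\ast(y^{-1})=\varphi^\ast(y)^{-1}$.

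I expect the main obstacle to be precisely this condition (iv): both the preservation of activeness under the $G_n$-action---which I would read off from the description \eqref{eq:nabla-tuple}, the action permuting only the middle entries $k_1,\dots,k_n$ and fixing $k_{\pm\infty}$---and the conjugation identity above, where the hypothesis $x\in\operatorname{RSt}^G_\varphi$ (rather than merely $x\in\overbar K_\varphi$) is exactly what makes the rewriting go through.
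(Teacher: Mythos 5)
Your proof is correct and follows essentially the same route as the paper's: the same expansion $\psi^\ast(xy)=(\psi^y)^\ast(x)\psi^\ast(y)$ for the subgroup property, and for condition (iv) the same reduction to a single inclusion via the identity $\psi^\ast(\varphi^\ast(y)x\varphi^\ast(y)^{-1})=(\varphi\psi')^\ast(y)\cdot(\psi')^\ast(x)\cdot(\varphi\psi')^\ast(y)^{-1}$ with $\psi'=\psi^{\varphi^\ast(y)^{-1}}$, together with the observation that $(\varphi^y\psi)^{y^{-1}}$ remains active. Your explicit treatment of conditions (ii)--(iii) via the remark that activeness of $\chi\varphi\psi$ forces activeness of $\varphi\psi$ is a clean substitute for the paper's appeal to \cref{lem:interval-factor}, but the substance is the same.
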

\begin{proof}
We first verify $\overbar K_\varphi\subset\operatorname{RSt}^G_\varphi$ forms a subgroup.
Suppose $\psi$ is a morphism in $\nabla$ with $\varphi\psi$ making sense and active.
For two elements $x,y\in\overbar K_\varphi$, we have
\begin{equation}
\label{eq:prf:congfam-bar:subg}
\psi^\ast(x^{-1}y)
= (\psi^{x^{-1}y})^\ast(x)^{-1}\psi^\ast(y)\ .
\end{equation}
Since $x,y\in\operatorname{RSt}^G_\varphi$, the composition $\varphi\psi^{x^{-1}y}$ equals to $\varphi\psi$, which is active.
Hence, both terms in the right hand side of \eqref{eq:prf:congfam-bar:subg} belongs to $K_\varphi$, which implies $x^{-1}y\in\overbar K_\varphi$.

Using \cref{lem:interval-factor}, one can verify $\overbar K=\{\overbar K_\varphi\}_\varphi$ forms a crossed $\operatorname{opTw}(\nabla)$-subgroup of $\operatorname{RSt}^G$.
It remains to verify the formula \eqref{eq:congfam-conj}.
Clearly, the inclusion in one direction will suffice, so we show
\begin{equation}
\label{eq:prf:congfam-bar:inc}
\varphi^\ast(y)\cdot\overbar K_\varphi\cdot\varphi^\ast(y)^{-1}
\subset \overbar K_{\varphi^y}
\end{equation}
for each $\varphi:\llangle m\rrangle\to\llangle n\rrangle\in\nabla$ and $y\in G_n$.
Suppose $\psi:\llangle l\rrangle\to\llangle m\rrangle\in\nabla$ is a morphism with $\varphi^y\psi$ active.
For $x\in\overbar K_\varphi$, we have
\begin{equation}
\label{eq:prf:congfam-bar:conj}
\begin{split}
\psi^\ast(\varphi^\ast(y)x\varphi^\ast(y)^{-1})
&= (\varphi\psi^{x\varphi^\ast(y)^{-1}})^\ast(y)\cdot (\psi^{\varphi^\ast(y)^{-1}})^\ast(x) \cdot \psi^\ast(\varphi^\ast(y)^{-1}) \\
&= (\varphi\psi^{\varphi^\ast(y)^{-1}})^\ast(y)\cdot (\psi^{\varphi^\ast(y)^{-1}})^\ast(x)\cdot (\varphi^y\psi)^\ast(y^{-1}) \\
&= (\varphi^y\psi)^\ast(y^{-1})^{-1}\cdot (\psi^{\varphi^\ast(y)^{-1}})^\ast(x)\cdot (\varphi^y\psi)^\ast(y^{-1})
\end{split}
\end{equation}
Note that, since $\varphi\psi^{\varphi^\ast(y)^{-1}}=(\varphi^y\psi)^{y^{-1}}$ is active, the middle term in the right hand side of \eqref{eq:prf:congfam-bar:conj} belongs to $K_{(\varphi^y\psi)^{y^{-1}}}$.
Using the formula \eqref{eq:congfam-conj} for the congruence family $K$, one gets
\[
(\varphi^y\psi)^\ast(y^{-1})^{-1}\cdot K_{(\varphi^y\psi)^{y^{-1}}}\cdot (\varphi^y\psi)^\ast(y^{-1})
= K_{\varphi^y\psi}\ .
\]
This implies that $\psi^\ast(\varphi^\ast(y)x\varphi^\ast(y)^{-1})\in K_{\varphi^y\psi}$, and the inclusion \eqref{eq:prf:congfam-bar:inc} follows.
\end{proof}

\begin{lemma}
\label{lem:congfam-barclop}
Let $G$ be a crossed interval group.
Then, the assignment $K\mapsto\overbar K$ defines a closure operator on the ordered set of congruence families on $G$.
\end{lemma}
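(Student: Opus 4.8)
The plan is to check the three defining properties of a closure operator on the poset of congruence families: \emph{extensivity} ($K\subseteq\overbar K$), \emph{monotonicity} ($K\subseteq L$ implies $\overbar K\subseteq\overbar L$), and \emph{idempotency} ($\overbar{\overbar K}=\overbar K$). By \cref{lem:congfam-bar}, each $\overbar K$ is again a congruence family, so all three assertions make sense in the ordered set at hand, the order being the pointwise inclusion $K_\varphi\subseteq K'_\varphi$ used in \cref{prop:congfam-quotal}.

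For extensivity I would argue pointwise. When $\mu$ is active we have $\overbar K_\mu=K_\mu$ by construction, so equality holds. For a general $\varphi:\llangle m\rrangle\to\llangle n\rrangle$, take $x\in K_\varphi$; then $x\in\operatorname{RSt}^G_\varphi$ by the first axiom of congruence families, and for every $\psi$ with $\varphi\psi$ making sense and active, axiom~(iii) says $\psi^\ast$ restricts to a map $K_\varphi\to K_{\varphi\psi}$, whence $\psi^\ast(x)\in K_{\varphi\psi}=\overbar K_{\varphi\psi}$. By the defining property of $\overbar K_\varphi$, this yields $x\in\overbar K_\varphi$, so $K_\varphi\subseteq\overbar K_\varphi$. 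Monotonicity is equally direct: if $K_\psi\subseteq L_\psi$ for all $\psi$ and $x\in\overbar K_\varphi$, then $x\in\operatorname{RSt}^G_\varphi$ and $\psi^\ast(x)\in K_{\varphi\psi}\subseteq L_{\varphi\psi}=\overbar L_{\varphi\psi}$ for every $\psi$ with $\varphi\psi$ active, so $x\in\overbar L_\varphi$.

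The only point requiring a remark is idempotency, and the key observation is that the bar construction depends on $K$ only through its values on \emph{active} morphisms, on which $\overbar K$ and $K$ coincide. Concretely, for active $\mu$ we have $\overbar{\overbar K}_\mu=\overbar K_\mu=K_\mu$, giving equality there. For a general $\varphi$, the definition of $\overbar{\overbar K}_\varphi$ asks that $x\in\operatorname{RSt}^G_\varphi$ and that $\psi^\ast(x)\in\overbar K_{\varphi\psi}$ for every $\psi$ with $\varphi\psi$ active; but $\varphi\psi$ being active forces $\overbar K_{\varphi\psi}=K_{\varphi\psi}$, so this condition is literally the one defining $\overbar K_\varphi$. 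Hence $\overbar{\overbar K}_\varphi=\overbar K_\varphi$ for every $\varphi$, and idempotency holds on the nose. I do not anticipate a serious obstacle: the whole statement reduces to unwinding the definition of $\overbar K$ together with congruence-family axiom~(iii), the main subtlety being simply to record that $\overbar K=K$ on active morphisms so that the two-fold bar collapses.
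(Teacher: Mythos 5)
Your proposal is correct and follows essentially the same route as the paper's proof: extensivity from congruence-family axiom (iii) applied to $\psi$ with $\varphi\psi$ active, monotonicity pointwise, and idempotency from the observation that $\overbar K$ agrees with $K$ on active morphisms, so the bar construction only sees those values. Your write-up simply makes explicit the steps the paper leaves as "clear."
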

\begin{proof}
The assignment $K\mapsto\overbar K$ clearly respects the inclusions.
Moreover, since $\overbar K_\mu=K_\mu$ for active morphisms $\mu$, we also have $\overbar{\overbar K}_\varphi=\overbar K_\varphi$ for every morphism $\varphi$.
On the other hand, we have $K_\varphi\subset\operatorname{RSt}^G_\varphi$, and for every morphism $\psi$ with $\varphi\psi$ making sense and active, $\psi^\ast(K_\varphi)\subset K_{\varphi\psi}$.
This implies $K_\varphi\subset\overbar K_\varphi$.
Thus, we obtain the result.
\end{proof}

\begin{definition}
A congruence family $K$ on a crossed interval group $G$ is said to be \emph{proper} if it is closed with respect to the closure operator $\overbar{(\blank)}$ defined above in the ordered set of congruence families on $G$; i.e. $\overbar K=K$.
\end{definition}

Every crossed interval group $G$ admits the minimum proper congruence family; namely the closure $\overbar{\operatorname{Triv}}$ of the trivial congruence family given in \cref{ex:congfam-triv}.
We write $\operatorname{Inr}^G:=\overbar{\operatorname{Triv}}$.
Hence, every proper congruence family on $G$ contains $\operatorname{Inr}^G$.
Moreover, it satisfies the following properties.

\begin{lemma}
\label{lem:conginr}
Let $G$ be a crossed interval group.
\begin{enumerate}[label={\upshape(\arabic*)}]
  \item\label{req:conginr:stab} If a composition $\varphi\psi$ in $\nabla$ is active, then the action of $\operatorname{Inr}^G_\varphi$ stabilizes $\psi$.
  \item\label{req:conginr:normal} For every morphism $\varphi$ in $\nabla$, the subgroup $\operatorname{Inr}^G_\varphi\subset\operatorname{RSt}^G_\varphi$ is normal.
  \item\label{req:conginr:inert} Let $K$ be a proper congruence family on $G$.
Then, for an active morphism $\mu$ and for an inert morphism $\rho$ with $\varphi\rho$ making sense, the composition
\begin{equation}
\label{eq:conginr:indinert}
K_\mu
\xrightarrow{\rho^\ast} K_{\mu\rho}
\twoheadrightarrow K_{\mu\rho}/\operatorname{Inr}^G_{\mu\rho}
\end{equation}
is bijective.
\end{enumerate}
\end{lemma}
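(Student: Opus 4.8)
The plan is to reduce all three assertions to a single statement about the \emph{section} of the inert part of $\varphi$. Fix $\varphi:\llangle m\rrangle\to\llangle n\rrangle$ and write $\varphi=\mu\rho$ for the factorization of \cref{lem:interval-factor} with $\rho:\llangle m\rrangle\to\llangle p\rrangle$ inert and $\mu:\llangle p\rrangle\to\llangle n\rrangle$ active; let $s:\llangle p\rrangle\to\llangle m\rrangle$ be the unique section of $\rho$, so $\rho s=\mathrm{id}$ and $\varphi s=\mu$ is active. I would first record two elementary facts. If $\varphi\psi$ is active then $\psi$ is active, since any interior point sent by $\psi$ to $\pm\infty$ would be sent there by $\varphi\psi$ too. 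Using this together with $\operatorname{im} s=\rho^{-1}(\langle p\rangle)\cup\{\pm\infty\}$, a morphism $\psi$ satisfies ``$\varphi\psi$ active'' exactly when it factors as $\psi=s\chi$ with $\chi=\rho\psi$ active. Feeding this into the definition $\operatorname{Inr}^G=\overbar{\operatorname{Triv}}$ and using the contravariant functoriality $(s\chi)^\ast=\chi^\ast s^\ast$, the requirement ``$\psi^\ast(x)=e$ for every such $\psi$'' collapses, upon taking $\chi=\mathrm{id}$, to the single equation $s^\ast(x)=e$. Thus
\[
\operatorname{Inr}^G_\varphi=\{x\in\operatorname{RSt}^G_\varphi\mid s^\ast(x)=e\}.
\]

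The heart of the matter, and what I expect to be the main obstacle, is the following claim: for \emph{every} $x\in\operatorname{RSt}^G_\varphi$ one has $s^x=s$. To prove it I would apply the right-stabilizer identity to the witness $s$ itself: from $\rho s=\mathrm{id}$ the equality $\varphi s^x=\varphi s$ reads $\mu\rho s^x=\mu$. Since $\mu$ is active, each interior $i$ has $\mu(i)\in\langle n\rangle$, so $\rho(s^x(i))$ must be interior, forcing $s^x(i)\in\rho^{-1}(\langle p\rangle)$. On the other hand the $G$-action on $\nabla(\llangle p\rrangle,\llangle m\rrangle)$ preserves the multiset of fibre-cardinalities over $\langle m\rangle$, so $s^x$ has the same fibre data as $s$, namely $p$ singletons and $m-p$ empty fibres. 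These two facts place the $p$ singleton fibres of $s^x$ exactly at the $p$ positions $\rho^{-1}(\langle p\rangle)$, and since both $s$ and $s^x$ are order-preserving this forces $s^x=s$. The only delicate input is the fibre-multiset invariance of the action; this is a property of the underlying $\nabla$-set action (it holds in $\mathfrak W^\nabla$ and transfers to $G$ along $G\to\mathfrak W^\nabla$), and I would isolate it beforehand as a short combinatorial lemma.

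Granting the claim, the first two parts are immediate. For \ref{req:conginr:stab}, given $x\in\operatorname{Inr}^G_\varphi$ and any $\psi=s\chi$ with $\varphi\psi$ active, the distributivity law yields $\psi^x=s^x\chi^{s^\ast(x)}=s\,\chi=\psi$, using $s^x=s$ and $s^\ast(x)=e$. For \ref{req:conginr:normal}, the claim gives $s^b=s$ for all $b\in\operatorname{RSt}^G_\varphi$, so the crossed relation $s^\ast(ab)=(s^b)^\ast(a)\,s^\ast(b)$ degenerates to $s^\ast(ab)=s^\ast(a)\,s^\ast(b)$; that is, $s^\ast$ restricts to a \emph{group homomorphism} on $\operatorname{RSt}^G_\varphi$. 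By the displayed description $\operatorname{Inr}^G_\varphi$ is exactly its kernel, hence normal.

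Finally, for \ref{req:conginr:inert} I would exploit this homomorphism with $\varphi=\mu\rho$ and $s$ the section of $\rho$. Since $K$ is proper, $\operatorname{Inr}^G=\overbar{\operatorname{Triv}}\subseteq\overbar K=K$, so $\operatorname{Inr}^G_{\mu\rho}\subseteq K_{\mu\rho}$ and the quotient is legitimate by \ref{req:conginr:normal}. The congruence-family axioms give $\rho^\ast(K_\mu)\subseteq K_{\mu\rho}$ and $s^\ast(K_{\mu\rho})\subseteq K_{\mu\rho s}=K_\mu$, while $\nabla$-set functoriality gives $s^\ast\rho^\ast=(\rho s)^\ast=\mathrm{id}$ on $K_\mu$. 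By the previous paragraph $s^\ast|_{K_{\mu\rho}}:K_{\mu\rho}\to K_\mu$ is a group homomorphism; it is surjective because $\rho^\ast$ is a section of it, and its kernel is $\{x\in K_{\mu\rho}\mid s^\ast(x)=e\}=K_{\mu\rho}\cap\operatorname{Inr}^G_{\mu\rho}=\operatorname{Inr}^G_{\mu\rho}$ by the displayed description and properness. Hence $s^\ast$ induces an isomorphism $K_{\mu\rho}/\operatorname{Inr}^G_{\mu\rho}\xrightarrow{\sim}K_\mu$, and the composite \eqref{eq:conginr:indinert} is its two-sided inverse (indeed $s^\ast\rho^\ast=\mathrm{id}$, and the fibres of a homomorphism are the cosets of its kernel), so \eqref{eq:conginr:indinert} is the desired bijection.
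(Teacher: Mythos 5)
Your proof is correct and follows essentially the same route as the paper's: both hinge on the unique section $\delta$ (your $s$) of the inert part of $\varphi$, the reduction of the defining condition of $\operatorname{Inr}^G_\varphi$ to the single equation $s^\ast(x)=e$, and the fact that $s$ is fixed by every right stabilizer of $\varphi$. Your packaging of parts (2) and (3) --- observing that $s^\ast$ restricts to a group homomorphism on $\operatorname{RSt}^G_\varphi$ with kernel exactly $\operatorname{Inr}^G_\varphi$ and invoking the first isomorphism theorem --- is a clean substitute for the paper's direct element computations, and your combinatorial verification that $s^x=s$ (via the fibre-cardinality invariance of the crossed action) fills in a step the paper merely asserts from the universal characterization of $\delta$.
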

\begin{proof}
We first show \ref{req:conginr:stab}.
Take the factorization $\varphi=\mu\rho$ with $\rho$ inert and $\mu$ active, and let $\delta$ be the unique section of $\rho$.
Notice that, in view of \cref{lem:interval-factor}, $\delta$ is characterized by the following two properties:
\begin{enumerate}[label={\upshape(\roman*)}]
  \item $\varphi\delta$ is active;
  \item every morphism $\psi$ with $\varphi\psi$ making sense and active uniquely factors as $\psi=\delta\psi'$ for a morphism $\psi'$.
\end{enumerate}
It follows that $\delta$ is fixed by the action of $\operatorname{RSt}^G_\varphi$.
Moreover, if $\varphi\psi$ is active, the property above implies there is a morphism $\psi'$ with $\psi=\delta\psi$.
Then, for each $x\in\operatorname{Inr}^G_\varphi$, we have
\[
\psi^x
= (\delta\psi')^x
= \delta^x\psi'^{\delta^\ast(x)}
= \delta\psi'
= \psi\ ,
\]
so that \ref{req:conginr:stab} follows.

Next, suppose $u\in\operatorname{Inr}^G_\varphi$ and $x\in\operatorname{RSt}^G_\varphi$.
For every morphism $\psi$ with $\varphi\psi$ making sense and active, we have
\[
\psi^\ast(xux^{-1})
= (\psi^{ux^{-1}})^\ast(x)(\psi^{x^{-1}})^\ast(u)\psi^\ast(x^{-1})
= (\psi^{ux^{-1}})^\ast(x)
\]
Note that $\varphi\psi^{x^{-1}}=\varphi\psi$ is active, so $(\psi^{x^{-1}})^\ast(u)$ is the unit.
Moreover, the part \ref{req:conginr:stab} implies $\psi^{ux^{-1}}=\psi^{x^{-1}}$.
It follows that $\psi^\ast(xux^{-1})$ vanishes, and we obtain \ref{req:conginr:normal}.

Finally, we show \ref{req:conginr:inert}.
Let $\delta:\llangle m\rrangle\to\llangle l\rrangle\in\nabla$ be the unique section of $\rho$.
We assert that the map $\delta^\ast:K_{\mu\rho}\to K_\mu$ induces the inverse of \eqref{eq:conginr:indinert}.
Indeed, for each $u\in\operatorname{Inv}^G_{\mu\rho}$, the definition of $\operatorname{Inv}^G_{\mu\rho}$ and the part \ref{req:conginr:stab} imply $\delta^\ast(u)=e$ and $\delta^u=\delta$.
Hence, for every $x\in K_{\mu\rho}$, $\delta^\ast(xu)=\delta^\ast(x)$.
In other words, $\delta^\ast$ is $\operatorname{Inv}^G_{\mu\rho}$-invariant so that it induces a map
\[
\delta^\dagger:K_{\mu\rho}/\operatorname{Inr}^G_{\mu\rho}\to K_\mu\ .
\]
Since $\delta$ is a section of $\rho$, the map is clearly a left inverse of the map \eqref{eq:conginr:indinert}.
To see it is also a right inverse, it is enough to see that, for each $x\in K_{\mu\rho}$, we have $\rho^\ast(\delta^\ast(x))x^{-1}\in\operatorname{Inr}^G_{\mu\rho}$.
Note that, by virtue of the characterization of $\delta$ above, this holds if and only if the map $\delta^\ast$ vanishes the element.
We have
\[
\delta^\ast(\rho^\ast(\delta^\ast(x))\cdot x^{-1})
= (\delta\rho\delta^{x^{-1}})^\ast(x)\cdot \delta^\ast(x^{-1})\ .
\]
As mentioned above, $\delta$ is fixed by the action of $\operatorname{RSt}^G_{\mu\rho}$, so we have $\delta\rho\delta^{x^{-1}}=\delta$ and $\delta^\ast(x^{-1})=\delta^\ast(x)^{-1}$.
Thus, $\delta^\ast(\rho^\ast(\delta^\ast(x))\cdot x^{-1})=e$, and we conclude $\delta^\dagger$ is a right inverse of \eqref{eq:conginr:indinert}, which completes the proof.
\end{proof}

\section{Associated double categories}
\label{sec:assoc-dblcat}

In the previous section, we see that congruence families are associated with quotal categories by taking the quotients of the total category.
Our problem is, on the other hand, higher categorical so we need ``higher categorical quotients'' in some sense.
We realize them in terms of \emph{double categories}.
Recall that a double category is a category internal to the category $\mathbf{Cat}$ of small categories; i.e. a diagram
\[
\mathfrak C\xrightrightarrows{s,t}\mathcal B
\]
in the category $\mathbf{Cat}$ of small categories together with functors
\[
\gamma:\mathfrak C\times_{\mathcal B}\mathfrak C
\to\mathfrak C
\quad\text{and}\qquad
\iota:\mathcal B\to\mathfrak C
\]
satisfying the appropriate conditions of categories, where the domain of $\gamma$ is the pullback of the cospan $\mathfrak C\xrightarrow{s}\mathcal B\xleftarrow{t}\mathfrak C$.

\begin{remark}
In what follows, we will often drop the structure functors $\gamma$ and $\iota$ from the notation and just say, for example, $\mathfrak C\rightrightarrows\mathcal B$ is a double category in the case above.
\end{remark}

Let $G$ be a crossed interval group.
We construct a double category from a pair $(K,L)$ of proper congruence families satisfying the following conditions:
\begin{enumerate}[label={\upshape($\spadesuit$\arabic*)}]
  \item\label{cond:congpair:normal} for each morphism $\varphi:\llangle m\rrangle\to\llangle n\rrangle\in\nabla$, the subgroup $K_\varphi\subset G_m$ is contained in the normalizer subgroup $N(L_\varphi)$ of $L_\varphi$; i.e.
\[
N(L_\varphi)
= \left\{x\in G_m\;\middle|\;xL_\varphi x^{-1}=L_\varphi\right\}\ ;
\]
  \item\label{cond:congpair:comm} if $\psi\varphi$ is a composition of morphisms in $\nabla$, for every $u\in L_\psi$ and for each $x\in K_\varphi$,
\[
[\varphi^\ast(u)x\varphi^\ast(u)^{-1}]=[x]\in\operatorname{Inr}^G_{\psi\varphi}\backslash K_{\psi\varphi}\ .
\]
\end{enumerate}
We first define a category $\mathcal Q_{L\dblslash K}$ as follows:
\begin{itemize}
  \item the objects are the same as $\nabla$;
  \item for $m,n\in\mathbb N$, morphisms $\llangle m\rrangle\to\llangle n\rrangle$ in $\mathcal Q_{L\dblslash K}$ are triples $(\varphi,[u],[x])$ with $\varphi\in\nabla(\llangle m\rrangle,\llangle n\rrangle)$, $[u]\in\operatorname{Inr}^G_\varphi\backslash K_\varphi$, and $[x]\in L_\varphi\backslash G_m$;
  \item the composition is given by
\[
(\psi,[v],[y])\circ(\varphi,[u],[x])
= (\psi\varphi^y, [\varphi^\ast(vy)u\varphi^\ast(y)^{-1}],[\varphi^\ast(y)x])\ .
\]
\end{itemize}
Note that we have
\[
\varphi^\ast(vy)u\varphi^\ast(y)^{-1}
= (\varphi^y)^\ast(v)\cdot\varphi^\ast(y)u\varphi^\ast(y)^{-1}\ ,
\]
so the conditions on congruence families imply the element belongs to $K_{\psi\varphi^y}$.
In addition, \ref{req:conginr:normal} in \cref{lem:conginr} and the condition \ref{cond:congpair:comm} guarantee that the composition does not depend on the choice of representatives.
If we are given another morphism $(\chi,[w],[z])$ postcomposable with $(\psi,[v],[y])$, the second component of the composition
\[
\left((\chi,[w],[z])\circ(\psi,[v],[y])\right)\circ(\varphi,[u],[x])
\]
is represented by the element
\[
\begin{split}
&\varphi^\ast(\psi^\ast(wz)v\psi^\ast(z)^{-1}\psi^\ast(z)y)u\varphi^\ast(\psi^\ast(z)y)^{-1} \\
&= (\psi\varphi^{vy})^\ast(wz)\varphi^\ast(vy)u\left((\psi\varphi^y)^\ast(z)\varphi^\ast(y)\right)^{-1} \\
&= (\psi\varphi^y)^\ast(wz)\varphi^\ast(vy)u\varphi^\ast(y)^{-1}(\psi\varphi^y)^\ast(z)^{-1}\ ,
\end{split}
\]
which also represents the second component of the other composition.
Thanks to this and the associativity of morphisms in $\mathbb E_G$, one obtains the associativity of the composition in $\mathbb G_{L\dblslash K}$ so that it is actually a category.

\begin{example}
\label{ex:L//Inr}
For every proper congruence family $L$, the pair $(\operatorname{Inr}^G,L)$ satisfies the conditions \ref{cond:congpair:normal} and \ref{cond:congpair:comm}.
One can verify that there is a canonical isomorphism $\mathcal Q_{L\dblslash \operatorname{Inr}^G}\cong\mathcal Q_L$.
\end{example}

\begin{example}
\label{ex:Kec//Dec}
Let $\mathcal G$ be a group operad, so we have the congruence family $\operatorname{Dec}^{\mathcal G}$ given in \cref{ex:congfam-decomp}.
For each morphism $\varphi:\llangle m\rrangle\to\llangle n\rrangle\in\nabla$, we set $\operatorname{Kec}^{\mathcal G}_\varphi\subset\operatorname{Dec}^{\mathcal G}$ to be the kernel of the composition
\[
\operatorname{Dec}^{\mathcal G}_\varphi
\hookrightarrow \mathcal G(m)
\to\mathfrak S(m)\ .
\]
In view of \cref{ex:congfam-kernel}, the family $\operatorname{Kec}^{\mathcal G}=\{\operatorname{Kec}^{\mathcal G}_\varphi\}_\varphi$ forms a congruence family on $\mathcal G$.
Taking the closure in the sense of \cref{lem:congfam-barclop}, we obtain proper congruence families $\overbar{\operatorname{Dec}}^{\mathcal G}$ and $\overbar{\operatorname{Kec}}^{\mathcal G}$.
One can verify that the pair $(\overbar{\operatorname{Dec}}^{\mathcal G},\overbar{\operatorname{Kec}}^{\mathcal G})$ satisfies the conditions \ref{cond:congpair:normal} and \ref{cond:congpair:comm} so that they give rise to a category $\mathbb G_{\overbar{\operatorname{Kec}}^{\mathcal G}\dblslash \overbar{\operatorname{Dec}}^{\mathcal G}}$.
\end{example}

The category $\mathcal Q_{L\dblslash K}$ comes equipped with two canonical functors
\begin{equation}
\label{eq:QL/K-doublecat}
s,t:\mathcal Q_{L\dblslash K}\rightrightarrows \mathcal Q_L\ ,
\end{equation}
here $\mathcal Q_L$ is the $G$-quotal category associated with $L$, such that
\begin{itemize}
  \item they are the identities on objects;
  \item for each morphism $(\varphi,[u],[x])\in\mathcal Q_{L\dblslash K}(\llangle m\rrangle,\llangle n\rrangle)$,
\[
s(\varphi,[u],[x]) = (\varphi,[x])
\ ,\quad
t(\varphi,[u],[x]) = (\varphi,[ux])\ .
\]
\end{itemize}
Note that the assignment $t$ does not depend on the choice of representatives by virtue of the condition \ref{cond:congpair:normal}.
Then, the functorialities are easily verified.
We assert that the diagram \eqref{eq:QL/K-doublecat} canonically admits a structure of a double category: define functors $\gamma:\mathcal Q_{L\dblslash K}\times_{\mathcal Q_L}\mathcal Q_{L\dblslash K}\to\mathcal Q_{L\dblslash K}$ and $\iota:\mathcal Q_L\to\mathcal Q_{L\dblslash K}$ by
\[
\gamma\left((\varphi,[u],[u'x]),(\varphi,[u'],[x])\right)
:= (\varphi,[uu'],[x])
\ ,\quad
\iota(\varphi,[x])
:= (\varphi,[e],[x])\ ,
\]
where $e$ is the unit in the group $K_\varphi$.
These actually define functors thanks to \ref{req:conginr:normal} in \cref{lem:conginr}, and the associativity and the unitality are obvious.
We call the double category \eqref{eq:QL/K-doublecat} the \emph{double category associated to the pair $(K,L)$}.

\begin{remark}
\label{rem:QL/K-2cat}
In the case $L\subset K$, the double category $\mathcal Q_{L\dblslash K}\rightrightarrows\mathcal Q_L$ looks like a ``homotopy quotient'' of the category $\mathcal Q_L$ with respect to the congruence family $K$ in the following sense:
since the functors \eqref{eq:QL/K-doublecat} are the identities on objects, one can see the double category as a $2$-category, say $\mathbf Q_{L\dblslash K}$.
For each $m,n\in\mathbb N$, the category $\mathbf Q_{L\dblslash K}(\llangle m\rrangle,\llangle n\rrangle)$ is a groupoid whose isomorphism classes corresponds in one-to-one to morphisms $\llangle m\rrangle\to\llangle n\rrangle$ in the $G$-quotal category $\mathcal Q_K$ associated with $K$.
\end{remark}

We further take a quotient of the double category $\mathcal Q_{L\dblslash K}\rightrightarrows\mathcal Q_L$ using the following general construction.

\begin{proposition}
\label{prop:lcancel-morcong}
Let $\mathcal A$ be a category, and let $\mathsf M$ be a left cancellative class of morphisms in $\mathcal A$; i.e. if a composition $\delta\varepsilon$ belongs to $\mathsf M$, so does $\delta$.
For each $a,b\in\mathcal A$, define a relation $\sim_{\mathsf M}$ on the set $\mathcal A(a,b)$ such that $\alpha\sim_{\mathsf M}\alpha'$ if and only if, for each morphism $\beta$ in $\mathcal A$ with codomain $a$, one has $\alpha\beta=\alpha'\beta$ as soon as either of the sides belongs to $\mathsf M$.
Then, the relation $\sim_{\mathsf M}$ is a congruence on $\mathcal A$ in the sense in \expandafter\uppercase\expandafter{\romannumeral 2\relax}.8 of \cite{McL98}.
Consequently, taking the quotient of each hom-set of $\mathcal A$ by $\sim_{\mathsf M}$, one gets a quotient category $\mathcal A\to\mathcal A/\mathord\sim_{\mathsf M}$.
\end{proposition}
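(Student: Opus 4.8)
The plan is to verify directly the two defining properties of a congruence in the sense of \cite{McL98}: that $\sim_{\mathsf M}$ restricts to an equivalence relation on each hom-set $\mathcal A(a,b)$, and that it is compatible with composition. Since, granting transitivity, the two-sided compatibility condition is equivalent to the conjunction of \emph{left compatibility} ($\alpha\sim_{\mathsf M}\alpha'$ implies $\gamma\alpha\sim_{\mathsf M}\gamma\alpha'$ for postcomposable $\gamma$) and \emph{right compatibility} ($\alpha\sim_{\mathsf M}\alpha'$ implies $\alpha\delta\sim_{\mathsf M}\alpha'\delta$ for precomposable $\delta$), I would establish these separately. The guiding mechanism throughout is that any equality $\alpha\beta=\alpha'\beta$ forced by the relation automatically transports membership in $\mathsf M$ from one side to the other.

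First I would dispose of the equivalence-relation axioms. Reflexivity and symmetry are immediate, since the defining condition is visibly symmetric in $\alpha$ and $\alpha'$ and $\alpha\beta=\alpha\beta$ always holds. For transitivity, suppose $\alpha\sim_{\mathsf M}\alpha'$ and $\alpha'\sim_{\mathsf M}\alpha''$, and let $\beta$ be a morphism with codomain $a$ with, say, $\alpha\beta\in\mathsf M$. The first relation gives $\alpha\beta=\alpha'\beta$, whence $\alpha'\beta\in\mathsf M$ as well; the second relation then gives $\alpha'\beta=\alpha''\beta$, so $\alpha\beta=\alpha''\beta$. The case $\alpha''\beta\in\mathsf M$ is symmetric, so $\alpha\sim_{\mathsf M}\alpha''$.

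Right compatibility is routine and uses no hypothesis on $\mathsf M$: for $\delta$ with codomain $a$ and any test morphism $\beta$ whose codomain is the domain of $\delta$, one has $(\alpha\delta)\beta=\alpha(\delta\beta)$ and $(\alpha'\delta)\beta=\alpha'(\delta\beta)$, so setting $\beta':=\delta\beta$ reduces the required implication to the defining condition for $\alpha\sim_{\mathsf M}\alpha'$ applied to $\beta'$. The main obstacle is \emph{left compatibility}, and this is exactly where the left cancellation property of $\mathsf M$ is indispensable. Given $\alpha\sim_{\mathsf M}\alpha'$ in $\mathcal A(a,b)$ and $\gamma\colon b\to c$, take a test morphism $\beta$ with codomain $a$ and suppose $(\gamma\alpha)\beta=\gamma(\alpha\beta)\in\mathsf M$. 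The point is that the hypothesis $\alpha\sim_{\mathsf M}\alpha'$ can only be invoked once we know that $\alpha\beta$ \emph{itself} lies in $\mathsf M$; left cancellation, applied to the composite $\gamma(\alpha\beta)\in\mathsf M$, supplies precisely this, yielding $\alpha\beta\in\mathsf M$. The defining condition then gives $\alpha\beta=\alpha'\beta$, and postcomposing with $\gamma$ gives $(\gamma\alpha)\beta=(\gamma\alpha')\beta$; the case $(\gamma\alpha')\beta\in\mathsf M$ is identical, so $\gamma\alpha\sim_{\mathsf M}\gamma\alpha'$.

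Having checked that $\sim_{\mathsf M}$ is an equivalence relation on each hom-set and is compatible with both pre- and postcomposition, I conclude that it is a congruence in the sense of \cite{McL98}. The quotient category $\mathcal A/\mathord\sim_{\mathsf M}$ is then produced by the standard construction, composition on $\sim_{\mathsf M}$-classes being well defined exactly by the compatibility just verified, which also makes $\mathcal A\to\mathcal A/\mathord\sim_{\mathsf M}$ a functor.
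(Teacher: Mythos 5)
Your proof is correct and follows essentially the same route as the paper's: the one substantive step in both is to use left cancellativity to deduce $\alpha\beta\in\mathsf M$ from $\gamma(\alpha\beta)\in\mathsf M$ so that the defining condition of $\sim_{\mathsf M}$ can be invoked, the paper merely handling pre- and postcomposition in a single two-sided step $\alpha\beta\gamma\sim_{\mathsf M}\alpha\beta'\gamma$ where you separate them (and it declares the equivalence-relation axioms, including the transitivity you spell out, to be obvious). Note that you, like the paper's own proof and its subsequent applications, read ``left cancellative'' as $\delta\varepsilon\in\mathsf M\Rightarrow\varepsilon\in\mathsf M$, which is clearly the intended meaning even though the statement literally says ``so does $\delta$.''
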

\begin{proof}
It is obvious that $\sim_{\mathsf M}$ is an equivalence relation on each hom-set $\mathcal A(a,b)$.
We have to show, for compositions $\alpha\beta\gamma$ and $\alpha\beta'\gamma$ with $\beta\sim_{\mathsf M}\beta'$, we have $\alpha\beta\gamma\sim_{\mathsf M}\alpha\beta\gamma$.
Suppose a composition $\alpha\beta\gamma\delta$ belongs to $\mathsf M$.
By virtue of the observation above, we have $\beta\gamma\delta\in\mathsf M$, so $\beta\sim_{\mathsf M}\beta'$ implies $\beta\gamma\delta=\beta'\gamma\delta$.
Thus, we obtain $\alpha\beta\gamma\delta=\alpha\beta'\gamma\delta$ and conclude $\alpha\beta\gamma\sim_{\mathsf M}\alpha\beta'\gamma$.
\end{proof}

\begin{remark}
\label{rem:lcancel-factsys}
Typical examples of left cancellative class of morphisms come from orthogonal factorization systems (see \cref{rem:ortho-fact}).
Suppose $(\mathsf E,\mathsf M)$ is an orthogonal factorization system on a category $\mathcal A$.
One can see that the class $\mathsf M$ is left cancellative provided every morphisms in $\mathsf E$ is an epimorphism.
Indeed, if $\varepsilon=\mu\rho$ and $\delta\mu=\nu\sigma$ are factorizations with $\mu,\nu\in\mathsf M$ and $\rho,\sigma\in\mathsf E$, then the equation $\delta\varepsilon\circ\mathrm{id}=\nu\circ\sigma\rho$ and the unique lifting property implies $\sigma\rho$ is an isomorphism.
Since $\sigma$ is an epimorphism by the assumption on $\mathsf E$, $\rho$ is an isomorphism so that $\varepsilon\in\mathsf M$.
\end{remark}

We apply \cref{prop:lcancel-morcong} to the category $\mathcal Q_{L\dblslash K}$.
To obtain a left cancellative classes on it, in view of \cref{rem:lcancel-factsys}, we construct orthogonal factorization system.
We define two classes $\mathsf I_{L\dblslash K}$ and $\mathsf A_{L\dblslash}$ of morphisms in $\mathcal Q_{L\dblslash K}$ as follows:
\[
\begin{gathered}
\mathsf I_{L\dblslash K}
:= \left\{(\varphi,[u],[x])\;\middle|\;\text{$\varphi\,$:~inert}\right\}\ ,
\\
\mathsf A_{L\dblslash K}
:= \left\{(\varphi,[u],[x])\;\middle|\;\text{$\varphi\,$:~active}\right\}\ .
\end{gathered}
\]
We assert that $(\mathsf I_{L\dblslash K},\mathsf A_{L\dblslash K})$ forms an orthogonal factorization system on $\mathbb Q_{L\dblslash K}$.
In fact, if we have a composition $\mu\rho:\llangle m\rrangle\to\llangle n\rrangle\in\nabla$ with $\mu$ active and $\rho$ inert, then for each $u\in K_{\mu\rho}$ and $x\in G_m$, one can use \ref{req:conginr:inert} in \cref{lem:conginr} to find a unique element $\overbar u\in K_\mu$ so that
\[
(\mu\rho,[u],[x])
= (\mu,[\overbar u],[e])\circ(\rho,[e_m],[x])\ .
\]
As easily verified, the factorization is unique up to a unique isomorphism, and we conclude $(\mathsf I_{L\dblslash K},\mathsf A_{L\dblslash K})$ is an orthogonal factorization.
Since every member of $\mathsf I_{L\dblslash K}$ is a split epimorphism, \cref{lem:interval-factor} implies $\mathsf A_{L\dblslash K}$ is left cancellative.

We denote by $\widetilde{\mathcal Q}_{L\dblslash K}$ the quotient category of $\mathcal Q_{L\dblslash K}$ obtained by \cref{prop:lcancel-morcong} with the class $\mathsf A_{L\dblslash K}$.
In particular, as mentioned in \cref{ex:L//Inr}, there is an isomorphism $\mathcal Q_L\cong\mathcal Q_{L\dblslash\operatorname{Inr}^G}$.
We set $(\mathsf I_L,\mathsf A_L)$ the orthogonal factorization system corresponds to $(\mathsf I_{L\dblslash\operatorname{Inr}^G},\mathsf A_{L\dblslash\operatorname{Inr}^G})$, so we obtain a quotient category $\mathcal Q_L\twoheadrightarrow\widetilde{\mathcal Q}_L$, which corresponding to $\widetilde{\mathcal Q}_{L\dblslash\operatorname{Inr}^G}$ through the isomorphism.

\begin{remark}
\label{rem:Icong-crit}
We have a convenient criterion for the congruence $\sim_{\mathsf A_{L\dblslash K}}$.
Suppose $\varphi,\varphi':\llangle m\rrangle\to\llangle n\rrangle\in\nabla$, $x,x'\in G_m$, $u\in K_\varphi$, and $u'\in K_{\varphi'}$.
Then, we have $(\varphi,[u],[x])\sim_{\mathsf A_{L\dblslash K}}(\varphi',[u'],[x'])$ if and only if for every $\psi:\llangle l\rrangle\to\llangle m\rrangle$ with either $\varphi\psi^x$ or $\varphi'\psi^{x'}$ active, the following equations hold:
\[
\begin{gathered}
\varphi\psi^x = \varphi'\psi^{x'} \in \nabla(\llangle l\rrangle,\llangle n\rrangle) \\
[\psi^\ast(x)]=[\psi^\ast(x')]\in L_{\varphi\psi^x}\backslash G_l \\
[(\psi^x)^\ast(u)]=[(\psi^{x'})^\ast(u')]\in\operatorname{Inr}^G_{\varphi\psi^x}\!\backslash K_{\varphi\psi^x}\ .
\end{gathered}
\]
Furthermore, let $\varphi=\mu\rho$ and $\varphi'=\mu'\rho'$ be the factorization with $\mu,\mu'$ active and $\rho,\rho'$ inert, and say $\delta$ and $\delta'$ are the unique sections of $\rho$ and $\rho'$ respectively.
Then, it turns out that we only have to test the conditions above in the cases $\psi=\delta^{x^{-1}}$ and $\psi=\delta'^{x'^{-1}}$.
\end{remark}

\begin{example}
\label{ex:Icong-active}
Let $G$ be a crossed interval group, and let $(K,L)$ be a pair of proper congruence families satisfying \ref{cond:congpair:normal} and \ref{cond:congpair:comm}.
Then, for an active morphism $\mu:\llangle m\rrangle\to\llangle n\rrangle$, we have $(\mu,[u],[x])\sim_{\mathsf A_{L\dblslash K}}(\mu',[u'],[x'])$ for two morphisms in $\mathcal Q_{L\dblslash K}$ if and only if they are in fact equal.
Indeed, the composition $\mu\circ\mathrm{id}_{\llangle m\rrangle}^x$ equals to $\mu$ itself and is active, so we have
\[
\begin{gathered}
\mu
= \mu\circ\mathrm{id}_{\llangle m\rrangle}^x
= \mu'\circ\mathrm{id}_{\llangle m\rrangle}^{x'}
= \mu'
\\
[x]
= [\mathrm{id}_{\llangle m\rrangle}^\ast(x)]
= [\mathrm{id}_{\llangle m\rrangle}^\ast(x')]
= [x']
\\
[u]
= [(\mathrm{id}^x_{\llangle m\rrangle})^\ast(u)]
= [(\mathrm{id}^{x'}_{\llangle m\rrangle})^\ast(u')]
= [u']\ .
\end{gathered}
\]
\end{example}

\begin{example}
\label{ex:Icong-inert}
Let $G$ be a crossed interval group and $L$ a proper congruence family.
For $1\le i\le n$, define $\rho_i:\llangle n\rrangle\to\llangle 1\rrangle$ to be the inert morphism such that
\[
\rho_i(j) =
\begin{cases}
-\infty & j<i\ , \\
1 & j=i\ , \\
\infty & j>i\ ,
\end{cases}
\]
and put $\delta_i$ the unique section of $\rho_i$.
Then, for each $x\in G_n$, we have
\begin{equation}
\label{eq:Icong-inert}
(\rho_{x(i)},[x])
\sim_{\mathsf A_L} (\rho_i,[\rho_i^\ast\delta_i^\ast(x)])
\in\mathcal Q_L(\llangle n\rrangle,\llangle 1\rrangle)\ .
\end{equation}
Indeed, since $\rho_i^\ast\delta_i^\ast(x)\in G_n$ acts trivially on active morphisms, we have
\[
\begin{gathered}
\rho_{x(i)}\delta_{x(i)}
= \mathrm{id}_{\llangle 1\rrangle}
= \rho_i\delta_i
= \rho_i\delta_{x(i)}^{x^{-1}}
= \rho_i\delta_{x(i)}^{\rho_i^\ast(x)\delta_i^\ast(x)x^{-1}}
\\
(\delta^{x^{-1}}_{x(i)})^\ast(x)
= \delta_i^\ast(x)
= \delta_i^\ast\left((\rho_i\delta_i)^\ast(x)\right)\ .
\end{gathered}
\]
Hence, \eqref{eq:Icong-inert} follows from the argument in \cref{rem:Icong-crit}.
\end{example}

\begin{lemma}
\label{lem:QQ-quot}
Let $G$ be a crossed interval group, and let $(K,L)$ be a pair of proper congruence families on $G$ satisfying \ref{cond:congpair:normal} and \ref{cond:congpair:comm}.
Then, the quotient functors $\mathcal Q_{L\dblslash K}\to\widetilde{\mathcal Q}_{L\dblslash K}$ and $\mathcal Q_L\to\widetilde{\mathcal Q}_L$ derive functors $s,t:\widetilde{\mathcal Q}_{L\dblslash K}\rightrightarrows\widetilde{\mathcal Q}_L$ from the functors \eqref{eq:QL/K-doublecat} so that the diagram below is commutative:
\begin{equation}
\label{eq:QQ-quot:pbsq}
\vcenter{
  \xymatrix{
    \mathcal Q_{L\dblslash K} \ar[r]^s \ar[d] \ar@{}[dr]|(.4)\pbcorner & \mathcal Q_L \ar[d] & \mathcal Q_{L\dblslash K} \ar[l]_t \ar@{}[dl]|(.4)\bpcorner \ar[d] \\
    \widetilde{\mathcal Q}_{L\dblslash K} \ar[r]^s & \widetilde{\mathcal Q}_L & \widetilde{\mathcal Q}_{L\dblslash K} \ar[l]_t }}
\quad.
\end{equation}
Moreover, each square in \eqref{eq:QQ-quot:pbsq} is a pullback of categories.
\end{lemma}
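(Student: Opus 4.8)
The plan is to treat the statement in three parts: the descent of $s$ and $t$ to the quotients (which also gives commutativity of the squares), the pullback property for the left ($s$-)square, and a reduction of the right ($t$-)square to the left one by a symmetry.

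First I would verify descent using the explicit criterion of \cref{rem:Icong-crit} for $\sim_{\mathsf A_{L\dblslash K}}$ and its evident analogue, obtained by dropping the $K$-component, for $\sim_{\mathsf A_L}$. Suppose $(\varphi,[u],[x])\sim_{\mathsf A_{L\dblslash K}}(\varphi',[u'],[x'])$. For $s$ the first two of the three congruence conditions are literally the two conditions defining $(\varphi,[x])\sim_{\mathsf A_L}(\varphi',[x'])$, so $s$ descends at once. For $t$ the point is that $u\in\operatorname{RSt}^G_\varphi$ gives $\varphi\psi^{ux}=\varphi\psi^x$ for every $\psi$, so the test morphisms making $(\varphi,[ux])$ active are exactly those for $(\varphi,[x])$; then from $\psi^\ast(ux)=(\psi^x)^\ast(u)\,\psi^\ast(x)$, the membership $(\psi^x)^\ast(u)\in K_{\varphi\psi^x}\subset N(L_{\varphi\psi^x})$ (by the third congruence-family axiom and \ref{cond:congpair:normal}), and the inclusion $\operatorname{Inr}^G\subset L$ (valid since $L$ is proper), the second and third congruence conditions combine to $[\psi^\ast(ux)]=[\psi^\ast(u'x')]$ in $L_{\varphi\psi^x}\backslash G_l$. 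Hence $t$ descends, and commutativity of both squares in \eqref{eq:QQ-quot:pbsq} is then immediate.

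Next, since every functor in \eqref{eq:QQ-quot:pbsq} is the identity on objects, the comparison functor $\mathcal Q_{L\dblslash K}\to\widetilde{\mathcal Q}_{L\dblslash K}\times_{\widetilde{\mathcal Q}_L}\mathcal Q_L$ is the identity on objects, so I must show it is a bijection on hom-sets. Sorting morphisms by their $s$-image, this is equivalent to showing that for each $\beta=(\varphi,[x])\in\mathcal Q_L(\llangle m\rrangle,\llangle n\rrangle)$ the quotient functor $q$ restricts to a bijection from $s^{-1}(\beta)=\{(\varphi,[u],[x])\mid[u]\in\operatorname{Inr}^G_\varphi\backslash K_\varphi\}$ onto $\tilde s^{-1}(p\beta)$, where $p$ denotes the quotient $\mathcal Q_L\to\widetilde{\mathcal Q}_L$. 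For injectivity I factor $\varphi=\mu\rho$ with $\delta$ the section of $\rho$ and apply \cref{rem:Icong-crit} with $\psi=\delta^{x^{-1}}$, for which $\psi^x=\delta$ and $\varphi\psi^x=\mu$ is active; the $K$-condition then reads $[\delta^\ast(u)]=[\delta^\ast(u')]$ in $\operatorname{Inr}^G_\mu\backslash K_\mu$, and since $\operatorname{Inr}^G_\mu=\operatorname{Triv}_\mu$ is trivial for active $\mu$, this gives $\delta^\ast(u)=\delta^\ast(u')$, whence $[u]=[u']$ by the bijection of \ref{req:conginr:inert} of \cref{lem:conginr}. For surjectivity I choose a representative $(\varphi_0,[u_0],[x_0])$ of a given class in $\tilde s^{-1}(p\beta)$, so that $(\varphi_0,[x_0])\sim_{\mathsf A_L}(\varphi,[x])$, and I seek $[u]$ with $(\varphi,[u],[x])\sim_{\mathsf A_{L\dblslash K}}(\varphi_0,[u_0],[x_0])$; the first two congruence conditions hold automatically, so testing with $\psi=\delta^{x^{-1}}$ I define $[u]$ by $\delta^\ast(u):=(\delta^{x_0x^{-1}})^\ast(u_0)\in K_\mu$, which is legitimate and unique by \ref{req:conginr:inert} of \cref{lem:conginr}.

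The hard part will be to check the $K$-condition for the remaining test morphism $\psi=\delta_0^{x_0^{-1}}$ (with $\varphi_0=\mu_0\rho_0$), namely the identity $(\delta_0^{xx_0^{-1}})^\ast(u)=\delta_0^\ast(u_0)$ in $K_{\mu_0}$. I would first show that $\sim_{\mathsf A_L}$ forces the active parts to coincide, $\mu=\mu_0$, by feeding the two expressions $\mu=\varphi_0\delta^{x_0x^{-1}}$ and $\mu_0=\varphi\delta_0^{xx_0^{-1}}$ arising from the two tests into the uniqueness of the active--inert factorization (\cref{lem:interval-factor}); granting this, the required identity should follow from the crossed $\operatorname{opTw}(\nabla)$-group axioms of \cref{rem:congfam-crsgrp}, which govern how $\delta^\ast$ and $\delta_0^\ast$ transport elements of $K$, together with condition \ref{cond:congpair:comm}, which absorbs the discrepancy between $x$ and $x_0$ (they differ by an element of $L$ by the second $\sim_{\mathsf A_L}$-condition) modulo $\operatorname{Inr}^G$. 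This compatibility computation is the technical crux. Finally, the $t$-square reduces to the $s$-square by the symmetry $\tau:\mathcal Q_{L\dblslash K}\to\mathcal Q_{L\dblslash K}$, $(\varphi,[u],[x])\mapsto(\varphi,[u^{-1}],[ux])$: using \ref{cond:congpair:normal} and $\operatorname{Inr}^G\subset L$ for well-definedness and a short computation with the composition law (identifying first components via $v\in\operatorname{RSt}^G_\psi$), $\tau$ is an identity-on-objects isomorphism with $s\tau=t$, $t\tau=s$, $\tau^2=\mathrm{id}$; as $\tau$ fixes the $\varphi$-component it preserves $\mathsf A_{L\dblslash K}$ and hence $\sim_{\mathsf A_{L\dblslash K}}$, descending to $\tilde\tau$ with $q\tau=\tilde\tau q$ and $\tilde s\tilde\tau=\tilde t$. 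Thus $\tau,\tilde\tau$ (identities on $\mathcal Q_L,\widetilde{\mathcal Q}_L$) carry the $t$-square isomorphically onto the $s$-square, so it is a pullback once the latter is.
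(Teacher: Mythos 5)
Your proposal is correct, and in two places it is more careful than the paper's own argument. The paper's proof consists of establishing, for a \emph{fixed} pair $(\varphi,x)$ and $u,u'\in K_\varphi$, the equivalence of (a) $(\varphi,[u],[x])\sim_{\mathsf A_{L\dblslash K}}(\varphi,[u'],[x])$, (b) the same with third components $[u^{-1}x],[u'^{-1}x]$, and (c) $u^{-1}u'\in\operatorname{Inr}^G_\varphi$ --- exactly your injectivity step (test $\psi=\delta^{x^{-1}}$, triviality of $\operatorname{Inr}^G_\mu$ for active $\mu$, and \ref{req:conginr:inert} of \cref{lem:conginr}); the descent of $s,t$ is declared straightforward, the surjectivity of the comparison functor onto the fibre over a given pair is left implicit, and the $t$-square is handled by ``the same argument.'' You supply the descent computation correctly (the use of $u\in\operatorname{RSt}^G_\varphi$ to identify the test morphisms, \ref{cond:congpair:normal}, and $\operatorname{Inr}^G\subset L$ is exactly what is needed), you isolate the surjectivity onto $\tilde s^{-1}(p\beta)$ as a genuine step, and you dispose of the $t$-square by the involution $\tau(\varphi,[u],[x])=(\varphi,[u^{-1}],[ux])$, a genuinely different and cleaner device than rerunning the fibre computation; your well-definedness checks for $\tau$ (normality of $\operatorname{Inr}^G_\varphi$ in $\operatorname{RSt}^G_\varphi$ from \cref{lem:conginr}, condition \ref{cond:congpair:normal}, properness of $L$) are the right ones.

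The step you flag as the technical crux does close, and more easily than you expect, so there is no gap --- but the reason is not \ref{cond:congpair:comm}; it is that the relation $(\varphi,[x])\sim_{\mathsf A_L}(\varphi_0,[x_0])$ already forces $\delta^{x_0x^{-1}}=\delta_0$ and hence $\delta_0^{xx_0^{-1}}=\delta$. Indeed, the first $\sim_{\mathsf A_L}$-condition for the test $\psi=\delta^{x^{-1}}$ gives $\varphi_0\delta^{x_0x^{-1}}=\varphi\delta=\mu$ active, so $\delta^{x_0x^{-1}}=\delta_0\psi'$ by the universal property of $\delta_0$, and symmetrically $\delta_0^{xx_0^{-1}}=\delta\chi'$; acting by $xx_0^{-1}$ on the first identity, using $(\varphi^y)^z=\varphi^{zy}$ and cancelling the split monomorphism $\delta$, one finds that $\psi'$ and $\chi'$ are invertible up to the crossed action, hence isomorphisms, hence identities since $\nabla$ has no non-trivial automorphisms. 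Consequently $\mu_0=\varphi_0\delta_0=\mu$ (no separate factorization argument is needed), the two test morphisms $\delta^{x^{-1}}$ and $\delta_0^{x_0^{-1}}$ of \cref{rem:Icong-crit} coincide, your defining equation $\delta^\ast(u)=(\delta^{x_0x^{-1}})^\ast(u_0)$ reads $\delta^\ast(u)=\delta_0^\ast(u_0)$, and the remaining condition $(\delta_0^{xx_0^{-1}})^\ast(u)=\delta_0^\ast(u_0)$ is literally the same equation. So your surjectivity argument is sound once this identification is made, and it fills in a point the paper passes over in silence.
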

\begin{proof}
The first statement is straightforward.
To see the last, we show that when we fix a morphism $\varphi:\llangle m\rrangle\to\llangle n\rrangle\in\nabla$ and an element $x\in G_m$, for two elements $u,u'\in K_\varphi$, the following three are all equivalent:
\begin{enumerate}[label={\upshape(\alph*)}]
  \item\label{cond:prf:QQ-quot:spb} $(\varphi,[u],[x])\sim_{\mathsf A_{L\dblslash K}} (\varphi,[u'],[x])\in\mathcal Q_{L\dblslash K}(\llangle m\rrangle,\llangle n\rrangle)$;
  \item\label{cond:prf:QQ-quot:tpb} $(\varphi,[u],[u^{-1}x])\sim_{\mathsf A_{L\dblslash K}} (\varphi,[u'],[u'^{-1}x])\in\mathcal Q_{L\dblslash K}(\llangle m\rrangle,\llangle n\rrangle)$;
  \item\label{cond:prf:QQ-quot:inr} $u^{-1}u'\in\operatorname{Inr}^G_\varphi$.
\end{enumerate}
Note the equivalence of \ref{cond:prf:QQ-quot:spb} and \ref{cond:prf:QQ-quot:inr} implies the left square in \eqref{eq:QQ-quot:pbsq} is a pullback while the equivalence of \ref{cond:prf:QQ-quot:tpb} and \ref{cond:prf:QQ-quot:inr} implies the other.

Let $\varphi=\mu\rho$ be the factorization with $\mu$ active and $\rho$ inert.
In view of \cref{rem:Icong-crit}, the condition \ref{cond:prf:QQ-quot:spb} is satisfied if and only if $\delta^\ast(u)=\delta^\ast(u')$ since $\operatorname{Inr}^G_{\varphi\delta}=\operatorname{Inr}^G_\mu$ is trivial.
The latter is equivalent to \ref{cond:prf:QQ-quot:inr} in view of \ref{req:conginr:inert} in \cref{lem:conginr}.
The same argument also completely goes well for the condition \ref{cond:prf:QQ-quot:tpb}, and this completes the proof.
\end{proof}

\begin{proposition}
\label{prop:tildeQL/K-doublecat}
Let $G$ be a crossed interval group, and let $K$ be a proper congruence family on $G$.
Then, the diagram
\begin{equation}
\label{eq:tildeQL/K-doublecat}
s,t:\widetilde{\mathcal Q}_{L\dblslash K}\rightrightarrows\widetilde{\mathcal Q}_L
\end{equation}
admits the structure of a double category inherited from \eqref{eq:QL/K-doublecat}.
\end{proposition}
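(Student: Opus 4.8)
The plan is to descend the entire internal-category structure of the double category \eqref{eq:QL/K-doublecat} along the quotient functors $p\colon\mathcal Q_{L\dblslash K}\to\widetilde{\mathcal Q}_{L\dblslash K}$ and $\bar p\colon\mathcal Q_L\to\widetilde{\mathcal Q}_L$. The source and target functors $\tilde s,\tilde t$ are already supplied by \cref{lem:QQ-quot}, so what remains is to produce a composition functor $\widetilde\gamma$ and an identity functor $\widetilde\iota$ and to check the internal-category axioms. Throughout I would use that $p$ and $\bar p$ are full and bijective on objects, so that by \cref{prop:lcancel-morcong} they exhibit their codomains as quotients by the congruences $\sim_{\mathsf A_{L\dblslash K}}$ and $\sim_{\mathsf A_L}$ and enjoy the corresponding universal property, together with the crucial fact that the two squares in \eqref{eq:QQ-quot:pbsq} are pullbacks.

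First I would construct $\widetilde\iota$: it is enough to show that $p\circ\iota$ is constant on $\sim_{\mathsf A_L}$-classes, for then it factors uniquely as $\widetilde\iota\circ\bar p$. Suppose $(\varphi,[x])\sim_{\mathsf A_L}(\varphi',[x'])$; using the identification $\mathcal Q_L\cong\mathcal Q_{L\dblslash\operatorname{Inr}^G}$ from \cref{ex:L//Inr} and the criterion of \cref{rem:Icong-crit}, this means $\varphi\psi^x=\varphi'\psi^{x'}$ and $[\psi^\ast(x)]=[\psi^\ast(x')]\in L_{\varphi\psi^x}\backslash G_l$ for every test morphism $\psi$ rendering $\varphi\psi^x$ (or $\varphi'\psi^{x'}$) active. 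Comparing this with the criterion for $\sim_{\mathsf A_{L\dblslash K}}$ applied to $\iota(\varphi,[x])=(\varphi,[e],[x])$ and $\iota(\varphi',[x'])=(\varphi',[e],[x'])$, the only additional condition is $[(\psi^x)^\ast(e)]=[(\psi^{x'})^\ast(e)]\in\operatorname{Inr}^G_{\varphi\psi^x}\backslash K_{\varphi\psi^x}$, which holds trivially. Hence $\iota(\varphi,[x])\sim_{\mathsf A_{L\dblslash K}}\iota(\varphi',[x'])$ and $\widetilde\iota$ exists.

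The construction of $\widetilde\gamma$ is the heart of the argument. The main step is to show that the canonical functor
\[
\pi\colon\mathcal Q_{L\dblslash K}\times_{\mathcal Q_L}\mathcal Q_{L\dblslash K}\to\widetilde{\mathcal Q}_{L\dblslash K}\times_{\widetilde{\mathcal Q}_L}\widetilde{\mathcal Q}_{L\dblslash K}
\]
induced by $p$ on both factors is again full and bijective on objects, hence itself a quotient functor. Bijectivity on objects is immediate since $s,t,\tilde s,\tilde t$ are all identities on objects. For fullness, given a composable pair $(\tilde g,\tilde f)$ downstairs with $\tilde s(\tilde g)=\tilde t(\tilde f)$, I would first lift $\tilde f$ to some $f$ with $p(f)=\tilde f$, and then invoke the pullback property of the left square in \eqref{eq:QQ-quot:pbsq}: since $\tilde s(\tilde g)=\tilde t(\tilde f)=\bar p(t(f))$, the pair $(\tilde g,t(f))$ determines a unique $g$ with $p(g)=\tilde g$ and $s(g)=t(f)$, so that $(g,f)$ is a composable lift of $(\tilde g,\tilde f)$. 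It then remains to check that $p\circ\gamma$ identifies any two composable pairs having the same image under $\pi$; i.e. that $p(g_1)=p(g_2)$ and $p(f_1)=p(f_2)$ force $p\bigl(\gamma(g_1,f_1)\bigr)=p\bigl(\gamma(g_2,f_2)\bigr)$. Writing $g_i=(\varphi_i,[u_i],[u_i'x_i])$ and $f_i=(\varphi_i,[u_i'],[x_i])$, this amounts to $(\varphi_1,[u_1u_1'],[x_1])\sim_{\mathsf A_{L\dblslash K}}(\varphi_2,[u_2u_2'],[x_2])$. Since $u_i'\in K_{\varphi_i}\subset\operatorname{RSt}^G_{\varphi_i}$, one has $\varphi_i\psi^{u_i'x_i}=\varphi_i\psi^{x_i}$, so the test morphisms are the same for all three equivalences, and the $\nabla$-component and $L$-component conditions are supplied verbatim by $f_1\sim_{\mathsf A_{L\dblslash K}}f_2$. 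For the $\operatorname{Inr}$-component I would expand $(\psi^{x_i})^\ast(u_iu_i')=(\psi^{u_i'x_i})^\ast(u_i)\cdot(\psi^{x_i})^\ast(u_i')$ via the crossed-interval-group identity and combine the class equalities coming from $g_1\sim_{\mathsf A_{L\dblslash K}}g_2$ and $f_1\sim_{\mathsf A_{L\dblslash K}}f_2$ inside the group $\operatorname{Inr}^G_{\varphi_1\psi^{x_1}}\backslash K_{\varphi_1\psi^{x_1}}$, which is legitimate because $\operatorname{Inr}^G$ is normal by \ref{req:conginr:normal} in \cref{lem:conginr}. This yields the required equivalence and hence $\widetilde\gamma$.

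Finally, the internal-category axioms (associativity and left/right unitality) for $(\tilde s,\tilde t,\widetilde\gamma,\widetilde\iota)$ transfer from those for $(s,t,\gamma,\iota)$ by descent: iterating the pullback argument used for $\pi$ shows that the comparison functor from the category of triple composable pairs upstairs to the one downstairs is again full and bijective on objects, hence epic, and precomposing the two sides of each axiom with it and using the defining relations $\widetilde\gamma\circ\pi=p\circ\gamma$ and $\widetilde\iota\circ\bar p=p\circ\iota$ reduces every downstairs identity to the corresponding identity already valid in \eqref{eq:QL/K-doublecat}. I expect the fullness of $\pi$ (and of its iterate) to be the main obstacle: it is exactly the point where the pullback property of \cref{lem:QQ-quot} is indispensable, since it is what permits lifting a composable pair downstairs to a composable pair upstairs, whereas the accompanying verification that $p\circ\gamma$ respects $\pi$-fibers is a routine computation with the criterion of \cref{rem:Icong-crit}.
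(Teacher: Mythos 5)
Your proposal is correct and follows essentially the same route as the paper: the key step in both is to use the pullback squares of \cref{lem:QQ-quot} to reduce the existence of $\widetilde\gamma$ to checking that the $\sim_{\mathsf A_{L\dblslash K}}$-class of a composite is independent of the chosen lift, verified via the criterion of \cref{rem:Icong-crit} together with the expansion $(\psi^{x})^\ast(uu')=(\psi^{u'x})^\ast(u)\cdot\psi^{x\ast}(u')$ and the normality of $\operatorname{Inr}^G$ from \cref{lem:conginr}. You merely reorganize this as ``$\pi$ is a quotient functor and $p\circ\gamma$ is constant on its fibers'' rather than the paper's isomorphism $\mathcal Q_{L\dblslash K}\times_{\mathcal Q_L}\mathcal Q_{L\dblslash K}\cong\mathcal Q_L\times_{\widetilde{\mathcal Q}_L}(\widetilde{\mathcal Q}_{L\dblslash K}\times_{\widetilde{\mathcal Q}_L}\widetilde{\mathcal Q}_{L\dblslash K})$, and you spell out the treatment of $\widetilde\iota$ and the descent of the axioms, which the paper leaves implicit.
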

\begin{proof}
It suffices to see that the vertical composition functor
\[
\gamma:\mathcal Q_{L\dblslash K}\times_{\mathcal Q_L}\mathcal Q_{L\dblslash K}\to\mathcal Q_{L\dblslash K}
\]
induces a functor
\[
\gamma:\widetilde{\mathcal Q}_{L\dblslash K}\times_{\widetilde{\mathcal Q}_L}\widetilde{\mathcal Q}_{L\dblslash K}\to\widetilde{\mathcal Q}_{L\dblslash K}\ .
\]
Note that, in view of the pullback squares in \eqref{eq:QQ-quot:pbsq}, we have a canonical isomorphism
\[
\mathcal Q_{L\dblslash K}\times_{\mathcal Q_L}\mathcal Q_{L\dblslash K}
\cong\mathcal Q_L\times_{\widetilde{\mathcal Q}_L}\left(\widetilde{\mathcal Q}_{L\dblslash K}\times_{\widetilde{\mathcal Q}_L}\widetilde{\mathcal Q}_{L\dblslash K}\right)
\]
of categories, where the right hand side is the limit of the diagram
\[
\vcenter{
  \xymatrix{
    & \mathcal Q_L \ar[d] & \\
    \widetilde{\mathcal Q}_{L\dblslash K} \ar[r]^s & \widetilde{\mathcal Q}_L & \widetilde{\mathcal Q}_{L\dblslash K} \ar[l]_t }}
\quad.
\]
Thus, we have to show the composition
\[
\widetilde\gamma:
\mathcal Q_L\times_{\widetilde{\mathcal Q}_L}\left(\widetilde{\mathcal Q}_{L\dblslash K}\times_{\widetilde{\mathcal Q}_L}\widetilde{\mathcal Q}_{L\dblslash K}\right)
\cong \mathcal Q_{L\dblslash K}\times_{\mathcal Q_L}\mathcal Q_{L\dblslash K}
\xrightarrow{\gamma} \mathcal Q_{L\dblslash K}
\to\widetilde{\mathcal Q}_{L\dblslash K}
\]
depends, with respect to the first parameter, only on the images under the functor $\mathcal Q_L\to\widetilde{\mathcal Q}_L$ .
Since $\widetilde\gamma$ is clearly the identity on objects, we concentrate on morphisms.
Note that, for a morphism $(\varphi,[x])$ of $\mathcal Q_L$ and for morphisms $\bsvarphi$ and $\bspsi$ of $\widetilde{\mathcal Q}_{L\dblslash K}$ with
\begin{equation}
\label{eq:prf:tildeQL/K-doublecat:comp}
t(\bsvarphi)=s(\bspsi)=[\varphi,x]\in\widetilde{\mathcal Q}_L\ ,
\end{equation}
the image $\widetilde\gamma((\varphi,[x]),\bspsi,\bsvarphi)$ is given as follows:
by virtue of \cref{lem:QQ-quot}, \eqref{eq:prf:tildeQL/K-doublecat:comp} implies there are elements $u,v\in K_\varphi$ so that
\[
\bsvarphi = [\varphi,u,u^{-1}x]
\ ,\quad \bspsi = [\varphi,v,x]\ .
\]
Then we have
\[
\widetilde\gamma((\varphi,[x]),\bspsi,\bsvarphi)
= [\varphi,vu,u^{-1}x]\ .
\]
Now, suppose $(\varphi,[x])\sim_{\mathsf A_L}(\varphi',[x'])$, and take $u',v'\in K_\varphi$ so that
\[
\bsvarphi = [\varphi',u',u'^{-1}x']
\ ,\quad \bspsi = [\varphi',v',x']\ .
\]
We show the congruence
\begin{equation}
\label{eq:prf:tildeQL/K-doublecat:eq}
(\varphi,[vu],[u^{-1}x])
\sim_{\mathsf A_{L\dblslash K}} (\varphi',[v'u'],[u'^{-1}x'])\ .
\end{equation}
Let $\psi$ be a morphism precomposable with $\varphi$ such that either $\varphi\psi^{u^{-1}x}$ or $\varphi'\psi^{u'^{-1}x'}$ is active.
Since $u$ and $u'$ are right stabilizers of $\varphi$, this implies either $\varphi\psi^x$ or $\varphi'\psi^{x'}$ is also active.
Then, in view of \cref{rem:Icong-crit}, the congruences
\[
(\varphi,[u],[u^{-1}x])
\sim_{\mathsf A_{L\dblslash K}} (\varphi',[u'],[u'^{-1}x'])
\ ,\quad
(\varphi,[v],[x])
\sim_{\mathsf A_{L\dblslash K}} (\varphi',[v'],[x'])
\]
imply
\[
\varphi\psi^{u^{-1}x} = \varphi\psi^{u'^{-1}x'}
\ ,\quad [\psi^\ast(u^{-1}x)]=[\psi^\ast(u'^{-1}x')]
\]
and
\[
\begin{split}
[(\psi^{u^{-1}x})^\ast(vu)]
&= [(\psi^x)^\ast(v)(\psi^{u^{-1}x})^\ast(u)] \\
&= [(\psi^x)^\ast(v')(\psi^{u'^{-1}x'})^\ast(u')] \\
&= [(\psi^{u'^{-1}x'})^\ast(v'u')]\ .
\end{split}
\]
Thus, \eqref{eq:prf:tildeQL/K-doublecat:eq} follows, and we obtain $\widetilde\gamma((\varphi,[x]),\bspsi,\bsvarphi)= \widetilde\gamma((\varphi',[x']),\bspsi,\bsvarphi)$ as required.
\end{proof}

\section{Internal presheaves over the associated double categories}
\label{sec:intpresh-assocdbl}

We constructed double categories $\mathcal Q_{L\dblslash K}\rightrightarrows\mathcal Q_L$ and $\widetilde{\mathcal Q}_{L\dblslash K}\rightrightarrows\widetilde{\mathcal Q}_L$ for a sort of pairs $(K,L)$ of proper congruence families on crossed interval groups $G$.
Recall that, as they are internal categories in the category $\mathbf{Cat}$ of small categories, we can consider the following notion on them.

\begin{definition}[cf. \cite{Johnstone1977}, Definition 2.14]
Let $\mathfrak C\xrightrightarrows{s,t}\mathcal B$ be a double category.
Then an \emph{internal presheaf} over it consists of the data
\begin{itemize}
  \item a category $(\mathcal X\to\mathcal B)\in\mathbf{Cat}^{/\mathcal B}$ over $\mathcal B$;
  \item a functor
\[
\mathpzc A_{\mathcal X}:\mathcal X\times_{\mathcal B}\mathfrak C\to\mathcal X
\]
in $\mathbf{Cat}^{/\mathcal B}$, where the domain is the pullback of the cospan $\mathcal X\to\mathcal B\xleftarrow{t}\mathfrak C$ and seen as a category over $\mathcal B$ with the composition
\[
\mathcal X\times_{\mathcal B}\mathfrak C
\xrightarrow{\mathrm{proj.}}\mathfrak C
\xrightarrow{s} \mathcal B\ ;
\]
\end{itemize}
such that the diagrams below are commutative:
\[
\vcenter{
  \xymatrix@C=3.5em{
    \mathcal X\times_{\mathcal B}\mathfrak C\times_{\mathcal B}\mathfrak C \ar[r]^-{\mathpzc A_{\mathcal X}\times\mathrm{Id}_{\mathfrak C}} \ar[d]_{\mathrm{Id}_{\mathcal X}\times\gamma_{\mathfrak C}} & \mathcal X\times_{\mathcal B}\mathfrak C \ar[d]^{\mathpzc A_{\mathcal X}} \\
    \mathcal X\times_{\mathcal B}\mathfrak C \ar[r]^-{\mathpzc A_{\mathcal X}} & \mathcal X }}
\quad,\quad
\vcenter{
  \xymatrix{
    \mathcal X\times_{\mathcal B}\mathcal B \ar[r]^{\mathrm{Id}_{\mathcal X}\times\iota} \ar@{=}[dr] & \mathcal X\times_{\mathcal B}\mathfrak C \ar[d]^{\mathpzc A_{\mathcal X}} \\
    & \mathcal X }}
\quad.
\]
\end{definition}

A double category $\mathfrak C\rightrightarrows\mathcal B$ gives rise to a $2$-monad
\[
\mathbf{Cat}^{/\mathcal B}\to\mathbf{Cat}^{/\mathcal B}
\ ;\quad \mathcal X\mapsto \mathcal X\times_{\mathcal B}\mathfrak C\ .
\]
Actually, internal presheaves over $\mathfrak C\rightrightarrows\mathcal B$ are precisely (strict) $2$-algebras on it.
In particular, they form a $2$-category, which we denote by $\mathbf{PSh}(\mathfrak C\rightrightarrows\mathcal B)$.
The $2$-morphisms in $\mathbf{PSh}(\mathfrak C\rightrightarrows\mathcal B)$ are, by definition, natural transformations $\alpha:H\to K:\mathcal X\to\mathcal Y$ over $\mathcal B$ such that the following two horizontal compositions coincide:
\[
\begin{gathered}
\vcenter{
  \xymatrix@C=6em{
    \mathcal X\times_{\mathcal B}\mathfrak C \ar@/^.7pc/[r]^{H\times\mathrm{Id}}_{}="a" \ar@/_.7pc/[r]_{K\times\mathrm{Id}}^{}="b" & \mathcal Y\times_{\mathcal B}\mathfrak C \ar[r]^-{\mathpzc A_{\mathcal Y}} & \mathcal Y \ar@{=>}^{\alpha\times\mathrm{id}} "a";"b" }}\quad,
\\
\vcenter{
  \xymatrix@C=6em{
    \mathcal X\times_{\mathcal B}\mathfrak C \ar[r]^-{\mathpzc A_{\mathcal X}} & \mathcal X \ar@/^.7pc/[r]^H_{}="a" \ar@/_.7pc/[r]_K^{}="b" & \mathcal Y \ar@{=>}^{\alpha} "a";"b" }}\quad.
\end{gathered}
\]
One can easily prove the following result.

\begin{lemma}
\label{lem:inrpresh-locfull}
Let $\mathfrak C\rightrightarrows\mathcal B$ be a double category such that the functors $s,t:\mathfrak C\rightrightarrows\mathcal B$ are the identity on objects.
Then, the forgetful functor
\[
\mathbf{PSh}(\mathfrak C\rightrightarrows\mathcal B)\to\mathbf{Cat}^{/\mathcal B}
\]
is locally fully faithful; i.e. for internal presheaves $\mathcal X$ and $\mathcal Y$, the functor
\[
\mathbf{PSh}(\mathfrak C\rightrightarrows\mathcal B)(\mathcal X,\mathcal Y)\to\mathbf{Cat}^{/\mathcal B}(\mathcal X,\mathcal Y)
\]
is fully faithful.
\end{lemma}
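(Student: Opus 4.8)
The plan is to split the assertion into the trivial \emph{faithfulness} and the substantive \emph{fullness}, and to reduce the latter to the observation that the identity-on-objects hypothesis forces both actions to be trivial on objects. Write $\pi_{\mathcal X}:\mathcal X\to\mathcal B$ and $\pi_{\mathcal Y}:\mathcal Y\to\mathcal B$ for the structure functors. Recall that a $2$-morphism in $\mathbf{PSh}(\mathfrak C\rightrightarrows\mathcal B)$ is by definition a natural transformation over $\mathcal B$ \emph{carrying} the extra pasting condition, and the functor in question sends it to its underlying natural transformation in $\mathbf{Cat}^{/\mathcal B}$. Hence the functor merely discards that condition, and faithfulness is immediate: two algebra $2$-cells with the same underlying natural transformation are literally equal. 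It therefore remains to prove fullness, namely that \emph{every} natural transformation $\alpha:H\Rightarrow K$ over $\mathcal B$ between algebra morphisms automatically satisfies the pasting condition.

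The key preliminary step is to exploit the hypothesis. Since $s,t:\mathfrak C\rightrightarrows\mathcal B$ are the identity on objects and $\iota:\mathcal B\to\mathfrak C$ is a common section, the functor $\iota$ is the identity on objects as well, and $\objof\mathfrak C=\objof\mathcal B$. Consequently an object of the pullback $\mathcal X\times_{\mathcal B}\mathfrak C$ is a pair $(X,\xi)$ with $\xi=\pi_{\mathcal X}(X)$, so objects of the pullback are in bijection with those of $\mathcal X$, and under this identification $\xi=\iota(\pi_{\mathcal X}(X))$. Feeding this into the unit axiom $\mathpzc A_{\mathcal X}\circ(\mathrm{Id}_{\mathcal X}\times\iota)=\mathrm{Id}_{\mathcal X}$ shows $\mathpzc A_{\mathcal X}(X,\xi)=X$ on objects; the same holds for $\mathpzc A_{\mathcal Y}$. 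In other words, both actions are trivial on objects, and all the content of $\mathpzc A$ is carried by morphisms.

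For fullness I would then verify the pasting condition componentwise. Using the strict algebra-morphism equations $\mathpzc A_{\mathcal Y}\circ(H\times\mathrm{Id}_{\mathfrak C})=H\circ\mathpzc A_{\mathcal X}$ and the analogue for $K$, both whiskerings are natural transformations $H\mathpzc A_{\mathcal X}\Rightarrow K\mathpzc A_{\mathcal X}$, so it suffices to compare their components at each object $(X,\xi)$. The right-hand pasting has component $\alpha_{\mathpzc A_{\mathcal X}(X,\xi)}=\alpha_X$ by the object-level triviality above. The left-hand pasting has component $\mathpzc A_{\mathcal Y}(\alpha_X,\mathrm{id}_\xi)$; since $\alpha_X$ is vertical one has $\mathrm{id}_\xi=\mathrm{id}_{\iota(\pi_{\mathcal X}(X))}=\iota(\mathrm{id}_{\pi_{\mathcal X}(X)})$, so $(\alpha_X,\mathrm{id}_\xi)$ is exactly the image of $\alpha_X$ under $\mathrm{Id}_{\mathcal Y}\times\iota$ through the canonical iso $\mathcal Y\times_{\mathcal B}\mathcal B\cong\mathcal Y$, and the unit axiom for $\mathcal Y$ gives $\mathpzc A_{\mathcal Y}(\alpha_X,\mathrm{id}_\xi)=\alpha_X$. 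The two components coincide, the pasting condition holds, and $\alpha$ is an algebra $2$-cell.

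The only delicate point—and hence the part I would write out most carefully—is the object-level triviality of the two actions together with the resulting identification of $(\alpha_X,\mathrm{id}_\xi)$ as a morphism in the image of $\mathrm{Id}_{\mathcal Y}\times\iota$. Once these are pinned down the verification is a formal consequence of the associativity and unit axioms; notably, the naturality of $\alpha$ is never invoked beyond its being a $1$-cell of the slice, so I expect no further obstacle.
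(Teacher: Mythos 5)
Your argument is correct. The paper does not actually supply a proof of this lemma (it is introduced with ``One can easily prove the following result''), and what you have written is precisely the intended verification: the $2$-cells of $\mathbf{PSh}(\mathfrak C\rightrightarrows\mathcal B)$ form a subset of the natural transformations over $\mathcal B$ cut out by the pasting condition, so faithfulness is automatic, and fullness reduces to checking that condition componentwise. Your key observations --- that $s\iota=t\iota=\mathrm{id}_{\mathcal B}$ forces $\iota$ to be the identity on objects, that the unit axiom then makes $\mathpzc A_{\mathcal X}$ and $\mathpzc A_{\mathcal Y}$ trivial on objects, and that for a vertical component $\alpha_X$ the pair $(\alpha_X,\mathrm{id}_\xi)$ lies in the image of $\mathrm{Id}_{\mathcal Y}\times\iota$ so that the unit axiom gives $\mathpzc A_{\mathcal Y}(\alpha_X,\mathrm{id}_\xi)=\alpha_X$ --- are exactly where the identity-on-objects hypothesis enters, and the proof is complete as written.
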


Suppose we are given a group operad $\mathcal G$ and a $\mathcal G$-symmetric multicategory $\mathcal M$.
In view of categories of operators of $\mathcal M$ with regard to $\mathcal G$, the pair $(\overbar{\operatorname{Dec}}^{\mathcal G},\overbar{\operatorname{Kec}}^{\mathcal G})$ plays the fundamental role.
We will write
\[
\mathbb G_{\mathcal G}
:= \mathcal Q_{\overbar{\operatorname{Kec}}^{\mathcal G}\dblslash\overbar{\operatorname{Dec}}^{\mathcal G}}
\ ,\quad
\mathbb E_{\mathcal G}
:= \mathcal Q_{\overbar{\operatorname{Kec}}^{\mathcal G}}
\ ,\quad
\widetilde{\mathbb G}_{\mathcal G}
:= \widetilde{\mathcal Q}_{\overbar{\operatorname{Kec}}^{\mathcal G}\dblslash\overbar{\operatorname{Dec}}^{\mathcal G}}
\ ,\quad
\widetilde{\mathbb E}_{\mathcal G}
:= \widetilde{\mathcal Q}_{\overbar{\operatorname{Kec}}^{\mathcal G}}\ .
\]
In this section, we see $\mathcal M$ gives rise to internal presheaves over the double categories $\mathbb G_{\mathcal G}\rightrightarrows\mathbb E_{\mathcal G}$ and $\widetilde{\mathbb G}_{\mathcal G}\rightrightarrows\widetilde{\mathbb E}_{\mathcal G}$ given in \eqref{eq:QL/K-doublecat} and \cref{prop:tildeQL/K-doublecat}.
We need some kinds of \emph{word calculus}, and the following notations are convenient.

\begin{notation}
Let $S$ be a set and $\vec a=a_1\dots a_n$ a word in $S$; i.e. $a_i\in S$.
\begin{enumerate}[label={\upshape(\arabic*)}]
  \item If $G$ is a crossed interval group, then for $x\in G_n$, we write
\[
x_\ast\vec a := a_{x^{-1}(1)}\dots a_{x^{-1}(n)}\ .
\]
Note that it coincides with the canonical left $G_n$-action on $S^{\times n}$ induced by the map $G_n\to\mathfrak W^\nabla_n\to\mathfrak S_n$.
  \item Suppose $\varphi:\llangle m\rrangle\to\llangle n\rrangle\in\nabla$ is an arbitrary morphism, and say $\varphi^{-1}\{j\}=\{i_1<\dots<i_{k^{(\varphi)}_j}\}$ for each $1\le j\le n$.
Then, we write
\[
\vec a^\varphi_j = a_{i_1}\dots a_{i_{k^{(\varphi)}_j}}\ .
\]
Hence, the concatenated word $\vec a^\varphi_1\dots\vec a^\varphi_n$ is a subword of the original $\vec a$.
\end{enumerate}
\end{notation}

\begin{lemma}
\label{lem:word-calc}
Let $S$ be a set and $\vec a=a_1\dots a_m$ a word in $S$.
\begin{enumerate}[label={\upshape(\arabic*)}]
  \item\label{req:word-calc:comp} Suppose we are given morphisms $\varphi:\llangle m\rrangle\to\llangle n\rrangle$ and $\psi:\llangle n\rrangle\to\llangle p\rrangle$ in $\nabla$, and say
\[
\psi^{-1}\{s\}
:=\{j^s_1<\dots<j^s_r\}\ .
\]
Then, we have
\[
\vec a^{\psi\varphi}_s = \vec a^\varphi_{j^s_1}\dots\vec a^\varphi_{j^s_r}\ .
\]
  \item\label{req:word-calc:perm} Let $G$ be a crossed interval group, and write the canonical map $G_n\to\mathfrak W^\nabla_n\cong(\mathfrak S_n\ltimes\mathbb Z/2\mathbb Z)\times\mathbb Z/2\mathbb Z$ in the form
\[
y\mapsto (\sigma^y;\varepsilon^y_1,\dots,\varepsilon^y_n;\theta^y)\ .
\]
Then, for every morphism $\varphi:\llangle m\rrangle\to\llangle n\rrangle\in\nabla$ and every $y\in G_n$,
\[
(\varphi^\ast(y)_\ast\vec a)^{\varphi^y}_j
= \beta^{\varepsilon^y_{y^{-1}(j)}}_\ast\vec a^\varphi_{y^{-1}(j)}\ .
\]
where $\beta$ is the order-reversing permutation.
  \item\label{req:word-calc:inert} Let $G$ be a crossed interval group.
Suppose we have two morphisms $\varphi,\varphi':\llangle m\rrangle\to\llangle n\rrangle\in\nabla$ and two elements $x,x'\in G_m$.
If two morphisms $[\varphi,x],[\varphi',x']$ in $\mathbb E_{\mathcal G}$ coincide with each other, then, for each $1\le j\le n$,
\[
(x_\ast\vec a)^\varphi_j = (x'_\ast\vec a)^{\varphi'}_j\ .
\]
\end{enumerate}
\end{lemma}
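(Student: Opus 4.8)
The plan is to treat the three parts in turn. Parts~(1) and~(2) are combinatorial statements about the blocks of a morphism in $\nabla$, while part~(3) will follow from a short computation identifying $\overbar{\operatorname{Kec}}^{\mathcal G}$ with a kernel.

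For part~(1) I would argue purely order-theoretically. By definition $\vec a^{\psi\varphi}_s$ lists, in increasing order of the index, those letters $a_i$ with $(\psi\varphi)(i)=s$, i.e. with $\varphi(i)\in\psi^{-1}\{s\}=\{j^s_1<\dots<j^s_r\}$; its underlying index set is the disjoint union $\bigsqcup_t\varphi^{-1}\{j^s_t\}$. The concatenation $\vec a^\varphi_{j^s_1}\dots\vec a^\varphi_{j^s_r}$ lists exactly the same letters, but grouped block by block. Since $\varphi$ is order-preserving, $\varphi(i)=j^s_t<j^s_{t'}=\varphi(i')$ forces $i<i'$, so listing the blocks in the order $j^s_1,\dots,j^s_r$ reproduces the global increasing order of indices. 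Hence the two words coincide.

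For part~(2) I would first reduce to the terminal crossed interval group $\mathfrak W^\nabla$. Both the permutation $\varphi^\ast(y)_\ast$ (which factors through the image of $\varphi^\ast(y)$ in $\mathfrak S_m$) and the morphism $\varphi^y\in\nabla$ are compatible with the structure map $G\to\mathfrak W^\nabla$, and the right-hand side is defined through $\bar y=(\sigma^y;\varepsilon^y_1,\dots,\varepsilon^y_n;\theta^y)\in\mathfrak W^\nabla_n$; so both sides depend only on $\bar y$ and we may assume $G=\mathfrak W^\nabla$. The key structural input is the identity $(\varphi^y,\varphi^\ast(y))=(\mathrm{id}_{\llangle n\rrangle},y)\circ(\varphi,e_m)$ in the total category $\nabla_G$, read off from the composition law used in the proof of \cref{lem:quotal-faithful}. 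Using the explicit model of $\mathfrak W^\nabla$ as signed interval symmetries, this exhibits the underlying permutation $\overline{\varphi^\ast(y)}\in\mathfrak S_m$ as the map sending the block $\varphi^{-1}\{l\}$ onto $(\varphi^y)^{-1}\{\sigma^y(l)\}$, order-preservingly when $\varepsilon^y_l=0$ and order-reversingly (by $\beta$) when $\varepsilon^y_l=1$, while $\theta^y$ only interchanges the blocks over $\pm\infty$ and therefore leaves the interior blocks untouched. Setting $l=y^{-1}(j)$ and reading off the block of $\varphi^\ast(y)_\ast\vec a$ lying over $j$ then yields the stated formula. The main obstacle is the bookkeeping of the reversals $\beta$ and of the index shift $j\mapsto y^{-1}(j)$; I would organize it by multiplicativity. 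Writing $y=y_2y_1$ and invoking $\varphi^\ast(y_2y_1)=(\varphi^{y_1})^\ast(y_2)\,\varphi^\ast(y_1)$ together with $\varphi^{y_2y_1}=(\varphi^{y_1})^{y_2}$, the formula for $y$ follows from those for $y_1$ and $y_2$ exactly when $\varepsilon^{y_2y_1}_k=\varepsilon^{y_1}_k+\varepsilon^{y_2}_{\sigma^{y_1}(k)}\pmod 2$, which is the cocycle built into the semidirect factor of $\mathfrak W^\nabla_n$. It thus suffices to check the base cases on a generating set — adjacent transpositions in $\mathfrak S_n$, the single sign flips $\varepsilon_i$, and the global flip $\theta$ — where each computation is immediate, $\theta$ acting trivially on interior blocks in accordance with its absence from the right-hand side.

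Finally, for part~(3) I would note that $(x_\ast\vec a)^\varphi_j$ depends only on $\varphi$ and on the image of $x$ in $\mathfrak S_m$, since the word action factors through $G_m\to\mathfrak S_m$ and the grouping depends only on $\varphi$. If $[\varphi,x]=[\varphi',x']$ in $\mathbb E_{\mathcal G}=\mathcal Q_{\overbar{\operatorname{Kec}}^{\mathcal G}}$, then $\varphi=\varphi'$ by the $G$-quotal property and $x'x^{-1}\in\overbar{\operatorname{Kec}}^{\mathcal G}_\varphi$, so it remains to prove $\overbar{\operatorname{Kec}}^{\mathcal G}_\varphi\subset\ker(\mathcal G(m)\to\mathfrak S_m)$. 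For active $\mu$ this is the very definition of $\operatorname{Kec}^{\mathcal G}$; for a general $\varphi$ an element $x\in\overbar{\operatorname{Kec}}^{\mathcal G}_\varphi$ is a right stabilizer, hence its underlying permutation preserves each fibre $\varphi^{-1}\{j\}$, and probing a fibre by an active composite $\varphi\psi$ with $\psi$ mapping the interior order-isomorphically onto that fibre shows that $\psi^\ast(x)\in\operatorname{Kec}^{\mathcal G}_{\varphi\psi}$ forces the restriction of the permutation to that fibre to be trivial. Therefore $x$ maps to the identity of $\mathfrak S_m$, whence $x'_\ast=x_\ast$ and the two groupings agree. This last reduction, where the closure definition of $\overbar{\operatorname{Kec}}^{\mathcal G}$ is genuinely used, is the only delicate point of part~(3).
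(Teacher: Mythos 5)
Parts (1) and (2) are correct. For (2) you ultimately rest on the same key fact the paper invokes --- that $\varphi^\ast(y)$ carries the block $\varphi^{-1}\{l\}$ bijectively onto $(\varphi^y)^{-1}\{\sigma^y(l)\}$, order\mbox{-}preservingly or \mbox{-}reversingly according to $\varepsilon^y_l$; the paper simply quotes this characterization, whereas you re-derive it by reducing to $\mathfrak W^\nabla$ and checking a generating set against the cocycle of the semidirect product. That is more work than necessary but sound.

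For (3) your route differs from the paper's (which views $[\varphi,x]$ as a set map $\llangle m\rrangle\to\llangle n\rrangle$ and observes that the two maps agree on the common preimage of $\langle n\rangle$), and it contains one false intermediate claim: the inclusion $\overbar{\operatorname{Kec}}^{\mathcal G}_\varphi\subset\ker(\mathcal G(m)\to\mathfrak S_m)$ fails for non-active $\varphi$. For a counterexample take $\mathcal G=\mathfrak S$ and $\rho:\llangle 3\rrangle\to\llangle 1\rrangle$ with $\rho(1)=1$, $\rho(2)=\rho(3)=\infty$. The transposition $(2\,3)$ is a right stabilizer of $\rho$, and every $\psi$ with $\rho\psi$ active maps the interior into $\rho^{-1}\{1\}=\{1\}$, so $\psi^\ast((2\,3))=e$ and the closure condition is vacuously met; hence $(2\,3)\in\overbar{\operatorname{Kec}}^{\mathfrak S}_\rho$ while its image in $\mathfrak S_3$ is nontrivial. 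The point is that the closure $\overbar K_\varphi$ of \cref{lem:congfam-bar} is in general strictly larger than $K_\varphi$ for non-active $\varphi$: only constraints coming from active composites survive, and these say nothing about the fibres over $\pm\infty$. Consequently your conclusion ``$x'_\ast=x_\ast$'' is also false in general. The proof is nevertheless repairable with no new input: your probing argument, applied with $\psi=\delta^{(\varphi)}_j$ for each $1\le j\le n$, proves exactly the weaker statement that every $u\in\overbar{\operatorname{Kec}}^{\mathcal G}_\varphi$ fixes each interior fibre $\varphi^{-1}\{j\}$ pointwise, and since $(x_\ast\vec a)^\varphi_j$ depends only on the restriction of $x$ to $\varphi^{-1}\{j\}$ with $1\le j\le n$, that weaker statement is all that part (3) requires. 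State and use that claim in place of the kernel inclusion.
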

\begin{proof}
The parts \ref{req:word-calc:comp} is obvious.
On the other hand, the part \ref{req:word-calc:perm} follows from the following characterization of the permutation on $\llangle m\rrangle$ associated with $\varphi^\ast(y)$:
\begin{enumerate}[label={\upshape(\roman*)}]
  \item the square below is commutative
\[
\vcenter{
  \xymatrix{
    \llangle m\rrangle \ar[r]^\varphi \ar[d]_{\psi^\ast(y)} & \llangle n\rrangle \ar[d]^y \\
    \llangle m\rrangle \ar[r]^{\varphi^y} & \llangle n\rrangle}}
\quad;
\]
  \item for each $j\in\llangle n\rrangle$, the bijection
\[
\varphi^{-1}\{j\}\to(\varphi^y)^{-1}\{y(j)\}
\]
restricting the permutation $\varphi^\ast(y)$ either preserves or reverses the order depending on $\varepsilon^y_j$.
\end{enumerate}

We show \ref{req:word-calc:inert}.
Under the identification $\llangle k\rrangle\cong\nabla(\llangle 1\rrangle,\llangle k\rrangle)$, the data induces maps
\begin{equation}
\label{eq:prf:word-calc:indmap}
\begin{gathered}
\llangle m\rrangle
\xrightarrow{x} \llangle m\rrangle
\xrightarrow{\varphi} \llangle n\rrangle\ ,
\\
\llangle m\rrangle
\xrightarrow{x'} \llangle m\rrangle
\xrightarrow{\varphi'} \llangle n\rrangle\ .
\end{gathered}
\end{equation}
It is observed that if $[\varphi,x]=[\varphi',x']$, the two maps \eqref{eq:prf:word-calc:indmap} have the same inverse image of $\langle n\rangle=\{1,\dots,n\}\subset\llangle n\rrangle$ where they agree with each other.
Hence, the required equation $(x_\ast\vec a)^\varphi_j=(x'_\ast\vec a)^{\varphi'}_j$ follows for each $1\le j\le n$.
\end{proof}

We begin the main construction.
For a multicategory $\mathcal M$, we define a category $\mathcal M\wr\mathbb E_{\mathcal G}$ as follows:
\begin{itemize}
  \item objects are finite sequences $\vec a=a_1\dots a_n$ of objects of $\mathcal M$;
  \item for $\vec a=a_1\dots a_m$ and $\vec b=b_1\dots b_n$, the hom-set $(\mathcal M\wr\mathbb E_{\mathcal G})(\vec a,\vec b)$ consists of tuples $(\varphi;f_1,\dots,f_n;[x])$ of
\begin{itemize}
  \item $\varphi:\llangle m\rrangle\to\llangle n\rrangle\in\nabla$,
  \item $[x]\in\overbar{\operatorname{Kec}}^{\mathcal G}_\varphi\backslash\mathcal G(m)$ represented by $x\in\mathcal G(m)$, and
  \item $f_j\in\mathcal M((x_\ast\vec a)^\varphi_j;b_j)$ for each $1\le j\le n$ (see \ref{req:word-calc:inert} in \cref{lem:word-calc} and \cref{lem:QQ-quot});
\end{itemize}
  \item for morphisms $(\varphi;\vec f;[x]):a_1\dots a_l\to b_1\dots b_m$ and $(\psi;\vec g;[y]):b_1\dots b_m\to c_1\dots c_n$, the composition is given by
\[
(\psi;\vec g;[y])\circ(\varphi;\vec f;[x])
:= \left(\psi\varphi^y;\gamma(g_1;(y_\ast\vec f)^\psi_1),\dots,\gamma(g_n;(y_\ast\vec f)^\psi_n);[\varphi^\ast(y)x]\right)\ .
\]
\end{itemize}
The composition is in fact associative; indeed, suppose we have another morphism $(\chi;\vec h;[z]):\vec c\to d_1\dots d_p$, and consider the equation
\begin{equation}
\label{eq:MG-assoc}
\left((\chi;\vec h;[z])\circ (\psi;\vec g;[y])\right)\circ(\varphi;\vec f;[x])
= (\chi;\vec h;[z])\circ\left((\psi;\vec g;[y])\circ(\chi;\vec h;[z])\right)\ .
\end{equation}
In terms of the first and the third components of the tuples, the equation clearly holds.
If we put $\chi^{-1}\{s\}=\{j^s_1<\dots<j^s_r\}$, then, in view of \cref{lem:word-calc}, each term of the second component in the left hand side of \eqref{eq:MG-assoc} is given by
\[
\begin{split}
&\gamma\left(\gamma(h_s;(z_\ast\vec g)^\chi_s);(\psi^\ast(z)_\ast y_\ast\vec f)^{\chi\psi^z}_s\right) \\
&= \gamma\left(\gamma(h_s;g_{z^{-1}(j^s_1)},\dots,g_{z^{-1}(j^s_r)});\;(y_\ast\vec f)^\psi_{z^{-1}(j^s_1)}\dots(y_\ast\vec f)^\psi_{z^{-1}(j^s_r)}\right) \\
&= \gamma\left(h_s;\gamma(g_{z^{-1}(j^s_1)};(y_\ast\vec f)^\psi_{z^{-1}(j^s_1)}),\dots,\gamma(g_{z^{-1}(j^s_r)};(y_\ast\vec f)^\psi_{z^{-1}(j^s_r)})\right)
\end{split}
\]
Clearly, the last term is precisely the one appearing as a component in the left hand side of \eqref{eq:MG-assoc}, so that the composition is associative.
Note that the identity on the object $a_1\dots a_n\in\mathcal M\wr\mathbb E_{\mathcal G}$ is the tuple
\[
(\mathrm{id}_{\llangle n\rrangle};\mathrm{id}_{a_1},\dots,\mathrm{id}_{a_n};[e_n])\ .
\]

\begin{example}
\label{ex:term-total}
In the case $\mathcal M=\ast$ is the terminal operad, the resulting category $\ast\wr\mathbb E_{\mathcal G}$ is nothing but the category $\mathbb E_{\mathcal G}$ itself.
\end{example}

We extend the constructions $\mathcal M\to\mathcal M\wr\mathbb E_{\mathcal G}$ to $2$-functors.
If $F:\mathcal M\to\mathcal N$ be a $\mathcal G$-symmetric multifunctor, then we define a functor $F^{\mathcal G}:\mathcal M\wr\mathbb E_{\mathcal G}\to\mathcal N\wr\mathbb E_{\mathcal G}$ so that
\begin{itemize}
  \item for each objects $a_1\dots a_m\in\mathcal M\wr\mathbb E_{\mathcal G}$, we put
\[
F^{\mathcal G}(a_1\dots a_m) := F(a_1)\dots F(a_m)\ ;
\]
  \item for $\vec a=a_1\dots a_m,\vec b=b_1\dots b_n\in\mathcal M\wr\mathbb E_{\mathcal G}$, define
\[
\begin{array}{cccc}
  F^{\mathcal G}:&(\mathcal M\wr\mathbb E_{\mathcal G})(\vec a,\vec b) &\to& (\mathcal N\wr\mathbb E_{\mathcal G})(F^{\mathcal G}(\vec a),F^{\mathcal G}(\vec b)) \\[1ex]
    & (\varphi;f_1,\dots,f_n;[x]) &\mapsto& (\varphi;F(f_1),\dots,F(f_n);[x])\ .
\end{array}
\]
\end{itemize}
The functoriality is easily verified.
In addition, if $\alpha:F\to G:\mathcal M\to\mathcal N$ is a multinatural transformation of multinatural functors, then one can check that the morphisms
\[
\alpha^{\mathcal G}_{a_1\dots a_m}=(\mathrm{id}_{\llangle m\rrangle};\alpha_{a_1},\dots,\alpha_{a_m};e_m):F^{\mathcal G}(a_1\dots a_m)\to G^{\mathcal G}(a_1\dots a_m)
\]
for $a_1\dots a_m\in\mathcal M\wr\mathbb E_{\mathcal G}$ form a natural transformation $\alpha^{\mathcal G}:F^{\mathcal G}\to G^{\mathcal G}$.
Combining with \cref{ex:term-total}, we obtain a $2$-functor
\begin{equation}
\label{eq:2func-G1}
(\blank)\wr\mathbb E_{\mathcal G}:\mathbf{MultCat}\to\mathbf{Cat}^{/\mathbb E_{\mathcal G}}\ .
\end{equation}

We furthermore consider a quotient of the category $\mathcal M\wr\mathbb E_{\mathcal G}$.
For two morphisms
\[
(\varphi;f_1,\dots,f_n;[x]),(\varphi';f'_1,\dots,f'_n;[x'])
:a_1\dots a_m\to b_1\dots b_n\in\mathcal M\wr\mathbb E_{\mathcal G}\ ,
\]
we write $(\varphi;f_1,\dots,f_n;[x])\sim_{\mathsf A_{\mathcal G}}(\varphi';f'_1,\dots,f'_n;[x'])$ precisely when we have $[\varphi,x]=[\varphi',x']$ in $\widetilde{\mathbb E}_{\mathcal G}(\llangle m\rrangle,\llangle n\rrangle)$ and $f_j=f'_j$ for each $1\le j\le n$.
Note that, thanks to \ref{req:word-calc:inert} in \cref{lem:word-calc}, the first equation implies
\[
\mathcal M((x_\ast\vec a)^\varphi_j;b_j)
= \mathcal M((x'_\ast\vec a)^{\varphi'}_j;b_j)
\]
so that the latter comparison makes sense.
It is straightforward that the relation $\sim_{\mathsf A_{\mathcal G}}$ is a congruence on the category $\mathcal M\wr\mathbb E_{\mathcal G}$.
We denote by $\mathcal M\wr\widetilde{\mathbb E}_{\mathcal G}$ the resulting quotient category.
For each morphism $(\varphi;f_1,\dots,f_n;[x])$ of $\mathcal M\wr\mathbb E_{\mathcal G}$, we write $[\varphi;f_1,\dots,f_n;x]$ its image in $\mathcal M\wr\widetilde{\mathbb E}_{\mathcal G}$.
It is easily verified that the assignment $\mathcal M\mapsto\mathcal M\wr\widetilde{\mathbb E}_{\mathcal G}$ also extends to a $2$-functor so that the functor $\mathcal M\wr\mathbb E_{\mathcal G}\to\mathcal M\wr\widetilde{\mathbb E}_{\mathcal G}$ forms a (strict) $2$-natural transformation.
More explicitly, a multifunctor $F:\mathcal M\to\mathcal N$ induces a functor $\widetilde F^{\mathcal G}:\mathcal M\wr\widetilde{\mathbb E}_{\mathcal G}\to\mathcal N\wr\widetilde{\mathbb E}_{\mathcal G}$ such that
\begin{itemize}
  \item for each object $\vec a=a_1\dots a_m\in\mathcal M\wr\widetilde{\mathbb E}_{\mathcal G}$, $\widetilde F^{\mathcal G}(\vec a)=F(a_1)\dots F(a_m)$;
  \item as for morphisms, we have
\[
\widetilde F^{\mathcal G}([\varphi;f_1,\dots,f_n;x])
= [\varphi;F(f_1),\dots,F(f_n);x]\ .
\]
\end{itemize}
On the other hand, if $\alpha:F\to G:\mathcal M\to\mathcal N$ is a multinatural transformation, we have a natural transformation $\widetilde\alpha^{\mathcal G}:\widetilde F^{\mathcal G}\to\widetilde G^{\mathcal G}$ with
\[
\widetilde\alpha^{\mathcal G}_{a_1\dots a_m}
= [\mathrm{id}_{\llangle m\rrangle};\alpha_{a_1},\dots,\alpha_{a_m};e_m]
\]
for each $a_1\dots a_m\in\mathcal M\wr\widetilde{\mathbb E}_{\mathcal G}$.
Observing the canonical identification $\ast\wr\widetilde{\mathbb E}_{\mathcal G}\cong\widetilde{\mathbb E}_{\mathcal G}$, we obtain a $2$-functor
\begin{equation}
(\blank)\wr\widetilde{\mathbb E}_{\mathcal G}:\mathbf{MultCat}\to\mathbf{Cat}^{/\widetilde{\mathbb E}_{\mathcal G}}\ .
\end{equation}

\begin{lemma}
\label{lem:rtEG-pb}
Let $\mathcal M$ be a multicategory.
Then, for every group operad $\mathcal G$, the square below is a pullback:
\[
\vcenter{
  \xymatrix{
    \mathcal M\wr\mathbb E_{\mathcal G} \ar[r] \ar[d] \ar@{}[dr]|(.4)\pbcorner & \mathcal M\wr\widetilde{\mathbb E}_{\mathcal G} \ar[d] \\
    \mathbb E_{\mathcal G} \ar[r] & \widetilde{\mathbb E}_{\mathcal G} }}
\quad.
\]
\end{lemma}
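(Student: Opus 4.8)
The plan is to exhibit the canonical comparison functor
\[
\Phi:\mathcal M\wr\mathbb E_{\mathcal G}\longrightarrow (\mathcal M\wr\widetilde{\mathbb E}_{\mathcal G})\times_{\widetilde{\mathbb E}_{\mathcal G}}\mathbb E_{\mathcal G}=:P
\]
as an isomorphism of categories. As the pairing of the quotient functor $\mathcal M\wr\mathbb E_{\mathcal G}\to\mathcal M\wr\widetilde{\mathbb E}_{\mathcal G}$ with the projection $\mathcal M\wr\mathbb E_{\mathcal G}\to\mathbb E_{\mathcal G}$, the functor $\Phi$ is automatic, sending a morphism $(\varphi;f_1,\dots,f_n;[x])$ to the pair $([\varphi;f_1,\dots,f_n;x],(\varphi,[x]))$. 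Both quotient functors are the identity on objects, and both projections send $\vec a=a_1\dots a_m$ to $\llangle m\rrangle$, so the objects of $P$ are the pairs $(\vec a,\llangle m\rrangle)$ with $|\vec a|=m$; these are in bijection with the objects of $\mathcal M\wr\mathbb E_{\mathcal G}$, whence $\Phi$ is bijective on objects. It then remains to verify that $\Phi$ is bijective on every hom-set.

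For injectivity, suppose $\Phi(\varphi;\vec f;[x])=\Phi(\varphi';\vec f';[x'])$. Equality of the $\mathbb E_{\mathcal G}$-components yields $(\varphi,[x])=(\varphi',[x'])$, hence $\varphi=\varphi'$ and $[x]=[x']$ in $\overbar{\operatorname{Kec}}^{\mathcal G}_\varphi\backslash\mathcal G(m)$; equality of the $\mathcal M\wr\widetilde{\mathbb E}_{\mathcal G}$-components, after unwinding the congruence $\sim_{\mathsf A_{\mathcal G}}$, yields $f_j=f'_j$ for each $j$. Since a morphism of $\mathcal M\wr\mathbb E_{\mathcal G}$ is precisely a tuple $(\varphi;f_1,\dots,f_n;[x])$, the two morphisms coincide. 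Here the $\mathbb E_{\mathcal G}$-factor of the pullback is doing the essential work: it retains the $\mathbb E_{\mathcal G}$-level datum $(\varphi,[x])$ that the passage to $\widetilde{\mathbb E}_{\mathcal G}$ would otherwise collapse.

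For surjectivity, take $(\xi,\eta)\in P$ and write $\eta=(\psi,[y])$ and $\xi=[\varphi;f_1,\dots,f_n;x]$, so that the compatibility condition reads $[\varphi,x]=[\psi,y]$ in $\widetilde{\mathbb E}_{\mathcal G}$. The natural candidate preimage is $(\psi;f_1,\dots,f_n;[y])$, and the single point to settle---the step I expect to be the main obstacle---is that this is even well formed: each $f_j$, a priori an element of $\mathcal M((x_\ast\vec a)^\varphi_j;b_j)$, must also lie in $\mathcal M((y_\ast\vec a)^\psi_j;b_j)$. This is exactly the invariance of the subwords $(x_\ast\vec a)^\varphi_j$ under the equivalence of $\widetilde{\mathbb E}_{\mathcal G}$, i.e. the assertion that these domains depend only on the class $[\varphi,x]\in\widetilde{\mathbb E}_{\mathcal G}$ and not on its finer refinement in $\mathbb E_{\mathcal G}$; this is the very fact, supplied by \ref{req:word-calc:inert} in \cref{lem:word-calc}, that makes $\sim_{\mathsf A_{\mathcal G}}$ well defined. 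Granting it, $(\psi;f_1,\dots,f_n;[y])$ is a genuine morphism whose $\mathbb E_{\mathcal G}$-component is $\eta$ and whose class in $\mathcal M\wr\widetilde{\mathbb E}_{\mathcal G}$ equals $\xi$---for $[\psi,y]=[\varphi,x]$ and the $f_j$ agree verbatim---so $\Phi$ is surjective, and the square is a pullback.
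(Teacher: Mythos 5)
Your proof is correct and is exactly the verification the paper leaves implicit: its own proof is the single sentence ``straightforward from the definition of $\widetilde{\mathbb E}_{\mathcal G}$,'' and what you have written out---the comparison functor into the pullback, injectivity on hom-sets because the $\mathbb E_{\mathcal G}$-leg retains $(\varphi,[x])$ while the other leg retains the $f_j$, and surjectivity via \ref{req:word-calc:inert} of \cref{lem:word-calc} to make the candidate preimage well formed---is precisely that unwinding. Nothing is missing and no different route is taken.
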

\begin{proof}
The result is straightforward from the definition of the category $\widetilde{\mathbb E}_{\mathcal G}$.
\end{proof}

We now take $\mathcal G$-symmetries into account and see that they give rise to internal presheaf structures on $\mathcal M\wr\mathbb E_{\mathcal G}$ (resp. of $\mathcal M\wr\widetilde{\mathbb E}_{\mathcal G}$) over the double category $\mathbb G_{\mathcal G}\rightrightarrows\mathbb E_{\mathcal G}$ (resp. on $\widetilde{\mathbb G}_{\mathcal G}\rightrightarrows\widetilde{\mathbb E}_{\mathcal G}$).
To simplify the notation, we define categories $\mathcal M\wr\mathbb G_{\mathcal G}$ and $\mathcal M\wr\widetilde{\mathbb G}_{\mathcal G}$ by the pullback squares
\[
\vcenter{
  \xymatrix{
    \mathcal M\wr\mathbb G_{\mathcal G} \ar[r] \ar[d] \ar@{}[dr]|(.4)\pbcorner & \mathcal M\wr\mathbb E_{\mathcal G} \ar[d] \\
    \mathbb G_{\mathcal G} \ar[r]^t & \mathbb E_{\mathcal G} }}
\quad,\quad
\vcenter{
  \xymatrix{
    \mathcal M\wr\widetilde{\mathbb G}_{\mathcal G} \ar[r] \ar[d] \ar@{}[dr]|(.4)\pbcorner & \mathcal M\wr\widetilde{\mathbb E}_{\mathcal G} \ar[d] \\
    \widetilde{\mathbb G}_{\mathcal G} \ar[r]^t & \widetilde{\mathbb E}_{\mathcal G} }}
\qquad.
\]
Hence, the required internal presheaf structures are functors
\begin{equation}
\label{eq:ME-gamma}
\gamma:\mathcal M\wr\mathbb G_{\mathcal G}
\to \mathcal M\wr\mathbb E_{\mathcal G}
\ ,\quad
\gamma:\mathcal M\wr\widetilde{\mathbb G}_{\mathcal G}
\to \mathcal M\wr\widetilde{\mathbb E}_{\mathcal G}\ ,
\end{equation}
over $\mathbb E_{\mathcal G}$ and $\widetilde{\mathbb E}_{\mathcal G}$ respectively which satisfy appropriate conditions.
Since the latter may be induced from the first, we mainly discuss $\mathcal M\wr\mathbb E_{\mathcal G}$.
Note that the category $\mathcal M\wr\mathbb G_{\mathcal G}$ is described explicitly as follows: for each objects $\vec a=a_1\dots a_m, \vec b=b_1\dots b_n\in\mathcal M\wr\mathbb G_{\mathcal G}$, the hom-set $(\mathcal M\wr\mathbb G_{\mathcal G})(\vec a,\vec b)$ consists of tuples $(\varphi;f_1,\dots,f_n;[u],[x])$ such that
\begin{itemize}
  \item $[u]\in\operatorname{Inr}^{\mathcal G}_\varphi\backslash\overbar{\operatorname{Dec}}^{\mathcal G}_\varphi$ and $[x]\in\overbar{\operatorname{Kec}}^{\mathcal G}_\varphi\backslash G_m$;
  \item $(\varphi;f_1,\dots,f_n;[ux]):\vec a\to \vec b$ makes sense as a morphism in $\mathcal M\wr\mathbb E_{\mathcal G}$;
\end{itemize}
The composition is given by
\begin{equation}
\label{eq:MG-comp}
\begin{split}
&(\psi;g_1,\dots,g_l;[v],[y])\circ(\varphi;f_1,\dots,f_n;[u],[x]) \\
&= (\psi\varphi^y;\gamma(g_1;((vy)_\ast\vec f)^\psi_1),\dots,\gamma(g_l;(vy)_\ast\vec f)^\psi_l;[\varphi^\ast(vy)u\varphi^\ast(y)^{-1}],[\varphi^\ast(y)x])\ ,
\end{split}
\end{equation}
and the structure functor $\mathcal M\wr\mathbb G_{\mathcal G}\to\mathbb E_{\mathcal G}$ is an identity-on-object functor with
\[
\begin{array}{ccc}
  (\mathcal M\wr\mathbb G_{\mathcal G})(a_1\dots a_m, b_1\dots b_n) &\to& \mathbb E_{\mathcal G}(\llangle m\rrangle,\llangle n\rrangle) \\[1ex]
  (\varphi;f_1,\dots,f_n;[u],[x]) &\mapsto& (\varphi,[x])
\end{array}
\quad.
\]
For the construction of a internal presheaf structure, the key is a comparison of the category $\mathcal M\wr\mathbb G_{\mathcal G}$ with $(\mathcal M\rtimes\mathcal G)\wr\mathbb E_{\mathcal G}$ (see the construction in \cref{sec:grpop}).

\begin{notation}
For a morphism $\varphi:\llangle m\rrangle\to\llangle n\rrangle\in\nabla$, and for each $1\le j\le n$, suppose
\[
\varphi^{-1}\{j\}
= \left\{i_1<\dots<i_{k^{(\varphi)}_j}\right\}\ .
\]
In this case, we set
\begin{equation}
\label{eq:fundef-delta}
\delta^{(\varphi)}_j:\llangle k^{(\varphi)}_j\rrangle\to\llangle m\rrangle
\ ;\quad s \mapsto
\begin{cases}
-\infty & s=-\infty\ , \\
i_s & 1\le s\le k^{(\varphi)}_j\ , \\
\infty & s=\infty\ .
\end{cases}
\end{equation}
Hence, the composition $\varphi\delta^{(\varphi)}_j$ factors through the map $\llangle 1\rrangle\to\llangle n\rrangle$ corresponding to the element $j\in\llangle n\rrangle$.
\end{notation}

\begin{remark}
\label{rem:funchar-delta}
The morphism $\delta^{(\varphi)}_j$ defined above is characterized by the following two properties:
\begin{enumerate}[label={\upshape(\roman*)}]
  \item the composition $\varphi\delta^{(\varphi)}_j$ factors through the map $\llangle 1\rrangle\to\llangle n\rrangle$ corresponding to the element $j\in\llangle n\rrangle$;
  \item if $\psi$ is a morphism with the previous property, then there is a unique morphism $\psi'$ such that $\psi=\delta^{(\varphi)}_j\psi'$.
\end{enumerate}
\end{remark}

\begin{lemma}
\label{lem:rst-split}
Let $G$ be a crossed interval group.
Then, for every morphism $\varphi:\llangle m\rrangle\to\llangle n\rrangle$, the map
\[
\vec\delta^{(\varphi)\ast}: \operatorname{RSt}^G_\varphi\to G_{k^{(\varphi)}_1}\times\dots\times G_{k^{(\varphi)}_n}
\ ;\quad x\mapsto \left(\delta^{(\varphi)\ast}_1(x),\dots,\delta^{(\varphi)\ast}_n(x)\right)
\]
is a group homomorphism.
Moreover, its kernel contains the subgroup $\operatorname{Inr}^G_\varphi\subset\operatorname{RSt}^G_\varphi$.
\end{lemma}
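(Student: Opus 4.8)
The plan is to separate the statement into its two assertions—that $\vec\delta^{(\varphi)\ast}$ is multiplicative on $\operatorname{RSt}^G_\varphi$, and that $\operatorname{Inr}^G_\varphi$ lands in its kernel—and to reduce both to understanding how right stabilizers act on the sections $\delta^{(\varphi)}_j$. For the homomorphism property, the starting point is the first crossed interval group axiom $\varphi^\ast(xy)=(\varphi^y)^\ast(x)\,\varphi^\ast(y)$ applied to $\varphi=\delta^{(\varphi)}_j$: for $x,y\in G_m$ it gives
\[
\delta^{(\varphi)\ast}_j(xy)=\bigl((\delta^{(\varphi)}_j)^y\bigr)^\ast(x)\cdot\delta^{(\varphi)\ast}_j(y)\ .
\]
Thus $\delta^{(\varphi)\ast}_j$ fails to be multiplicative only through the twisting morphism $(\delta^{(\varphi)}_j)^y$, and it suffices to prove that $(\delta^{(\varphi)}_j)^y=\delta^{(\varphi)}_j$ for every $y\in\operatorname{RSt}^G_\varphi$ and every $1\le j\le n$. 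Once this is known, the twisting disappears and $\vec\delta^{(\varphi)\ast}$ is a homomorphism on $\operatorname{RSt}^G_\varphi$.

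To establish the stabilization, I would use that the composite $\varphi\delta^{(\varphi)}_j$ factors through the morphism $\llangle 1\rrangle\to\llangle n\rrangle$ corresponding to $j$ and is therefore active. Since $y$ is a right stabilizer of $\varphi$, we have $\varphi(\delta^{(\varphi)}_j)^y=\varphi\delta^{(\varphi)}_j$, which again factors through that morphism; by the universal property recorded in \cref{rem:funchar-delta} this produces a factorization $(\delta^{(\varphi)}_j)^y=\delta^{(\varphi)}_j\lambda$ for a unique endomorphism $\lambda$ of $\llangle k^{(\varphi)}_j\rrangle$. Carrying out the same with $y^{-1}$ and using the second axiom $(\delta^{(\varphi)}_j\lambda)^{y^{-1}}=(\delta^{(\varphi)}_j)^{y^{-1}}\lambda^{\delta^{(\varphi)\ast}_j(y^{-1})}$, then composing with the inert retraction of $\delta^{(\varphi)}_j$ supplied by \cref{lem:interval-factor}, I would obtain an identity of the form $\mathrm{id}=\kappa\,\lambda^{\delta^{(\varphi)\ast}_j(y^{-1})}$ between endomorphisms of $\llangle k^{(\varphi)}_j\rrangle$. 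Hence $\kappa$ is a surjective order-preserving endomorphism of a fixed object of $\nabla$, so it is the identity, forcing $(\delta^{(\varphi)}_j)^{y^{-1}}=\delta^{(\varphi)}_j$ and, by symmetry, $(\delta^{(\varphi)}_j)^y=\delta^{(\varphi)}_j$. This is exactly the reasoning already used in the proof of \ref{req:conginr:stab} in \cref{lem:conginr}, where the section of an inert morphism was shown to be fixed by $\operatorname{RSt}^G_\varphi$, so I would simply invoke it.

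For the kernel containment the homomorphism property lets us speak of $\ker\vec\delta^{(\varphi)\ast}$, and the inclusion $\operatorname{Inr}^G_\varphi\subset\ker\vec\delta^{(\varphi)\ast}$ should be immediate from the definition $\operatorname{Inr}^G=\overbar{\operatorname{Triv}}$. Unwinding the closure operator, an element $u\in\operatorname{Inr}^G_\varphi$ is a right stabilizer of $\varphi$ with $\psi^\ast(u)=e$ for every $\psi$ such that $\varphi\psi$ is active, since $\overbar{\operatorname{Triv}}_{\varphi\psi}=\operatorname{Triv}_{\varphi\psi}=\{e\}$ on active composites. As $\varphi\delta^{(\varphi)}_j$ is active for each $j$, taking $\psi=\delta^{(\varphi)}_j$ gives $\delta^{(\varphi)\ast}_j(u)=e_{k^{(\varphi)}_j}$, and hence $u\in\ker\vec\delta^{(\varphi)\ast}$.

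The step I expect to be the main obstacle is the stabilization $(\delta^{(\varphi)}_j)^y=\delta^{(\varphi)}_j$: the right-stabilizer hypothesis only yields equality after postcomposition with $\varphi$, and one must upgrade this to equality of the sections themselves. The cleanest route is the endomorphism-surjectivity argument above, which isolates the combinatorial fact that $\nabla$ has no nontrivial order-preserving surjective endomorphism of a fixed object; as an alternative, one may argue that the crossed action on morphisms factors through the interval Weyl group $\mathfrak W^\nabla_m$ and therefore preserves monomorphisms, so that $(\delta^{(\varphi)}_j)^y$ is again an order-preserving injection with image $\varphi^{-1}\{j\}$ and thus coincides with $\delta^{(\varphi)}_j$ by uniqueness.
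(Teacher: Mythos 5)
Your proof is correct and follows essentially the same route as the paper: the paper likewise reduces multiplicativity to the invariance of $\delta^{(\varphi)}_j$ under the left action of $\operatorname{RSt}^G_\varphi$, derives that invariance from the universal characterization in \cref{rem:funchar-delta}, and treats the kernel statement as immediate from $\operatorname{Inr}^G=\overbar{\operatorname{Triv}}$ together with the activeness of $\varphi\delta^{(\varphi)}_j$. Your write-up simply fills in the details that the paper leaves as ``this follows from the characterization'' and ``the last assertion is straightforward.''
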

\begin{proof}
To see each map $\delta^{(\varphi)\ast}_j:\operatorname{RSt}^G_\varphi\to G_{k^{(\varphi)}_j}$ is a group homomorphism, it suffices to show $\delta^{(\varphi)}$ is invariant under the left action of $\operatorname{RSt}^G_\varphi$.
This follows from the characterization in \cref{rem:funchar-delta}.
The last assertion is straightforward.
\end{proof}

In the case $G=\mathcal G$ is a group operad, if $\varphi=\mu\rho$ is the unique factorization with $\mu$ active and $\rho$ inert, then there are canonical identifications
\[
\mathcal G(k^{(\varphi)}_1)\times\dots\times\mathcal G(k^{(\varphi)}_n)
\cong \operatorname{Dec}^{\mathcal G}_\mu
= \overbar{\operatorname{Dec}}^{\mathcal G}_\mu\ .
\]
Put $\delta$ the unique section of $\rho$, then one can see the both squares in the diagram below are commutative:
\begin{equation}
\label{eq:decdelta}
\vcenter{
  \xymatrix{
    \overbar{\operatorname{Dec}}^{\mathcal G}_\varphi \ar@<.2pc>[r]^{\delta^\ast} \ar[d] & \overbar{\operatorname{Dec}}^{\mathcal G}_\mu \ar[d]^\cong \ar@<.2pc>[l]^{\rho^\ast} \\
    \operatorname{RSt}^{\mathcal G}_\varphi \ar[r]^-{\vec\delta^{(\varphi)\ast}} & \mathcal G(k^{(\varphi)}_1)\times\dots\times\mathcal G(k^{(\varphi)}_n)}}
\end{equation}
In other words, the composition of the left and the bottom arrows in \eqref{eq:decdelta} induces the inverse of the map \eqref{eq:conginr:indinert} in the case $K=\overbar{\operatorname{Dec}}^{\mathcal G}$.

\begin{theorem}
\label{theo:MG-compare}
Let $\mathcal G$ be a group operad, and let $\mathcal M$ be a multicategory.
Then, the family of maps
\[
\begin{array}{rccc}
  \Phi\mathrlap:&(\mathcal M\wr\mathbb G_{\mathcal G})(\vec a,\vec b) &\mathclap\to& ((\mathcal M\rtimes\mathcal G)\wr\mathbb E_{\mathcal G})(\vec a,\vec b) \\[1ex]
  &(\varphi;f_1,\dots,f_n;[u],[x]) &\mathclap\mapsto& \left(\varphi;(f_1,\delta^{(\varphi)\ast}_1(u)),\dots,(f_n,\delta^{(\varphi)\ast}_n(u));[x]\right)
\end{array}
\]
for $\vec a=a_1\dots a_m,\vec b=b_1\dots b_n\in\mathcal M\wr\mathbb G_{\mathcal G}$ form an identity-on-objects functor
\[
\Phi:\mathcal M\wr\mathbb G_{\mathcal G}\to(\mathcal M\rtimes\mathcal G)\wr\mathbb E_{\mathcal G}\ .
\]
Moreover, $\Phi$ is an isomorphism of categories which is $2$-natural with respect to $\mathcal M\in\mathbf{MultCat}$.
\end{theorem}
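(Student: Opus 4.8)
The plan is to read $\Phi$ as the operation that converts the ``decomposable'' datum $[u]$ into a tuple $(w_1,\dots,w_n)$ of group elements, one for each block of $\varphi$, and then to verify well-definedness together with bijectivity, functoriality, and $2$-naturality in turn. The backbone is \eqref{eq:decdelta} and \cref{lem:rst-split}: for $\varphi:\llangle m\rrangle\to\llangle n\rrangle$ the homomorphism $\vec\delta^{(\varphi)\ast}$ restricts on $\overbar{\operatorname{Dec}}^{\mathcal G}_\varphi$ to a map with kernel exactly $\operatorname{Inr}^{\mathcal G}_\varphi$, inducing a group isomorphism
\[
\overbar{\operatorname{Dec}}^{\mathcal G}_\varphi/\operatorname{Inr}^{\mathcal G}_\varphi
\xrightarrow{\ \sim\ }
\mathcal G(k^{(\varphi)}_1)\times\dots\times\mathcal G(k^{(\varphi)}_n)
\ ;\quad
[u]\mapsto\bigl(\delta^{(\varphi)\ast}_1(u),\dots,\delta^{(\varphi)\ast}_n(u)\bigr)\ ,
\]
namely the inverse of the bijection \eqref{eq:conginr:indinert} of \cref{lem:conginr} in the case $K=\overbar{\operatorname{Dec}}^{\mathcal G}$.

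First I would check that $\Phi$ is well-defined. Independence of the representative $u$ is immediate from the displayed isomorphism, as replacing $u$ by $wu$ with $w\in\operatorname{Inr}^{\mathcal G}_\varphi$ leaves each $\delta^{(\varphi)\ast}_j(u)$ fixed. That $(f_j,\delta^{(\varphi)\ast}_j(u))$ is a genuine multimorphism of $\mathcal M\rtimes\mathcal G$ with target $b_j$ rests on the word identity
\[
\bigl((ux)_\ast\vec a\bigr)^\varphi_j
= \bigl(\delta^{(\varphi)\ast}_j(u)\bigr)_\ast\bigl((x_\ast\vec a)^\varphi_j\bigr)\ ,
\]
which identifies the domain of $f_j$ recorded in $\mathcal M\wr\mathbb G_{\mathcal G}$ with the one prescribed by the construction of $\mathcal M\rtimes\mathcal G$. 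To prove it I would use that a decomposable element permutes the entries of a word only within the blocks $\varphi^{-1}\{j\}$, the restriction to the $j$-th block being governed by $\delta^{(\varphi)\ast}_j(u)$; since the image of $\mathcal G$ in $\mathfrak W^\nabla$ carries trivial interval signs, the order-reversing factors $\beta$ of part~\ref{req:word-calc:perm} of \cref{lem:word-calc} do not appear. Bijectivity on each hom-set is then a formality: the displayed isomorphism recovers $[u]$ uniquely from the tuple $(w_j)_j$, the data $[x]$ and the $f_j$ are carried over unchanged, and the two prescriptions for the domain of $f_j$ coincide by the same word identity.

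The functoriality of $\Phi$ is the substantial point; preservation of identities is clear. For composites, comparing $\Phi$ applied to \eqref{eq:MG-comp} with the composite of the images in $(\mathcal M\rtimes\mathcal G)\wr\mathbb E_{\mathcal G}$, the underlying morphism $\psi\varphi^y$ and the coset $[\varphi^\ast(y)x]$ match verbatim, and the comparison reduces to two pointwise identities. The $\mathcal M$-components agree by the word identity above applied now to $v$ and $\psi$, giving $((vy)_\ast\vec f)^\psi_s=(\delta^{(\psi)\ast}_s(v))_\ast((y_\ast\vec f)^\psi_s)$, which is exactly the permutation of arguments built into the composition law of $\mathcal M\rtimes\mathcal G$. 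The $\mathcal G$-components demand
\[
\delta^{(\psi\varphi^y)\ast}_s\bigl(\varphi^\ast(vy)\,u\,\varphi^\ast(y)^{-1}\bigr)
= \gamma_{\mathcal G}\bigl(\delta^{(\psi)\ast}_s(v);\,\delta^{(\varphi)\ast}_{y^{-1}(t_1)}(u),\dots,\delta^{(\varphi)\ast}_{y^{-1}(t_r)}(u)\bigr)\ ,
\]
where $\psi^{-1}\{s\}=\{t_1<\dots<t_r\}$. I would prove it by first rewriting the argument as $(\varphi^y)^\ast(v)\cdot\varphi^\ast(y)\,u\,\varphi^\ast(y)^{-1}$ via the crossed-interval relation, then applying the homomorphism $\vec\delta^{(\psi\varphi^y)\ast}$ of \cref{lem:rst-split}, and finally invoking the group-operad interchange law \eqref{eq:grpop-interchange} to recognize the resulting product in $\mathcal G(k^{(\psi\varphi^y)}_s)$ as the single operadic composite on the right.

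I expect this last identity to be the main obstacle: it is the one place where the defining axiom \eqref{eq:grpop-interchange} is genuinely needed, and it requires careful tracking of how the sections $\delta^{(\psi\varphi^y)}_s$, $\delta^{(\psi)}_s$ and $\delta^{(\varphi)}_j$ are related under the factorization of composites (the operadic analogue of part~\ref{req:word-calc:comp} of \cref{lem:word-calc}) and of how $\vec\delta^{(\cdot)\ast}$ interacts with conjugation by $\varphi^\ast(y)$. The remaining $2$-naturality is routine: for a multifunctor $F$ and a multinatural transformation $\alpha$, the induced functors $F^{\mathcal G}$ and transformations $\alpha^{\mathcal G}$ act only on the $f_j$ and on objects, leaving $\varphi$ and all group-theoretic data untouched, so the squares expressing $2$-naturality of $\Phi$ commute by inspection. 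Collecting the isomorphisms $\Phi$ over all $\mathcal M$ then yields the asserted $2$-natural isomorphism.
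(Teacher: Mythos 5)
Your proposal is correct and follows essentially the same route as the paper: well-definedness and injectivity via \cref{lem:rst-split} together with part~\ref{req:conginr:inert} of \cref{lem:conginr}, reduction of functoriality to the single identity comparing $\delta^{(\psi\varphi^y)\ast}_j(\varphi^\ast(vy)u\varphi^\ast(y)^{-1})$ with the operadic composite, proved by splitting off $(\varphi^y\delta^{(\psi\varphi^y)}_j)^\ast(v)$ and invoking the interchange law, and routine $2$-naturality. The only place you stop short of the paper is the explicit evaluation of the two factors (the paper does this via the pullback square relating $\delta^{(\psi\varphi^y)}_j$, $\delta^{(\psi)}_j$ and $\varphi^y$, after normalizing $u$ to the form $\gamma(e_{m+2};e,u_1,\dots,u_m,e)$), but your outline of that step is accurate.
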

\begin{proof}
First notice that, by virtue of \cref{lem:rst-split}, for each class $[u]\in\operatorname{Inr}^{\mathcal G}_\varphi\backslash\overbar{\operatorname{Dec}}^{\mathcal G}_\varphi$, the element $\delta^{(\varphi)\ast}_j(u)$ does not depend on the choice of the representative $u\in\overbar{\operatorname{Dec}}^{\mathcal G}_\varphi$ for every $1\le j\le n$.
In particular, in view of \cref{lem:conginr}, we may take $u$ of the form
\begin{equation}
\label{eq:prf:MG-compare:u}
u
=\gamma(e_{m+2};e^{(\varphi)}_{-\infty},u_1,\dots,u_m,e^{(\varphi)}_\infty)
\in\overbar{\operatorname{Dec}}^{\mathcal G}_\varphi\subset \mathcal G(m)
\end{equation}
with $u_i\in\mathcal G(k^\varphi_i)$, where $e^{(\varphi)}_{\pm\infty}:=e_{k^{(\varphi)}_{\pm\infty}}$.
In this case, we have $\delta^{(\varphi)\ast}_i(u)=u_i$ so that, for each morphism in $\mathcal M\wr\mathbb G_{\mathcal G}$ of the form $(\varphi;f_1,\dots,f_m;[u],[x])$, we have
\[
\Phi(\varphi;f_1,\dots,f_m;[u],[x])
= (\varphi;(f_1,u_1),\dots,(f_m,u_m);[x])\ .
\]
Now, for every morphism $(\psi;g_1,\dots,g_n;[v],[y])$ in $\mathcal M\wr\mathbb G_{\mathcal G}$ postcomposable with $(\varphi;\vec f;[u],[x])$ above, the explicit formula of the composition operation in $\mathcal M\rtimes\mathcal G$ and the formulas in \cref{lem:word-calc} give the equation
\begin{equation}
\label{eq:prf:MG-compare:compPhi}
\begin{split}
&\Phi(\psi;\vec g;[v],[y])\circ\Phi(\varphi;\vec f;[u],[x]) \\
&
\begin{multlined}
=\left(\psi\varphi^y;\left(\gamma_{\mathcal M}(g_1;\delta^{(\psi)\ast}_1(v)_\ast(y_\ast\vec f)^\psi_1),\gamma_{\mathcal G}(\delta^{(\psi)\ast}_1(v);(y_\ast\vec u)^\psi_1)\right),\right. \\
\qquad\left.\dots,\left(\gamma_{\mathcal M}(g_n;\delta^{(\psi)\ast}_n(v)_\ast(y_\ast\vec f)^\psi_n),\gamma_{\mathcal G}(\delta^{(\psi)\ast}_n(v);(y_\ast\vec u)^\psi_n)\right);[\varphi^\ast(y)x]\right)
\end{multlined}
\\
&
\begin{multlined}
= \left(\psi\varphi^y;\left(\gamma_{\mathcal M}(g_1;((vy)_\ast\vec f)^\psi_1),\gamma_{\mathcal G}(\delta^{(\psi)\ast}_1(v);(y_\ast\vec u)^\psi_1)\right),\right. \\
\qquad\left.\dots,\left(\gamma_{\mathcal M}(g_n;((vy)_\ast\vec f)^\psi_n),\gamma_{\mathcal G}(\delta^{(\psi)\ast}_n(v);(y_\ast\vec u)^\psi_n)\right); [\varphi^\ast(y)x]\right)\ .
\end{multlined}
\end{split}
\end{equation}
The comparison of \eqref{eq:prf:MG-compare:compPhi} with the formula \eqref{eq:MG-comp} tells us that, in order to have the functoriality of $\Phi$, we only have to verify the equation
\begin{equation}
\label{eq:prf:MG-compare:vj}
\delta^{(\psi\varphi^y)\ast}_j(\varphi^\ast(vy)u\varphi^\ast(y)^{-1})
= \gamma(\delta^{(\psi)\ast}_j(v);(y_\ast\vec u)^\psi_j)
\end{equation}
for each $1\le j\le n$.
By virtue of \cref{lem:rst-split}, the left hand side of \eqref{eq:prf:MG-compare:vj} equals
\begin{equation}
\label{eq:prf:MG-compare:uvexpand}
(\varphi^y\delta^{(\psi\varphi^y)}_j)^\ast(v)
\cdot \delta^{(\psi\varphi^y)\ast}_j\left(\varphi^\ast(y)u\varphi^\ast(y)^{-1}\right)\ .
\end{equation}
Notice that there is a unique active morphism $\varphi'_j:\llangle k^{(\psi\varphi^y)}_j\rrangle\to\llangle k^{(\psi)}_j\rrangle$ which makes the square below commute:
\begin{equation}
\label{eq:prf:MG-compare:pb}
\vcenter{
  \xymatrix{
    \llangle k^{(\psi\varphi^y)}_j\rrangle \ar[r]^{\varphi'_j} \ar[d]_{\delta^{(\psi\varphi^y)}_j} & \llangle k^{(\psi)}_j\rrangle \ar[d]^{\delta^{(\psi)}_j} \ar[r] & \llangle 1\rrangle \ar[d]^{\{j\}} \\
    \llangle l\rrangle \ar[r]^{\varphi^y} & \llangle m\rrangle \ar[r]^\psi & \llangle n\rrangle }}
\quad.
\end{equation}
It turns out that each square in \eqref{eq:prf:MG-compare:pb} forms a pullback square of (ordinary) maps, so one has
\[
k^{(\varphi'_j)}_s
= k^{(\varphi^y)}_{\delta^{(\psi)}_j(s)}
= k^{(\varphi)}_{y^{-1}(\delta^{(\psi)}_j(s))}
\]
for each $1\le s\le k^{(\psi)}_j$.
Thus, we obtain
\begin{equation}
\label{eq:prf:MG-compare:deltav}
\begin{split}
(\varphi^y\delta^{(\psi\varphi^y)}_j)^\ast(v)
&= (\delta^{(\psi)}_j\varphi'_j)^\ast(v) \\
&= \gamma_{\mathcal G}\bigl(\delta^{(\psi)\ast}_j(v);e^{(\varphi)}_{y^{-1}(\delta^{(\psi)}_j(1))},\dots,e^{(\varphi)}_{y^{-1}(\delta^{(\psi)}_j(k^{(\psi)}_j))}\bigr) \\
&= \gamma_{\mathcal G}\left(\delta^{(\psi)\ast}_j(v);(y_\ast\vec e^{(\varphi)})^\psi_j\right) \\
\end{split}
\end{equation}
where $e^{(\varphi)}_i = e_{k^{(\varphi)}_i}$.
On the other hand, in view of the presentation \eqref{eq:prf:MG-compare:u}, we have
\begin{equation}
\label{eq:prf:MG-compare:deltau}
\begin{split}
\delta^{(\psi\varphi^y)\ast}_j\left(\varphi^\ast(y)u\varphi^\ast(y)^{-1}\right)
&= \delta^{(\psi\varphi^y)\ast}_j\left(\gamma_{\mathcal G}(e_{m+2};e^{(\varphi)}_{-\infty},u_{y^{-1}(1)},\dots,u_{y^{-1}(m)},e^{(\varphi)}_\infty)\right) \\
&= \gamma_{\mathcal G}\bigl(e^{(\psi)}_j;u_{y^{-1}(\delta^{(\psi)}_j(1))},\dots,u_{y^{-1}(\delta^{(\psi)}_j(k^{(\psi)}_j))}\bigr) \\
&= \gamma_{\mathcal G}\left(e^{(\psi)}_j;(y_\ast\vec u)^\psi_j\right)
\end{split}
\end{equation}
Substituting \eqref{eq:prf:MG-compare:deltav} and \eqref{eq:prf:MG-compare:deltau} into \eqref{eq:prf:MG-compare:uvexpand}, we obtain \eqref{eq:prf:MG-compare:vj}, which implies $\Phi$ is actually a functor.

The $2$-naturality of $\Phi$ immediately follows from definition.
We verify $\Phi$ is an isomorphism of categories.
Since it is the identity on objects, it suffices to show $\Phi$ is bijective on each hom-sets.
This is actually a consequence of \ref{req:conginr:inert} in \cref{lem:conginr}.
\end{proof}

\begin{corollary}
\label{cor:ME-intpresh}
For every group operad $\mathcal G$, the $2$-functor $(\blank)\wr\mathbb E_{\mathcal G}$ admits a lift depicted as the dashed arrow in the diagram below:
\[
\vcenter{
  \xymatrix{
    \mathbf{MultCat}_{\mathcal G} \ar@{-->}[r] \ar[d]_{\mathit{forget}} & \mathbf{PSh}(\mathbb G_{\mathcal G}\rightrightarrows\mathbb E_{\mathcal G}) \ar[d]^{\mathit{forget}} \\
    \mathbf{MultCat} \ar[r]^{(\blank)\wr\mathbb E_{\mathcal G}} & \mathbf{Cat}^{/\mathbb E_{\mathcal G}} }}
\quad.
\]
\end{corollary}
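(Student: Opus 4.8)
The plan is to exhibit the internal presheaf structure abstractly, by recognizing it as a transport of the $\mathcal G$-symmetric structure along the isomorphism $\Phi$ of \cref{theo:MG-compare}. As noted above, internal presheaves over $\mathbb G_{\mathcal G}\rightrightarrows\mathbb E_{\mathcal G}$ are precisely the strict algebras for the $2$-monad $S:\mathcal X\mapsto\mathcal X\times_{\mathbb E_{\mathcal G}}\mathbb G_{\mathcal G}$ on $\mathbf{Cat}^{/\mathbb E_{\mathcal G}}$, while $\mathcal G$-symmetric multicategories are by definition the strict algebras for the $2$-monad $T=(\blank)\rtimes\mathcal G$ on $\mathbf{MultCat}$. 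Writing $U=(\blank)\wr\mathbb E_{\mathcal G}$, the defining pullback of $\mathcal M\wr\mathbb G_{\mathcal G}$ identifies $S(U(\mathcal M))=\mathcal M\wr\mathbb G_{\mathcal G}$, and \cref{theo:MG-compare} supplies a $2$-natural isomorphism $\Phi:S(U(\mathcal M))\xrightarrow{\cong}U(T(\mathcal M))$. Thus $U$ carries the $2$-monad $S$ (restricted to its image) to the $2$-monad $T$, and the task reduces to checking that $\Phi$ intertwines the two monad structures.

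Concretely, I would verify that $\Phi$ is compatible with the units and multiplications. For the units, the double-category unit $\iota:\mathbb E_{\mathcal G}\to\mathbb G_{\mathcal G}$ of \eqref{eq:QL/K-doublecat} is $(\varphi,[x])\mapsto(\varphi,[e],[x])$, and since $\delta^{(\varphi)\ast}_j(e)=e$ for all $j$, applying $\Phi$ reproduces $H\wr\mathbb E_{\mathcal G}$, where $H:f\mapsto(f,e)$. For the multiplications, the composition functor $\gamma$ of the double category \eqref{eq:QL/K-doublecat} multiplies the $\overbar{\operatorname{Dec}}^{\mathcal G}$-components; under the identification $\overbar{\operatorname{Dec}}^{\mathcal G}_\varphi/\operatorname{Inr}^{\mathcal G}_\varphi\cong\mathcal G(k^{(\varphi)}_1)\times\dots\times\mathcal G(k^{(\varphi)}_n)$ of \eqref{eq:decdelta}, this corresponds exactly to the coordinatewise group multiplication, which is what $M\wr\mathbb E_{\mathcal G}$, with $M:(f,x,y)\mapsto(f,xy)$, performs on the $\mathcal G$-components. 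That these squares commute follows from $\delta^{(\varphi)\ast}$ being a group homomorphism (\cref{lem:rst-split}) together with the word-calculus identities of \cref{lem:word-calc}; this is essentially the computation already made in the proof of \cref{theo:MG-compare} when checking that $\Phi$ respects composition.

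Granting the intertwining, the lift is produced by the standard formalism of algebras over $2$-monads. For a $\mathcal G$-symmetric multicategory $(\mathcal M,\mathpzc{A})$ with $\mathpzc{A}:\mathcal M\rtimes\mathcal G\to\mathcal M$, I set the internal presheaf structure on $\mathcal M\wr\mathbb E_{\mathcal G}$ to be
\[
\mathpzc{A}_{\mathcal M\wr\mathbb E_{\mathcal G}}
:= (\mathpzc{A}\wr\mathbb E_{\mathcal G})\circ\Phi
:\mathcal M\wr\mathbb G_{\mathcal G}=S(U(\mathcal M))\to U(\mathcal M)=\mathcal M\wr\mathbb E_{\mathcal G}\ ,
\]
which is the functor $\gamma$ of \eqref{eq:ME-gamma}; its being a morphism over $\mathbb E_{\mathcal G}$ is immediate from the descriptions of $\Phi$ and of the structure functor of $\mathcal M\wr\mathbb G_{\mathcal G}$. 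The associativity and unitality diagrams required of an internal presheaf then follow from the two algebra axioms for $\mathpzc{A}$ by transporting them along $\Phi$, using the unit- and multiplication-compatibility established above. On $1$-cells, a $\mathcal G$-symmetric multifunctor $F$ is by definition a $T$-algebra homomorphism, so the $2$-naturality of $\Phi$ and of $U$ makes $F^{\mathcal G}=U(F)$ into an $S$-algebra homomorphism, i.e.\ a morphism of internal presheaves; on $2$-cells one argues identically. This assembles into the desired $2$-functor $\mathbf{MultCat}_{\mathcal G}\to\mathbf{PSh}(\mathbb G_{\mathcal G}\rightrightarrows\mathbb E_{\mathcal G})$, lying over $(\blank)\wr\mathbb E_{\mathcal G}$ by construction since the underlying object of the presheaf attached to $\mathcal M$ is $\mathcal M\wr\mathbb E_{\mathcal G}$.

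The main obstacle is the multiplication-compatibility of $\Phi$: one must track precisely how the $\overbar{\operatorname{Dec}}^{\mathcal G}$-data entering the double-category composition $\gamma$ is converted, via the section maps $\delta^{(\varphi)\ast}_j$ and the permutation reindexing $y_\ast$ of \cref{lem:word-calc}, into the fibrewise group-operad multiplication defining $M$. All other verifications are formal consequences of the $2$-monad isomorphism, so the content is concentrated in re-reading the composition computation of \cref{theo:MG-compare} as the statement that $\Phi$ is a map of $2$-monads.
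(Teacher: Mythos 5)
Your proposal is correct and follows essentially the same route as the paper: the internal presheaf structure is defined as $(\mathpzc A\wr\mathbb E_{\mathcal G})\circ\Phi$ on the pullback $\mathcal M\wr\mathbb G_{\mathcal G}=(\mathcal M\wr\mathbb E_{\mathcal G})\times_{\mathbb E_{\mathcal G}}\mathbb G_{\mathcal G}$, with the presheaf axioms and $2$-naturality transported from the algebra axioms for $\mathpzc A$ via \cref{theo:MG-compare} and \cref{lem:rst-split}. You merely make explicit, in the language of the two $2$-monads, the ``direct computation'' that the paper leaves implicit.
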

\begin{proof}
In view of \cref{theo:MG-compare}, each $\mathcal G$-symmetric multicategory $\mathcal M$ admits a canonical functor
\begin{equation}
\label{eq:prf:ME-intpresh:symstr}
(\mathcal M\wr\mathbb E_{\mathcal G})\times_{\mathbb E_{\mathcal G}}\mathbb G_{\mathcal G}
= \mathcal M\wr\mathbb G_{\mathcal G}
\xrightarrow[\cong]{\Phi} (\mathcal M\rtimes\mathcal G)\wr\mathbb E_{\mathcal G}
\to \mathcal M\wr\mathbb E_{\mathcal G}\ .
\end{equation}
\Cref{lem:rst-split} and the direct computation shows that it is in fact a structure of an internal presheaf over the double category $\mathbb G_{\mathcal G}\rightrightarrows\mathbb E_{\mathcal G}$.
Moreover, since the isomorphism $\Phi$ is $2$-natural, the structure functor \eqref{eq:prf:ME-intpresh:symstr} is also $2$-natural with respect to $\mathcal G$-symmetric multicategories $\mathcal M$.
Therefore, we obtain the result.
\end{proof}

We finally obtain an analogues on quotients.

\begin{theorem}
\label{theo:tildeMG-compare}
Let $\mathcal G$ be a group operad, and let $\mathcal M$ be a multicategory.
Then, there is an isomorphism $\widetilde\Phi:\mathcal M\wr\widetilde{\mathbb G}_{\mathcal G}\cong(\mathcal M\rtimes\mathcal G)\wr\widetilde{\mathbb E}_{\mathcal G}$ which is the identity on objects and, on each hom-set, described as
\begin{equation}
\label{eq:tildeMG-compare:def}
\widetilde\Phi([\varphi;f_1,\dots,f_n;u,x])
= \left[\varphi;(f_1,\delta^{(\varphi)\ast}_1(u)),\dots,(f_n,\delta^{(\varphi)\ast}_n(u));x\right]\ .
\end{equation}
Moreover, $\widetilde\Phi$ is a $2$-natural transformation with respect to $\mathcal M\in\mathbf{MultCat}$ such that the diagram below is commutative:
\[
\xymatrix{
  \mathcal M\wr\mathbb G_{\mathcal G} \ar[r]^-\Phi_-\cong \ar[d] & (\mathcal M\rtimes\mathcal G)\wr\mathbb E_{\mathcal G} \ar[d] \\
  \mathcal M\wr\widetilde{\mathbb G}_{\mathcal G} \ar[r]^-{\widetilde\Phi}_-\cong & (\mathcal M\rtimes\mathcal G)\wr\widetilde{\mathbb E}_{\mathcal G} }
\]
\end{theorem}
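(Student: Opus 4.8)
The plan is to obtain $\widetilde\Phi$ by descending the isomorphism $\Phi$ of \cref{theo:MG-compare} along the two canonical quotient functors
\[
p\colon\mathcal M\wr\mathbb G_{\mathcal G}\to\mathcal M\wr\widetilde{\mathbb G}_{\mathcal G}\ ,\qquad q\colon(\mathcal M\rtimes\mathcal G)\wr\mathbb E_{\mathcal G}\to(\mathcal M\rtimes\mathcal G)\wr\widetilde{\mathbb E}_{\mathcal G}\ .
\]
Here $q$ is the defining quotient by $\sim_{\mathsf A_{\mathcal G}}$, while $p$ is produced from the universal property of the pullback defining $\mathcal M\wr\widetilde{\mathbb G}_{\mathcal G}$ applied to the compatible pair $\mathcal M\wr\mathbb G_{\mathcal G}\to\mathbb G_{\mathcal G}\to\widetilde{\mathbb G}_{\mathcal G}$ and $\mathcal M\wr\mathbb G_{\mathcal G}\to\mathcal M\wr\mathbb E_{\mathcal G}\to\mathcal M\wr\widetilde{\mathbb E}_{\mathcal G}$, their compatibility over $\widetilde{\mathbb E}_{\mathcal G}$ coming from \cref{lem:QQ-quot}. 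Combining the pullback squares of \cref{lem:QQ-quot} with the pullback definition of $\mathcal M\wr\mathbb G_{\mathcal G}$ and with \cref{lem:rtEG-pb} applied to the multicategory $\mathcal M\rtimes\mathcal G$, one checks that both $p$ and $q$ arise as base changes of the identity-on-objects full functor $\mathbb E_{\mathcal G}\to\widetilde{\mathbb E}_{\mathcal G}$ along the respective structure functors; hence $p$ and $q$ are themselves full and bijective on objects, so they are quotient functors (and epimorphisms in $\mathbf{Cat}$) with congruences $\sim_p$ and $\sim_{\mathsf A_{\mathcal G}}$.

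With this set-up the whole theorem reduces to the single assertion that $\Phi$ intertwines the two congruences, i.e. $\alpha\sim_p\alpha'$ if and only if $\Phi(\alpha)\sim_{\mathsf A_{\mathcal G}}\Phi(\alpha')$ for parallel morphisms $\alpha,\alpha'$ of $\mathcal M\wr\mathbb G_{\mathcal G}$. Indeed, the forward implication lets the universal property of the quotient $p$ factor $q\circ\Phi$ as $\widetilde\Phi\circ p$, producing $\widetilde\Phi$ whose formula \eqref{eq:tildeMG-compare:def} is read off directly from that of $\Phi$ and which makes the asserted square commute; the backward implication symmetrically factors $p\circ\Phi^{-1}$ through $q$ as some $\Psi$. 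Then $\widetilde\Phi\Psi\circ q=q\Phi\Phi^{-1}=q$ and $\Psi\widetilde\Phi\circ p=p\Phi^{-1}\Phi=p$, so $\widetilde\Phi$ and $\Psi$ are mutually inverse because $p$ and $q$ are epic. The identity-on-objects property and the $2$-naturality of $\widetilde\Phi$ are inherited from those of $\Phi$ in \cref{theo:MG-compare}, and the commuting square with $\Phi$ holds by construction.

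To prove the congruence identity I first strip off the multimorphism data: both $\sim_p$ and $\sim_{\mathsf A_{\mathcal G}}$ impose exactly the condition $f_j=f'_j$ for all $j$, the arities and source words matching by \ref{req:word-calc:inert} of \cref{lem:word-calc}, so the claim reduces to a purely index-theoretic statement about $\varphi,\varphi',x,x'$ with $u\in\overbar{\operatorname{Dec}}^{\mathcal G}_\varphi$ and $u'\in\overbar{\operatorname{Dec}}^{\mathcal G}_{\varphi'}$. Writing $(K,L)=(\overbar{\operatorname{Dec}}^{\mathcal G},\overbar{\operatorname{Kec}}^{\mathcal G})$ and unwinding the pullback description of $p$, the relation $\alpha\sim_p\alpha'$ becomes $(\varphi,[u],[x])\sim_{\mathsf A_{L\dblslash K}}(\varphi',[u'],[x'])$ together with $[\varphi,ux]=[\varphi',u'x']$ in $\widetilde{\mathbb E}_{\mathcal G}$, whereas $\Phi(\alpha)\sim_{\mathsf A_{\mathcal G}}\Phi(\alpha')$ becomes $[\varphi,x]=[\varphi',x']$ in $\widetilde{\mathbb E}_{\mathcal G}$ together with $\delta^{(\varphi)\ast}_j(u)=\delta^{(\varphi')\ast}_j(u')$ for all $j$. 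I then apply the criterion of \cref{rem:Icong-crit}: since $u,u'$ are right stabilizers they fix the active compositions being tested, whence $\varphi\psi^{ux}=\varphi\psi^x$, so the two $\widetilde{\mathbb E}_{\mathcal G}$-conditions are tested over the same morphisms $\psi$. This identifies $[\varphi,x]=[\varphi',x']$ in $\widetilde{\mathbb E}_{\mathcal G}$ with the first two clauses of $\sim_{\mathsf A_{L\dblslash K}}$, and leaves one to match, for each test $\psi$ with $\nu:=\varphi\psi^x$ active, the class of $(\psi^x)^\ast(u)$ in $\operatorname{Inr}^{\mathcal G}_\nu\backslash\overbar{\operatorname{Dec}}^{\mathcal G}_\nu$ and of $(\psi^x)^\ast(u)\,\psi^\ast(x)$ in $\overbar{\operatorname{Kec}}^{\mathcal G}_\nu\backslash G_l$ against the componentwise equalities $\delta^{(\varphi)\ast}_j(u)=\delta^{(\varphi')\ast}_j(u')$.

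I expect this last reconciliation, which trades the combined datum $ux$ and the $\overbar{\operatorname{Dec}}/\overbar{\operatorname{Kec}}$-cosets for the separated data $x$ and the decomposition components $\delta^{(\varphi)\ast}_j(u)$, to be the main obstacle. To carry it out I will use that $\operatorname{Inr}^{\mathcal G}_\nu$ is trivial for active $\nu$ and that, by \cref{lem:rst-split} together with diagram \eqref{eq:decdelta} and \ref{req:conginr:inert} of \cref{lem:conginr}, an element of $\overbar{\operatorname{Dec}}^{\mathcal G}_\nu=\operatorname{Dec}^{\mathcal G}_\nu$ is determined by its $\vec\delta^{(\nu)\ast}$-components. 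Testing with $\psi=\delta^{x^{-1}}$, which is legitimate by the last part of \cref{rem:Icong-crit} with $\delta$ the section of the inert part of $\varphi$, one has $(\psi^x)^\ast(u)=\delta^\ast(u)$, and \eqref{eq:decdelta} packages this precisely as the components $\delta^{(\varphi)\ast}_j(u)$; the pullback computation of \cref{theo:MG-compare} built on \eqref{eq:prf:MG-compare:pb} transports these components through a general $\psi$. Finally, to pass between the $\overbar{\operatorname{Kec}}^{\mathcal G}_\nu$-coset of $(\psi^x)^\ast(u)\psi^\ast(x)$ and that of $\psi^\ast(x)$ I invoke \ref{cond:congpair:normal}, which guarantees that the decomposition factor normalizes $\overbar{\operatorname{Kec}}^{\mathcal G}_\nu$ and may therefore be absorbed without disturbing the coset comparison. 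Assembling these observations yields both implications of the congruence identity, and hence the descent producing $\widetilde\Phi$.
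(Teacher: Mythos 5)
Your proposal is correct and follows essentially the same route as the paper: the paper likewise pastes the pullback squares of \cref{lem:QQ-quot} and \cref{lem:rtEG-pb} into a cube to exhibit both vertical functors as base changes of the full, identity-on-objects functor $\mathbb E_{\mathcal G}\to\widetilde{\mathbb E}_{\mathcal G}$, and then descends $\Phi$ (your explicit congruence comparison is exactly the ``explicit computation'' the paper leaves implicit). The only cosmetic difference is at the last step: where you descend $\Phi^{-1}$ separately and cancel against $\widetilde\Phi$ using that $p$ and $q$ are epic, the paper concludes invertibility in one stroke from the fact that base change along a full identity-on-objects functor preserves and reflects fully-faithfulness.
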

\begin{proof}
We have the following commutative diagram of functors:
\begin{equation}
\vcenter{
  \xymatrix@R=2ex@C=0em{
    & \mathcal M\wr\widetilde{\mathbb G}_{\mathcal G} \ar[rr] \ar[dd]|\hole && \mathcal M\wr\widetilde{\mathbb E}_{\mathcal G} \ar[dd] \\
    \mathcal M\wr\mathbb G_{\mathcal G} \ar[rr] \ar[dd] \ar[ur] && \mathcal M\wr\mathbb E_{\mathcal G} \ar[dd] \ar[ur] & \\
    & \widetilde{\mathbb G}_{\mathcal G} \ar[rr]|-\hole^(.7){t} && \widetilde{\mathbb E}_{\mathcal G} \\
    \mathbb G_{\mathcal G} \ar[rr]^{t} \ar[ur] && \mathbb E_{\mathcal G} \ar[ur] }}
\end{equation}
Note that, \cref{lem:QQ-quot,lem:rtEG-pb} assert that the bottom and the right faces, as well as the front and the back, are pullbacks.
Hence, the ``associativity property'' of pullbacks (e.g. see Proposition 2.5.9 in \cite{Bor94I}) implies the other faces are also pullbacks.
In particular, we obtain isomorphisms of categories:
\begin{equation}
\label{eq:prf:tildeMG-compare:isom}
(\mathcal M\wr\widetilde{\mathbb G}_{\mathcal G})\times_{\widetilde{\mathbb E}_{\mathcal G}}\mathbb E_{\mathcal G}
\cong \mathcal M\wr\mathbb G_{\overbar{\operatorname{Dec}}^{\mathcal G}}
\xrightarrow[\cong]{\Phi} (\mathcal M\rtimes\mathcal G)\wr\mathbb E_{\mathcal G}
\cong ((\mathcal M\rtimes\mathcal G)\wr\widetilde{\mathbb E}_{\mathcal G})\times_{\widetilde{\mathbb E}_{\mathcal G}}\mathbb E_{\mathcal G}
\end{equation}
The explicit computation shows that the isomorphism \eqref{eq:prf:tildeMG-compare:isom} is induced by the identity on $\mathbb E_{\mathcal G}$ and an identity-on-object functor $\widetilde\Phi:\mathcal M\wr\widetilde{\mathbb G}_{\mathcal G}\to(\mathcal M\rtimes\mathcal G)\wr\widetilde{\mathbb E}_{\mathcal G}$ described as \eqref{eq:tildeMG-compare:def}.
Moreover, since the functor $\mathbb E_{\mathcal G}\to\widetilde{\mathbb E}_{\mathcal G}$ is full and the identity on objects, the pullback along it preserves and reflects fully-faithfulness.
Thus, we conclude $\widetilde\Phi$ is an isomorphism of categories.
The $2$-naturality and the compatibility with $\Phi$ are obvious.
\end{proof}

\begin{corollary}
\label{cor:tildeME-intpresh}
For every group operad $\mathcal G$, the $2$-functor $(\blank)\wr\widetilde{\mathbb E}_{\mathcal G}$ admits a lift depicted as the dashed arrow in the diagram below:
\[
\vcenter{
  \xymatrix{
    \mathbf{MultCat}_{\mathcal G} \ar@{-->}[r] \ar[d]_{\mathit{forget}} & \mathbf{PSh}(\widetilde{\mathbb G}_{\mathcal G}\rightrightarrows\widetilde{\mathbb E}_{\mathcal G}) \ar[d]^{\mathit{forget}} \\
    \mathbf{MultCat} \ar[r]^{(\blank)\wr\widetilde{\mathbb E}_{\mathcal G}} & \mathbf{Cat}^{/\widetilde{\mathbb E}_{\mathcal G}} }}
\quad.
\]
\end{corollary}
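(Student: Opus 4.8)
The plan is to mirror the proof of \cref{cor:ME-intpresh}, replacing the isomorphism $\Phi$ of \cref{theo:MG-compare} by its quotient $\widetilde\Phi$ supplied by \cref{theo:tildeMG-compare}. For a $\mathcal G$-symmetric multicategory $\mathcal M$ with structure multifunctor $\mathpzc A:\mathcal M\rtimes\mathcal G\to\mathcal M$, I would take the structure functor of the internal presheaf to be the composite
\[
(\mathcal M\wr\widetilde{\mathbb E}_{\mathcal G})\times_{\widetilde{\mathbb E}_{\mathcal G}}\widetilde{\mathbb G}_{\mathcal G}
= \mathcal M\wr\widetilde{\mathbb G}_{\mathcal G}
\xrightarrow[\cong]{\widetilde\Phi} (\mathcal M\rtimes\mathcal G)\wr\widetilde{\mathbb E}_{\mathcal G}
\xrightarrow{\widetilde{\mathpzc A}^{\mathcal G}} \mathcal M\wr\widetilde{\mathbb E}_{\mathcal G}\ ,
\]
where the equality on the left is the defining pullback of $\mathcal M\wr\widetilde{\mathbb G}_{\mathcal G}$ and the last arrow is the image of $\mathpzc A$ under the $2$-functor $(\blank)\wr\widetilde{\mathbb E}_{\mathcal G}$. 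Since $\widetilde\Phi$ is the identity on objects and lies over $\widetilde{\mathbb E}_{\mathcal G}$, and $\widetilde{\mathpzc A}^{\mathcal G}$ is likewise the identity on objects over $\widetilde{\mathbb E}_{\mathcal G}$, this composite is a morphism in $\mathbf{Cat}^{/\widetilde{\mathbb E}_{\mathcal G}}$.

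It then remains to check the two internal-presheaf axioms. Rather than recomputing, I would deduce them from the corresponding facts for $\mathcal M\wr\mathbb E_{\mathcal G}$ established in \cref{cor:ME-intpresh}. The commutative square of \cref{theo:tildeMG-compare} identifies the composite above, after precomposition with the quotient functor $\mathcal M\wr\mathbb G_{\mathcal G}\to\mathcal M\wr\widetilde{\mathbb G}_{\mathcal G}$, with the composite of the non-tilde structure functor of \cref{cor:ME-intpresh} followed by the quotient $\mathcal M\wr\mathbb E_{\mathcal G}\to\mathcal M\wr\widetilde{\mathbb E}_{\mathcal G}$. The comparison functors $\mathcal M\wr\mathbb G_{\mathcal G}\to\mathcal M\wr\widetilde{\mathbb G}_{\mathcal G}$ and their analogues on iterated pullbacks are full and bijective on objects, hence surjective on morphisms; consequently the associativity and unit diagrams, which commute after precomposition by these quotients by \cref{cor:ME-intpresh}, already commute on $\mathcal M\wr\widetilde{\mathbb G}_{\mathcal G}$ and on the iterated pullbacks $\mathcal M\wr\widetilde{\mathbb G}_{\mathcal G}\times_{\widetilde{\mathbb E}_{\mathcal G}}\widetilde{\mathbb G}_{\mathcal G}$. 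This gives the required internal presheaf structure over $\widetilde{\mathbb G}_{\mathcal G}\rightrightarrows\widetilde{\mathbb E}_{\mathcal G}$.

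For the action on $1$- and $2$-cells I would argue that $\widetilde\Phi$ is $2$-natural in $\mathcal M$ by \cref{theo:tildeMG-compare} and that $(\blank)\wr\widetilde{\mathbb E}_{\mathcal G}$ is a $2$-functor, so that $\widetilde{\mathpzc A}^{\mathcal G}$ varies $2$-naturally and the composite defines a lift on $1$-cells; the fact that $\widetilde F^{\mathcal G}$ is a morphism of internal presheaves for a $\mathcal G$-symmetric multifunctor $F$ reduces, via \cref{lem:inrpresh-locfull}, to the commutation of $F$ with $\mathpzc A$, while every multinatural transformation over $\widetilde{\mathbb E}_{\mathcal G}$ is automatically a $2$-cell of internal presheaves by the same local full faithfulness. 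The main obstacle is the descent of the axioms through the quotient: one must ensure that the iterated pullbacks appearing in the associativity square are genuinely quotients of their non-tilde counterparts with full, identity-on-objects comparison functors. This I would settle using the pullback descriptions in \cref{lem:QQ-quot} together with the observation that the relevant comparison functors are surjective on morphisms, so that equalities holding upstairs descend verbatim.
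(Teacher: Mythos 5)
Your proposal is correct and is essentially the argument the paper intends: the corollary is stated without proof precisely because it follows from \cref{theo:tildeMG-compare} in the same way that \cref{cor:ME-intpresh} follows from \cref{theo:MG-compare}, namely by taking the structure functor to be $\widetilde{\mathpzc A}^{\mathcal G}\circ\widetilde\Phi$ and using $2$-naturality for the functoriality in $\mathcal M$. Your descent of the associativity and unit axioms through the quotient functors (via the pullback squares of \cref{lem:QQ-quot} and \cref{lem:rtEG-pb}, which make the comparison functors full and bijective on objects) is a clean way of making explicit what the paper leaves implicit, and the appeal to \cref{lem:inrpresh-locfull} for the $2$-cells matches the paper's setup.
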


\section{CoCartesian lifting properties}
\label{sec:cocart-lifts}

We investigate the image of the functor $\mathbf{MultCat}_{\mathcal G}\to\mathbf{PSh}(\widetilde{\mathbb G}_{\mathcal G}\rightrightarrows\widetilde{\mathbb E}_{\mathcal G})$ given in \cref{cor:tildeME-intpresh}.

\begin{definition}
For a crossed interval group $G$, a morphism in $\widetilde{\mathbb E}_G$ is called \emph{active} (resp. \emph{inert}) if it is of the form $[\mu,x]$ for $\mu:\llangle m\rrangle\to\llangle n\rrangle\in\nabla$ active (resp. \emph{inert}) and arbitrary $x\in G_m$.
\end{definition}

In particular, the functor $\nabla\to\widetilde{\mathbb E}_G$ preserves active morphisms and inert morphisms respectively.
Throughout the section, the following inert morphisms in $\nabla$ play important roles: for each $1\le i\le n$, we define a morphism $\rho_i:\llangle n\rrangle\to\llangle 1\rrangle$ by
\[
\rho_i(j) =
\begin{cases}
-\infty & j<i\ , \\
1 & j=i\ , \\
\infty & j>i\ .
\end{cases}
\]
By abuse of notation, we use the same notation $\rho_i$ to denote its image in $\widetilde{\mathbb E}_G$.

\begin{proposition}
\label{prop:inert-lift}
Let $\mathcal G$ be a group operad, and let $\mathcal M$ be a multicategory.
Then, the canonical functor $p_{\mathcal M}:\mathcal M\wr\widetilde{\mathbb E}_{\mathcal G}\to\widetilde{\mathbb E}_{\mathcal G}$ satisfies the following properties.
\begin{enumerate}[label={\upshape(\arabic*)}]
  \item\label{req:inert-lift:cocart} Every inert morphism $[\rho,x]:\llangle m\rrangle\to\llangle n\rrangle\in\widetilde{\mathbb E}_{\mathcal G}$ admits $p_{\mathcal M}$-coCartesian lifts along any object in the fiber $(\mathcal M\wr\widetilde{\mathbb E}_{\mathcal G})_{\llangle m\rrangle}:=p_{\mathcal M}^{-1}\{\llangle m\rrangle\}$.
More precisely, if $\delta:\llangle n\rrangle\to\llangle m\rrangle$ is the section of $\rho$, then for each $\vec a=a_1\dots a_m\in(\mathcal M\wr\widetilde{\mathbb E}_{\mathcal G})_{\llangle n\rrangle}$, the morphism
\begin{equation}
\label{eq:inert-lift:stdcocart}
\begin{multlined}
\widehat{[\rho,x]}_{\vec a} := [\rho;\mathrm{id}_{a_{x^{-1}(\delta(1))}},\dots,\mathrm{id}_{a_{x^{-1}(\delta(n))}};x] \\
\mkern100mu:a_1\dots a_m\to a_{x^{-1}(\delta(1))}\dots a_{x^{-1}(\delta(n))}
\end{multlined}
\end{equation}
is $p_{\mathcal M}$-coCartesian.
  \item\label{req:inert-lift:plim} For an object $\vec a=a_1\dots a_n\in(\mathcal M\wr\widetilde{\mathbb E}_{\mathcal G})_{\llangle n\rrangle}$, choose a $p_{\mathcal M}$-coCartesian lift $\widehat\rho_j:\vec a\to a'_j$ of $\rho_j$ along $\vec a$ for each $1\le i\le n$.
Then, for every object $\vec b\in\mathcal M\wr\widetilde{\mathbb E}_{\mathcal G}$, the square below is a pullback:
\begin{equation}
\label{eq:inert-lift:plim}
\vcenter{
  \xymatrix@C=6em{
    (\mathcal M\wr\widetilde{\mathbb E}_{\mathcal G})(\vec b,\vec a) \ar[r]^-{((\widehat\rho_1)_\ast,\dots,(\widehat\rho_n)_\ast)} \ar[d]_{p_{\mathcal M}} \ar@{}[dr]|(.4)\pbcorner & \prod_{i=1}^n(\mathcal M\wr\widetilde{\mathbb E}_{\mathcal G})(\vec b,a'_i) \ar[d]^{p_{\mathcal M}} \\
    \widetilde{\mathbb E}_{\mathcal G}(p_{\mathcal M}(\vec b),\llangle n\rrangle) \ar[r]^-{((\rho_1)_\ast,\dots,(\rho_n)_\ast)} & \widetilde{\mathbb E}_{\mathcal G}(p_{\mathcal M}(\vec b),\llangle 1\rrangle)^{\times n} }}
\quad.
\end{equation}
  \item\label{req:inert-lift:prod} For each $1\le i\le n$, take a functor $(\rho_i)_!:(\mathcal M\wr\widetilde{\mathbb E}_{\mathcal G})_{\llangle n\rrangle}\to(\mathcal M\wr\widetilde{\mathbb E}_{\mathcal G})_{\llangle 1\rrangle}$ together with a natural transformation $\widehat\rho_i:\vec a\to(\rho_i)_!\vec a$ which is (componentwisely) $p_{\mathcal M}$-coCartesian.
Then the functor
\[
((\rho_1)_!,\dots,(\rho_n)_!):(\mathcal M\wr\widetilde{\mathbb E}_{\mathcal G})_{\llangle n\rrangle} \to (\mathcal M\wr\widetilde{\mathbb E}_{\mathcal G})_{\llangle 1\rrangle}^{\times n}
\]
is an equivalence of categories:
\end{enumerate}
\end{proposition}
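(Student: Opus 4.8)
The plan is to establish the three parts in order, treating part~(1) as a direct verification, part~(2) as the technical core, and part~(3) as a formal consequence of the first two.

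For part~(1), I would check the universal property of a $p_{\mathcal M}$-coCartesian morphism directly from the composition formula in $\mathcal M\wr\widetilde{\mathbb E}_{\mathcal G}$. Given a morphism $g\colon\vec a\to\vec c$ lying over a factorization $p_{\mathcal M}(g)=[\chi,z]\circ[\rho,x]$, I must produce a unique lift $h$ over $[\chi,z]$ with $h\circ\widehat{[\rho,x]}_{\vec a}=g$. Since $\rho$ is inert it admits the section $\delta$, and the multimorphism components of $\widehat{[\rho,x]}_{\vec a}$ are identities; unwinding the composition operation shows that precomposition by $\widehat{[\rho,x]}_{\vec a}$ only re-indexes the multimorphism components (selecting a subword, in the sense of \cref{lem:word-calc}) and left-translates the $\mathcal G$-part. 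Both operations are invertible using $\delta$, which forces both the existence and the uniqueness of $h$.

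For part~(2), I would unwind the two horizontal maps of the square. Using the composition formula, precomposing $\widehat\rho_i$ with a morphism $[\varphi;f_1,\dots,f_n;x]\colon\vec b\to\vec a$ yields $[\rho_i\varphi;f_i;x]$, so the top map records the underlying $\widetilde{\mathbb E}_{\mathcal G}$-morphism $[\varphi,x]$ together with the $n$ individual components $f_i$. By \cref{lem:word-calc} the relevant source words satisfy $(x_\ast\vec b)^{\rho_i\varphi}_1=(x_\ast\vec b)^\varphi_i$, so each $h_i$ is exactly the datum of an $f_i\in\mathcal M((x_\ast\vec b)^\varphi_i;a_i)$. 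The pullback property then reduces to the claim that a morphism $\vec b\to\vec a$ is equivalent to the pair consisting of its underlying $\widetilde{\mathbb E}_{\mathcal G}$-morphism and the tuple $(f_1,\dots,f_n)$, with no residual relation tying the $f_i$ together; this is read off from the explicit criterion defining the congruence $\sim_{\mathsf A_{\mathcal G}}$. I expect this reassembly step to be the main obstacle, as it is where one must verify that the fibrewise multimorphism datum genuinely splits as a product over the index and that neither the $\overbar{\operatorname{Kec}}^{\mathcal G}$-quotient nor the congruence $\sim_{\mathsf A_{\mathcal G}}$ introduces cross-component identifications.

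Finally, for part~(3), I would argue as follows. With the standard coCartesian lifts of part~(1) (taking $x=e$, so that $\delta_i(1)=i$), the functor $((\rho_1)_!,\dots,(\rho_n)_!)$ sends the object $a_1\dots a_n$ to the tuple $(a_1,\dots,a_n)$; hence it is bijective on objects, and in particular essentially surjective. For full faithfulness I restrict the pullback square of part~(2) to the fibre over $\mathrm{id}_{\llangle n\rrangle}$, taking $\vec b$ over $\llangle n\rrangle$ as well: since $\mathrm{id}_{\llangle n\rrangle}$ maps along the bottom row to $(\rho_1,\dots,\rho_n)$, the morphisms $\vec b\to\vec a$ in the fibre are in bijection with tuples $(h_1,\dots,h_n)$ in which $h_i\colon\vec b\to(\rho_i)_!\vec a$ lies over $\rho_i$. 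Applying the coCartesian universal property of part~(1) to the lift $\vec b\to(\rho_i)_!\vec b$ identifies such $h_i$ with morphisms $(\rho_i)_!\vec b\to(\rho_i)_!\vec a$ in the fibre over $\llangle 1\rrangle$. The composite of these two bijections is precisely the action of $((\rho_1)_!,\dots,(\rho_n)_!)$ on hom-sets, so the functor is fully faithful; being also bijective on objects, it is an equivalence of categories. Since any two choices of the pushforward functors $(\rho_i)_!$ are canonically naturally isomorphic, the conclusion is independent of these choices.
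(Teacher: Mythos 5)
Your proposal is correct and follows essentially the same route as the paper: part (1) by unwinding the composition formula and observing that the underlying $\widetilde{\mathbb E}_{\mathcal G}$-datum together with the tuple of multimorphism components determines a morphism, part (2) by the correspondence between a morphism $[\varphi;f_1,\dots,f_n;x]$ and its components $[\rho_i\varphi;f_i;x]$ via the identity $(x_\ast\vec b)^{\rho_i\varphi}_1=(x_\ast\vec b)^\varphi_i$, and part (3) from the first two. The only cosmetic difference is in part (3), where the paper identifies the fibre $(\mathcal M\wr\widetilde{\mathbb E}_{\mathcal G})_{\llangle n\rrangle}$ explicitly with $\underline{\mathcal M}^{\times n}$ and checks that $(\rho_i)_!$ is the projection, whereas you deduce full faithfulness formally from the pullback square and the coCartesian factorization; both arguments use the same ingredients.
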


\begin{remark}
\label{rem:cocart-indep}
The condition \ref{req:inert-lift:plim} actually does not depend on the choice of coCartesian lifts $\widehat\rho_i$ of $\rho_i$.
Indeed, if one choose another coCartesian lift $\widehat\rho'_i:\vec a\to a''_i$, then the uniqueness of the coCartesian lifts implies there is a unique isomorphism $a'_i\cong a''_i$ so that $\widehat\rho'_i$ factors through $\widehat\rho_i$ followed by the isomorphism.
Moreover, it also gives rise to an isomorphism of squares \eqref{eq:inert-lift:plim}.
Thus, if \ref{req:inert-lift:plim} satisfied for one family of coCartesian lifts, then it is also for the other.

A similar argument shows that the condition \ref{req:inert-lift:prod} does not depend on the choice of the functors $(\rho_i)_!$.
\end{remark}

\begin{proof}[Proof of \cref{prop:inert-lift}]
In order to verify \ref{req:inert-lift:cocart}, it clearly suffices to consider only the case $x\in G_m$ is the unit.
For an inert morphism $\rho:\llangle m\rrangle\to\llangle n\rrangle\in\nabla$, set $\delta$ to be the unique section, and suppose we have a morphism in $\mathcal M\wr\widetilde{\mathbb E}_{\mathcal G}$ of the form
\[
[\varphi\rho;f_1,\dots,f_l;\rho^\ast(y)]:a_1\dots a_m\to\vec b\ .
\]
We show it uniquely factors through the morphism
\[
\widehat\rho_{\vec a}=[\rho;\mathrm{id}_{a_{\delta(1)}},\dots,\mathrm{id}_{a_{\delta(n)}};e_m]:\vec a\to a_{\delta(1)}\dots a_{\delta(n)}
\]
Thanks to the unique factorization in $\nabla$, we have
\begin{equation}
\label{eq:prf:inert-lift:fact}
[\varphi\rho;f_1,\dots,f_l;\rho^\ast(y)]
= [\varphi;f_1,\dots,f_l;y]\circ\widehat\rho_{\vec a}\ ,
\end{equation}
so there in fact exists a factorization.
Moreover, since the morphism $[\varphi;f_1,\dots,f_l;y]$ is uniquely determined by the underlying morphism $[\varphi,y]$ in $\widetilde{\mathbb E}_{\mathcal G}$ and the tuple $(f_1,\dots,f_l)$, which is determined by the left hand side.
This implies the factorization \eqref{eq:prf:inert-lift:fact} is unique, so $\widehat\rho_{\vec a}$ is $p_{\mathcal M}$-coCartesian.

We next see \ref{req:inert-lift:plim}.
For an object $\vec a=a_1\dots a_n\in(\mathcal M\wr\widetilde{\mathbb E}_{\mathcal G})_{\llangle n\rrangle}$, in view of \cref{rem:cocart-indep}, we may assume the lift $\widehat\rho_i=(\widehat\rho_i)_{\vec a}$ is the one given in the part \ref{req:inert-lift:cocart}.
Suppose $\vec b=b_1\dots b_m\in\mathcal M\wr\widetilde{\mathbb E}_{\mathcal G}$, and $[\varphi,x]:\llangle m\rrangle\to\llangle n\rrangle\in\widetilde{\mathbb E}_{\mathcal G}$.
Then, if one has a morphism of the form
\begin{equation}
\label{eq:prf:inert-lift:tuple}
[\varphi;f_1,\dots,f_n;x]:\vec b\to\vec a\ ,
\end{equation}
then $f_i\in\mathcal M((x_\ast\vec b)^\varphi_i;a_i)$.
On the other hand, we have $(x_\ast\vec b)^{\rho_i\varphi}_1=(x_\ast\vec b)^\varphi_i$ so that \eqref{eq:prf:inert-lift:tuple} makes sense if and only if we have morphisms
\begin{equation}
\label{eq:prf:inert-lift:component}
[\rho_i\varphi;f_i;x]:\vec b\to a_i
\end{equation}
for $1\le i\le n$.
When we fix a morphism $[\varphi,x]$ in $\widetilde{\mathbb E}_{\mathcal G}$, the two data \eqref{eq:prf:inert-lift:tuple} and \eqref{eq:prf:inert-lift:component} clearly correspond in one-to-one to each other.
It follows that the square \eqref{eq:inert-lift:plim} is a pullback.

We finally show \ref{req:inert-lift:prod}.
Note that, in view of \cref{ex:Icong-active}, every morphism in $(\mathcal M\wr\widetilde{\mathbb E}_{\mathcal G})_{\llangle n\rrangle}$ is of the form
\[
[\mathrm{id}_{\llangle n\rrangle};f_1,\dots,f_n;e_n]:a_1\dots a_n\to b_1\dots b_n
\]
with $f_i\in\mathcal M(a_i;b_i)=\underline{\mathcal M}(a_i,b_i)$, here $\underline{\mathcal M}$ is the underlying category of $\mathcal M$.
In other words, we have a canonical isomorphism
\begin{equation}
\label{eq:prf:inert-lift:prodisom}
(\mathcal M\wr\widetilde{\mathbb E}_{\mathcal G})_{\llangle n\rrangle}
\cong \underline{\mathcal M}^{\times n}
\cong (\mathcal M\wr\widetilde{\mathbb E}_{\mathcal G})_{\llangle 1\rrangle}^{\times n}\ .
\end{equation}
Hence, it suffices to show the functor $(\rho_i)_!:(\mathcal M\wr\widetilde{\mathbb E}_{\mathcal G})_{\llangle n\rrangle}\to(\mathcal M\wr\widetilde{\mathbb E}_{\mathcal G})_{\llangle 1\rrangle}$ coincides with the projection under the isomorphism \eqref{eq:prf:inert-lift:prodisom}.
If $(\rho_i)_!$ is the one induced by the $p_{\mathcal M}$-coCartesian lifts in the part \ref{req:inert-lift:cocart}, this follows from the correspondence of \eqref{eq:prf:inert-lift:tuple} to \eqref{eq:prf:inert-lift:component} and the unique factorization \eqref{eq:prf:inert-lift:fact}.
In view of \cref{rem:cocart-indep}, this completes the proof.
\end{proof}

We define a $2$-subcategory $\mathbf{Oper}'_{\mathcal G}\subset\mathbf{Cat}^{/\widetilde{\mathbb E}_{\mathcal G}}$ as follows:
\begin{itemize}
  \item objects of $\mathbf{Oper}'_{\mathcal G}$ are those categories $\mathcal C$ over $\widetilde{\mathbb E}_{\mathcal G}$ that satisfy three properties in \cref{prop:inert-lift};
  \item for $\mathcal C,\mathcal D\in\mathbf{Oper}'_{\mathcal G}$, the hom-category $\mathbf{Oper}'_{\mathcal G}(\mathcal C,\mathcal D)$ is the full subcategory of $\mathbf{Cat}^{/\widetilde{\mathbb E}_{\mathcal G}}$ spanned by functors $\mathcal C\to\mathcal D$ over $\widetilde{\mathbb E}_{\mathcal G}$ which preserve coCartesian lifts of inert morphisms in $\widetilde{\mathbb E}_{\mathcal G}$.
\end{itemize}
Furthermore, we put
\[
\mathbf{Oper}^{\mathsf{alg}}_{\mathcal G}
:= \mathbf{PSh}(\widetilde{\mathbb G}_{\mathcal G}\rightrightarrows\widetilde{\mathbb E}_{\mathcal G})\times_{\mathbf{Cat}^{/\widetilde{\mathbb E}_{\mathcal G}}}\mathbf{Oper}'_{\mathcal G}\ ,
\]
whose objects are called \emph{categories of algebraic $\mathcal G$-operators}, and whose morphisms \emph{maps of algebraic $\mathcal G$-operators}.
In other words, a category of operators is an internal presheaf $\mathcal X$ over the double category $\widetilde{\mathbb G}_{\mathcal G}\rightrightarrows\widetilde{\mathbb E}_{\mathcal G}$ with the functor $p:\mathcal X\to\widetilde{\mathbb E}_{\mathcal G}$ satisfying the following conditions:
\begin{enumerate}[label={\upshape(\roman*)}]
  \item\label{cond:catalgop:cocart} every inert morphism $\bsrho:\llangle m\rrangle\to\llangle n\rrangle\in\widetilde{\mathbb E}_{\mathcal G}$ admits $p$-coCartesian lifts along any object in the fiber $\mathcal X_{\llangle m\rrangle}:=p^{-1}\{\llangle m\rrangle\}$;
  \item\label{cond:catalgop:plim} if we are given a $p$-coCartesian morphism $\widehat\rho_j:X\to X_i$ covering the inert morphism $\rho_i:\llangle n\rrangle\to\llangle 1\rrangle\in\widetilde{\mathbb E}_{\mathcal G}$ for each $1\le i\le n$, for every object $W\in\mathcal X$, the square below is a pullback:
\begin{equation}
\label{eq:catalgop:plim:sq}
\vcenter{
  \xymatrix@C=6em{
    \mathcal X(W,X) \ar[r]^-{((\widehat\rho_1)_\ast,\dots,(\widehat\rho_n)_\ast)} \ar[d]_p \ar@{}[dr]|(.4)\pbcorner & \prod_{i=1}^n\mathcal X(W,X_i) \ar[d]^p \\
    \widetilde{\mathbb E}_{\mathcal G}(p(W),\llangle n\rrangle) \ar[r]^-{((\rho_1)_\ast,\dots,(\rho_n)_\ast)} & \widetilde{\mathbb E}_{\mathcal G}(p(W),\llangle 1\rrangle)^{\times n} }}
\quad;
\end{equation}
  \item\label{cond:catalgop:prod} if $(\rho_i)_!:\mathcal X_{\llangle n\rrangle}\to\mathcal X_{\llangle 1\rrangle}$ is a functor induced by the inert morphism $\rho_i:\llangle n\rrangle\to\llangle 1\rrangle$ for each $1\le i\le n$, then the functor
\[
((\rho_1)_!,\dots,(\rho_n)_!):(\mathcal M\wr\widetilde{\mathbb E}_{\mathcal G})_{\llangle n\rrangle} \to (\mathcal M\wr\widetilde{\mathbb E}_{\mathcal G})_{\llangle 1\rrangle}^{\times n}
\]
is an equivalence of categories.
\end{enumerate}
Thanks to \cref{cor:tildeME-intpresh} and \cref{prop:inert-lift}, the $2$-functor $(\blank)\wr\widetilde{\mathbb E}_{\mathcal G}$ induces a $2$-functor $\mathbf{MultCat}_{\mathcal G}\to\mathbf{Oper}^{\mathsf{alg}}_{\mathcal G}$, which we also denote by $(\blank)\wr\widetilde{\mathbb E}_{\mathcal G}$ by abuse of notation.
Thanks to \cref{lem:inrpresh-locfull}, the forgetful functor
\[
\mathbf{Oper}^{\mathsf{alg}}_{\mathcal G}
\to \mathbf{Oper}'_{\mathcal G}
\]
is locally fully faithful.

\begin{example}
\label{ex:ast-oper}
As we have $\ast\wr\widetilde{\mathbb E}_{\mathcal G}\cong\widetilde{\mathbb E}_{\mathcal G}$, the identity functor $\widetilde{\mathbb E}_{\mathcal G}\to\widetilde{\mathbb E}_{\mathcal G}$ exhibits $\widetilde{\mathbb E}_{\mathcal G}$ as a category of algebraic $\mathcal G$-operators.
\end{example}

\begin{example}
\label{ex:G-oper}
Recall that every group operad $\mathcal G$ is itself a $\mathcal G$-symmetric multicategory with the multiplication map $\mathcal G\rtimes\mathcal G\to\mathcal G$.
On the other hand, in view of \cref{theo:tildeMG-compare}, we have isomorphisms
\[
\widetilde{\mathbb G}_{\mathcal G}
\cong \ast\wr\widetilde{\mathbb G}_{\mathcal G}
\cong (\ast\rtimes\mathcal G)\wr\widetilde{\mathbb E}_{\mathcal G}
\cong \mathcal G\wr\widetilde{\mathbb E}_{\mathcal G}\ .
\]
It follows that the functor $s:\widetilde{\mathbb G}_{\mathcal G}\to\widetilde{\mathbb E}_{\mathcal G}$ exhibits $\widetilde{\mathbb G}_{\mathcal G}$ as a category of algebraic $\mathcal G$-operators.
\end{example}

It turns out that there are \emph{free $\mathcal G$-symmetrizations} of objects in $\mathbf{Oper}'_{\mathcal G}$.
Indeed, we have the following property on the free construction.

\begin{lemma}
\label{lem:free-cocart}
For every $\mathcal C\in\mathbf{Oper}'_{\mathcal G}$, the functor
\begin{equation}
\label{eq:free-cocart:unit}
\mathcal C
\cong\mathcal C\times_{\widetilde{\mathbb E}_{\mathcal G}}\widetilde{\mathbb E}_{\mathcal G}
\xrightarrow{\mathrm{Id}\times\iota}\mathcal C\times_{\widetilde{\mathbb E}_{\mathcal G}}\widetilde{\mathbb E}_{\mathcal G}
\end{equation}
preserves coCartesian lifts of inert morphisms.
\end{lemma}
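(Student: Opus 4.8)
The plan is to reduce the statement to two ingredients: a purely formal criterion for coCartesian morphisms in the pullback $\mathcal C\times_{\widetilde{\mathbb E}_{\mathcal G}}\widetilde{\mathbb G}_{\mathcal G}$ defining the free $\mathcal G$-symmetrization, together with an explicit identification of $\iota$ on inert morphisms. First I would fix notation: write $p\colon\mathcal C\to\widetilde{\mathbb E}_{\mathcal G}$ for the structure functor, recall that the free $\mathcal G$-symmetrization is the pullback of $p$ along $t\colon\widetilde{\mathbb G}_{\mathcal G}\to\widetilde{\mathbb E}_{\mathcal G}$, regarded as a category over $\widetilde{\mathbb E}_{\mathcal G}$ through $P:=s\circ\mathrm{pr}_2$, and observe that the functor \eqref{eq:free-cocart:unit} sends $c$ to $(c,\iota p(c))$. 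Since $t\iota=s\iota=\mathrm{id}$, we have $P\circ(\mathrm{Id}\times\iota)=p$, so the functor lies over $\widetilde{\mathbb E}_{\mathcal G}$, and it remains to check that it carries $p$-coCartesian lifts of inert morphisms to $P$-coCartesian morphisms.

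The key formal step is the following criterion, which I would prove directly from the universal property. Let $(\alpha,\beta)\colon(c,g)\to(c',g')$ be a morphism of $\mathcal C\times_{\widetilde{\mathbb E}_{\mathcal G}}\widetilde{\mathbb G}_{\mathcal G}$ such that $\alpha$ is $p$-coCartesian over $t(\beta)$ and $\beta$ is $s$-coCartesian over $s(\beta)$; then $(\alpha,\beta)$ is $P$-coCartesian. Given a test morphism $(\gamma,\delta)$ out of $(c,g)$ whose image $s(\delta)$ factors as $w\circ s(\beta)$, one first uses the $s$-coCartesianness of $\beta$ to produce the unique $\zeta$ with $\zeta\beta=\delta$ and $s(\zeta)=w$; then, since $p(\gamma)=t(\delta)=t(\zeta)\circ t(\beta)=t(\zeta)\circ p(\alpha)$, the $p$-coCartesianness of $\alpha$ yields the unique $\epsilon$ with $\epsilon\alpha=\gamma$ and $p(\epsilon)=t(\zeta)$. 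The pair $(\epsilon,\zeta)$ is a morphism of the pullback because $p(\epsilon)=t(\zeta)$, it satisfies $(\epsilon,\zeta)(\alpha,\beta)=(\gamma,\delta)$ and $s(\zeta)=w$, and uniqueness follows by applying the two universal properties in turn. This interplay of the two legs $s$ and $t$ is where the argument must be handled carefully, and I expect it to be the main obstacle: the pullback is formed along $t$ whereas the projection to the base uses $s$, so the base lift $\zeta$ is controlled by $s$ but feeds into the $\mathcal C$-component only through $t$.

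It then remains to apply the criterion to $(\mathrm{Id}\times\iota)(\widehat{[\rho,x]})=(\widehat{[\rho,x]},\iota[\rho,x])$ for a $p$-coCartesian lift $\widehat{[\rho,x]}$ of an inert morphism. By hypothesis $\widehat{[\rho,x]}$ is $p$-coCartesian over $\rho=t(\iota[\rho,x])$, so I only need that $\iota[\rho,x]$ is $s$-coCartesian over $s(\iota[\rho,x])=[\rho,x]$. Here I would invoke \cref{ex:G-oper}: under the isomorphism $\widetilde{\mathbb G}_{\mathcal G}\cong\mathcal G\wr\widetilde{\mathbb E}_{\mathcal G}$ of \cref{theo:tildeMG-compare}, the functor $s$ is identified with the projection $p_{\mathcal G}$, whose coCartesian lifts of inert morphisms are the explicit ones of \ref{req:inert-lift:cocart} in \cref{prop:inert-lift}. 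A direct computation with $\widetilde\Phi$ gives $\widetilde\Phi(\iota[\rho,x])=[\rho;e_{k^{(\rho)}_1},\dots,e_{k^{(\rho)}_n};x]$, and since $\rho$ is inert every $k^{(\rho)}_j$ equals $1$, so each decoration is $e_1=\mathrm{id}$; as $\mathcal G$ has a single object, this is precisely the standard lift \eqref{eq:inert-lift:stdcocart}. Hence $\iota[\rho,x]$ is $s$-coCartesian, the criterion of the previous paragraph applies, and \eqref{eq:free-cocart:unit} preserves coCartesian lifts of inert morphisms.
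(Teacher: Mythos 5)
Your proposal is correct, but it takes a genuinely different route from the paper. The paper argues entirely inside the explicit description of $\mathcal C\times_{\widetilde{\mathbb E}_{\mathcal G}}\widetilde{\mathbb G}_{\mathcal G}$: given a test morphism $[\varphi\rho;f;u,x]$, it invokes \ref{req:conginr:inert} of \cref{lem:conginr} to produce the unique $\overbar u\in\overbar{\operatorname{Dec}}^{\mathcal G}_\varphi$ with $[u]=[\rho^\ast(\overbar u)]$, factors the base morphism as $[\varphi,\overbar u]\circ[\rho,x]$, and then uses coCartesianness of $\widehat{[\rho,x]}_X$ in $\mathcal C$ to factor $f$; uniqueness of $\overbar u$ and $f'$ gives uniqueness of the factorization. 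You instead isolate two reusable ingredients: a purely formal criterion (a pair $(\alpha,\beta)$ in the pullback along $t$ is $P$-coCartesian for $P=s\circ\mathrm{pr}_2$ whenever $\alpha$ is $p$-coCartesian and $\beta$ is $s$-coCartesian), whose two-step verification correctly threads the asymmetry between the leg $t$ used to form the pullback and the leg $s$ used as projection; and the identification, via \cref{theo:tildeMG-compare} and \cref{ex:G-oper}, of $\iota[\rho,x]$ with the standard coCartesian lift \eqref{eq:inert-lift:stdcocart} in $\mathcal G\wr\widetilde{\mathbb E}_{\mathcal G}$, which is $s$-coCartesian by \ref{req:inert-lift:cocart} of \cref{prop:inert-lift}. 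In effect you outsource the base-level factorization (the paper's $\overbar u$ computation) to the already-proved \cref{prop:inert-lift} applied to $\mathcal M=\mathcal G$; there is no circularity, since \cref{prop:inert-lift} and \cref{ex:G-oper} precede and do not depend on this lemma. Your approach buys a general pullback lemma that would apply verbatim to other pairs $(K,L)$, at the cost of routing through the isomorphism $\widetilde{\mathbb G}_{\mathcal G}\cong\mathcal G\wr\widetilde{\mathbb E}_{\mathcal G}$; the paper's computation is more self-contained. One cosmetic slip: you write that $\widehat{[\rho,x]}$ is $p$-coCartesian over $\rho=t(\iota[\rho,x])$, whereas $t(\iota[\rho,x])=[\rho,ex]=[\rho,x]$; this is exactly what the hypothesis provides, so nothing breaks.
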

\begin{proof}
Note that the category $\mathcal C\times_{\widetilde{\mathbb E}_{\mathcal G}}\widetilde{\mathbb G}_{\mathcal G}$ is described as follows:
\begin{itemize}
  \item objects are the same as $\mathcal C$;
  \item for $X,Y\in\mathcal C$, the hom-set $(\mathcal C\times_{\widetilde{\mathbb E}_{\mathcal G}}\widetilde{\mathbb G}_{\mathcal G})(X,Y)$ consists of tuples $[\varphi;f;u,x]$ so that $[\varphi,u,x]:q(X)\to q(Y)\in\widetilde{\mathbb G}_{\mathcal G}$ makes sense and $f:X\to Y\in\mathcal C$ with $q(f)=[\varphi,ux]$;
  \item the composition is given by
\[
[\psi;g;v,y]\circ[\varphi;f;u,x]
= [\psi\varphi^y;gf;\varphi^\ast(vy)u\varphi^\ast(y)^{-1},\varphi^\ast(y)x]\ ;
\]
  \item the structure functor $\mathcal C\times_{\widetilde{\mathbb E}_{\mathcal G}}\widetilde{\mathbb G}_{\mathcal G}\to\widetilde{\mathbb E}_{\mathcal G}$ is given by
\[
[\varphi;f;u,x] \mapsto [\varphi,x]\ .
\]
\end{itemize}
Suppose $[\rho,x]:\llangle m\rrangle\to\llangle n\rrangle\in\widetilde{\mathbb E}_{\mathcal G}$ is an inert morphism, and take a coCartesian lift
\[
\widehat{[\rho,x]}_X:X\to X'\in\mathcal C
\]
along $X\in\mathcal C$.
The functor \eqref{eq:free-cocart:unit} sends it to
\begin{equation}
\label{eq:prf:free-cocart:img}
[\rho;\widehat{[\rho,x]}_X;e,x]:X\to X'\in\mathcal C\times_{\widetilde{\mathbb E}_{\mathcal G}}\widetilde{\mathbb G}_{\mathcal G}\ .
\end{equation}
To see \eqref{eq:prf:free-cocart:img} is coCartesian, consider a morphism in $\mathcal C\times_{\widetilde{\mathbb E}_{\mathcal G}}\widetilde{\mathbb G}_{\mathcal G}$ of the form $[\varphi\rho;f;u,x]:X\to Y$.
In view of \ref{req:conginr:inert} in \cref{lem:conginr}, there is a unique element $\overbar u\in\overbar{\operatorname{Dec}}^{\mathcal G}_\varphi$ such that $[u]=[\rho^\ast(\overbar u)]\in\overbar{\operatorname{Inr}}^{\mathcal G}_{\varphi\rho}\backslash\overbar{\operatorname{Dec}}^{\mathcal G}_{\varphi\rho}$, which implies
\[
[\varphi\rho,ux] = [\varphi,\overbar u]\circ[\rho,x]\ .
\]
On the other hand, since $\widehat{[\rho,x]}$ is coCartesian, there is a unique factorization $f=f'\circ\widehat{[\rho,x]}_X$ with $f':X'\to Y$ covering $[\varphi,\overbar u]$.
One obtains
\begin{equation}
\label{eq:prf:free-cocart:fact}
[\varphi\rho;f;u,x]
= [\varphi;f';\overbar u,e]\circ[\rho;\widehat{[\rho,x]}_X;e,x]\ .
\end{equation}
Since the morphisms $f'$ and $\overbar u$ are uniquely determined by the other data, the factorization \eqref{eq:prf:free-cocart:fact} is unique.
It follows that the morphism \eqref{eq:prf:free-cocart:img} is coCartesian.
\end{proof}

\begin{proposition}
\label{prop:free-Gsym}
The free $2$-functor
\[
\mathbf{Cat}^{/\widetilde{\mathbb E}_{\mathcal G}}\to\mathbf{PSh}(\widetilde{\mathbb G}_{\mathcal G}\rightrightarrows\widetilde{\mathbb E}_{\mathcal G})
\ ;\quad \mathcal C\mapsto \mathcal C\times_{\widetilde{\mathbb E}_{\mathcal G}}\widetilde{\mathbb G}_{\mathcal G}
\]
associated to the $2$-monad of internal presheaves over the double category $\widetilde{\mathbb G}\rightrightarrows\widetilde{\mathbb E}_{\mathcal G}$ restricts to a $2$-functor
\[
\mathbf{Oper}'_{\mathcal G}\to\mathbf{Oper}^{\mathsf{alg}}_{\mathcal G}\ .
\]
\end{proposition}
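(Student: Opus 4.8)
The plan is to verify directly that, for each $\mathcal C\in\mathbf{Oper}'_{\mathcal G}$, the free internal presheaf $\mathcal X:=\mathcal C\times_{\widetilde{\mathbb E}_{\mathcal G}}\widetilde{\mathbb G}_{\mathcal G}$, whose objects, hom-sets and composition are spelled out in \cref{lem:free-cocart}, satisfies the three conditions \ref{req:inert-lift:cocart}, \ref{req:inert-lift:plim}, \ref{req:inert-lift:prod} of \cref{prop:inert-lift}. Since $\mathcal X$ is by construction a (strict) $2$-algebra for the free $2$-monad, once its underlying category over $\widetilde{\mathbb E}_{\mathcal G}$ lies in $\mathbf{Oper}'_{\mathcal G}$ it automatically lies in $\mathbf{Oper}^{\mathsf{alg}}_{\mathcal G}=\mathbf{PSh}(\widetilde{\mathbb G}_{\mathcal G}\rightrightarrows\widetilde{\mathbb E}_{\mathcal G})\times_{\mathbf{Cat}^{/\widetilde{\mathbb E}_{\mathcal G}}}\mathbf{Oper}'_{\mathcal G}$. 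I write $q\colon\mathcal C\to\widetilde{\mathbb E}_{\mathcal G}$ and $p\colon\mathcal X\to\widetilde{\mathbb E}_{\mathcal G}$ for the structure functors, the latter sending $[\varphi;f;u,x]\mapsto[\varphi,x]=s[\varphi,u,x]$.

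Condition \ref{req:inert-lift:cocart} is essentially the content of \cref{lem:free-cocart}: for an inert $[\rho,x]$ and an object $X$ in the fibre over its domain, \cref{prop:inert-lift} applied to $\mathcal C$ yields a $q$-coCartesian lift $\widehat{[\rho,x]}_X$ in $\mathcal C$, and \cref{lem:free-cocart} shows its image $[\rho;\widehat{[\rho,x]}_X;e,x]$ is $p$-coCartesian in $\mathcal X$. Thus the unit functor $\mathcal C\to\mathcal X$ both preserves and supplies all inert coCartesian lifts, so I will take the lifts $\widehat\rho_i$ of the standard inerts $\rho_i$ in the compatible form whose $\mathcal C$-component $\widehat{\rho_i}^{\mathcal C}$ is a coCartesian lift in $\mathcal C$ and whose $\widetilde{\mathbb G}_{\mathcal G}$-component is $[\rho_i,e,e]$; the latter is itself $s$-coCartesian, as one sees through the identification $\widetilde{\mathbb G}_{\mathcal G}\cong\mathcal G\wr\widetilde{\mathbb E}_{\mathcal G}$ of \cref{ex:G-oper}. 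By \cref{rem:cocart-indep} these choices are harmless.

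For condition \ref{req:inert-lift:plim} I would run a pasting argument that separates the two structure maps $s,t$ of the double category. A morphism $\vec b\to\vec a$ in $\mathcal X$ is a pair $(f,g)$ with $f\in\mathcal C(\vec b,\vec a)$, $g\in\widetilde{\mathbb G}_{\mathcal G}(q\vec b,q\vec a)$ and $q(f)=t(g)$, and $p$ records $s(g)$. Given a family $(h_i)=(f_i,g_i)$ of morphisms $\vec b\to a'_i$ and $e\in\widetilde{\mathbb E}_{\mathcal G}(q\vec b,\llangle n\rrangle)$ with $s(g_i)=\rho_i\circ e$, I first reconstruct $g$: since $(\widetilde{\mathbb G}_{\mathcal G},s)\in\mathbf{Oper}'_{\mathcal G}$ by \cref{ex:G-oper}, condition \ref{req:inert-lift:plim} for it yields a unique $g$ with $s(g)=e$ and $[\rho_i,e,e]\circ g=g_i$. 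Functoriality of $t$ gives $q(f_i)=t(g_i)=\rho_i\circ t(g)$, so condition \ref{req:inert-lift:plim} for $\mathcal C$ reconstructs a unique $f$ with $q(f)=t(g)$ and $\widehat{\rho_i}^{\mathcal C}\circ f=f_i$; then $(f,g)$ is the unique preimage, establishing the pullback \eqref{eq:inert-lift:plim} for $\mathcal X$. The bookkeeping here—reconstructing $g$ through $s$ but coupling it to $f$ through $t$—is the delicate point, and is where I expect the genuine work to lie.

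Condition \ref{req:inert-lift:prod} I would deduce from \ref{req:inert-lift:plim} together with the same property of $\mathcal C$. Full faithfulness of the Segal functor $((\rho_1)_!,\dots,(\rho_n)_!)\colon\mathcal X_{\llangle n\rrangle}\to\mathcal X_{\llangle 1\rrangle}^{\times n}$ is formal once \ref{req:inert-lift:plim} is known: computing $\mathcal X_{\llangle n\rrangle}(\vec a,\vec a')$ as the fibre of \eqref{eq:inert-lift:plim} over $\mathrm{id}_{\llangle n\rrangle}$ and reindexing by the coCartesian lifts $\widehat\rho_i$ identifies it with $\prod_i\mathcal X_{\llangle 1\rrangle}((\rho_i)_!\vec a,(\rho_i)_!\vec a')$. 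For essential surjectivity I use that the fibres of $\mathcal X$ and $\mathcal C$ have the same objects and that the unit restricts to $\mathcal C_{\llangle 1\rrangle}\hookrightarrow\mathcal X_{\llangle 1\rrangle}$; hence condition \ref{req:inert-lift:prod} for $\mathcal C$, producing $\vec a$ with $(\rho_i)_!^{\mathcal C}\vec a\cong c_i$, transports to the required isomorphisms in $\mathcal X_{\llangle 1\rrangle}$. Finally, on morphisms a map $F\colon\mathcal C\to\mathcal D$ in $\mathbf{Oper}'_{\mathcal G}$ induces $F\times\mathrm{id}_{\widetilde{\mathbb G}_{\mathcal G}}$, a morphism of internal presheaves by $2$-naturality of the free $2$-monad; as the inert coCartesian lifts in $\mathcal X$ are images of those in $\mathcal C$ and $F$ preserves the latter, $F\times\mathrm{id}$ preserves them, so it is a map of algebraic $\mathcal G$-operators. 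The same $2$-functor carries $2$-cells to $2$-cells, and since the hom-categories of $\mathbf{Oper}^{\mathsf{alg}}_{\mathcal G}$ are full subcategories, this completes the restriction.
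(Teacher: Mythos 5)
Your proposal is correct, and its skeleton matches the paper's: condition \ref{req:inert-lift:cocart} comes from \cref{lem:free-cocart} together with the fact that the unit is the identity on objects, and the statement about $1$-morphisms is handled exactly as in the paper via the naturality square for the unit. Where you diverge is in the verification of the Segal-type conditions \ref{cond:catalgop:plim} and \ref{cond:catalgop:prod}. The paper verifies \ref{cond:catalgop:plim} by direct inspection of the explicit hom-set description of $\mathcal C\times_{\widetilde{\mathbb E}_{\mathcal G}}\widetilde{\mathbb G}_{\mathcal G}$, and for \ref{cond:catalgop:prod} it observes that the fibre $(\widetilde{\mathbb G}_{\mathcal G})_{\llangle n\rrangle}$ consists of automorphisms $[\mathrm{id},u,e_n]$ killed by $t$, whence $(\mathcal C\times_{\widetilde{\mathbb E}_{\mathcal G}}\widetilde{\mathbb G}_{\mathcal G})_{\llangle n\rrangle}\cong\mathcal C_{\llangle n\rrangle}\times(\widetilde{\mathbb G}_{\mathcal G})_{\llangle n\rrangle}$ and the Segal functor splits as a product of two equivalences. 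You instead run a factor-by-factor pasting: reconstruct the $\widetilde{\mathbb G}_{\mathcal G}$-component through $s$ using that $s:\widetilde{\mathbb G}_{\mathcal G}\to\widetilde{\mathbb E}_{\mathcal G}$ lies in $\mathbf{Oper}'_{\mathcal G}$ (\cref{ex:G-oper}), push it through $t$, and then reconstruct the $\mathcal C$-component; and you obtain fully faithfulness of the Segal functor formally from \ref{req:inert-lift:cocart} and \ref{req:inert-lift:plim}, with essential surjectivity transported from $\mathcal C$ along the unit. Both routes work; yours is more modular and makes the ``delicate point'' (coupling the $s$-reconstruction to the $t$-constraint) explicit where the paper only asserts that the property is ``easily verified,'' but it leans on \cref{ex:G-oper} (hence on \cref{prop:inert-lift} for $\mathcal G\wr\widetilde{\mathbb E}_{\mathcal G}$), whereas the paper's product decomposition of the fibres is self-contained and also immediately exhibits the Segal equivalence. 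One small point worth recording in your write-up: the identification of $[\rho_i,e,e]$ as an $s$-coCartesian lift should be checked against the standard lift \eqref{eq:inert-lift:stdcocart} under the isomorphism $\widetilde{\mathbb G}_{\mathcal G}\cong\mathcal G\wr\widetilde{\mathbb E}_{\mathcal G}$, which is routine but is the hinge on which your first reconstruction step turns.
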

\begin{proof}
It clearly suffices to show the composition
\[
\mathbf{Oper}'_{\mathcal G}
\hookrightarrow\mathbf{Cat}^{/\widetilde{\mathbb E}_{\mathcal G}}
\xrightarrow{(\blank)\times_{\widetilde{\mathbb E}_{\mathcal G}}{\widetilde{\mathbb G}_{\mathcal G}}} \mathbf{PSh}(\widetilde{\mathbb G}_{\mathcal G}\rightrightarrows\widetilde{\mathbb E}_{\mathcal G})
\xrightarrow{\mathit{forget}} \mathbf{Cat}^{/\widetilde{\mathbb E}_{\mathcal G}}
\]
factors through the subcategory $\mathbf{Oper}'_{\mathcal G}$ at the end.
We have to verify it regarding objects and $1$-morphisms.

Let $\mathcal C\in\mathbf{Oper}'_{\mathcal G}$ with $q:\mathcal C\to\widetilde{\mathbb E}_{\mathcal G}$.
We verify the three conditions on categories of algebraic $\mathcal G$-operators for $\mathcal C\times_{\widetilde{\mathbb E}_{\mathcal G}}\widetilde{\mathbb G}_{\mathcal G}$.
Since the unit $\mathcal C\to\mathcal C\times_{\widetilde{\mathbb E}_{\mathcal G}}\widetilde{\mathbb G}_{\mathcal G}$ is the identity on objects, \cref{lem:free-cocart} implies $\mathcal C\times_{\widetilde{\mathbb E}_{\mathcal G}}\widetilde{\mathbb G}_{\mathcal G}$ admits all the coCartesian lifts of inert morphisms.
On the other hand, according to the description of $\mathcal C\times_{\widetilde{\mathbb E}_{\mathcal G}}\widetilde{\mathbb G}_{\mathcal G}$ in the proof of \cref{lem:free-cocart}, one easily verify the property \ref{cond:catalgop:plim}.
To see the property \ref{cond:catalgop:prod}, observe that the category $(\widetilde{\mathbb G}_{\mathcal G})_{\llangle n\rrangle}$ consists of automorphisms on $\llangle n\rrangle$ in $\widetilde{\mathbb G}_{\mathcal G}$ of the form
\[
[\mathrm{id}_{\llangle n\rrangle},u,e_n]
\]
for $u\in\overbar{\operatorname{Dec}}^{\mathcal G}_{\mathrm{id}_{\llangle n\rrangle}}$.
It turns out that such morphisms vanished by the functor $t:\widetilde{\mathbb G}_{\mathcal G}\to\widetilde{\mathbb E}_{\mathcal G}$, so we obtain isomorphisms
\[
(\mathcal C\times_{\widetilde{\mathbb E}_{\mathcal G}}\widetilde{\mathbb G}_{\mathcal G})_{\llangle n\rrangle}
\cong\mathcal C\times_{\widetilde{\mathbb E}_{\mathcal G}} (\widetilde{\mathbb G}_{\mathcal G})_{\llangle n\rrangle}
\cong\mathcal C_{\llangle n\rrangle}\times(\widetilde{\mathbb G}_{\mathcal G})_{\llangle n\rrangle}
\]
Under the identification, it is easily verified that, for each inert morphism $\rho_i:\llangle n\rrangle\to\llangle 1\rrangle$, the induced functor
\[
(\rho_i)_!:(\mathcal C\times_{\widetilde{\mathbb E}_{\mathcal G}}\widetilde{\mathbb G}_{\mathcal G})_{\llangle n\rrangle}
\to(\mathcal C\times_{\widetilde{\mathbb E}_{\mathcal G}}\widetilde{\mathbb G}_{\mathcal G})_{\llangle 1\rrangle}
\]
coincides with the one induced by
\[
(\rho_i)_!:\mathcal C_{\llangle n\rrangle}
\to\mathcal C_{\llangle 1\rrangle}
\ ,\quad
(\rho_i)_!:(\widetilde{\mathbb G}_{\mathcal G})_{\llangle n\rrangle}
\to(\widetilde{\mathbb G}_{\mathcal G})_{\llangle 1\rrangle}\ .
\]
Thus, the functor
\[
((\rho_1)_!,\dots,(\rho_n)_!):
(\mathcal C\times_{\widetilde{\mathbb E}_{\mathcal G}}\widetilde{\mathbb G}_{\mathcal G})_{\llangle n\rrangle}
\to (\mathcal C\times_{\widetilde{\mathbb E}_{\mathcal G}}\widetilde{\mathbb G}_{\mathcal G})_{\llangle 1\rrangle}^{\times n}
\]
is an equivalence.

As for $1$-morphisms, suppose $F:\mathcal C\to\mathcal D$ is a functor over $\widetilde{\mathbb E}_{\mathcal G}$ for $\mathcal C,\mathcal D\in\mathbf{Oper}'_{\mathcal G}$.
We have the following commutative square:
\begin{equation}
\label{eq:prf:free-Gsym:unitsq}
\vcenter{
  \xymatrix{
    \mathcal C \ar[r]^F \ar[d]_{\eta} & \mathcal D \ar[d]^\eta \\
    \mathcal C\times_{\widetilde{\mathbb E}_{\mathcal G}}\widetilde{\mathbb G}_{\mathcal G} \ar[r]^{F\times\mathrm{Id}} & \mathcal D\times_{\widetilde{\mathbb E}_{\mathcal G}}\widetilde{\mathbb G}_{\mathcal G} }}
\quad.
\end{equation}
In view of \cref{lem:free-cocart}, all the coCartesian lifts of inert morphisms in $\mathcal C\times_{\widetilde{\mathbb E}_{\mathcal G}}\widetilde{\mathbb G}_{\mathcal G}$ and $\mathcal D\times_{\widetilde{\mathbb E}_{\mathcal G}}\widetilde{\mathbb G}_{\mathcal G}$ are isomorphic to the images of ones in $\mathcal C$ and in $\mathcal D$ respectively by the vertical functors.
It follows that the bottom arrow in \eqref{eq:prf:free-Gsym:unitsq} preserves coCartesian lifts of inert morphisms as soon as so does the top.
The required result now follows immediately.
\end{proof}

\begin{corollary}
\label{cor:Gsym-act}
Let $\mathcal G$ be a group operad, and let $\mathcal C$ be a category of algebraic $\mathcal G$-operators.
Then, the functor
\[
\mathpzc A_{\mathcal C}:\mathcal C\times_{\widetilde{\mathbb E}_{\mathcal G}}\widetilde{\mathbb G}_{\mathcal G}
\to\mathcal C
\]
in the internal presheaf structure on $\mathcal C$ is a map of algebraic $\mathcal G$-operators.
\end{corollary}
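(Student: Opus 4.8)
The plan is to unwind the definition of a map of algebraic $\mathcal G$-operators and verify the two conditions it comprises for $\mathpzc A_{\mathcal C}$. First I would note that both ends of $\mathpzc A_{\mathcal C}$ are categories of algebraic $\mathcal G$-operators: the codomain $\mathcal C$ is one by hypothesis, and the domain $\mathcal C\times_{\widetilde{\mathbb E}_{\mathcal G}}\widetilde{\mathbb G}_{\mathcal G}$ is the free $\mathcal G$-symmetrization of $\mathcal C$, hence an object of $\mathbf{Oper}^{\mathsf{alg}}_{\mathcal G}$ by \cref{prop:free-Gsym}. Since $\mathbf{Oper}^{\mathsf{alg}}_{\mathcal G}$ is defined as the pullback $\mathbf{PSh}(\widetilde{\mathbb G}_{\mathcal G}\rightrightarrows\widetilde{\mathbb E}_{\mathcal G})\times_{\mathbf{Cat}^{/\widetilde{\mathbb E}_{\mathcal G}}}\mathbf{Oper}'_{\mathcal G}$, a functor over $\widetilde{\mathbb E}_{\mathcal G}$ between two such objects is a map of algebraic $\mathcal G$-operators precisely when it both underlies a morphism of internal presheaves over $\widetilde{\mathbb G}_{\mathcal G}\rightrightarrows\widetilde{\mathbb E}_{\mathcal G}$ and preserves coCartesian lifts of inert morphisms. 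It therefore suffices to establish these two properties for $\mathpzc A_{\mathcal C}$.

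For the first property I would invoke general $2$-monad theory. The double category $\widetilde{\mathbb G}_{\mathcal G}\rightrightarrows\widetilde{\mathbb E}_{\mathcal G}$ gives the $2$-monad $\mathcal X\mapsto\mathcal X\times_{\widetilde{\mathbb E}_{\mathcal G}}\widetilde{\mathbb G}_{\mathcal G}$ on $\mathbf{Cat}^{/\widetilde{\mathbb E}_{\mathcal G}}$, of which internal presheaves are exactly the strict algebras; the domain of $\mathpzc A_{\mathcal C}$ is the underlying object of the free algebra on $\mathcal C$, equipped with the multiplication of the monad, namely $\mathrm{Id}\times\gamma$. The associativity axiom for the internal presheaf structure on $\mathcal C$ says exactly that $\mathpzc A_{\mathcal C}\circ(\mathrm{Id}\times\gamma)=\mathpzc A_{\mathcal C}\circ(\mathpzc A_{\mathcal C}\times\mathrm{Id})$, which is the assertion that $\mathpzc A_{\mathcal C}$ is a homomorphism of algebras from the free algebra to $\mathcal C$. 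Thus $\mathpzc A_{\mathcal C}$ underlies a morphism of internal presheaves with no extra work.

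The substantive step is the second property. Here I would use the description of coCartesian lifts in the free symmetrization supplied by \cref{lem:free-cocart}: writing $\eta$ for the unit $\mathcal C\to\mathcal C\times_{\widetilde{\mathbb E}_{\mathcal G}}\widetilde{\mathbb G}_{\mathcal G}$, if $\widehat{[\rho,x]}_X\colon X\to X'$ is a coCartesian lift of an inert morphism in $\mathcal C$, then $\eta(\widehat{[\rho,x]}_X)$ is a coCartesian lift of the same inert morphism in the domain. The unit axiom of the internal presheaf structure gives $\mathpzc A_{\mathcal C}\circ\eta=\mathrm{Id}_{\mathcal C}$, so $\mathpzc A_{\mathcal C}$ sends $\eta(\widehat{[\rho,x]}_X)$ back to $\widehat{[\rho,x]}_X$, which is coCartesian in $\mathcal C$. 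Since the domain lies in $\mathbf{Oper}'_{\mathcal G}$, any coCartesian lift of a given inert morphism along a given object differs from $\eta(\widehat{[\rho,x]}_X)$ by a unique vertical isomorphism; as $\mathpzc A_{\mathcal C}$ is a functor over $\widetilde{\mathbb E}_{\mathcal G}$ it carries vertical isomorphisms to vertical isomorphisms, and coCartesianness is stable under post-composition with these. Hence $\mathpzc A_{\mathcal C}$ takes an arbitrary coCartesian lift of an inert morphism to a coCartesian one, establishing the second property and with it the corollary.

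I expect the only delicate point to be the verticality bookkeeping in the last step: one must check that the comparison isomorphism between an arbitrary coCartesian lift and $\eta(\widehat{[\rho,x]}_X)$ is genuinely vertical over $\widetilde{\mathbb E}_{\mathcal G}$, so that it is preserved by the fibered functor $\mathpzc A_{\mathcal C}$. Everything else reduces formally to the monad axioms together with \cref{lem:free-cocart} and \cref{prop:free-Gsym}.
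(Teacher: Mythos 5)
Your proposal is correct and takes essentially the same route as the paper: the paper's own proof is a three-line appeal to \cref{lem:free-cocart} and \cref{prop:free-Gsym} for membership in $\mathbf{Oper}'_{\mathcal G}$ plus the observation that the structure map of an algebra is automatically a map of internal presheaves, which is exactly your decomposition. You merely make explicit the details the paper calls ``straightforward,'' including the verticality of the comparison isomorphism between coCartesian lifts, and that bookkeeping is sound.
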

\begin{proof}
By virtue of \cref{lem:free-cocart} and \cref{prop:free-Gsym}, $\mathpzc A_{\mathcal C}$ is a $1$-morphism in the $2$-category $\mathbf{Oper}'_{\mathcal G}$.
In addition, it is straightforward from the definition of internal presheaves that $\mathpzc A_{\mathcal C}$ is a map of internal presheaves.
Combining them, one obtains the result.
\end{proof}

\section{The equivalence of notions}
\label{sec:algopcat-equiv}

The goal of this section is to prove the following result.

\begin{theorem}
\label{theo:multcat-oper-eq}
Let $\mathcal G$ be a group operad.
Then, the $2$-functor
\[
(\blank)\wr\widetilde{\mathbb E}_{\mathcal G}:\mathbf{MultCat}_{\mathcal G}
\to\mathbf{Oper}^{\mathsf{alg}}_{\mathcal G}
\]
is a biequivalence of $2$-categories.
In other words, the following hold.
\begin{enumerate}[label={\upshape(\arabic*)}]
  \item\label{req:multcat-oper-eq:ff} It is essentially fully faithful; i.e. for every pair $(\mathcal M,\mathcal N)$ of $\mathcal G$-symmetric multicategories, the functor
\begin{equation}
\label{eq:multcat-oper-eq:homcat}
\begin{array}{ccc}
  \mathbf{MultCat}_{\mathcal G}(\mathcal M,\mathcal N) &\mathclap\to& \mathbf{Oper}^{\mathsf{alg}}_{\mathcal G}(\mathcal M\wr\widetilde{\mathbb E}_{\mathcal G},\mathcal N\wr\widetilde{\mathbb E}_{\mathcal G}) \\
  F,\alpha &\mathclap\mapsto& \widetilde F^{\mathcal G},\widetilde\alpha^{\mathcal G}
\end{array}
\end{equation}
is an equivalence of categories.
  \item\label{req:multcat-oper-eq:surj} It is essentially surjective; i.e. for every category of algebraic $\mathcal G$-operators $\mathcal C$, there is a $\mathcal G$-symmetric multicategory $\mathcal M$ together with an equivalence $\mathcal M\wr\widetilde{\mathbb E}_{\mathcal G}\simeq\mathcal C$ in $\mathbf{Oper}^{\mathsf{alg}}_{\mathcal G}$.
\end{enumerate}
\end{theorem}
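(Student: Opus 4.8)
The plan is to factor the problem through the underlying, non-symmetric comparison and then transport the result across the $2$-monads whose algebras are the two sides. Recall that $\mathbf{MultCat}_{\mathcal G}$ is the $2$-category of strict algebras for the $2$-monad $(\blank)\rtimes\mathcal G$ on $\mathbf{MultCat}$, and that, by \cref{prop:free-Gsym} together with the defining pullback, $\mathbf{Oper}^{\mathsf{alg}}_{\mathcal G}$ is the $2$-category of strict algebras for the restriction to $\mathbf{Oper}'_{\mathcal G}$ of the free internal-presheaf $2$-monad $(\blank)\times_{\widetilde{\mathbb E}_{\mathcal G}}\widetilde{\mathbb G}_{\mathcal G}$. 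The isomorphism $\widetilde\Phi$ of \cref{theo:tildeMG-compare} provides an invertible $2$-natural transformation
\[
\bigl((\blank)\rtimes\mathcal G\bigr)\wr\widetilde{\mathbb E}_{\mathcal G}
\;\cong\;
\bigl((\blank)\wr\widetilde{\mathbb E}_{\mathcal G}\bigr)\times_{\widetilde{\mathbb E}_{\mathcal G}}\widetilde{\mathbb G}_{\mathcal G}\ ,
\]
and a direct check against the explicit formulas for the unit $H$ and multiplication $M$ of $(\blank)\rtimes\mathcal G$ and for $\iota,\gamma$ on $\widetilde{\mathbb G}_{\mathcal G}\rightrightarrows\widetilde{\mathbb E}_{\mathcal G}$ shows that $\widetilde\Phi$ is a morphism of $2$-monads lying over $(\blank)\wr\widetilde{\mathbb E}_{\mathcal G}$. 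By the standard transport-of-algebras principle, if $(\blank)\wr\widetilde{\mathbb E}_{\mathcal G}\colon\mathbf{MultCat}\to\mathbf{Oper}'_{\mathcal G}$ is already a biequivalence, then the induced $2$-functor on algebras is a biequivalence as well; this induced functor is precisely $(\blank)\wr\widetilde{\mathbb E}_{\mathcal G}\colon\mathbf{MultCat}_{\mathcal G}\to\mathbf{Oper}^{\mathsf{alg}}_{\mathcal G}$. The crucial payoff of this reduction is that it removes the $\mathcal G$-symmetry from all subsequent bookkeeping: the $\mathcal G$-action is carried for free by the monad morphism $\widetilde\Phi$.

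It therefore remains to prove the non-symmetric biequivalence $(\blank)\wr\widetilde{\mathbb E}_{\mathcal G}\colon\mathbf{MultCat}\xrightarrow{\ \simeq\ }\mathbf{Oper}'_{\mathcal G}$. For essential surjectivity I would construct a reconstruction assignment $\Upsilon$ sending $\mathcal C\in\mathbf{Oper}'_{\mathcal G}$ to the multicategory with object set $\objof\mathcal C_{\llangle 1\rrangle}$ and underlying category $\mathcal C_{\llangle 1\rrangle}$. Using \ref{req:inert-lift:prod} in \cref{prop:inert-lift}, fix for each tuple $(a_1,\dots,a_n)$ a Segal representative $\vec a\in\mathcal C_{\llangle n\rrangle}$ with chosen $p$-coCartesian lifts over the $\rho_i$, and set $\Upsilon(\mathcal C)(a_1\dots a_n;a)$ to be the set of morphisms $\vec a\to a$ in $\mathcal C$ lying over the image of the active map $\mu_n\colon\llangle n\rrangle\to\llangle 1\rrangle$. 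The operadic composition is recovered by assembling the inner multimorphisms into a single morphism over the active map $\llangle m\rrangle\to\llangle n\rrangle$ via the Segal pullback in \ref{req:inert-lift:plim} of \cref{prop:inert-lift}, and then composing with the outer multimorphism; associativity and unitality descend from those in $\mathcal C$. One then exhibits a comparison functor $\Upsilon(\mathcal C)\wr\widetilde{\mathbb E}_{\mathcal G}\to\mathcal C$ over $\widetilde{\mathbb E}_{\mathcal G}$, sending each object to its chosen Segal representative, and shows it is an equivalence in $\mathbf{Oper}'_{\mathcal G}$: it is essentially surjective because every object of every fiber $\mathcal C_{\llangle n\rrangle}$ is isomorphic to a Segal representative, and fully faithful because any morphism decomposes, by \ref{req:inert-lift:plim} in \cref{prop:inert-lift}, into its components over the $\rho_i$, each of which is exactly an element of a recovered multihom-set.

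For the local statement---that $\mathbf{MultCat}(\mathcal M,\mathcal N)\to\mathbf{Oper}'_{\mathcal G}(\mathcal M\wr\widetilde{\mathbb E}_{\mathcal G},\mathcal N\wr\widetilde{\mathbb E}_{\mathcal G})$ is an equivalence---I would run the same reconstruction on $1$- and $2$-cells. A functor over $\widetilde{\mathbb E}_{\mathcal G}$ preserving coCartesian lifts of inert morphisms restricts to a functor on the fibers $\mathcal C_{\llangle 1\rrangle}$, hence determines an object assignment and an underlying functor; its effect on the morphisms over the active maps $\mu_n$ determines a map of multihom-sets, and compatibility with the inert lifts forces these data to assemble into a multifunctor $F$ with $\widetilde F^{\mathcal G}$ isomorphic to the given map, giving essential surjectivity of the hom-functor. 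Faithfulness and fullness on $2$-cells follow because a natural transformation over $\widetilde{\mathbb E}_{\mathcal G}$ is determined by, and freely assembled from, its components in the fibers over $\llangle 1\rrangle$, which are exactly the components of a multinatural transformation. Once the monad transport is in place, the local fully-faithfulness recorded in \cref{lem:inrpresh-locfull} guarantees that passing between $\mathbf{Oper}'_{\mathcal G}$ and $\mathbf{Oper}^{\mathsf{alg}}_{\mathcal G}$ costs nothing at the level of $2$-cells, so that parts \ref{req:multcat-oper-eq:ff} and \ref{req:multcat-oper-eq:surj} follow simultaneously.

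The main obstacle I anticipate lies entirely in the non-symmetric biequivalence, and specifically in the coherence of the reconstruction: the operadic composition in $\Upsilon(\mathcal C)$ depends on the arbitrary choices of Segal representatives $\vec a$ and of their coCartesian lifts over the $\rho_i$, and one must check that associativity and unitality hold on the nose---or canonically, up to the unique comparisons furnished by \ref{req:inert-lift:plim} in \cref{prop:inert-lift}---rather than merely up to unspecified isomorphism. Likewise, verifying that the comparison functor $\Upsilon(\mathcal C)\wr\widetilde{\mathbb E}_{\mathcal G}\to\mathcal C$ respects composition requires matching the concatenation of chosen representatives against the genuine composites in $\mathcal C$, which is where the Segal pullback squares must be invoked with care. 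By contrast, the $\mathcal G$-equivariance---which would be the hardest ingredient in a direct attack---never enters this argument, having been absorbed into the formal monad-transport step guaranteed by \cref{theo:tildeMG-compare}.
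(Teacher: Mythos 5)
Your non-symmetric reconstruction (the multicategory $\Upsilon(\mathcal C)$ built from $\mathcal C_{\llangle 1\rrangle}$, multihoms over $\mu_n$ out of chosen Segal representatives, composition via the Segal pullbacks, and the comparison functor to $\mathcal C$) is essentially the paper's construction of $\mathcal M_{\mathcal C}$ and $P$, and your worries about coherence of the choices are exactly the ones the paper resolves with \cref{lem:oper-concat}, \cref{lem:oper-Gsym} and the uniqueness of the isomorphisms $\theta$. The genuine gap is in the step you treat as free: the ``standard transport-of-algebras principle'' asserting that a biequivalence of base $2$-categories commuting with a pair of $2$-monads induces a biequivalence between the $2$-categories of \emph{strict} algebras and \emph{strict} algebra maps. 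No such principle holds at this level of generality. Transporting the strict internal-presheaf structure of $\mathcal C$ across an equivalence $\mathcal M\wr\widetilde{\mathbb E}_{\mathcal G}\simeq\mathcal C$ in $\mathbf{Oper}'_{\mathcal G}$ a priori produces only a \emph{pseudo}-algebra structure on $\mathcal M\wr\widetilde{\mathbb E}_{\mathcal G}$ (the unit and associativity constraints get conjugated by the chosen pseudo-inverse), and similarly, if $H$ is a strict map of internal presheaves and $H\cong\widetilde F^{\mathcal G}$ only up to isomorphism in $\mathbf{Oper}'_{\mathcal G}$, one obtains only a pseudo-map structure on $\widetilde F^{\mathcal G}$, which does not by itself force $F$ to be a strict $\mathcal G$-symmetric multifunctor. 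Strictifying these requires either coherence machinery for $2$-monads that the paper does not set up, or a hands-on argument exploiting the specific situation.

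That hands-on argument is precisely what the paper supplies, and it is the technical heart of the proof: \cref{lem:Gsym-transfer} shows (using the property \ref{cond:catalgop:plim}, the fullness of $\widetilde{\mathbb G}_{\mathcal G}\to\widetilde{\mathbb E}_{\mathcal G}$, and \cref{theo:tildeMG-compare}) that a $\mathbf{Oper}'_{\mathcal G}$-equivalence $\mathcal M\wr\widetilde{\mathbb E}_{\mathcal G}\xrightarrow{\sim}\mathcal C$ transfers a unique honest $\mathcal G$-symmetric structure to $\mathcal M$; and in the fully-faithfulness part the paper verifies by explicit computation with the comparison isomorphisms $\lambda_{\vec a}$ of \cref{lem:adapter-isom} that $\Theta(H)(f^x)=\Theta(H)(f)^x$ holds on the nose for any strict presheaf map $H$. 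Your closing remark that the $\mathcal G$-equivariance ``never enters this argument, having been absorbed into the formal monad-transport step'' is therefore exactly backwards: the monad-transport step is where the $\mathcal G$-equivariance difficulty lives, and it is a theorem requiring proof, not a formality. To repair the proposal you would need to either prove the transport statement in this specific case (reproducing \cref{lem:Gsym-transfer} and the equivariance computation) or invoke and verify the hypotheses of a genuine strictification theorem from $2$-monad theory.
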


\begin{remark}
It is known that a $2$-functor $\mathcal K\to\mathcal L$ between $2$-categories admits a \emph{pseudoinverse}, i.e. a pseudofunctor $\mathcal L\to\mathcal K$ which is the inverse up to natural isomorphisms, provided it is essentially fully faithful and essentially surjective in the sense in \cref{theo:multcat-oper-eq}.
The reader can find a sketch in Section 3.2 in \cite{Lack2010}.
Note that, unlikely the case of equivalences of ordinary $1$-categories, a pseudoinverse might not be a strict one.
\end{remark}

In order to prove \cref{theo:multcat-oper-eq}, we need to observe that coCartesian lifts of inert morphisms in $\mathcal M\wr\widetilde{\mathbb E}_{\mathcal G}$ are preserved \emph{coherently} by arbitrary maps of algebraic $\mathcal G$-operators.
To simplify the notation, we use the following convention: let $[\rho,x]:\llangle m\rrangle\to\llangle n\rrangle\in\widetilde{\mathbb E}_{\mathcal G}$ be an inert morphism.
Although we may denote by $\widehat{[\rho,x]}_X$ an arbitrary coCartesian lift $\rho$ along an object $X$ in a general category of algebraic $\mathcal G$-operators $\mathcal C$, we always assume $\widehat{[\rho,x]}_{\vec a}$ is the one in \ref{req:inert-lift:cocart} in \cref{prop:inert-lift} in the special case $\mathcal C=\mathcal M\wr\widetilde{\mathbb E}_{\mathcal G}$.
In particular, we write
\[
\widehat\rho_{\vec a} := \widehat{[\rho,e_m]}_{\vec a}
\ ,\quad \widehat x_{\vec a} := \widehat{[\mathrm{id},x]}_{\vec a}\ .
\]
Hence, if $\delta$ is the section of $\rho$, the induced functor
\[
\rho^{}_!:(\mathcal M\wr\widetilde{\mathbb E}_{\mathcal G})_{\llangle m\rrangle}\to(\mathcal M\wr\widetilde{\mathbb E}_{\mathcal G})_{\llangle n\rrangle}
\]
coincides with the canonical projection so that $\rho^{}_!(a_1\dots a_m)=a_{\delta(1)}\dots a_{\delta(n)}$.

\begin{lemma}
\label{lem:adapter-isom}
Let $\mathcal G$ be a group operad, and let $\mathcal M$ and $\mathcal N$ be $\mathcal G$-symmetric multicategories.
Suppose $H:\mathcal M\wr\widetilde{\mathbb E}_{\mathcal G}\to\mathcal N\wr\widetilde{\mathbb E}_{\mathcal G}$ is a map of algebraic $\mathcal G$-operators.
Then, for each $\vec a=a_1\dots a_m$ and each $1\le i\le m$, there is a unique isomorphism
\[
\lambda_{\vec a,i}:H(a_i)\xrightarrow{\cong}(\rho_i)_!H(\vec a)\in(\mathcal M\wr\widetilde{\mathbb E}_{\mathcal G})_{\llangle 1\rrangle}=\underline{\mathcal M}
\]
such that $(\widehat\rho_i)_{H(\vec a)}=\lambda_{\vec a,i}\circ H((\widehat\rho_i)_{\vec a})$.
Moreover, the family
\[
\left\{\lambda_{\vec a}=[\mathrm{id}_{\llangle m\rrangle};\lambda_{\vec a,1},\dots,\lambda_{\vec a,m};e_m]\;\middle|\;\vec a=a_1\dots a_m\in\mathcal M\wr\widetilde{\mathbb E}_{\mathcal G}\right\}
\]
enjoys the property that, for every inert morphism $[\rho,x]:\llangle m\rrangle\to\llangle n\rrangle\in\widetilde{\mathbb E}_{\mathcal G}$, say $\delta$ is the section of $\rho$, and for each $\vec a=a_1\dots a_m\in\mathcal M\wr\widetilde{\mathbb E}_{\mathcal G}$, the square below commutes in $\mathcal M\wr\widetilde{\mathbb E}_{\mathcal G}$:
\begin{equation}
\label{eq:adapter-isom:sq}
\vcenter{
  \xymatrix{
    H(a_1)\dots H(a_m) \ar[r]^{\lambda_{\vec a}} \ar[d]_{\widehat{[\rho,x]}_{H(a_1)\dots H(a_m)}} & H(a_1\dots a_m) \ar[d]^{H(\widehat{[\rho,x]}_{\vec a})} \\
    H(a_{x^{-1}\delta(1)})\dots H(a_{x^{-1}\delta(n)}) \ar[r]^-{\lambda_{\rho^{}_!x_\ast\vec a}} & H(a_{x^{-1}\delta(1)}\dots a_{x^{-1}\delta(n)}) }}
\end{equation}
\end{lemma}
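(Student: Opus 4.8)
The plan is to manufacture the comparison isomorphisms from the essential uniqueness of coCartesian lifts, to isolate one clean identity in the symmetry-free case, and then to bootstrap to the general square via the Segal-type pullback together with a collapse of symmetry-twisted projections that already happens inside $\widetilde{\mathbb E}_{\mathcal G}$. First I would construct $\lambda_{\vec a,i}$. Since $H$ is a map of algebraic $\mathcal G$-operators, it is in particular a functor over $\widetilde{\mathbb E}_{\mathcal G}$ preserving coCartesian lifts of inert morphisms. Hence, for $\vec a=a_1\dots a_m$ and $1\le i\le m$, both $H\bigl((\widehat\rho_i)_{\vec a}\bigr)$ and $(\widehat\rho_i)_{H(\vec a)}$ are coCartesian lifts of the inert morphism $\rho_i$ along $H(\vec a)$, by \ref{req:inert-lift:cocart} in \cref{prop:inert-lift} and preservation. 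The essential uniqueness of coCartesian lifts then provides a unique isomorphism $\lambda_{\vec a,i}$ in the fiber over $\llangle 1\rrangle$ with $(\widehat\rho_i)_{H(\vec a)}=\lambda_{\vec a,i}\circ H\bigl((\widehat\rho_i)_{\vec a}\bigr)$; assembling these over $i$ gives the fiber isomorphism $\lambda_{\vec a}$, whose $i$-th component under $(\rho_i)_!$ is exactly $\lambda_{\vec a,i}$.

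The second step, which is the computational heart, is the symmetry-free single-index identity
\[
H\bigl((\widehat\rho_k)_{\vec a}\bigr)\circ\lambda_{\vec a}
= (\widehat\rho_k)_{H(a_1)\dots H(a_m)}\ .
\]
Applying the naturality of the coCartesian transformation $\widehat\rho_k$ (see \ref{req:inert-lift:prod} in \cref{prop:inert-lift}) to the fiber isomorphism $\lambda_{\vec a}$ gives $(\widehat\rho_k)_{H(\vec a)}\circ\lambda_{\vec a}=\lambda_{\vec a,k}\circ(\widehat\rho_k)_{H(a_1)\dots H(a_m)}$, because $(\rho_k)_!(\lambda_{\vec a})=\lambda_{\vec a,k}$. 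Substituting the defining relation of $\lambda_{\vec a,k}$ on the left-hand side and cancelling the isomorphism $\lambda_{\vec a,k}$ yields the identity.

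For the square \eqref{eq:adapter-isom:sq} in general, I would invoke the pullback \ref{req:inert-lift:plim} in \cref{prop:inert-lift} at the target $H(\vec c)$, where $\vec c=a_{x^{-1}\delta(1)}\dots a_{x^{-1}\delta(n)}$. Both composites cover $[\rho,x]$, so it suffices to compare their postcompositions with $(\widehat\rho_j)_{H(\vec c)}$ for $1\le j\le n$. Using the defining relation of $\lambda_{\vec c,j}$, the naturality of $\widehat\rho_j$, and the composite-of-coCartesian-lifts identity $(\widehat\rho_j)_{\vec c}\circ\widehat{[\rho,x]}_{\vec a}=\widehat{[\rho_{\delta(j)},x]}_{\vec a}$ (a short check with \eqref{eq:inert-lift:stdcocart}, noting $\rho_j\rho=\rho_{\delta(j)}$), each postcomposition reduces, after cancelling $\lambda_{\vec c,j}$, to the base identity $H\bigl(\widehat{[\rho_{\delta(j)},x]}_{\vec a}\bigr)\circ\lambda_{\vec a}=\widehat{[\rho_{\delta(j)},x]}_{H(a_1)\dots H(a_m)}$. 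This reduces the whole statement to inert morphisms with target $\llangle 1\rrangle$, namely those of the form $[\rho_k,x]$.

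The remaining point is the genuine obstacle: the interaction with the $\mathcal G$-symmetry $x$. A naive attempt to split $[\rho_k,x]$ as a projection followed by a symmetry and to reduce to the second step loops back on itself, since the pullback reduction merely returns the same symmetry-twisted base identity. I expect to break this circularity using \cref{ex:Icong-inert}: it shows $[\rho_k,x]=\rho_{x^{-1}(k)}$ \emph{already in} $\widetilde{\mathbb E}_{\mathcal G}$, because the twisting element $\rho_{x^{-1}(k)}^\ast\delta_{x^{-1}(k)}^\ast(x)$ is the image under $\rho_{x^{-1}(k)}^\ast$ of an element of $\mathcal G(1)$, whose image in $\mathfrak S$ is trivial, and hence lies in $\overbar{\operatorname{Kec}}^{\mathcal G}$. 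As the standard lifts \eqref{eq:inert-lift:stdcocart} carry identity multimorphism components, this equality of base morphisms forces $\widehat{[\rho_k,x]}_{\vec a}=(\widehat\rho_{x^{-1}(k)})_{\vec a}$ in $\mathcal M\wr\widetilde{\mathbb E}_{\mathcal G}$, and likewise over $H(a_1)\dots H(a_m)$ in $\mathcal N\wr\widetilde{\mathbb E}_{\mathcal G}$. Consequently the base identity for $[\rho_k,x]$ is literally the symmetry-free identity of the second step with index $x^{-1}(k)$, which completes the argument.
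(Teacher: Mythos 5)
Your proposal is correct and follows essentially the same route as the paper: construct $\lambda_{\vec a,i}$ from the essential uniqueness of coCartesian lifts, reduce the square \eqref{eq:adapter-isom:sq} to postcompositions with the $(\widehat\rho_j)$ via the pullback property \ref{req:inert-lift:plim}, and absorb the symmetry using the identity $[\rho_j,x]=\rho_{x^{-1}(j)}$ in $\widetilde{\mathbb E}_{\mathcal G}$ from \cref{ex:Icong-inert}. The only difference is organizational --- the paper splits into the two cases $[\rho,e_m]$ and $[\mathrm{id},x]$ and computes each directly, whereas you reduce uniformly to inert morphisms with target $\llangle 1\rrangle$ first --- but the key ingredients and computations are the same.
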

\begin{proof}
Since the functor $H:\mathcal M\wr\widetilde{\mathbb E}_{\mathcal G}\to\mathcal N\wr\widetilde{\mathbb E}_{\mathcal G}$ preserves coCartesian lifts of inert morphisms, the morphism $H((\widehat\rho_i)_{\vec a}):H(\vec a)\to H(a_i)$ is coCartesian.
Thus, the first statement is obvious.
As for the second, it turns out that we only have to verify the commutativity of \eqref{eq:adapter-isom:sq} for inert morphisms of the forms $[\rho,e_m]$ and $[\mathrm{id},x]$.
In the first case, for each $1\le j\le n$, we have
\[
\begin{split}
(\widehat\rho_j)_{H(\rho^{}_!\vec a)}\circ H(\widehat\rho_{\vec a})\circ\lambda_{\vec a}
&= \lambda_{\rho^{}_!\vec a,j}\circ H((\widehat\rho_j)_{\rho^{}_!\vec a}\circ\widehat\rho_{\vec a})\circ\lambda_{\vec a} \\
&= \lambda_{\rho^{}_!\vec a,j}\circ H((\widehat\rho^{}_{\delta(j)})_{\vec a})\circ \lambda_{\vec a} \\
&= \lambda_{\rho^{}_!\vec a,j}\circ\lambda_{\vec a,\delta(j)}^{-1}\circ(\widehat\rho^{}_{\delta(j)})_{H(\vec a)}\circ\lambda_{\vec a} \\
&= \lambda_{\rho^{}_!\vec a,j}\circ(\widehat\rho^{}_{\delta(j)})^{}_{H(a_1)\dots H(a_m)} \\
&= (\widehat\rho_j)_{H(\rho^{}_!\vec a)}\circ \lambda_{\rho^{}_!\vec a}\circ\widehat\rho^{}_{H(a_1)\dots H(a_m)}\ .
\end{split}
\]
In view of \ref{req:inert-lift:plim} in \cref{prop:inert-lift}, this implies $H(\widehat\rho_{\vec a})\lambda_{\vec a}=\lambda_{\rho_!\vec a}\widehat\rho^{}_{H(a_1)\dots H(a_m)}$, and \eqref{eq:adapter-isom:sq} is commutative.

It remains to show the commutativity of \eqref{eq:adapter-isom:sq} in the case $\rho$ is the identity.
Similarly to the case above, we have
\begin{equation}
\label{eq:prf:adapter-isom:hatx}
\begin{split}
(\widehat\rho_j)_{H(x_\ast\vec a)}\circ H(\widehat x_{\vec a})\circ\lambda_{\vec a}
&= \lambda_{x_\ast\vec a,j}\circ H((\widehat\rho_j)_{x_\ast\vec a}\circ\widehat x_{\vec a})\circ\lambda_{\vec a} \\
&= \lambda_{x_\ast\vec a,j}\circ H(\widehat{[\rho_j,x]}_{\vec a})\circ\lambda_{\vec a}\ .
\end{split}
\end{equation}
In view of \label{ex:Icong:inert}, setting $\delta_j$ to be the section of $\rho_j$ for each $1\le j\le m$, we have
\[
\begin{split}
[\rho_j,x]
&= [\rho_{x^{-1}(j)},\rho^\ast_{x^{-1}(j)}\delta_{x^{-1}(j)}^\ast(x)] \\
&= [\mathrm{id}_{\llangle n\rrangle},\delta_{x^{-1}(j)}^\ast(x)]\circ \rho_{x^{-1}(j)} \\
&= \rho_{x^{-1}(j)}
\end{split}
\]
as morphisms in $\widetilde{\mathbb E}_{\mathcal G}$ since $\overbar{\operatorname{Kec}}^{\mathcal G}_{\mathrm{id}_{\llangle 1\rrangle}}=\operatorname{Kec}^{\mathcal G}_{\mathrm{id}_{\llangle 1\rrangle}} = \mathcal G(1)$.
Substituting it to \eqref{eq:prf:adapter-isom:hatx}, we get
\[
\begin{split}
(\widehat\rho_j)_{H(x_\ast\vec a)}\circ H(\widehat x_{\vec a})\circ\lambda_{\vec a}
&= \lambda_{x_\ast\vec a,j}\circ H((\widehat\rho_{x^{-1}(j)})_{\vec a})\circ\lambda_{\vec a} \\
&= \lambda_{x_\ast\vec a,j}\circ\lambda_{\vec a,x^{-1}(j)}^{-1}\circ(\widehat\rho_{x^{-1}(j)})^{}_{H(\vec a)}\circ\lambda_{\vec a} \\
&= \lambda_{x_\ast\vec a,j}\circ(\widehat\rho_{x^{-1}(j)})^{}_{H(a_1)\dots H(a_m)} \\
&= \lambda_{x_\ast\vec a,j}\circ\widehat{[\rho_j,x]}_{H(a_1)\dots H(a_m)} \\
&= (\widehat\rho_j)^{}_{H(x_\ast\vec a)}\circ\lambda_{x_\ast\vec a}\circ \widehat x^{}_{H(a_1)\dots H(a_m)}\ .
\end{split}
\]
Hence, \ref{req:inert-lift:plim} in \cref{prop:inert-lift} again implies the commutativity of \eqref{eq:adapter-isom:sq}.
\end{proof}

On the other hand, on the construction of a $\mathcal G$-symmetric multicategories from a category of algebraic $\mathcal G$-operators, we need to observe how coCartesian lifts of inert morphisms determine composition operations.
Notice that, if we denote by $\mu_n:\llangle n\rrangle\to\llangle 1\rrangle$ the active morphism with $\mu_n(\pm\infty)=\pm\infty$ and $\mu_n(i)=1$ for $1\le i\le n$, then, for a multicategory $\mathcal M$, the multihom-set $\mathcal M(a_1\dots a_n;a)$ is recovered from the category $\mathcal M\wr\widetilde{\mathbb E}_{\mathcal G}$ as
\[
\mathcal M(a_1\dots a_n;a)
\cong (\mathcal M\wr\widetilde{\mathbb E}_{\mathcal G})(a_1\dots a_n,a)_{\mu_n}\ ,
\]
here the right hand side is the set of morphisms $a_1\dots a_n\to a$ in $\mathcal M\wr\widetilde{\mathbb E}_{\mathcal G}$ covering $\mu_n$.

\begin{notation}
Given active morphisms $\nu_1:\llangle k_i\rrangle\to\llangle l_i\rrangle\in\nabla$ for $1\le i\le n$, we define an active morphism $\nu_1\diamond\dots\diamond\nu_n:\llangle k_1+\dots+k_n\rrangle\to\llangle l_1+\dots+l_n\rrangle\in\nabla$ to be the map
\[
\begin{multlined}[t]
\llangle k_1+\dots+k_n\rrangle
\cong \{-\infty\}\star\langle k_1\rangle\star\dots\star\langle k_n\rangle\star\{\infty\}
\\[.5ex]
\xrightarrow{\mathrm{id}\amalg\nu_1\amalg\dots\amalg\nu_n\amalg\mathrm{id}} \{-\infty\}\star\langle l_1\rangle\star\dots\star\langle l_n\rangle\star\{\infty\}
\cong \llangle l_1+\dots+l_n\rrangle\ ,
\end{multlined}
\]
here $\star$ is the join of ordered sets and all maps are order-preserving.
In particular, we write
\[
\mu_{\vec k}
:= \mu_{k_1}\diamond\dots\diamond\mu_{k_n}\ .
\]
On the other hand, we set $\rho^{(\vec k)}_i:\llangle k_1+\dots+k_n\rrangle\to\llangle k_i\rrangle\in\nabla$ to be the inert morphism with
\[
\rho^{(\vec k)}_i(j) =
\begin{cases}
-\infty & j\le \sum_{s<i}k_s\ , \\[.5ex]
j- \sum_{s<i}k_s & \sum_{s<i}k_s<j\le\sum_{s\le i}k_s\ , \\[.5ex]
\infty & j>\sum_{s\le i}k_s\ .
\end{cases}
\]
We will identify the morphisms above with their images in $\widetilde{\mathbb E}_{\mathcal G}$.
\end{notation}

Using the notation above, one can immediately see
\begin{gather}
\label{eq:joinact-comp}
(\nu'_1\nu_1\diamond\dots\diamond\nu'_n\nu_n)
= (\nu'_1\diamond\dots\diamond\nu'_n)\circ(\nu_1\diamond\dots\diamond\nu_n)\ ,
\\
\label{eq:joinact-inert}
\rho^{(\vec l)}_i\circ(\nu_1\diamond\dots\diamond\nu_n)
= \nu_i\circ\rho^{(\vec k)}_i\ ,
\end{gather}
for active morphisms $\nu_i:\llangle k_i\rrangle\to\llangle l_i\rrangle$ and $\nu'_i:\llangle l_i\rrangle\to\llangle m_i\rrangle$ with $\vec k=(k_1,\dots,k_n)$ and $\vec l=(l_1,\dots,l_n)$.

\begin{lemma}
\label{lem:oper-concat}
Let $\mathcal C\in\mathbf{Oper}'_{\mathcal G}$, and suppose we are given active morphisms $\nu_i:\llangle k_i\rrangle\to\llangle l_i\rrangle\in\nabla$ for $1\le i\le n$.
Put $\vec k=(k_1,\dots,k_n)$ and $\vec l=(l_1,\dots,l_n)$, and suppose in addition $(\widehat\rho^{(\vec k)}_i)_X:X\to X_i$ and $(\widehat\rho^{(\vec l)}_i)_Y:Y\to Y_i$ are coCartesian morphisms in $\mathcal C$ covering $\rho^{(\vec k)}_i$ and $\rho^{(\vec l)}_i$ respectively.
Then, there is a unique bijection
\[
\varpi:\prod_{i=1}^n\mathcal C(X_i,Y_i)_{\nu_i}
\to \mathcal C(X,Y)_{\nu_1\diamond\dots\diamond\nu_n}
\]
such that $(\widehat\rho^{(\vec l)}_i)_Y\circ\varpi(f_1,\dots,f_n)=f_i\circ(\widehat\rho^{(\vec k)}_i)$, where $\mathcal C(V,W)_\nu$ is the set of morphisms $V\to W\in\mathcal C$ covering $\nu$.
Moreover, if other active morphisms $\nu'_i:\llangle l_i\rrangle\to\llangle m_i\rrangle\in\nabla$ and coCartesian morphisms $(\widehat\rho^{(\vec m)})_Z:Z\to Z_i\in\mathcal C$ covering $\rho^{(\vec m)}$ are given for $1\le i\le n$, with $\vec m=(m_1,\dots,m_n)$, then the square below is commutative:
\begin{equation}
\renewcommand{\objectstyle}{\displaystyle}
\label{eq:oper-concat:comp}
\vcenter{
  \xymatrix@C=4em{
    \prod_{i=1}^n\mathcal C(Y_i,Z_i)_{\nu'_i}\times\mathcal C(X_i,Y_i)_{\nu_i} \ar[r]^-{\prod\mathrm{comp}} \ar[d]_{\varpi\times\varpi} & \prod_{i=1}^n\mathcal C(X_i,Z_i)_{\nu'_i\nu_i} \ar[d]^\varpi \\
    \mathcal C(Y,Z)_{\nu'_1\diamond\dots\diamond\nu'_n}\times\mathcal C(X,Y)_{\nu_1\diamond\dots\diamond\nu_n} \ar[r]^-{\mathrm{comp}} & \mathcal C(X,Z)_{(\nu'_1\nu_1)\diamond\dots\diamond(\nu'_n\nu_n)} }}
.
\end{equation}
\end{lemma}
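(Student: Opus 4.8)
The plan is to reduce both assertions to a ``block'' refinement of the pullback property \ref{req:inert-lift:plim} in \cref{prop:inert-lift} together with the coCartesian lifting property, so that the bijection $\varpi$ is forced by universal properties and the compatibility square \eqref{eq:oper-concat:comp} becomes a formal consequence. The coCartesian data will do all the real work; the active morphisms $\nu_i$ only enter through the identities \eqref{eq:joinact-comp} and \eqref{eq:joinact-inert}.

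First I would establish a block-indexed version of \ref{req:inert-lift:plim}: for the coCartesian lifts $(\widehat\rho^{(\vec l)}_i)_Y\colon Y\to Y_i$ and every object $W\in\mathcal C$, the square sending $\mathcal C(W,Y)$ to $\prod_{i=1}^n\mathcal C(W,Y_i)$, lying over the block projection $\widetilde{\mathbb E}_{\mathcal G}(p(W),\llangle l_1+\dots+l_n\rrangle)\to\prod_i\widetilde{\mathbb E}_{\mathcal G}(p(W),\llangle l_i\rrangle)$, is a pullback. This follows from the elementary case by grouping letters: choosing coCartesian lifts $\widehat\rho_r\colon Y_i\to(y_i)_r$ of the $\rho_r\colon\llangle l_i\rrangle\to\llangle 1\rrangle$, the composites $\widehat\rho_r\circ(\widehat\rho^{(\vec l)}_i)_Y$ are coCartesian over $\rho_r\circ\rho^{(\vec l)}_i=\rho_{(\sum_{s<i}l_s)+r}$, so they exhibit the $(y_i)_r$ as precisely the letters of $Y$. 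I then paste pullbacks: the product over $i$ of the elementary pullback squares for the $Y_i$ is a pullback, the elementary pullback square for $Y$ is a pullback, and since $\widetilde{\mathbb E}_{\mathcal G}\cong\ast\wr\widetilde{\mathbb E}_{\mathcal G}$ itself lies in $\mathbf{Oper}'_{\mathcal G}$ the base row composes correctly; the pasting lemma then gives the block square. This grouping-and-pasting step is the technical heart of the proof.

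With the block pullback in hand I would construct $\varpi$ by taking $W=X$. A morphism $g\colon X\to Y$ covering $\mu:=\nu_1\diamond\dots\diamond\nu_n$ is, by the block pullback, the same datum as a compatible tuple $g_i:=(\widehat\rho^{(\vec l)}_i)_Y\circ g\colon X\to Y_i$; by \eqref{eq:joinact-inert} each $g_i$ covers $\rho^{(\vec l)}_i\mu=\nu_i\rho^{(\vec k)}_i$, so coCartesianness of $(\widehat\rho^{(\vec k)}_i)_X$ factors it uniquely as $g_i=f_i\circ(\widehat\rho^{(\vec k)}_i)_X$ with $f_i\colon X_i\to Y_i$ covering $\nu_i$. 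Conversely, a tuple $(f_i)$ yields the family $\bigl(f_i\circ(\widehat\rho^{(\vec k)}_i)_X\bigr)_i$, whose base data agrees with $\mu$ under the block projections, so the block pullback produces a unique $g$ covering $\mu$ with $(\widehat\rho^{(\vec l)}_i)_Y\circ g=f_i\circ(\widehat\rho^{(\vec k)}_i)_X$. This is exactly the asserted bijection $\varpi$, with its defining relation and uniqueness built into the construction.

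Finally, the commutativity of \eqref{eq:oper-concat:comp} I expect to be purely formal. Writing $g=\varpi(\vec f)$ and $h=\varpi(\vec f')$, the composite $h\circ g$ covers $\mu'\circ\mu=(\nu'_1\nu_1)\diamond\dots\diamond(\nu'_n\nu_n)$ by \eqref{eq:joinact-comp}, so it lies in the correct hom-set; and by the defining relations for $h$ and then $g$,
\[
(\widehat\rho^{(\vec m)}_i)_Z\circ(h\circ g)=f'_i\circ(\widehat\rho^{(\vec l)}_i)_Y\circ g=f'_i\circ f_i\circ(\widehat\rho^{(\vec k)}_i)_X .
\]
The uniqueness clause characterizing $\varpi$ then identifies $h\circ g$ with $\varpi\bigl((f'_i\circ f_i)_i\bigr)$, which is precisely the required commutativity. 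Thus the only substantive point is the block pullback of the second paragraph; everything downstream is bookkeeping with coCartesian factorization and the two identities \eqref{eq:joinact-comp} and \eqref{eq:joinact-inert}.
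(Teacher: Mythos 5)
Your proposal is correct and follows essentially the same route as the paper: the paper also obtains $\varpi$ from the identity $\rho^{(\vec l)}_i\circ(\nu_1\diamond\dots\diamond\nu_n)=\nu_i\circ\rho^{(\vec k)}_i$ together with coCartesian factorization and the Segal pullback condition \ref{cond:catalgop:plim}, and then deduces the square \eqref{eq:oper-concat:comp} from the uniqueness clause exactly as you do. The only difference is that you make explicit the block-indexed refinement of \ref{req:inert-lift:plim} (via pasting of pullbacks over the letters), a step the paper compresses into ``immediately follows from the universal property of the coCartesian morphisms''; this is a legitimate and welcome filling-in of detail rather than a different argument.
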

\begin{proof}
The existence of $\varpi$ immediately follows from the equation $\rho_i\mu_{\vec k}=\mu_{k_i}\rho^{(\vec k)}_i$ and the universal property of the coCartesian morphisms.
The uniqueness is guaranteed by the property \ref{cond:catalgop:plim} of objects of $\mathbf{Oper}'_{\mathcal G}$.
To see the last statement, take morphisms $f_i:X_i\to Y_i$ and $g_i:Y_i\to Z_i$ covering $\nu_i$ and $\nu'_i$ respectively for each $1\le i\le n$.
Then, we have
\[
\begin{split}
(\widehat\rho^{(\vec m)}_i)_Z\circ\varpi(g_1,\dots,g_n)\circ\varpi(f_1,\dots,f_n)
&= g_i\circ(\widehat\rho^{(\vec l)}_i)_Y\circ\varpi(f_1,\dots,f_n) \\
&= g_if_i\circ(\widehat\rho^{(\vec k)}_i)_X\ ,
\end{split}
\]
so the uniqueness of $\varpi$ implies
\[
\varpi(g_1f_1,\dots,g_nf_n) = \varpi(g_1,\dots,g_n)\circ\varpi(f_1,\dots,f_n)\ .
\]
Hence, the commutativity of \eqref{eq:oper-concat:comp} follows.
\end{proof}

\begin{lemma}
\label{lem:oper-Gsym}
Let $\mathcal C$ be a category of algebraic $\mathcal G$-operators, and let $X\in\mathcal C_{\llangle m\rrangle}$ is an object together with coCartesian morphisms
\[
(\widehat\rho_i)_X:X\to X_i
\]
covering the inert morphism $\rho_i:\llangle m\rrangle\to\llangle 1\rrangle\in\nabla$ for $1\le i\le m$.
For an inert morphism $[\rho,x]:\llangle n\rrangle\to\llangle n\rrangle\in\widetilde{\mathbb E}_{\mathcal G}$, say $\delta$ is the section of $\rho$ in $\nabla$, suppose we are given an object $X'\in\mathcal C_{\llangle n\rrangle}$ together with coCartesian morphisms
\[
(\widehat\rho_j)_{X'}:X'\to X_{x^{-1}(\delta(n))}
\]
covering the inert morphism $\rho_j:\llangle n\rrangle\to\llangle 1\rrangle$ for $1\le j\le n$.
Then, there is a unique isomorphism $\widehat{[\rho, x]}_X:X\to X'\in\mathcal C$ covering the morphism $[\rho,x]$ which makes the diagram
\begin{equation}
\label{eq:oper-Gsym:comp}
\vcenter{
  \xymatrix@C=1pc{
    X \ar[rr]^{\widehat{[\rho,x]}_X} \ar[dr]_{(\widehat\rho_{x^{-1}(\delta(j))})_X} && X' \ar[dl]^{(\widehat\rho_j)_{X'}} \\
    & X_{x^{-1}(\delta(j))} & }}
\end{equation}
commutes for each $1\le j\le n$.
Moreover, $\widehat{[\rho,x]}_X$ is coCartesian.
\end{lemma}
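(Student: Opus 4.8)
The plan is to read the morphism $\widehat{[\rho,x]}_X$ off the Segal-type pullback condition, and then to identify it with an honest coCartesian lift in order to obtain the remaining properties. First I would record the bookkeeping that makes the hypotheses line up: using the composition formula $(\rho_j,e)\circ(\rho,x)=(\rho_j\rho,x)$ in the quotal category together with $\rho_j\rho=\rho_{\delta(j)}$ and the identity $[\rho_{\delta(j)},x]=\rho_{x^{-1}(\delta(j))}$ established in \cref{ex:Icong-inert}, one sees that $\rho_j\circ[\rho,x]=\rho_{x^{-1}(\delta(j))}$ in $\widetilde{\mathbb E}_{\mathcal G}$. Hence the given coCartesian morphisms $(\widehat\rho_{x^{-1}(\delta(j))})_X:X\to X_{x^{-1}(\delta(j))}$ cover exactly $\rho_j\circ[\rho,x]$. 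Feeding the datum $\bigl([\rho,x];\,((\widehat\rho_{x^{-1}(\delta(j))})_X)_j\bigr)$ into the pullback square \ref{cond:catalgop:plim} (with $W=X$ and the chosen projections of $X'$) produces a unique morphism $\widehat{[\rho,x]}_X:X\to X'$ over $[\rho,x]$ with $(\widehat\rho_j)_{X'}\circ\widehat{[\rho,x]}_X=(\widehat\rho_{x^{-1}(\delta(j))})_X$; this is simultaneously the existence, the uniqueness, and the commutativity of \eqref{eq:oper-Gsym:comp} asserted in the lemma.

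It then remains to see that this morphism is coCartesian, and, since in the present situation $[\rho,x]$ is an isomorphism in $\widetilde{\mathbb E}_{\mathcal G}$, that it is itself invertible. Here I would invoke condition \ref{cond:catalgop:cocart} to choose an actual coCartesian lift $c:X\to\tilde X$ of $[\rho,x]$, and compare $\tilde X$ with $X'$ fibrewise. For each $j$ the composite $(\widehat\rho_j)_{\tilde X}\circ c$ is coCartesian over the inert composite $\rho_j\circ[\rho,x]=\rho_{x^{-1}(\delta(j))}$, so it and $(\widehat\rho_{x^{-1}(\delta(j))})_X$ are two coCartesian lifts of the same inert morphism along $X$; by the essential uniqueness of coCartesian lifts there is a unique isomorphism $k_j:(\rho_j)_!\tilde X\xrightarrow{\cong}X_{x^{-1}(\delta(j))}=(\rho_j)_!X'$ with $k_j\circ(\widehat\rho_j)_{\tilde X}\circ c=(\widehat\rho_{x^{-1}(\delta(j))})_X$. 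The tuple $(k_j)_j$ is an isomorphism $\Pi\tilde X\to\Pi X'$ for $\Pi=((\rho_1)_!,\dots,(\rho_n)_!)$, and since $\Pi$ is an equivalence by condition \ref{cond:catalgop:prod}, its full faithfulness yields a unique fibre morphism $h:\tilde X\to X'$ with $(\rho_j)_!h=k_j$, necessarily an isomorphism because $\Pi$ reflects them.

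Finally I would check that $h\circ c$ satisfies the universal property characterizing $\widehat{[\rho,x]}_X$: it covers $[\rho,x]$, and the defining relation $k_j\circ(\widehat\rho_j)_{\tilde X}=(\widehat\rho_j)_{X'}\circ h$ of $(\rho_j)_!h$ gives $(\widehat\rho_j)_{X'}\circ(h\circ c)=(\widehat\rho_{x^{-1}(\delta(j))})_X$, so by the uniqueness already proved $\widehat{[\rho,x]}_X=h\circ c$. As a composite of the coCartesian $c$ with the isomorphism $h$ it is coCartesian, and when $[\rho,x]$ is invertible the lift $c$ is itself an isomorphism — a coCartesian morphism over an isomorphism is always invertible, by the standard two-sided-inverse argument that uses only the universal property of $c$ — whence so is $\widehat{[\rho,x]}_X$. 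The one delicate point I would treat with care is the passage through the equivalence $\Pi$: one must verify that the fibrewise comparison isomorphisms $k_j$ genuinely assemble, via the full faithfulness of $\Pi$, into a fibre isomorphism compatible with the chosen projections, so that $h\circ c$ really reproduces the prescribed legs $(\widehat\rho_{x^{-1}(\delta(j))})_X$ rather than merely doing so up to the equivalence.
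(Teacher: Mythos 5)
Your proposal is correct and follows essentially the same strategy as the paper: the morphism and its uniqueness are extracted from the Segal pullback condition \ref{cond:catalgop:plim} via the identity $\rho_j\circ[\rho,x]=\rho_{x^{-1}(\delta(j))}$, and coCartesianness is obtained by comparing with a genuine coCartesian lift of $[\rho,x]$ through a fibrewise isomorphism. The only (harmless) divergence is that you assemble that comparison isomorphism using the equivalence $((\rho_1)_!,\dots,(\rho_n)_!)$ from condition \ref{cond:catalgop:prod}, whereas the paper obtains it directly from a second application of \ref{cond:catalgop:plim} to the two cones of coCartesian projections; you also make explicit the invertibility of the lift over the isomorphism $[\rho,x]$, which the paper leaves implicit.
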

\begin{proof}
According to the computation in \cref{ex:Icong-inert}, we have
\[
\rho_j\circ[\rho,x]
= [\rho_{\delta(j)},x]
= \rho_{x^{-1}(\delta(j))}
\]
so that the first statement directly follows from the property \ref{cond:catalgop:plim} of categories of algebraic $\mathcal G$-operators.
To prove the last, take coCartesian lifts $\widehat{[\rho,x]\mathrlap'}_X:X\to X''$ of $[\rho,x]$ along $X$ and $(\widehat\rho_j)_{X''}:X''\to X''_j$ of $\rho_j$ along $X''$ for $1\le j\le n$.
The computation above shows the composition $(\widehat\rho_j)_{X''}\circ\widehat{[\rho,x]\mathrlap'}_X$ is a coCartesian lift of $\rho_{x^{-1}(\delta(j))}$, so the uniqueness of coCartesian lifts enables us to assume $X''_j = X_{x^{-1}(\delta(j))}$ and the following diagram commutes:
\begin{equation}
\label{eq:prf:oper-Gsym}
\vcenter{
  \xymatrix@C=1pc{
    X \ar[rr]^{\widehat{[\rho,x]}'_X} \ar[dr]_{(\widehat\rho_{x^{-1}(\delta(j))})_X} && X'' \ar[dl]^{(\widehat\rho_j)_{X''}} \\
    & X_{x^{-1}(\delta(j))} & }}
\quad.
\end{equation}
We have two cones in $\mathcal C$ below
\[
\begin{gathered}
\left\{(\widehat\rho_j)_{X'}:X'\to X_{x^{-1}(\delta(j))}\right\}_{j=1}^n\ ,
\\
\left\{(\widehat\rho_j)_{X''}:X''\to X_{x^{-1}(\delta(j))}\right\}_{j=1}^n\ ,
\end{gathered}
\]
both of which consist of coCartesian morphisms and lie over the cone
\[
\left\{\rho_j:\llangle n\rrangle\to\llangle 1\rrangle\right\}_{j=1}^n
\]
in $\widetilde{\mathbb E}_{\mathcal G}$.
Then, the property \ref{cond:catalgop:plim} of categories of algebraic $\mathcal G$-operators implies there is a unique isomorphism $\theta:X''\to X'\in\mathcal C_{\llangle n\rrangle}$ such that $(\widehat\rho_j)_{X'}\theta=(\widehat\rho_j)_{X''}$.
In view of the uniqueness of the morphism $\widehat{[\rho,x]}_X$, we obtain
\[
\theta\circ\widehat{[\rho,x]}_X = \widehat{[\rho,x]\mathrlap'}_X\ .
\]
In particular, $\widehat{[\rho,x]}_X$ is isomorphic to a coCartesian morphism, so it is itself coCartesian.
\end{proof}

When we endow a $\mathcal G$-symmetric structure, it is good to have transfer.

\begin{lemma}
\label{lem:Gsym-transfer}
Let $\mathcal G$ be a group operad.
Suppose we are given a multicategory $\mathcal M$ and a category of algebraic $\mathcal G$-operators $\mathcal C$ together with an equivalence
\[
H:\mathcal M\wr\widetilde{\mathbb E}_{\mathcal G}
\xrightarrow\sim \mathcal C
\]
in the $2$-category $\mathbf{Oper}'_{\mathcal G}$.
Then, there is a unique $\mathcal G$-symmetric structure on $\mathcal M$ which makes $H$ into an equivalence in $\mathbf{Oper}^{\mathsf{alg}}_{\mathcal G}$.
\end{lemma}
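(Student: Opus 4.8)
The plan is to transport the internal–presheaf structure of $\mathcal C$ back to $\mathcal D:=\mathcal M\wr\widetilde{\mathbb E}_{\mathcal G}$ along $H$ and then to read the result off as a $\mathcal G$-symmetric structure on $\mathcal M$. Since $H$ is an equivalence in $\mathbf{Oper}'_{\mathcal G}\subset\mathbf{Cat}^{/\widetilde{\mathbb E}_{\mathcal G}}$, its underlying functor is an equivalence over $\widetilde{\mathbb E}_{\mathcal G}$, hence \emph{fully faithful}. Both the action $\mathpzc A_{\mathcal C}$ and the structure map $\mathpzc A_{\mathcal C}\circ(H\times_{\widetilde{\mathbb E}_{\mathcal G}}\mathrm{Id})$ are the identity on objects in each fibre (by the unit axiom of an internal presheaf), so I may take $\gamma_{\mathcal D}$ to be the identity on objects; full faithfulness of $H$ then defines a \emph{unique} functor $\gamma_{\mathcal D}\colon\mathcal D\times_{\widetilde{\mathbb E}_{\mathcal G}}\widetilde{\mathbb G}_{\mathcal G}\to\mathcal D$ over $\widetilde{\mathbb E}_{\mathcal G}$ with $H\circ\gamma_{\mathcal D}=\mathpzc A_{\mathcal C}\circ(H\times\mathrm{Id})$ on morphisms. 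Because $H$ is faithful, every equation satisfied by $\mathpzc A_{\mathcal C}$ is reflected back, so the associativity and unit axioms hold strictly for $\gamma_{\mathcal D}$: it is a \emph{strict} internal–presheaf structure, and by construction $H$ is a strict map of internal presheaves. The point of routing through $H^{-1}$ rather than through a pseudo-inverse is exactly that no coherence isomorphisms are introduced at this stage.

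Next I would extract the $\mathcal G$-symmetric structure from $\gamma_{\mathcal D}$. Using the identification $\mathcal M(a_1\dots a_n;a)\cong(\mathcal M\wr\widetilde{\mathbb E}_{\mathcal G})(a_1\dots a_n,a)_{\mu_n}$ recalled before \cref{lem:oper-concat}, for $f\in\mathcal M(a_1\dots a_n;a)$ and $x\in\mathcal G(n)$ I set $\vec c:=a_{x(1)}\dots a_{x(n)}$ and define $f^x\in\mathcal M(\vec c;a)$ as the morphism over $\mu_n$ obtained by applying $\gamma_{\mathcal D}$ to the pair $\bigl([\mu_n;f;x],[\mu_n,x,e_n]\bigr)$ in $\mathcal D\times_{\widetilde{\mathbb E}_{\mathcal G}}\widetilde{\mathbb G}_{\mathcal G}$; this is legitimate because $[\mu_n;f;x]$ lies over $[\mu_n,x]=t([\mu_n,x,e_n])$, and $\mu_n$ is active so $\overbar{\operatorname{Dec}}^{\mathcal G}_{\mu_n}\cong\mathcal G(n)$ by \cref{ex:congfam-decomp,ex:Kec//Dec}. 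I then verify that $\{f\mapsto f^x\}$ is a strict algebra for $(\blank)\rtimes\mathcal G$. The identity $f^{e_n}=f$ is immediate from the unit axiom, since $[\mu_n,e_n,e_n]=\iota(\mu_n)$. The relation $(f^x)^y=f^{xy}$ follows from the strict associativity of $\gamma_{\mathcal D}$ together with the computation of the relevant composite in $\widetilde{\mathbb G}_{\mathcal G}$, after converting the encoding $[\mu_n;f^x;y]$ lying over $[\mu_n,y]$ into the encoding of $f^x$ lying over $\mu_n$ by the coCartesian lift of the inert automorphism $[\mathrm{id}_{\llangle n\rrangle},y]$, as governed by \cref{ex:Icong-inert} and \cref{lem:oper-Gsym}. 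Finally, the interchange with multicategorical composition comes from the functoriality of $\gamma_{\mathcal D}$ applied to composites lying over concatenations $\mu_{k_1}\diamond\dots\diamond\mu_{k_n}$, which is exactly the situation controlled by \cref{lem:oper-concat}.

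To close the existence half I would show that this $\gamma_{\mathcal D}$ coincides with the internal–presheaf structure that the $\mathcal G$-symmetric structure $\{f^x\}$ induces on $\mathcal D$ through \cref{cor:tildeME-intpresh}: the Segal property and the coCartesian lifts of \cref{prop:inert-lift} reconstruct the value of any internal–presheaf structure on all of $\mathcal D\times_{\widetilde{\mathbb E}_{\mathcal G}}\widetilde{\mathbb G}_{\mathcal G}$ from its values on morphisms over the $\mu_n$, so the two structures, agreeing there, agree everywhere. Consequently $H$ is a strict map of internal presheaves whose underlying functor is an equivalence in $\mathbf{Oper}'_{\mathcal G}$; since the forgetful functor $\mathbf{Oper}^{\mathsf{alg}}_{\mathcal G}\to\mathbf{Oper}'_{\mathcal G}$ is locally fully faithful by \cref{lem:inrpresh-locfull}, the pseudo-inverse of $H$ carries an (a priori pseudo) presheaf-morphism structure that is pinned down along the coCartesian lifts exactly as in \cref{lem:adapter-isom}, whence $H$ is an equivalence in $\mathbf{Oper}^{\mathsf{alg}}_{\mathcal G}$ in the sense of the remark following \cref{theo:multcat-oper-eq}. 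For uniqueness, if two $\mathcal G$-symmetric structures both make $H$ an equivalence in $\mathbf{Oper}^{\mathsf{alg}}_{\mathcal G}$, their induced structures $\gamma_{\mathcal D},\gamma'_{\mathcal D}$ are both strictly intertwined with $\mathpzc A_{\mathcal C}$ by $H$, so $H\circ\gamma_{\mathcal D}=\mathpzc A_{\mathcal C}\circ(H\times\mathrm{Id})=H\circ\gamma'_{\mathcal D}$; faithfulness of $H$ gives $\gamma_{\mathcal D}=\gamma'_{\mathcal D}$, and since $f^x$ is recovered from the value of the structure on morphisms over $\mu_n$, the two $\mathcal G$-symmetric structures coincide.

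The main obstacle I expect is the middle step: confirming that the transported strict action $\gamma_{\mathcal D}$ is not merely \emph{some} internal–presheaf structure on $\mathcal D$ but precisely the one coming from a (strict) $\mathcal G$-symmetric structure on $\mathcal M$. Concretely, the difficulty is the bookkeeping that translates between the two natural encodings of a twisted multimorphism — the encoding $[\mu_n;f;x]$ over $[\mu_n,x]$ and the encoding $\overline{f^x}$ over $\mu_n$ — which is governed by the coCartesian lifts of the inert automorphisms $[\mathrm{id},x]$. Assembling these compatibilities into the strict axioms, especially $(f^x)^y=f^{xy}$ and the interchange law, together with the verification that the same isomorphisms do not obstruct the equivalence of $H$ in $\mathbf{Oper}^{\mathsf{alg}}_{\mathcal G}$, is where \cref{lem:oper-concat,lem:oper-Gsym,lem:adapter-isom} do the real work.
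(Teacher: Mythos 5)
Your proposal is correct and follows essentially the same route as the paper: both transport $\mathpzc A_{\mathcal C}$ back along the fully faithful $H$ (the paper does this hom-set by hom-set over the active morphisms $\mu_n$, you do it globally first and then restrict), read off the action $f\mapsto f^x$ from morphisms over $\mu_n$, verify the multicategory/algebra axioms via the coCartesian lifts and the $\varpi$-bijections of \cref{lem:oper-concat,lem:oper-Gsym}, and deduce uniqueness from faithfulness of $H$. The "main obstacle" you identify — checking the transported structure is the one induced by a genuine $\mathcal G$-symmetric structure on $\mathcal M$ — is exactly the content of the paper's verification of \eqref{eq:prf:Gsym-transfer:Gequiv}.
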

\begin{proof}
For each $\mathcal D\in\mathbf{Oper}'_{\mathcal G}$ with $q':\mathcal D\to\widetilde{\mathbb E}_{\mathcal G}$, for $X,Y\in\mathcal D$, and for $[\varphi,x]:q'(X)\to q'(Y)\in\widetilde{\mathbb E}_{\mathcal G}$, we write $\mathcal D(X,Y)_{[\varphi,x]}$ the set of morphisms of $\mathcal D$ lying over $[\varphi,x]$.
Hence, in view of \cref{theo:tildeMG-compare}, we have canonical bijections
\begin{equation}
\label{eq:prf:Gsym-transfer:MC}
\begin{split}
\mathcal M(a_1\dots a_n;a)
&\cong (\mathcal M\wr\widetilde{\mathbb E}_{\mathcal G})(a_1\dots a_n,a)_{\mu_n} \\
&\xrightarrow[\cong]{H} \mathcal C(H(a_1\dots a_n),H(a))_{\mu_n}\ .
\end{split}
\end{equation}
for objects $a,a_i\in\mathcal M$.
On the other hand, since $\widetilde{\mathbb G}_{\mathcal G}\to\widetilde{\mathbb E}_{\mathcal G}$ is full and the identity on objects, the induced functor $H_{\widetilde{\mathbb G}_{\mathcal G}}:\mathcal M\wr\widetilde{\mathbb G}_{\mathcal G}\to\mathcal C\times_{\widetilde{\mathbb E}_{\mathcal G}}\widetilde{\mathbb G}_{\mathcal G}$ is also an equivalence in $\mathbf{Oper}'_{\mathcal G}$ in view of \cref{prop:free-Gsym}.
Thus, we also have bijections
\begin{equation}
\label{eq:prf:Gsym-transfer:MGC}
\begin{split}
(\mathcal M\rtimes\mathcal G)(a_1\dots a_n;a)
&\cong (\mathcal M\wr\widetilde{\mathbb G}_{\mathcal G})(a_1\dots a_n,a)_{\mu_n} \\
&\xrightarrow[\cong]{H_{{\widetilde{\mathbb G}}_{\mathcal G}}} (\mathcal C\times_{\widetilde{\mathbb E}_{\mathcal G}}\widetilde{\mathbb G}_{\mathcal G})(H(a_1\dots a_n),H(a))_{\mu_n}\ .
\end{split}
\end{equation}
Note that the internal presheaf structure $\mathpzc A_{\mathcal C}:\mathcal C\times_{\widetilde{\mathbb E}_{\mathcal G}}\widetilde{\mathbb G}_{\mathcal G}\to\mathcal C$ is the identity on objects; indeed, the following diagram commutes:
\begin{equation}
\label{eq:prf:Gsym-transfer:Cunit}
\vcenter{
  \xymatrix@C=1em{
    \mathcal C\times_{\widetilde{\mathbb E}_{\mathcal G}}\widetilde{\mathbb E}_{\mathcal G} \ar@{=}[dr] \ar[rr]^{\mathrm{Id}\times\iota} && \mathcal C\times_{\widetilde{\mathbb E}_{\mathcal G}}\widetilde{\mathbb G}_{\mathcal G} \ar[dl]^{\mathpzc A_{\mathcal C}} \\
    & \mathcal C & }}
\quad.
\end{equation}
Combining with the isomorphisms in \eqref{eq:prf:Gsym-transfer:MC} and \eqref{eq:prf:Gsym-transfer:MGC}, we now obtain a map
\begin{equation}
\label{eq:prf:Gsym-transfer:candidate}
\begin{multlined}
\mathpzc A_{\mathcal M}:(\mathcal M\rtimes\mathcal G)(a_1\dots a_n;a)
\cong (\mathcal C\times_{\widetilde{\mathbb E}_{\mathcal G}}\widetilde{\mathbb G}_{\mathcal G})(H(a_1\dots a_n),H(a))_{\mu_n} \\
\xrightarrow{\mathpzc A_{\mathcal C}} \mathcal C(H(a_1\dots a_n),H(a))_{\mu_n}
\cong \mathcal M(a_1\dots a_n;a)\ .
\end{multlined}
\end{equation}

We assert that the map \eqref{eq:prf:Gsym-transfer:candidate} gives a $\mathcal G$-symmetric structure on $\mathcal M$.
Notice that, the composition operation in $\mathcal M$ is recovered from $\mathcal C$ as follows: for each $a,a_i\in\objof\mathcal M$ and $\vec a^{(i)}=a^{(i)}_1\dots a^{(i)}_{k_i}$, the composition operation is given by
\begin{equation}
\label{eq:prf:Gsym-transfer:multcomp}
\begin{split}
&\mathcal M(a_1\dots a_n;a)\times\prod_{i=1}^n\mathcal M(\vec a^{(i)};a_i) \\
&\cong \mathcal C(H(a_1\dots a_n),H(a))_{\mu_n}\times\prod_{i=1}^n\mathcal C(H(\vec a^{(i)}),H(a_i))_{\mu_{k_i}} \\
&\xrightarrow[\cong]{\mathrm{id}\times\varpi} \mathcal C(H(a_1\dots a_n),H(a))_{\mu_n}\times \mathcal C(H(\vec a^{(1)}\dots\vec a^{(n)}),H(a_1\dots a_n))_{\mu_{\vec k}} \\
&\xrightarrow{\mathrm{comp.}} \mathcal C(H(\vec a^{(1)}\dots\vec a^{(n)}),H(a))_{\mu_{k_1+\dots+k_n}} \\
&\cong\mathcal M(\vec a^{(1)}\dots\vec a^{(n)};a)\ ,
\end{split}
\end{equation}
where $\varpi$ is the bijection in \cref{lem:oper-concat} with respect to the image by $H$ of the standard coCartesian lifts
\[
(\widehat\rho_i)_{\vec a}:\vec a\to a_i
\ ,\quad
(\widehat\rho^{(\vec k)}_i)_{\vec a^{(1)}\dots\vec a^{(n)}}:\vec a^{(1)}\dots\vec a^{(n)}\to \vec a^{(i)}
\ \in\mathcal M\wr\widetilde{\mathbb E}_{\mathcal G}\ .
\]
Similarly, the composition in $\mathcal M\rtimes\mathcal G$ is also recovered from $\mathcal C\times_{\widetilde{\mathbb E}_{\mathcal G}}\widetilde{\mathbb G}_{\mathcal G}$.
Moreover, thanks to the choice of the coCartesian lifts of $\rho_i$ and $\rho^{(\vec k)}_i$, the square below is commutative:
\[
\vcenter{
  \xymatrix{
    \prod_{i=1}^n(\mathcal C\times_{\widetilde{\mathbb E}_{\mathcal G}}\widetilde{\mathbb G}_{\mathcal G})(H(\vec a^{(i)}),H(a_i))_{\mu_{k_i}}  \ar[r]^-{\prod\mathpzc A_{\mathcal C}} \ar[d]_{\varpi} & \prod_{i=1}^n\mathcal C(H(\vec a^{(i)}),H(a_i))_{\mu_{k_i}} \ar[d]^\varpi \\
    (\mathcal C\times_{\widetilde{\mathbb E}_{\mathcal G}}\widetilde{\mathbb G}_{\mathcal G})(H(\vec a^{(1)}\dots\vec a^{(n)}),H(\vec a))_{\mu_{\vec k}} \ar[r]^-{\mathpzc A_{\mathcal C}} & \mathcal C(H(\vec a^{(1)}\dots\vec a^{(n)}),H(\vec a))_{\mu_{\vec k}} }}
\]
This together with the functoriality of $\mathpzc A_{\mathcal C}$ implies that the map \eqref{eq:prf:Gsym-transfer:candidate} actually defines a multifunctor $\mathpzc A_{\mathcal M}:\mathcal M\rtimes\mathcal G\to\mathcal M$.
It furthermore turns out that $\mathpzc A_{\mathcal M}$ is actually a $\mathcal G$-symmetric structure on $\mathcal M$; the unitality and the associativity follow from the corresponding axioms for the internal presheaf structure on $\mathcal C$.

Finally, we see $H$ respects the internal presheaf structures over $\widetilde{\mathbb G}_{\mathcal G}\rightrightarrows\widetilde{\mathbb E}_{\mathcal G}$.
In view of \cref{theo:tildeMG-compare} and the property \ref{cond:catalgop:plim} of categories of algebraic $\mathcal G$-symmetric operators, it suffices to show that, for each morphism $[\varphi;f_1,\dots,f_n;u,x]:\vec a\to\vec b=b_1\dots b_n$ in $\mathcal M\wr\widetilde{\mathbb E}_{\mathcal G}$, we have
\begin{equation}
\label{eq:prf:Gsym-transfer:Gequiv}
\begin{multlined}
H((\widehat\rho_j)_{\vec b})\circ H([\varphi;\mathpzc A_{\mathcal M}(f_1,\delta^{(\varphi)\ast}_1(u)),\dots\mathpzc A_{\mathcal M}(f_n,\delta^{(\varphi)\ast}_n(u));x]) \\
= H((\widehat\rho_j)_{\vec b})\circ \mathpzc A_{\mathcal C}H_{\widetilde{\mathbb G}_{\mathcal G}}([\varphi;f_1,\dots,f_n;u,x])
\end{multlined}
\end{equation}
for each $1\le j\le n$.
Let $\varphi=\mu\rho$ be the factorization with $\mu$ active and $\rho$ inert, so \ref{req:conginr:inert} in \cref{lem:conginr} allows us to assume $u=\rho^\ast(\overbar u)$ with $\overbar u\in\overbar{\operatorname{Dec}}^{\mathcal G}_{\mu}$.
If we put $\mu=\mu_{\vec k}$, then the left hand side of \eqref{eq:prf:Gsym-transfer:Gequiv} is computed as
\begin{equation}
\label{eq:prf:Gsym-transfer:GequivL}
\begin{split}
&H((\widehat\rho_j)_{\vec b})\circ H([\varphi;\mathpzc A_{\mathcal M}(f_1,\delta^{(\varphi)\ast}_1(u)),\dots\mathpzc A_{\mathcal M}(f_n,\delta^{(\varphi)\ast}_n(u));x]) \\
&= H([\mu_{k_j};\mathpzc A_{\mathcal M}(f_j,\delta^{(\mu)\ast}_j(\overbar u));e])\circ H(\widehat\rho^{(\vec k)}_j\widehat{[\rho, x]}_{\vec a}) \\
&= \mathpzc A_{\mathcal C}H_{\widetilde{\mathbb G}_{\mathcal G}}([\mu_{k_j};f_j;\delta^{(\mu)\ast}_j(\overbar u),e])\circ H(\widehat\rho^{(\vec k)}_j\widehat{[\rho, x]}_{\vec a})\ .
\end{split}
\end{equation}
On the other hand, according to \eqref{eq:prf:Gsym-transfer:Cunit}, for every standard coCartesian lift $\widehat{[\rho',x']}$ of an inert morphism in $\mathcal M\wr\widetilde{\mathbb E}_{\mathcal G}$, one has
\[
H(\widehat{[\rho',x']})
= \mathpzc A_{\mathcal C} H_{\widetilde{\mathbb G}_{\mathcal G}}(\widehat{[\rho',x']})\ ,
\]
here we identify $\widehat{[\rho',x']}$ with its image in $\mathcal M\wr\widetilde{\mathbb E}_{\mathcal G}$ using \cref{lem:free-cocart}.
Thus, the right hand side of \eqref{eq:prf:Gsym-transfer:Gequiv} is given by
\begin{equation}
\label{eq:prf:Gsym-transfer:GequivR}
\begin{split}
& H((\widehat\rho_j)_{\vec b})\circ \mathpzc A_{\mathcal C}H_{\widetilde{\mathbb G}_{\mathcal G}}([\varphi;f_1,\dots,f_n;u,x]) \\
&= \mathpzc A_{\mathcal C} H_{\widetilde{\mathbb G}_{\mathcal G}}(\widehat{\rho_j}_{\vec b}\circ[\mu_{\vec k};f_1,\dots, f_n;\overbar u,e]\circ \widehat{[\rho,x]}_{\vec a}) \\
&= \mathpzc A_{\mathcal C} H_{\widetilde{\mathbb G}_{\mathcal G}}([\mu_{k_j};f_j;\delta^{(\mu)\ast}_j(\overbar u),e])\circ H(\widehat\rho^{(\vec k)}_j\widehat{[\rho,x]}_{\vec a})\ .
\end{split}
\end{equation}
Now, \eqref{eq:prf:Gsym-transfer:GequivL} and \eqref{eq:prf:Gsym-transfer:GequivR} give rise tot the equation \eqref{eq:prf:Gsym-transfer:Gequiv}, and it shows $H$ is a $1$-morphism in $\mathbf{Oper}^{\mathsf{alg}}_{\mathcal G}$.
The uniqueness is obvious by construction.
\end{proof}

\begin{proof}[Proof of \cref{theo:multcat-oper-eq}]
In order to show \ref{req:multcat-oper-eq:ff}, we construct an inverse $\Theta$ of \eqref{eq:multcat-oper-eq:homcat}.
Fix $\mathcal G$-symmetric multicategories $\mathcal M$ and $\mathcal N$.
Suppose $H:\mathcal M\wr\widetilde{\mathbb E}_{\mathcal G}\to\mathcal N\wr\widetilde{\mathbb E}_{\mathcal G}$ is a map of algebraic $\mathcal G$-operators, and take the family
\[
\{\lambda_{\vec a}=[\mathrm{id};\lambda_{\vec a,1},\dots,\lambda_{\vec a,m};e_m]\}_{\vec a=a_1\dots a_m}
\]
of morphisms in $\mathcal N\wr\widetilde{\mathbb E}_{\mathcal G}$ as in \cref{lem:adapter-isom}.
Note that $H$ induces a functor $\underline H:\underline{\mathcal M}\to\underline{\mathcal N}$ between underlying categories via the restriction to the fibers over $\llangle 1\rrangle\in\widetilde{\mathbb E}_{\mathcal G}$.
On the other hand, if $f\in\mathcal M(\vec a;b)$ is a multimorphism, we can take a unique multimorphism $H^\circ(f)\in\mathcal N(H(\vec a);H(b))$ so that
\[
H([\mu_m;f;e_m]) = [\mu_m;H^\circ(f);e_m]\ .
\]

We define a multifunctor $\Theta(H):\mathcal M\to\mathcal N$ as follows: for each $a\in\operatorname{Ob}\mathcal M$, we set $\Theta(H)(a):=\underline H(a)$.
For each $\vec a=a_1\dots a_m$ and each $b$, define
\[
\begin{array}{rccc}
  \Theta(H)\ \mathrlap:& \mathcal M(\vec a;b) &\mathclap\to& \mathcal N(\underline H(a_1)\dots\underline H(a_m);\underline H(b)) \\[1ex]
  & f &\mathclap\mapsto& \gamma_{\mathcal N}(H^\circ(f);\lambda_{\vec a,1},\dots,\lambda_{\vec a,m})
\end{array}
\quad.
\]
Since $\lambda_{a,1}$ is the identity for $a\in\underline{\mathcal M}$, $\Theta(H)$ preserves the identities, so we show the multifunctoriality.
For $f_i\in\mathcal M(a^{(i)}_1\dots a^{(i)}_{k_i};a_i)$ for $1\le i\le m$, thanks to the commutative square \eqref{eq:adapter-isom:sq}, we have
\[
\begin{split}
& (\widehat\rho_i)_{H(\vec a)}\circ H([\mu_{\vec k};f_1,\dots,f_m;e])\circ\lambda_{\vec a^{(1)}\dots\vec a^{(m)}} \\
&= \lambda_{\vec a,i}\circ H((\widehat\rho_i)_{\vec a}\circ[\mu_{\vec k};f_1,\dots,f_m;e])\circ\lambda_{\vec a^{(1)}\dots\vec a^{(m)}} \\
&= \lambda_{\vec a,i}\circ H([\mu_{k_i};f_i;e]\circ(\widehat\rho^{(\vec k)}_i)_{\vec a^{(1)}\dots\vec a^{(m)}})\circ\lambda_{\vec a^{(1)}\dots\vec a^{(m)}} \\
&= \lambda_{\vec a,i}\circ [\mu_{k_i};H^\circ(f_i);e]\circ \lambda_{\vec a^{(i)}}\circ(\widehat\rho^{(\vec k)}_i)_{H(\vec a^{(1)})\dots H(\vec a^{(m)})} \\
&= \lambda_{\vec a,i}\circ[\mu_{k_i};\Theta(H)(f_i);e] \\
&= (\widehat\rho_i)_{H(\vec a)}\circ\lambda_{\vec a}\circ[\mu_{\vec k};\Theta(H)(f_1),\dots,\Theta(H)(f_m);e]\ ,
\end{split}
\]
which, by virtue of the property \ref{req:inert-lift:plim} in \cref{prop:inert-lift}, implies the square below is commutative:
\begin{equation}
\label{eq:prf:multcat-oper-eq:natlambda}
\vcenter{
  \xymatrix@C=4em{
    H(a^{(1)}_1)\dots H(a^{(1)}_{k_1})\dots H(a^{(m)}_{k_m}) \ar[r]^-{\lambda_{\vec a^{(1)}\dots\vec a^{(m)}}} \ar[d]_{[\mu_{\vec k};\Theta(H)(f_1),\dots,\Theta(H)(f_m);e]} & H(\vec a^{(1)}\dots\vec a^{(m)}) \ar[d]^{H([\mu_{\vec k};f_1,\dots,f_m;e])} \\
    H(a_1)\dots H(a_m) \ar[r]^{\lambda_{\vec a}} & H(a_1\dots a_m) }}
\end{equation}
Therefore we obtain
\[
\begin{split}
& [\mu_{k_1+\dots+k_m};\gamma_{\mathcal N}(\Theta(H)(f);\Theta(H)(f_1),\dots,\Theta(H)(f_m));e] \\
&= [\mu_m;\Theta(H)(f);e]\circ[\mu_{\vec k};\Theta(H)(f_1),\dots,\Theta(H)(f_m);e] \\
&= [\mu_m;H^\circ(f);e]\circ\lambda_{\vec a}\circ[\mu_{\vec k};\Theta(H)(f_1),\dots,\Theta(H)(f_m);e] \\
&= H([\mu_m;f;e]\circ[\mu_{\vec k};f_1,\dots,f_m;e])\circ\lambda_{\vec a^{(1)}\dots\vec a^{(m)}} \\
&= H([\mu_{k_1+\dots+k_m};\gamma_{\mathcal M}(f;f_1,\dots,f_m);e]\circ\lambda_{\vec a^{(1)}\dots\vec a^{(m)}} \\
&= [\mu_{k_1+\dots+k_m};H^\circ(\gamma_{\mathcal M}(f;f_1,\dots,f_m));e]\circ\lambda_{\vec a^{(1)}\dots\vec a^{(m)}} \\
&= [\mu_{k_1+\dots+k_m};\Theta(H)(\gamma_{\mathcal M}(f;f_1,\dots,f_m));e]\ ,
\end{split}
\]
and the multifunctoriality of $\Theta(H)$ follows.
Furthermore, $\Theta(H):\mathcal M\to\mathcal N$ is $\mathcal G$-symmetric.
To see this, notice that, for $f\in\mathcal M(x_\ast\vec a,b)$, we have
\[
\begin{split}
H([\mu_m;f;x])
&= [\mu_m;H^\circ(f);e]\circ H(\widehat x_{\vec a}) \\
&= [\mu_m;H^\circ(f);e]\circ \lambda_{x_\ast\vec a}\circ\widehat x_{H(\vec a)}\circ\lambda_{\vec a}^{-1} \\
&= [\mu_m;\Theta(H)(f);x]\circ\lambda_{\vec a}^{-1} \\
&= \left[\mu_m;\gamma_{\mathcal N}\left(\Theta(H)(f);\lambda_{\vec a,x^{-1}(1)}^{-1},\dots,\lambda_{\vec a,x^{-1}(m)}\right);x\right]\ .
\end{split}
\]
It follows that, for the induced functor $H_{\mathbb G}:\mathcal M\wr\mathbb G_{\mathcal G}\to\mathcal N\wr\mathbb G_{\mathcal G}$, we have
\[
H_{\mathbb G}([\mu_m;f;x,e_m])
= \left[\mu_m;\gamma_{\mathcal N}\left(\Theta(H)(f);\lambda_{\vec a,x^{-1}(1)}^{-1},\dots,\lambda_{\vec a,x^{-1}(m)}^{-1}\right);x,e_m\right]\ .
\]
Since $H$ is a map of internal presheaves, we obtain
\[
\begin{split}
H([\mu_m;f^x;e_m])
&= \left[\mu_m;\gamma_{\mathcal N}\left(\Theta(H)(f);\lambda_{\vec a,x^{-1}(1)}^{-1},\dots,\lambda_{\vec a,x^{-1}(m)}^{-1}\right)^x;e_m\right] \\
&= \left[\mu_m;\gamma_{\mathcal N}\left(\Theta(H)(f)^x;\lambda_{\vec a,1}^{-1},\dots,\lambda_{\vec a,m}^{-1}\right);e_m\right] \\
&= [\mu_m;\Theta(H)(f)^x;e_m]\circ\lambda_{\vec a}^{-1}
\end{split}
\]
and so $\Theta(H)(f^x)=\Theta(H)(f)^x$.

We extend $\Theta$ to an actual functor
\[
\Theta:
\mathbf{Oper}^{\mathsf{alg}}_{\mathcal G}(\mathcal M\wr\widetilde{\mathbb E}_{\mathcal G},\mathcal N\wr\widetilde{\mathbb E}_{\mathcal G})
\to\mathbf{MultCat}_{\mathcal G}(\mathcal M,\mathcal N)
\]
as follows: note that, in view of \cref{lem:inrpresh-locfull}, for $1$-morphisms $H,K:\mathcal M\wr\widetilde{\mathbb E}_{\mathcal G}\to\mathcal N\wr\widetilde{\mathbb E}_{\mathcal G}\in\mathbf{Oper}^{\mathsf{alg}}_{\mathcal G}$, a $2$-morphism $\xi:H\to K$ is nothing but a natural transformation over $\widetilde{\mathbb E}_{\mathcal G}$.
We set
\begin{equation}
\label{eq:prf:multcat-oper-eq:Theta2}
\Theta(\xi):=\left\{\xi_a:H(a)\to K(a)\right\}_{a\in\mathcal M}\ .
\end{equation}
To see \eqref{eq:prf:multcat-oper-eq:Theta2} forms a multinatural transformation $\Theta(H)\to\Theta(K)$, notice that, for each $\vec a=a_1\dots a_n\in\mathcal M\wr\widetilde{\mathbb E}_{\mathcal G}$, the naturality of $\xi$ implies the square
\[
\vcenter{
  \xymatrix{
    H(a_1\dots a_n) \ar[r]^{\xi_{\vec a}} \ar[d]_{H((\widehat\rho_i)_{\vec a})} & K(a_1\dots a_n) \ar[d]^{K((\widehat\rho_i)_{\vec a})} \\
    H(a_i) \ar[r]^{\xi_{a_i}} & K(a_i) }}
\]
is commutative for each $1\le i\le n$.
Computing the compositions, one obtains
\[
\xi_{\vec a} = \lambda^{(K)}_{\vec a}\circ [\mathrm{id};\xi_{a_1},\dots,\xi_{a_n};e_n]\circ\lambda^{(H)-1}_{\vec a}\ ,
\]
where $\lambda^{(H)}$ and $\lambda^{(K)}$ are the ones in \cref{lem:adapter-isom} for functors $H$ and $K$ respectively.
Then, the multinaturality of \eqref{eq:prf:multcat-oper-eq:Theta2} is straightforward.

We verify $\Theta$ is actually an inverse to the functor \eqref{eq:multcat-oper-eq:homcat}.
If $F:\mathcal M\to\mathcal N$ is a $\mathcal G$-symmetric multifunctor, then
the $\mathcal G$-symmetric multifunctor $\Theta(\widetilde F^{\mathcal G})$ is exactly $F$ itself since $\lambda_{\vec a}$ is trivial in this case.
On the other hand, in view of \eqref{eq:adapter-isom:sq} and \eqref{eq:prf:multcat-oper-eq:natlambda}, the family $\lambda=\{\lambda_{\vec a}\}_{\vec a}$ forms a natural isomorphism $\widetilde{\Theta(H)}^{\mathcal G}\cong H$.
The uniqueness of $\lambda$ implies it is natural with respect to $H$.
Hence, \eqref{eq:multcat-oper-eq:homcat} is an equivalence of categories, and we have finished the proof of the part \ref{req:multcat-oper-eq:ff}.

Finally, we show the part \ref{req:multcat-oper-eq:surj}.
Let $q:\mathcal C\to\widetilde{\mathbb E}_{\mathcal G}$ be a category of algebraic $\mathcal G$-operators.
By virtue of \cref{lem:Gsym-transfer}, in order to see $\mathcal C$ lies in the essential image of $(\blank)\wr\widetilde{\mathbb E}_{\mathcal G}$, it suffices to show there is a multicategory $\mathcal M$ together with an equivalence $\mathcal M\wr\widetilde{\mathbb E}_{\mathcal G}\xrightarrow\sim\mathcal C$ in the $2$-category $\mathbf{Oper}'_{\mathcal G}$.
For each finite word $\vec W=W_1\dots W_n$ of objects in $\mathcal C$ with, say, $q(W_i)=\llangle k_i\rrangle$, the property \ref{cond:catalgop:prod} of categories of algebraic $\mathcal G$-operators allows us to take an object $\varpi(\vec W)\in\mathcal C_{\llangle k_1+\dots+k_n\rrangle}$ together with coCartesian morphisms
\[
(\widehat\rho^{(\vec k)}_i)_{\vec W}:\varpi(\vec W)\to W_i
\]
covering the inert morphism $\rho^{(\vec k)}_i:\llangle k_1+\dots+k_n\rrangle\to\llangle k_i\rrangle\in\nabla$.
In the following argument, we fix such data for each $\vec W$.
Note that the coincidence of the symbol $\varpi$ here and in \cref{lem:oper-concat} is intentional; for $\vec V=V_1\dots V_n$ with $V_i\in\mathcal C$ and $q(V_i)=\llangle l_i\rrangle$, and for active morphisms $\nu_i:\llangle k_i\rrangle\to\llangle l_i\rrangle\in\nabla$, we have a map
\[
\varpi:\prod_{i=1}^n\mathcal C(W_i,V_i)_{\nu_i}\to\mathcal C(\varpi(W_1\dots W_n),\varpi(V_1\dots V_n))_{\nu_1\diamond\dots\diamond\nu_n}
\]
so that
\[
(\widehat\rho^{(\vec l)}_i)_{\vec V}\circ\varpi(f_1,\dots,f_n)
=f_i\circ(\widehat\rho^{(\vec k)}_i)_{\vec W}\ .
\]
Put $\underline{\mathcal C}:=\mathcal C_{\llangle 1\rrangle}$.
Note that, for $\vec X^{(i)}=X^{(i)}_1\dots X^{(i)}_{k_i}$ with $X^{(i)}_j\in\underline{\mathcal C}$, \cref{lem:oper-Gsym} asserts that there is a unique isomorphism
\[
\theta:\varpi(\varpi(\vec X^{(1)})\dots\varpi(\vec X^{(n)}))
\cong \varpi(\vec X^{(1)}\dots\vec X^{(n)})
\in\mathcal C_{\llangle k_1+\dots+k_n\rrangle}
\]
which makes the square below commutes:
\[
\vcenter{
  \xymatrix{
    \varpi(\varpi(\vec X^{(1)})\dots\varpi(\vec X^{(n)})) \ar[r]^-{\theta}_-{\cong} \ar[d]_{\widehat\rho^{(\vec k)}_i} & \varpi(\vec X^{(1)}\dots\vec X^{(n)}) \ar[d]^{\widehat\rho_{k_1+\dots+k_{i-1}+j}} \\
    \varpi(\vec X^{(i)}) \ar[r]^{\widehat\rho_j} & X^{(i)}_j }}
\quad.
\]
The property \ref{cond:catalgop:plim} also implies that, for morphisms $f^{(i)}_j:X^{(i)}_j\to Y^{(i)}_j\in\underline{\mathcal C}$ for $1\le i\le n$ and $1\le j\le k_n$, the following square is also commutative:
\begin{equation}
\label{eq:prf:multcat-oper-eq:thetanat}
\vcenter{
  \xymatrix{
    \varpi(\varpi(\vec X^{(1)}),\dots,\varpi(\vec X^{(n)})) \ar[d]_{\varpi(\varpi(\vec f^{(1)}),\dots,\varpi(\vec f^{(n)}))} \ar[r]^-\theta & \varpi(\vec X^{(1)}\dots\vec X^{(n)}) \ar[d]^{\varpi(f^{(1)}_1,\dots,f^{(1)}_{k_1},\dots,f^{(n)}_{k_n})} \\
    \varpi(\varpi(\vec Y^{(1)}),\dots,\varpi(\vec Y^{(n)})) \ar[r]^-\theta & \varpi(\vec Y^{(1)}\dots\vec Y^{(n)}) }}
\end{equation}
In addition, the uniqueness of $\theta$ guarantees that it makes the diagram below commute:
\begin{equation}
\label{eq:prf:multcat-oper-eq:thetaassoc}
\vcenter{
  \xymatrix@!C=4pc@R=1.5pc{
    &\varpi(\varpi(\varpi(\vec X^{(1,1)})\dots\varpi(\vec X^{(1,r_1)}))\dots\varpi(\varpi(\vec X^{(n,1)})\dots\varpi(\vec X^{(n,r_n)}))) \ar[dl]_{\theta} \ar[ddr]^{\varpi(\theta,\dots,\theta)} & \\
    \varpi(\varpi(\vec X^{(1,1)})\dots\varpi(\vec X^{(1,r_1)})\dots\varpi(\vec X^{(n,r_n)})) \ar[ddr]_{\theta} && \\
    && \varpi(\varpi(\vec X^{(1,1)}\dots\vec X^{(1,r_1)})\dots\varpi(\vec X^{(n,r_n)})) \ar[dl]^{\theta} \\
    &\varpi(\vec X^{(1,1)}\dots\vec X^{(1,r_1)}\dots\vec X^{(n,r_n)}) & }}
\end{equation}

We now define a multicategory $\mathcal M_{\mathcal C}$ so that
\begin{itemize}
  \item objects are those in $\underline{\mathcal C}$;
  \item for $X,X_1,\dots,X_m\in\underline{\mathcal C}$, we set
\[
\mathcal M_{\mathcal C}(X_1\dots X_m;X)
:= \mathcal C(\varpi(X_1\dots X_m),X)_{\mu_m}\ ;
\]
  \item the composition is given by
\[
\begin{split}
\gamma:&\mathcal M_{\mathcal C}(X_1\dots X_n;X)\times\prod_{i=1}^n\mathcal M_{\mathcal C}(\vec X^{(i)};X_i) \\
&\cong\mathcal C(\varpi(X_1\dots X_n),X)_{\mu_n}\times\prod_{i=1}^n\mathcal C(\varpi(\vec X^{(i)}),X_i)_{\mu_{k_i}} \\
&
\begin{multlined}
\xrightarrow{\mathrm{id}\times\varpi} \mathcal C(\varpi(X_1\dots X_n),X)_{\mu_n} \\
\phantom{\mathrm{id}\times\varpi}\times\mathcal C(\varpi(\varpi(\vec X^{(1)})\dots\varpi(\vec X^{(n)})),\varpi(X_1\dots X_n))_{\mu_{\vec k}}
\end{multlined}
\\
& \xrightarrow{\mathrm{comp.}} \mathcal C(\varpi(\varpi(\vec X^{(1)})\dots\varpi(\vec X^{(n)})),X)_{\mu_{k_1+\dots+k_n}} \\
& \xrightarrow{\theta^{-1\ast}}\mathcal C(\varpi(\vec X^{(1)}\dots\vec X^{(n)}),X)_{\mu_{k_1+\dots+k_n}}\ ;
\end{split}
\]
in other words, we have
\[
\gamma(f;f_1,\dots,f_n)
= f\circ\varpi(f_1,\dots,f_n)\circ\theta^{-1}\ .
\]
\end{itemize}
The associativity of the composition is verified as follows: take morphisms $f\in\mathcal M_{\mathcal C}(X_1\dots X_n;X)$, $f_i\in\mathcal M_{\mathcal C}(X^{(i)}_1\dots X^{(i)}_{k_i};X_i)$, and $f^{(i)}_j\in\mathcal M_{\mathcal C}(\vec X^{(i,j)};X^{(i)}_j)$ for $1\le i\le n$ and $1\le j\le k_i$.
Then, thanks to the commutative squares \eqref{eq:oper-concat:comp}, \eqref{eq:prf:multcat-oper-eq:thetanat}, and \eqref{eq:prf:multcat-oper-eq:thetaassoc}, we have
\[
\begin{split}
&\gamma(f;\gamma(f_1;f^{(1)}_1,\dots,f^{(1)}_{k_1}),\dots,\gamma(f_n;f^{(n)}_1,\dots,f^{(n)}_{k_n})) \\
&= f\circ\varpi(f_1\circ\varpi(f^{(1)}_1,\dots,f^{(1)}_{k_1})\circ\theta^{-1},\dots,f_n\circ\varpi(f^{(n)}_1,\dots,f^{(n)}_{k_n})\circ\theta^{-1})\circ\theta^{-1} \\
&=
\begin{multlined}[t]
f\circ\varpi(f_1,\dots,f_n) \\
\circ\varpi(\varpi(f^{(1)}_1,\dots,f^{(1)}_{k_1}),\dots,\varpi(f^{(n)}_1,\dots,f^{(n)}_{k_n}))\circ\varpi(\theta,\dots,\theta)^{-1}\circ\theta^{-1}
\end{multlined}
\\
&= \gamma(f;f_1,\dots,f_n)\circ\varpi(f^{(1)}_1,\dots,f^{(1)}_{k_1},\dots,f^{(n)}_{k_n})\circ\theta^{-1} \\
&= \gamma(\gamma(f;f_1,\dots,f_n);f^{(1)}_1,\dots,f^{(1)}_{k_1},\dots,f^{(n)}_{k_n})\ ,
\end{split}
\]
which implies the associativity of the composition.
The unitality is obvious so that $\mathcal M_{\mathcal C}$ is actually a multicategory.

We define a functor $P:\mathcal M_{\mathcal C}\wr\widetilde{\mathbb E}_{\mathcal G}\to\mathcal C$ as follows: for each object $X_1\dots X_m\in\mathcal M_{\mathcal C}\wr\widetilde{\mathbb E}_{\mathcal G}$, put
\[
P(X_1\dots X_m)
:= \varpi(X_1\dots X_m)\ .
\]
As for a morphism $[\varphi;f_1,\dots,f_n;x]:X_1\dots X_m\to Y_1\dots Y_n$, taking the factorization $\varphi=\mu\rho$ with $\mu$ active and $\rho$ inert, we set
\begin{equation}
\label{eq:prf:multcat-oper-eq:Pmor}
P([\varphi;f_1,\dots,f_n;x])
:= \varpi(f_1,\dots,f_n)\circ\theta^{-1}\circ\widehat{[\rho,x]}^{}_{X_1\dots X_m}\ ,
\end{equation}
where $\widehat{[\rho,x]}^{}_{X_1\dots X_m}:\varpi(X_1\dots X_m)\to\varpi(X_{x^{-1}(1)}\dots X_{x^{-1}(m)})$ is the coCartesian lift of $[\rho,x]$ given in \cref{lem:oper-Gsym}.
The functoriality of $P$ directly follows from the uniqueness of each morphisms in the right hand side of \eqref{eq:prf:multcat-oper-eq:Pmor}.
It is clear that $P$ preserves coCartesian lifts of inert morphisms, so $P$ is a $1$-morphism in $\mathbf{Oper}'_{\mathcal G}$.
In addition, the property \ref{cond:catalgop:prod} of categories of algebraic $\mathcal G$-operators implies $P$ is essentially surjective.
Hence, in order to see $P$ is an equivalence, it remains to show it is fully faithful.
Consider an arbitrary morphism in $\mathcal C$ of the form
\[
h:\varpi(X_1\dots X_m)\to\varpi(Y_1\dots Y_n)
\]
covering $[\varphi,x]:\llangle m\rrangle\to\llangle n\rrangle\in\widetilde{\mathbb E}_{\mathcal G}$.
If $\varphi=\mu\rho$ is the factorization with $\mu$ active and $\rho$ inert, say $\delta$ is the section of $\rho$, the universal property of the coCartesian morphism $\widehat{[\rho,x]}^{}_{X_1\dots X_m}$ given in \cref{lem:oper-Gsym} implies that there is a unique morphism $h':\varpi(X_{x^{-1}(\delta(1))}\dots X_{x^{-1}(\delta(m))})\to\varpi(Y_1\dots Y_n)\in\mathcal C$ covering $\mu$ such that
\[
h = h'\circ\widehat{[\rho,x]}_{X_1\dots X_m}
\]
In view of \cref{lem:oper-concat}, $h'$ can be uniquely written as $h'=\varpi(h_1,\dots,h_m)$.
This implies $P$ is fully faithful, and this completes the proof.
\end{proof}

\bibliographystyle{plain}
\bibliography{mybiblio}
\end{document}